\documentclass[11pt,reqno]{amsart}

\usepackage{amsthm}
\usepackage{amssymb}
\usepackage{latexsym}
\usepackage{multicol}
\usepackage{verbatim,enumerate}
\usepackage{accents}
\usepackage[usenames]{color}
\usepackage{dirtytalk}

\usepackage{hyperref}
\usepackage{hyperref}
\usepackage{amsmath, amscd}
\usepackage{soul}
\usepackage{tikz}
\usetikzlibrary{matrix,arrows,decorations.pathmorphing}
\usetikzlibrary{positioning}

\advance\textwidth by 1.2in \advance\oddsidemargin by -.6in \advance\evensidemargin by -.6in

\theoremstyle{plain}
\newtheorem{thm}{Theorem}[subsection]
\newtheorem{prop}[thm]{Proposition}
\newtheorem{lem}[thm]{Lemma}
\newtheorem{cor}[thm]{Corollary}


\theoremstyle{definition}
\newtheorem{example}[thm]{Example}
\newtheorem{defn}[thm]{Definition}


\theoremstyle{remark}
\newtheorem{rem}[thm]{Remark}



\newcommand{\lie}[1]{\mathfrak{#1}}
\newcommand{\wh}[1]{\widehat{#1}}

\newcommand\bc{\mathbb C}
\newcommand\bn{\mathbb N}
\newcommand\bz{\mathbb Z}

\newcommand{\Gr}{\mathrm{Gr}}

\def\a{\alpha}


\newenvironment{pf}{\proof}{\endproof}
\newcounter{cnt}
 
\makeatletter
\def\mydggeometry{\makeatletter\dg@YGRID=1\dg@XGRID=20\unitlength=0.003pt\makeatother}
\makeatother 


\numberwithin{equation}{section}
\makeatletter
\def\section{\def\@secnumfont{\mdseries}\@startsection{section}{1}%
  \z@{.7\linespacing\@plus\linespacing}{.5\linespacing}%
  {\normalfont\scshape\centering}}
\def\subsection{\def\@secnumfont{\bfseries}\@startsection{subsection}{2}%
  {\parindent}{.5\linespacing\@plus.7\linespacing}{-.5em}%
  {\normalfont\bfseries}}
\makeatother


\begin{document}


\title{Maximal closed subroot systems of real affine root systems}

\author{Krishanu Roy}
\thanks{}
\address{HBNI, The Institute of Mathematical Sciences, Chennai, India}
\email{krishanur@imsc.res.in}
\author{R. Venkatesh}
\thanks{RV is partially supported by the ``MAT/16-17/047/DSTX/RVEN''}
\address{Indian Institute of Science, Bangalore, India}
\email{rvenkat@iisc.ac.in}

\begin{abstract}
We completely classify and give explicit descriptions of all maximal closed subroot systems of real affine root systems. 
As an application we describe a procedure to get the classification of all
regular subalgebras of affine Kac--Moody algebras in
terms of their root systems.
\end{abstract}

\maketitle
\section{Introduction}

Given a finite crystallographic root system one can naturally ask for the list of all its subroot systems.
In \cite{B&S}, A. Borel and J. De Siebenthal gave a partial answer to this question;
they classified all the maximal closed subroot systems of finite crystallographic root systems.
A subroot system of a finite root system is said to be closed
if it is closed under addition with respect to the ambient finite root system. Naturally one can extend this definition to any real affine root systems 
(i.e., the real roots of affine Kac--Moody algebras). We simply call the real affine root systems as affine root systems throughout this paper. 
More precisely, let $\lie g$ be an affine Kac--Moody algebra and let $\Phi$ be the set of its real roots. 
A subroot system $\Psi$ of $\Phi$ is called closed if for each $\alpha$, $\beta \in \Psi$, such that $\alpha+\beta \in \Phi$, we have $\alpha+\beta \in \Psi$.
The main goal of this paper is to give a complete classification of all maximal closed subroot systems of affine root system $\Phi$ in both untwisted and twisted cases. 

One of our main motivations comes from the work of M. J. Dyer and G. I. Lehrer \cite{Dyer} (see also \cite{DyerTAMS}), where they developed some new ideas to classify all the
subroot systems of untwisted affine root systems, or more generally the subroot systems of real root systems of loop algebras of Kac--Moody algebras. The subroot systems of affine root systems are characterized in terms of certain explicit
compatible cosets of $\mathbb{Z}$ and the subroot systems of the 
underlying (gradient) finite  root systems, see Section 
\ref{characterization}
for more details. Using this characterization, we give very explicit descriptions of all maximal closed subroot systems of affine root systems. 
Our main theorem is indeed a corollary of the results of \cite{Dyer} for the untwisted case, but it is not stated as a corollary of their result anywhere  as we know. Some difficulties naturally arise
when we consider the twisted case. For example, the gradient root system of a proper closed subroot system of a twisted affine root system does not need to be a closed subroot system,
see Proposition \ref{twistedgradient}. 
Indeed this is the precise fact that makes it harder to deal with the twisted case, see Section \ref{twistedgradient} for more details. We need a case-by-case analysis
when the gradient root system of a maximal closed affine subroot system is not closed. Table \ref{mrs} in Section \ref{tabeluntwisted}
and Table \ref{mrs2} in Section \ref{tabletwisted} contain the complete list of all maximal closed subroot systems of untwisted and twisted affine algebras respectively.

The classification of closed subroot systems is very essential in the classification of semi-simple subalgebras of semi-simple Lie algebras. 
In \cite{Dynkin}, E. B. Dynkin introduced the notion of regular semi-simple subalgebras 
in order to classify all the semi-simple subalgebras of finite dimensional complex semi-simple Lie algebras.
He classified regular semi-simple subalgebras in terms of their root systems, which are closed subroot systems of the root system of the ambient Lie algebra.
One can define regular subalgebras in the context of affine Kac--Moody algebras by generalizing the definition of regular semi-simple subalgebras. 
A subalgebra of the affine Kac--Moody algebra $\lie g$ is said to be a regular subalgebra if there exists a closed subroot system $\Psi$ of $\Phi$ 
such that it is generated as a Lie subalgebra by the root spaces $\lie g_\a$ for $\a\in \Psi$. See Section \ref{equivdefregular} for some characterizations of regular subalgebras.
The regular subalgebra defined by $\Psi$ is uniquely determined by 
$\Psi$ and conversely $\Psi$ is also uniquely determined the regular subalgebra defined by $\Psi$, see Section \ref{closed} for more details.
Hence, classifying all the regular subalgebras is same as classifying all the closed subroot systems of $\Phi.$  In this paper we describe a procedure to classify
all the closed subroot systems using the information about maximal closed subroot systems. As a by-product we get a complete classification of all regular subalgebras of affine Kac--Moody algebras. 

Our second motivation for this work comes from the work of A. Felikson, A. Retakh and P. Tumarkin \cite{FRT}, where they described a procedure to classify all 
the regular subalgebras of affine Kac--Moody algebras. They determine all possible maximal closed affine type subroot systems in terms of their Weyl group in order to classify all the regular subalgebras. 
It appears that some maximal subroot systems were omitted in their classification list. For example, the root system of type $\tt A_{2}^{(1)} \oplus \tt A_{2}^{(1)}$ appears as a decomposable maximal closed subroot system
of $\tt E_6^{(2)}$ and the root system of type $\tt D_{5}^{(2)}$ appears as an  indecomposable maximal closed subroot system
of $\tt E_6^{(2)}$,  which were omitted in their list, see \cite[Table 2]{FRT} and Table  \ref{mrs7}. 
See the Remark \ref{differences} for the complete list of differences between our
classification list and their classification list.
Moreover our approach is completely different from their approach.

\vskip 2.0mm

The paper is organized as follows. In Sections~\ref{prelim} and \ref{characterization},  we give a short description of affine root systems, 
set up the notations and introduce some basic 
theory which will be used throughout this paper.  
In Section~\ref{untwi}, we give a complete description of all maximal closed subroot systems of untwisted affine root systems.
In Section~\ref{twistedcase}, we give a complete description of the maximal closed subroot systems of twisted affine root systems for which
 the corresponding gradient subroot system is either
 equal to 
the gradient root system of the original twisted affine root system or it is a proper closed subroot system.
Section~\ref{D2n} (resp. Section \ref{2A2n-1}, \ref{3D4}, \ref{2E6}) contains the classification of all maximal closed subroot systems of $\tt D_{n+1}^{(2)}$ 
(resp. $\tt A_{2n-1}^{(2)}$, $\tt D_4^{(3)}$, $\tt E_6^{(2)}$)  whose gradient subroot system is a proper semi-closed subroot system.
The case $\tt A_{2n}^{(2)}$ is treated separately in Section~\ref{2A2n} for $n\ge 2$ and the case 
$\tt A_{2}^{(2)}$ is treated separately in Section~\ref{2A2}. In Section~\ref{closed}, we describe a procedure to classify all the regular subalgebras of affine Kac--Moody
 algebras.

\vskip 2.0mm

\textit{Acknowledgement: The first author is very grateful to K. N. Raghavan and S. Viswanath 
for many useful discussions
and encouragement. Part of this work was done when the first author visited the Indian Institute of Science, Bangalore. 
He thanks G. Misra and Indian Institute of Science for their hospitality.
The second author acknowledges the hospitality and
excellent working conditions at The Institute of Mathematical Sciences, Chennai where part of this
work was done. Both authors thank P. Tumarkin for many helpful discussions.
Both authors are very grateful to the anonymous referees for their generous comments on the previous
manuscript. Their comments helped very much to improve the readability of this manuscript.
}

\section{Preliminaries}\label{prelim}

\subsection{}\label{defaffine} We denote the set of complex numbers by $\bc$ and, respectively, the set of integers, non--negative integers, and positive integers  by $\bz$, $\bz_+$, and $\bn$.
 
We refer to \cite{K90} for the general theory of affine Lie algebras and we refer  to \cite{Bor, MacD} for the general theory of affine root systems.
Throughout, $A$ will denote an indecomposable affine Cartan matrix,  and $S$  will denote the corresponding Dynkin diagram with the labeling of vertices as 
in Table Aff2 from \cite[pg.54--55]{K90}. Let $\mathring{S}$ be the Dynkin diagram obtained from 
$S$ by dropping the zero node and let $\mathring{A}$ be the Cartan matrix, whose Dynkin diagram is $\mathring{S}$.

Let $\lie g$ and $\mathring{\lie g}$  be the  affine Lie algebra and the finite--dimensional simple Lie algebra associated to $A$ and $\mathring{A}$ over $\mathbb{C}$, respectively. 
We shall realize $\mathring{\lie g}$ as a subalgebra of $\lie g$.
We fix $\mathring{\lie h}\subseteq\lie h$ Cartan subalgebras of $\mathring{\lie g}$ and respectively $\lie g$. Then we have 
$$\mathfrak{h}=\mathring{\mathfrak{h}}\oplus \mathbb{C}K\oplus \mathbb{C}d. $$
where $K$ is the canonical central element, and $d$ is the derivation. Consider $\mathring{\mathfrak{h}}^*$ as a subspace of $\mathfrak{h}^*$
by setting $\lambda(K)=\lambda(d)=0$ for all $\lambda\in \mathring{\mathfrak{h}}^*.$ 
Let $\delta\in \mathfrak{h}$ be given by $\delta(d)=a_0$, where $a_0$ is $2$ if $\mathfrak{g}$ is of type $A_{2n}^{(2)}$ and $1$ otherwise, and $\delta(\mathring{\mathfrak{h}}\oplus \mathbb{C}K)=0.$
Let $(\ ,\ )$ be a standard symmetric non-degenerate invariant bilinear form on $\lie h^{*}$.

 \subsection{}\label{definitionofm} We denote by $\Delta(\lie g)$ the set of roots of $\lie g$ with respect to $\lie h$, and the set of real roots of $\lie g $ 
 by $\Delta_{\mathrm{re}}(\lie g) = :\Phi$ and the set of imaginary roots of $\lie g$ by $\Delta_{\mathrm{im}}(\lie g)$. {\em{ We call $\Phi$ as affine root systems in this paper. By abuse of notations, we say that  $\Phi$ is of affine type $X$ (resp. untwisted or twisted) if and only if
 $\Delta(\lie g)$ is of affine type $X$ (resp. untwisted or twisted).}}
The set of roots of $\mathring{\lie g}$ with respect to $\mathring{\lie h}$ is denoted by $\mathring{\Phi}$ and note that $\mathring{\Phi}$ can be identified as a subroot system of $\Phi.$
Let $\Phi_\ell$ and $\Phi_s$ (resp. $\mathring{\Phi}_\ell$ and $\mathring{\Phi}_s$) denote respectively the subsets of $\Phi$ (resp. $\mathring{\Phi}$) consisting of the long and short roots.
We set
$$m=\begin{cases}
1,& \text{if $\Phi$ is of untwisted type}\\
2,& \text{if $\Phi$ is of type $\tt A^{(2)}_{2n}\ (n\geq 1), \tt A^{(2)}_{2n-1}\ (n\geq 3), \tt D^{(2)}_{n+1}\ (n\geq 2)\mbox{ or }\tt E^{(2)}_{6}$}\\
3,& \text{if $\Phi$ is of type $\tt D^{(3)}_{4}$.} 
\end{cases}
$$
We have (see \cite[Page no. 83]{K90}) $\Phi= \{\alpha+r\delta: \alpha\in \mathring{\Phi}, r\in \mathbb{Z}\} \ \text{if $m=1$}$ and 
$$\Phi=  \{\alpha+r\delta: \alpha\in \mathring{\Phi}_s, r\in \mathbb{Z} \}\cup \{\alpha+mr\delta: \alpha\in \mathring{\Phi}_\ell, r\in \mathbb{Z} \}$$
if $m=2$ or $3$, but 
                $\Phi$ is not of type $\tt A^{(2)}_{2n}$ and else
$$\Phi= \{\tfrac{1}{2}(\alpha+(2r-1)\delta: \alpha\in \mathring{\Phi}_\ell, r\in \mathbb{Z} \}\cup \{\alpha+r\delta: \alpha\in \mathring{\Phi}_s, r\in \mathbb{Z} \}\cup \{\alpha+2r\delta: \alpha\in \mathring{\Phi}_\ell, r\in \mathbb{Z} \}.$$

\subsection{}\label{Weylgroup} Given $\alpha\in \Phi$, we denote by $\alpha^\vee\in\mathfrak{h}$ the coroot associated to $\alpha$.
Then we set $\langle \beta, \alpha^\vee\rangle:= \beta(\alpha^\vee)= \frac{ 2(\beta, \alpha)}{(\alpha, \alpha)}.$
Define reflections 
 $\textbf{s}_{\alpha}\colon \mathfrak{h}^{*}\rightarrow \mathfrak{h}^{*}$ for $\alpha\in \Phi$ as follows:
  $$\textbf{s}_{\alpha}(\beta)= \beta -\langle \beta, \alpha^\vee\rangle\alpha$$
where $\beta\in \mathfrak{h}^{*}$.  For $\alpha\in \mathring{\Phi}$, $\textbf{s}_{\alpha}$ restricts to the reflection in $\alpha$ on $\mathring{\mathfrak{h}}^{*}$.
  We let $W:=\{\textbf{s}_{\alpha}: \alpha\in \Phi \}$ denote the Weyl group of $\mathfrak{g}$
  and denote by $\mathring{W}:=\{\textbf{s}_{\alpha}: \alpha\in \mathring{\Phi} \}$ the Weyl group of $\mathring{\mathfrak{g}}$.

\subsection{} In this section, we recall some general definitions and facts about finite and affine root systems. 
\begin{defn}\label{subrootdefinition}
 A proper non--empty subset $\Psi$ of $\Phi$ (resp., $\mathring{\Phi}$) is called
 \begin{enumerate}
  \item a subroot system of $\Phi$
  (resp., $\mathring{\Phi}$), if $\textbf{s}_{\alpha}(\beta)\in \Psi$ for all $\alpha,\beta\in \Psi$;
  \item  closed in $\Phi$
  (resp., $\mathring{\Phi}$), $\text{if} \ \alpha, \beta\in \Psi \ \text{and} \ \alpha+\beta\in \Phi \ \text{(resp.,} \ \mathring{\Phi})\ \text{implies} \ \alpha+\beta\in \Psi$;
  \item closed subroot system of $\Phi$
  (resp., $\mathring{\Phi}$), if it is both subroot system and closed.
 \end{enumerate}

\end{defn}

 \begin{defn} 
A proper closed subroot system $\Psi$ of $\Phi$ (resp., $\mathring{\Phi}$) is said to be a maximal closed subroot system of $\Phi$ (resp., $\mathring{\Phi}$) if
$\Psi\subseteq \Delta \subsetneq \Phi$ (resp., $\mathring{\Phi}$) implies $\Delta=\Psi$ for all closed subroot system of $\Phi$ (resp., $\mathring{\Phi}$).
\end{defn}

\begin{defn}
 
 Let $\Psi\le \Phi$ be a subroot system. The gradient root system  associated with $\Psi$ is defined to be 
 $$\Gr(\Psi):=\left\{(\alpha+r\delta)|_{\mathring{\lie h}} : \alpha+r\delta\in \Psi \right\},$$
where recall that $\mathring{\lie h}$ is the Cartan subalgebra of $\mathring{\lie g}$ defined in Section \ref{defaffine}. Since $\delta|_{\mathring{\lie h}}=0$, we have $(\alpha+r\delta)|_{\mathring{\lie h}}=\alpha|_{\mathring{\lie h}}=\alpha$ for $\alpha+r\delta\in \Psi.$
 In particular we have
$$\mathrm{Gr}(\Phi)=\begin{cases}
        \mathring{\Phi}\cup \frac{1}{2}\mathring{\Phi}_\ell& \text{if $\wh{\lie g}$ is of type $\tt A^{(2)}_{2n}$}\\
        \mathring{\Phi}& \text{otherwise.}
         \end{cases}$$
         \end{defn}
         The definition of $\Gr(\Psi)$ is dependent on the ambient root system $\Phi$. But we do not want to put $\Phi$ as an additional parameter in the notation.
Note that $\Gr(\Psi)$ does not need be a reduced root system in general. For example,  $\mathrm{Gr}(\Phi)$ is non-reduced finite root system of type $\tt BC_n$ when $\lie g$ is of type $\tt A^{(2)}_{2n}$.
It is easy to see that the gradient root system associated with $\Psi$ is a subroot system of $\Gr(\Phi)$ in the sense of Definition \ref{subrootdefinition}(1). We say $\Gr(\Psi)$
is reduced if $\Gr(\Psi)$ does not contain a subroot system of type $\tt BC_r$ for any $r\ge 1.$ 
The Weyl group of $\Gr(\Psi)$ generated by $\{\textbf{s}_\alpha:\alpha\in \Gr(\Psi)\}$ is denoted by $W_{\Gr(\Psi)}.$

\begin{defn}
  Let $\Psi\le \Gr(\Phi)$ be a subroot system. The lift of $\Psi$ in $\Phi$ is defined to be 
  $$\widehat \Psi:=\bigcup\limits_{\a\in \Psi} \left\{\alpha+r\delta : \ \text{for all $r$ such that}\ \alpha+r\delta\in \Phi\right\}$$ 
  It is easy to see that the lift $\widehat \Psi$ of $\Psi$ is a subroot system of $\Phi.$
\end{defn}

\begin{defn}\label{type}
Let $\Psi$ be an irreducible subroot system of $\Phi$.
We say that $\Psi$ is of type $\tt X_n^{(r)}$ if there exists a vector space isomorphism $\varphi: \mathbb{R}\Psi \to \mathbb{R}\tt X_n^{(r)}$ such that
$$\varphi(\Psi)=\text{$\tt X_n^{(r)}$} \ \ \text{and}\ \ \langle \beta, \alpha^\vee\rangle=\langle \varphi(\beta), \varphi(\alpha^\vee)\rangle\;\; \text{for all} \ \ \alpha,\beta\in\Psi,$$
where $\mathbb{R}\Psi$ (resp., $\mathbb{R}\tt X_n^{(r)}$) denotes the vector space spanned by $\Psi$ (resp., $\tt X_n^{(r)}$) over $\mathbb{R}$.
Let $\Psi$ be a reducible subroot system of $\Phi$. We say that $\Psi$ is of type 
$\tt X_{n_1}^{(r_1)}\oplus X_{n_2}^{(r_2)}\oplus\cdots \oplus X_{n_k}^{(r_k)}$ if 
$\Psi=\Psi_1\oplus \Psi_2\oplus \cdots \oplus\Psi_k$ such that $\Psi_i$ is irreducible for all $1\le i\le k$, $\Psi_i$ are mutually orthogonal and $\Psi_i$ is of type $\tt X_{n_i}^{(r_i)}$ 
for all $1\le i\le k$.
\end{defn}

\begin{rem}
Notice that the vector space generated by the irreducible components of a reducible root system need not be direct. For example, 
consider the affine root system $\Delta$ of type $\tt G_2^{(1)}$ and its real roots $\Phi=\{\alpha+n \delta: \alpha \in \mathring{\Phi}, n \in\mathbb{Z}\}$ where
$\mathring{\Phi}$ is of type $\text{$\tt G_2$}$.
Let $\{\a_1, \a_2\}$ be the simple system of $\mathring{\Phi}$, such that $\a_2$ is a short root. 
Then define $$\Psi=\{\pm\a_2+n\delta : n\in\mathbb{Z}\}\cup\{\pm\theta+n\delta: n\in\mathbb{Z}\},$$ where $\theta$ is the long root of $\mathring{\Phi}$.
 Clearly,  $\Psi$ is a closed subroot system of type $\tt A_1^{(1)}\oplus A_1^{(1)}$ but the sum of vector spaces spanned by each component is not direct.
\end{rem}

\vskip 2mm
\noindent
The following Lemma is immediate from the above definitions.
\begin{lem}\label{closedlemma}
 Let $\Phi$ be an irreducible affine root system and let $\Gr(\Phi)$ be its corresponding gradient root system. 
 If $\Psi$ is a closed subroot system of $\Gr(\Phi)$ then the lift $\wh{\Psi}$ is also a closed subroot system of $\Phi$. 
\end{lem}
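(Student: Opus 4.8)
The plan is to reduce the statement to a one-line computation by first recording the clean description of the lift
\[
\wh\Psi=\{\mu\in\Phi:\mu|_{\mathring{\lie h}}\in\Psi\},
\]
that is, $\wh\Psi$ is precisely the intersection with $\Phi$ of the preimage of $\Psi$ under the gradient map $\pi\colon\Phi\to\Gr(\Phi)$, $\pi(\alpha+r\delta)=(\alpha+r\delta)|_{\mathring{\lie h}}=\alpha$. This is immediate from the defining union: an element $\alpha+r\delta$ lies in $\wh\Psi$ exactly when $\alpha+r\delta\in\Phi$ and its gradient $\alpha$ belongs to $\Psi$, and conversely any $\mu\in\Phi$ with $\mu|_{\mathring{\lie h}}\in\Psi$ differs from its gradient by a multiple of $\delta$ and hence sits in the union. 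Since the excerpt already notes that $\wh\Psi$ is a subroot system of $\Phi$, the whole content of the lemma is the verification that $\wh\Psi$ is \emph{closed}.

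The key structural fact is that $\pi$ is additive: as the restriction of linear functionals to $\mathring{\lie h}$, together with $\delta|_{\mathring{\lie h}}=0$, it satisfies $\pi(\mu+\nu)=\pi(\mu)+\pi(\nu)$. So I would take $\mu,\nu\in\wh\Psi$ with $\mu+\nu\in\Phi$ and show $\mu+\nu\in\wh\Psi$. Setting $\alpha=\pi(\mu)$ and $\beta=\pi(\nu)$, the membership criterion gives $\alpha,\beta\in\Psi$, and additivity gives $\pi(\mu+\nu)=\alpha+\beta$. Because $\mu+\nu\in\Phi$, its gradient $\alpha+\beta$ automatically lies in $\Gr(\Phi)$ by the definition of the gradient root system. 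At this point the hypothesis does all the work: $\alpha,\beta\in\Psi$ with $\alpha+\beta\in\Gr(\Phi)$ and $\Psi$ closed in $\Gr(\Phi)$ force $\alpha+\beta\in\Psi$. Hence $\mu+\nu$ is an element of $\Phi$ whose gradient lies in $\Psi$, which is exactly the criterion for $\mu+\nu\in\wh\Psi$. This proves closedness and completes the lemma.

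I do not expect a genuine obstacle: the argument is a bookkeeping unwinding of the definitions, and the only delicate point is ensuring that the closedness hypothesis for $\Psi$ is applicable, i.e.\ that $\alpha+\beta$ really is a gradient root, which is handed to us for free by the assumption $\mu+\nu\in\Phi$. It is instructive to contrast this with the reverse passage, from a closed subroot system of $\Phi$ down to its gradient, which genuinely fails in the twisted case (Proposition~\ref{twistedgradient}): there one cannot guarantee that a sum $\alpha+\beta\in\Gr(\Phi)$ of two gradients lifts back to a root of $\Phi$. That asymmetry is exactly why the lift direction is trivial while the gradient direction requires the case-by-case analysis carried out later in the paper.
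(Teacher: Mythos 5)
Your proof is correct and is precisely the definition-unwinding argument the paper has in mind: the paper states this lemma without proof ("immediate from the above definitions"), and your verification — identifying $\wh\Psi$ as $\pi^{-1}(\Psi)\cap\Phi$ under the additive gradient map and applying closedness of $\Psi$ in $\Gr(\Phi)$ to $\pi(\mu+\nu)=\pi(\mu)+\pi(\nu)$ — is exactly that immediate argument, including the correct observation that the asymmetry with Proposition \ref{twistedgradient} is what makes the reverse direction nontrivial.
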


\subsection{}

We make the following conventions throughout this paper $\tt B_1=C_1=D_1=A_1$, $\tt B_2=C_2$, $\tt D_2=A_1\oplus A_1$, $\tt D_3=A_3$,
$\tt A_1^{(1)}=B_1^{(1)}=C_1^{(1)}$, $\tt B_2^{(1)}=C_2^{(1)}$, 
$\tt D_2^{(1)}=A_1^{(1)}\oplus A_1^{(1)}$, $\tt A_3^{(1)}=D_3^{(1)}$, $\tt A_1^{(2)}=A_1^{(1)}$ and $\tt A_3^{(2)}=D_3^{(2)}$.
We end this section by recalling the list of all maximal closed subroot systems of an irreducible finite crystallographic root system of rank $n$
from \cite[Page 136]{kane}. 

\begin{table}[ht]
\caption{Types of maximal closed subroot systems of irreducible finite root systems}
\centering 
\begin{tabular}{|c|c|c|}
\hline
Type & Reducible & Irreducible \\
\hline 
$\tt A_n$ & $\tt A_r \oplus A_{n-r-1}\; (0 \leq r \leq n-2)$ & $\tt A_{n-1}$  \\[1ex]
$\tt B_n$ & $\tt B_r \oplus D_{n-r}\;\;\;\; (1 \leq r \leq n-2)$ & $\tt B_{n-1}$, $\tt D_n$ \\[1ex]
$\tt C_n$ & $\tt C_r \oplus C_{n-r}\;\;\;\; (1 \leq r \leq n-1)$ & $\tt A_{n-1}$ \\[1ex]
$\tt D_n$ & $\tt D_r \oplus D_{n-r}\;\;\; (2 \leq r \leq n-2)$ & $\tt A_{n-1}$, $\tt D_{n-1}$\\[1ex]
$\tt E_6$ & $\tt A_5 \oplus A_{1}$, $\tt A_2 \oplus A_{2} \oplus A_2$ & $\tt D_{5}$ \\[1ex]
$\tt E_7$ & $\tt A_5 \oplus A_{2}$, $\tt A_1 \oplus D_{6} $ & $\tt E_{6}$, $\tt A_7$ \\[1ex]
$\tt E_8$ & $\tt A_1 \oplus E_{7}$, $\tt E_6 \oplus A_{2}, A_4 \oplus A_{4} $ & $\tt D_{8}$, $\tt A_8$\\ [1ex]
$\tt F_4$ & $\tt A_2 \oplus A_{2}$, $\tt C_3 \oplus A_1 $ & $\tt B_{4}$ \\[1ex]
$\tt G_2$ & $\tt A_1 \oplus A_{1}$ & $\tt A_{2}$ \\[1ex]

\hline
\end{tabular}
\label{mrs3}
\end{table}


\section{Characterization of closed subroot systems}\label{characterization}
We will closely follow the arguments in \cite{Dyer} (see also \cite{DyerTAMS}) to complete the classification of maximal closed subroot systems of affine root systems. 
The authors of \cite{Dyer} considered only the untwisted affine root systems or
more generally considered real root systems of loop algebras of Kac--Moody algebras  in \cite{Dyer}. Here in this paper we will deal with both untwisted and twisted affine root systems.
We leave out the proofs of most of the results presented in this section as it closely follows the arguments of \cite{Dyer}.

\subsection{}\label{dyerbasics}
Recall that $\Phi$ is the set of real roots of the indecomposable affine Kac--Moody Lie algebra $\lie g$ defined in Section \ref{defaffine}. 
Let $\Psi$ be a subroot system of $\Phi.$
Define $$Z_\alpha(\Psi)=\left\{r : \alpha+r\delta\in \Psi \right\}, \ \text{for} \ \alpha\in \mathrm{Gr}(\Psi).$$
It is easy to see that
$\Psi=\left\{\alpha+r\delta: \alpha\in \Gr(\Psi), r\in Z_\alpha(\Psi)\right\}.$ 
We immediately have (see, Lemma 8 in \cite{Dyer}) 
\begin{equation}\label{zsets}
 Z_{\beta}(\Psi)-\langle\beta,\alpha^\vee \rangle Z_{\alpha}(\Psi)\subseteq Z_{\textbf{s}_{\alpha}(\beta)}(\Psi),\ \text{for all $\alpha, \beta\in\mathrm{Gr}(\Psi).$}
\end{equation}

\begin{lem}[\cite{Dyer}, Lemma 13]\label{extn}
Let $\Phi$ be an irreducible affine root system and
let $\Psi$ be a subroot system of $\Phi$ and assume that $\Gr(\Psi)$ is reduced.
 Let $\Gamma$ be a simple system of $\Gr(\Psi)$ and let $p:\Gamma\to \mathbb{Z}$ be an arbitrary function. Then there exists a unique 
 $\mathbb{Z}$--linear extension $p$ to $\Gr(\Psi)$, 
 which we denote again by $p$ for simplicity, 
 $p:\Gr(\Psi)\to \mathbb{Z}$ given by $\alpha\mapsto p_\alpha$ satisfying
\begin{equation}\label{palpha}
 p_{\beta}-\langle\beta,\alpha^\vee \rangle p_{\alpha}=p_{\textbf{s}_{\alpha}(\beta)}
\end{equation}
for all $\alpha, \beta\in \Gr(\Psi).$                                                                  
\end{lem}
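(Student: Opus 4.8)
The plan is to observe that the desired identity \eqref{palpha} is nothing but the assertion of $\mathbb{Z}$-linearity applied to the reflection formula $\textbf{s}_\alpha(\beta)=\beta-\langle\beta,\alpha^\vee\rangle\alpha$ from Section~\ref{Weylgroup}. The whole lemma then collapses to the standard structural fact that a simple system of a reduced crystallographic root system is a $\mathbb{Z}$-basis of its root lattice, so I would not try to build $p$ by any inductive or reflection-by-reflection procedure at all.

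First I would record the relevant facts about $\Gr(\Psi)$. Since $\Gr(\Psi)$ is reduced with simple system $\Gamma$, the set $\Gamma$ is linearly independent and $\mathbb{Z}\Gamma=\mathbb{Z}\Gr(\Psi)=:Q$; that is, every $\beta\in\Gr(\Psi)$ admits a unique expression $\beta=\sum_{\gamma\in\Gamma}c_{\gamma}(\beta)\,\gamma$ with all $c_\gamma(\beta)\in\mathbb{Z}$. I would then define the extension by $p_\beta:=\sum_{\gamma\in\Gamma}c_\gamma(\beta)\,p_\gamma$. This is manifestly integer-valued on $\Gr(\Psi)$, it agrees with the given data on $\Gamma$, and it is the unique $\mathbb{Z}$-linear map $Q\to\mathbb{Z}$ doing so, since a $\mathbb{Z}$-linear map is determined by its values on a basis. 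This settles both existence and uniqueness of the $\mathbb{Z}$-linear extension at once.

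It then remains only to verify \eqref{palpha}. Fix $\alpha,\beta\in\Gr(\Psi)$. Because $\Gr(\Psi)$ is a subroot system of $\Gr(\Phi)$ it is stable under its own reflections, so $\textbf{s}_\alpha(\beta)\in\Gr(\Psi)$ and the quantity $p_{\textbf{s}_\alpha(\beta)}$ is defined. Using $\textbf{s}_\alpha(\beta)=\beta-\langle\beta,\alpha^\vee\rangle\alpha$ together with the integrality of the Cartan number $\langle\beta,\alpha^\vee\rangle$, the $\mathbb{Z}$-linearity of $p$ immediately gives $p_{\textbf{s}_\alpha(\beta)}=p_\beta-\langle\beta,\alpha^\vee\rangle\,p_\alpha$, which is exactly \eqref{palpha}. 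I expect the only genuinely delicate point to be conceptual rather than computational: one might instead try to \emph{define} $p$ by propagating \eqref{palpha} outward from $\Gamma$ along chains of reflections, but that route forces a proof that the resulting value is independent of the chain chosen, a consistency condition entangled with the braid relations in $W_{\Gr(\Psi)}$. Constructing $p$ globally via the $\mathbb{Z}$-basis first, and only afterwards checking \eqref{palpha}, sidesteps this obstacle entirely. Finally, the reducedness hypothesis is precisely what makes $\Gamma$ a genuine $\mathbb{Z}$-basis of $Q$, which is why the non-reduced $\tt BC$-type gradient systems (occurring for $\tt A_{2n}^{(2)}$) are, of necessity, treated separately.
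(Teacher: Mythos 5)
Your proof is correct and is the intended argument: the paper gives no proof of this lemma at all (it is quoted as Lemma 13 of \cite{Dyer}, with the remark that proofs in this section are omitted), and the standard proof underlying that citation is exactly yours — a simple system of the reduced finite root system $\Gr(\Psi)$ is a $\mathbb{Z}$-basis of its root lattice, so the $\mathbb{Z}$-linear extension exists and is unique, and (\ref{palpha}) follows from linearity applied to $\textbf{s}_{\alpha}(\beta)=\beta-\langle\beta,\alpha^\vee\rangle\alpha$ with integral Cartan numbers. One caveat on your closing remark: even in the non-reduced $\tt BC$ case the simple system is still a $\mathbb{Z}$-basis of the root lattice, so a unique linear extension would still exist; what actually fails there, and what Proposition \ref{keypropositionnon2A2n} has to work around, is that long roots are not Weyl-conjugate to simple roots, so the linearly extended values need not lie in the cosets $Z_\alpha(\Psi)$.
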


\subsection{}\label{pexis}
The following proposition is very crucial.
\begin{prop}\label{keypropositionnon2A2n}
Let $\Phi$ be an irreducible affine root system and
let $\Psi$ be a subroot system of $\Phi$. Then there exists a function 
$p^\Psi:\Gr(\Psi)\to \mathbb{Z}, \a\mapsto p_\a^{\Psi},$ and non-negative integers $n_\a^{\Psi}$ for each $\a\in\Gr(\Psi)$
such that $Z_\a(\Psi)=p_\a^{\Psi}+n_\a^{\Psi}\mathbb{Z}$. Moreover
the function $p^\Psi$ is $\mathbb{Z}-$linear if $\Gr(\Psi)$ is reduced.
\end{prop}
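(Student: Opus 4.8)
The plan is to prove the proposition by reducing it to the reduced case (where Lemma~\ref{extn} applies) and handling the non-reduced case separately. The key observation is that for each fixed $\a\in\Gr(\Psi)$, the set $Z_\a(\Psi)=\{r:\a+r\delta\in\Psi\}$ is a coset of a subgroup of $\bz$, hence of the form $p_\a^{\Psi}+n_\a^{\Psi}\bz$ for some $p_\a^{\Psi}\in\bz$ and $n_\a^{\Psi}\in\bz_+$.

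\emph{First} I would establish this coset structure. Fix $\a\in\Gr(\Psi)$ and suppose $Z_\a(\Psi)$ is nonempty. Applying the reflection identity \eqref{zsets} with $\beta=\a$, and noting $\langle\a,\a^\vee\rangle=2$ and $\textbf{s}_\a(\a)=-\a$, gives $Z_\a(\Psi)-2Z_\a(\Psi)\subseteq Z_{-\a}(\Psi)$; since $\Psi$ is a subroot system it is symmetric, so $Z_{-\a}(\Psi)=-Z_\a(\Psi)$. Iterating \eqref{zsets} and combining these containments, one checks that differences of elements of $Z_\a(\Psi)$ generate a subgroup $n_\a^{\Psi}\bz$ of $\bz$ and that $Z_\a(\Psi)$ is stable under translation by this subgroup, so $Z_\a(\Psi)=p_\a^{\Psi}+n_\a^{\Psi}\bz$ for any chosen base point $p_\a^{\Psi}$. (If $\a\in\Gr(\Psi)$ then $Z_\a(\Psi)\neq\emptyset$ by definition of the gradient root system, so every $\a$ yields such a coset.)

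\emph{Next}, assume $\Gr(\Psi)$ is reduced. The content to extract is the $\bz$-linearity of $\a\mapsto p_\a^{\Psi}$. Choose a simple system $\Gamma$ of $\Gr(\Psi)$ and set $p:\Gamma\to\bz$ by picking any base point $p_\a^{\Psi}\in Z_\a(\Psi)$ for $\a\in\Gamma$. Lemma~\ref{extn} then produces a unique $\bz$-linear extension $p:\Gr(\Psi)\to\bz$ satisfying \eqref{palpha}. The remaining task is to verify that this linear extension genuinely lands in the correct coset, i.e. $p_\a^{\Psi}\in Z_\a(\Psi)$ for \emph{every} $\a\in\Gr(\Psi)$, not just the simple ones. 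This follows by induction on the height of $\a$ (expressed via the $\Gamma$-action of the Weyl group $W_{\Gr(\Psi)}$): writing $\a=\textbf{s}_{\gamma}(\beta)$ with $\beta$ of smaller height and $\gamma\in\Gamma$, the linearity relation \eqref{palpha} together with the containment \eqref{zsets}---which guarantees $p_\beta^{\Psi}-\langle\beta,\gamma^\vee\rangle p_\gamma^{\Psi}\in Z_{\textbf{s}_\gamma(\beta)}(\Psi)$ whenever $p_\beta^{\Psi}\in Z_\beta(\Psi)$ and $p_\gamma^{\Psi}\in Z_\gamma(\Psi)$---shows the linear value $p_\a^{\Psi}$ lies in $Z_\a(\Psi)$, so it is a legitimate base point. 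This simultaneously exhibits the required linear $p^{\Psi}$ and confirms the coset decomposition with this choice of base points.

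\emph{The main obstacle} I anticipate is the reduced hypothesis interacting with \eqref{palpha}: Lemma~\ref{extn} asserts $\bz$-linearity of $p$ as a formal consequence of the reflection relations, but one must check the relations \eqref{palpha} are actually \emph{consistent}---that different reduced-word expressions for the same $\a$ give the same value---which is exactly what Lemma~\ref{extn} packages, so invoking it cleanly is the crucial step. The non-reduced case (type $\tt A_{2n}^{(2)}$, where $\Gr(\Phi)$ contains $\tt BC$-type subsystems with $\a$ and $2\a$ both roots) is excluded from the linearity claim, and there the coset structure from the first paragraph still holds for each $\a$ individually; only the simultaneous $\bz$-linearity may fail, which is consistent with the proposition's phrasing (linearity asserted \emph{only} when $\Gr(\Psi)$ is reduced). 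I would flag that the proposition's statement for $\tt A_{2n}^{(2)}$ therefore requires only the first paragraph, and that the compatibility between $Z_\a$ and $Z_{2\a}$ in the non-reduced situation is deferred to the later case-by-case analysis rather than proved here.
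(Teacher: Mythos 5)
Your proposal is correct, and in the reduced case it is essentially the paper's own proof: arbitrary base points on a simple system, the unique $\mathbb{Z}$--linear extension from Lemma \ref{extn}, membership $p_\alpha^\Psi\in Z_\alpha(\Psi)$ for all roots via Weyl conjugacy to simple roots (your height induction is the same argument in different words), and then the subgroup criterion ($0\in S$, $S=-S$, $S+2S\subseteq S$ forces $S$ to be a subgroup of $\mathbb{Z}$, which is the Lemma~22 argument of \cite{Dyer}) applied to the shifted sets. Where you genuinely diverge is the non-reduced ($\tt BC$-type) case. You observe that the statement asserts nothing about $p^\Psi$ there beyond its being a choice of base points, so the per-root argument --- \eqref{zsets} with $\beta=\alpha$ gives $Z_\alpha(\Psi)-2Z_\alpha(\Psi)\subseteq Z_{-\alpha}(\Psi)=-Z_\alpha(\Psi)$, whence $Z_\alpha(\Psi)-p$ satisfies the three subgroup conditions for \emph{any} $p\in Z_\alpha(\Psi)$ --- already finishes the proof uniformly for every root, reduced or not. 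The paper instead works harder in this case: it builds a base-point function that is $\mathbb{Z}$--linear on $\Gr(\Psi)_s\cup\Gr(\Psi)_{\mathrm{im}}$ and odd-symmetric ($p_{-\alpha}^\Psi=-p_\alpha^\Psi$) on the long roots, and re-verifies the \eqref{zsets}-type containments for the correspondingly shifted sets. That extra structure is not part of the statement, but the paper quietly relies on it later (for instance when a maximal closed $\Psi$ with $\Gr(\Psi)=\Gr(\Phi)$ in type $\tt A_{2n}^{(2)}$ is put in the form $\Psi(p,n_s)$ with $p$ linear on short and intermediate roots in Theorem \ref{thm2A2n}); with your leaner proof, that partial linearity would have to be re-derived at the point of use, exactly as you flag. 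One cosmetic point: for short roots in type $\tt A_{2n}^{(2)}$ the base points lie in $\tfrac{1}{2}+\mathbb{Z}$ rather than $\mathbb{Z}$, so your phrase ``$p_\alpha^\Psi\in\mathbb{Z}$'' inherits an imprecision already present in the proposition's statement (the paper's own proof works with $\tfrac{1}{2}\mathbb{Z}$).
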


\begin{pf}
We will first assume that $\Gr(\Psi)$ is reduced.
 Let $\Gamma$ be a simple system of $\Gr(\Psi)$ and choose arbitrary elements $p_\alpha^\Psi\in Z_{\alpha}(\Psi)$ for each $\alpha\in \Gamma.$ 
 Define a function $p^\Psi:\Gamma\to \mathbb{Z}$ given by $\alpha\mapsto p_\alpha^\Psi$. Now,  fix the unique $\mathbb{Z}$--linear extension of $p^\Psi$ to $\Gr(\Psi)$ as in Lemma \ref{extn}.
 Define
 $$\text{$Z_\alpha'(\Psi)=Z_\alpha(\Psi)-p_\alpha^\Psi=\{r-p_\a^\Psi: r\in Z_\a(\Psi)\}$ for $\alpha\in \Gr(\Psi)$.}$$
 Since each root of $\Gr(\Psi)$ is conjugate to some simple root by an element in  $W_{\Gr(\Psi)}$, we get
 $p_\alpha^\Psi\in Z_\alpha(\Psi)$, for all $\alpha\in \Gr(\Psi)$ and $$ Z_{\beta}'(\Psi)-\langle\beta,\alpha^\vee \rangle Z_{\alpha}'(\Psi)\subseteq Z_{\textbf{s}_{\alpha}(\beta)}'(\Psi),\ \text{for all $\alpha, \beta\in\Gr(\Psi)$}, $$
 using the equation (\ref{zsets}) and (\ref{palpha}).
 One can easily see that $Z_\alpha'(\Psi)$ are subgroups for all $\alpha\in\Gr(\Psi)$, since  $0\in Z_\alpha'(\Psi)$,
 $Z_\alpha'(\Psi)=Z_{-\alpha}'(\Psi)$ and $Z_\alpha'(\Psi)+2Z_\alpha'(\Psi)=Z_\alpha'(\Psi)$
 for all $\alpha\in\Gr(\Psi)$ (proof of this fact is
 same as the proof of Lemma 22 in \cite{Dyer}). Hence there exists $n_\alpha^\Psi\in \mathbb{Z}_+$ for each $\alpha\in\Gr(\Psi)$ such that
 $Z_\alpha'(\Psi)=n_\alpha^\Psi \mathbb{Z}$. This completes the proof in this case. 
 
 \vskip 2mm
We are now left with the case $\Gr(\Psi)$ is non-reduced. Since the sets $Z_\a(\Psi)$ depends only on the individual irreducible components of $\Psi$,
we can assume that $\Psi$ is irreducible. In particular, $\Gr(\Psi)$ is of type $\text{$\tt BC_r$}$ for some $r\ge 1.$ So, we have
$$\mathrm{Gr}(\Psi)=\big\{\pm\epsilon_i, \pm2\epsilon_i, \pm\epsilon_i\pm\epsilon_j: 1\leq i\neq j\leq r\big\}$$ if $r\ge 2$  or 
$\mathrm{Gr}(\Psi)=\{\pm\epsilon_1, \pm2\epsilon_1\}$ if $r=1$ (see \cite[Page no. 547]{carter}).
Write $\Gr(\Psi)_s=\{\pm\epsilon_i : 1\le i\le r\}$, $\Gr(\Psi)_{\mathrm{im}}=\{\pm\epsilon_i\pm \epsilon_j : 1\le i\neq j\le r\}$ and 
 $\Gr(\Psi)_\ell=\{\pm2\epsilon_i : 1\le i\le r\} $. By convention, we have $\Gr(\Psi)_{\mathrm{im}}=\emptyset$ if $r=1.$
Let $\Gamma=\{\a_1=\epsilon_1-\epsilon_2,\cdots, \a_{r-1}=\epsilon_{r-1}-\epsilon_{r}, \a_r=\epsilon_{r}\}$ be the simple system of $\Gr(\Psi)$ and here by convention we have
$\Gamma=\{\epsilon_1\}$ when $r=1$.
Choose arbitrary elements $p_\alpha^\Psi\in Z_{\alpha}(\Psi)$ for each $\alpha\in \Gamma$ and 
define the function $p^\Psi:\Gamma\to \tfrac{1}{2}\mathbb{Z}$, $\a\mapsto p_\a^\Psi$ as before. 
Fix the unique $\mathbb{Z}-$linear extension of $\overline{p^\Psi}$ to $\Gr(\Psi)$ as in Lemma \ref{extn}.
Since the long roots of $\Gr(\Psi)$ are not Weyl group conjugate to simple roots, we will not have $\overline{p_\a^\Psi}\in Z_\a(\Psi)$ for
all long roots $\a\in\Gr(\Psi)_\ell$ as before in reduced case. 
But this is the only obstruction that we have in this case. 
To overcome this issue, first fix a $\mathbb{Z}-$linear extension of $p^\Psi:\Gamma \to \frac{1}{2}\mathbb{Z}$ to 
$p^\Psi:\mathrm{Gr}(\Psi)_s \cup \mathrm{Gr}(\Psi)_{\mathrm{im}}\to \frac{1}{2}\mathbb{Z}$ and choose $p^\Psi_{\a}\in Z_{\a}(\Psi)$ arbitrarily for the positive roots of $\Gr(\Psi)_\ell$. 
Then we see that $-p^\Psi_{\a}\in Z_{-\a}(\Psi)$ for $\a\in \Gr(\Psi)_\ell$. 
So, we take
$p^\Psi_{-\a}:=-p^\Psi_{\a}$
for the negative roots of $\Gr(\Psi)_\ell$ and define a natural extension $$\text{$p^\Psi:\Gr(\Psi)\to \frac{1}{2}\mathbb{Z}$ of
$p^\Psi:\Gr(\Psi)_s \cup \Gr(\Psi)_{\mathrm{im}}\to \frac{1}{2}\mathbb{Z}$}$$
by assigning these arbitrarily chosen $p^\Psi_\a$ to $\a$ for each long root $\a$. 
Now,  note that this new extension $p^\Psi:\Gr(\Psi)\to \frac{1}{2}\mathbb{Z}$
is no longer $\mathbb{Z}-$linear map. 
As before, we define $Z_\a'(\Psi)=Z_\a(\Psi)-p^\Psi_\a$ for all $\a\in \Gr(\Psi)$. 
Then by definition of $Z_\a'(\Psi)$, we have $0\in Z_\a'(\Psi)$ for all $\a\in \Gr(\Psi)$.
Note that $Z_\a(\Psi)$ satisfies the equation (\ref{zsets}), which implies that  
$$ Z_{\beta}'(\Psi)-\langle\beta,\alpha^\vee \rangle Z_{\alpha}'(\Psi)\subseteq Z_{\textbf{s}_{\alpha}(\beta)}'(\Psi)+(p^\Psi_{\bold s_\a(\beta)}-(p^\Psi_\beta-\langle\beta,\alpha^\vee \rangle p^\Psi_\a)),\ \text{for all $\alpha, \beta\in\mathrm{Gr}(\Psi).$} $$
Since $p^\Psi_\a=-p^\Psi_\a$ for all $\a\in \Gr(\Psi)$, we get $Z_{\alpha}'(\Psi)-2Z_{\alpha}'(\Psi)\subseteq Z_{-\alpha}'(\Psi)$ for all $\a\in \Gr(\Psi).$ 
This implies $Z_{-\alpha}'(\Psi)=Z_{\alpha}'(\Psi)$ and $Z_{\alpha}'(\Psi)+2Z_{\alpha}'(\Psi)=Z_{\alpha}'(\Psi)$ for all $\a\in \Gr(\Psi).$
Precisely this fact and 
$0\in Z_\a'(\Psi), \a\in \Gr(\Psi)$ used in the proof of \cite[Lemma 22]{Dyer} to prove that $Z_\a'(\Psi)$ is a subgroup of $\mathbb{Z}$ for all $\a\in \Gr(\Psi)$.
Note that for $\a,\beta\in \Gr(\Psi)$ we have
$\textbf{s}_{\a+p^\Psi_\a\delta}(\beta+p^\Psi_\beta\delta)=\bold s_\a(\beta)+(p^\Psi_\beta-\langle\beta,\alpha^\vee \rangle p^\Psi_\a)\delta,\
\text{which implies that  $p^\Psi_\beta-\langle\beta,\alpha^\vee \rangle p^\Psi_\a\in Z_{\textbf{s}_{\alpha}(\beta)}(\Psi$).}$
This implies that  $(p^\Psi_{\bold s_\a(\beta)}-(p^\Psi_\beta-\langle\beta,\alpha^\vee \rangle p^\Psi_\a))$ must be in $Z_{\bold s_{\a}(\beta)}'(\Psi)$ 
for all $\a,\beta\in \Gr(\Psi)$. Hence, we have 
$$ Z_{\beta}'(\Psi)-\langle\beta,\alpha^\vee \rangle Z_{\alpha}'(\Psi)\subseteq Z_{\textbf{s}_{\alpha}(\beta)}'(\Psi)\ \text{for all $\alpha, \beta\in\mathrm{Gr}(\Psi).$}$$
as before.
Since the sets $Z_\a'(\Psi)$ are subgroups of $\mathbb{Z}$, there exists 
$n^\Psi_\a\in \mathbb{Z}_+$ such that $Z_\a(\Psi)=p^\Psi_\a+n^\Psi_\a\mathbb{Z}$ for all $\a\in \Gr(\Psi)$. This completes the proof in this case.
\end{pf}

\subsection{}
From the Proposition \ref{keypropositionnon2A2n}, it is clear that a subroot system $\Psi$ of $\Phi$ is completely determined by 
the gradient subroot system $\Gr(\Psi)$ and the cosets
$Z_\a(\Psi)=p_\a^{\Psi}+n_\a^{\Psi}\mathbb{Z}$, $\alpha\in \Gr(\Psi)$. 
Naturally if $\Psi$ is closed in $\Phi$, then the ``closedness property of $\Psi$ in $\Phi$" will give us some more restrictions on the gradient subroot systems and the cosets
$Z_\a(\Psi)$. We will completely characterize these restrictions on the gradient subroot systems $\Gr(\Psi)$ and the cosets
$Z_\a(\Psi)$ corresponding to ``closedness property of $\Psi$ in $\Phi$" in Proposition \ref{untwisted01}, \ref{twisted01}, \ref{2A2n01}, \ref{untwisted02}, \ref{twistedgradient} 
and use this information to determine all possible maximal closed subroot systems $\Psi$ of $\Phi$. 
The following lemma tells us about the relationships between  the integers $n_\a^\Psi$. 
Proof of this lemma closely follows the arguments of \cite[Lemma 14]{Dyer} and only uses the fact that 
$$ Z_{\beta}'(\Psi)-\langle\beta,\alpha^\vee \rangle Z_{\alpha}'(\Psi)\subseteq Z_{\textbf{s}_{\alpha}(\beta)}'(\Psi)\ \text{for all $\alpha, \beta\in\mathrm{Gr}(\Phi),$}$$
so we will omit the proof.
  \begin{lem}\label{nalpha}[Lemma 14, \cite{Dyer}]
  Let $\Psi$ be a subroot system of $\Phi$ and let $n_\a^\Psi$ be defined as above.
We have $\langle\beta, \alpha^\vee \rangle n_{\alpha}^\Psi\mathbb{Z}\subseteq n_{\beta}^\Psi\mathbb{Z}$ for all $\alpha,\beta\in \Gr(\Psi)$,
and $n_{\alpha}^\Psi=n_{\beta}^\Psi$ for all $\alpha,\beta\in \Gr(\Psi)$ with $\beta\in W_{\Gr(\Psi)}\alpha$. In particular if $n_{\alpha}^\Psi=0$ for some $\a\in \Gr(\Psi)$
then $n_{\beta}^\Psi=0$ for all $\beta\in W_{\Gr(\Psi)}\alpha$. 
 \end{lem}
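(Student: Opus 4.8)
The plan is to derive everything from the single inclusion
$$Z_{\beta}'(\Psi)-\langle\beta,\alpha^\vee \rangle Z_{\alpha}'(\Psi)\subseteq Z_{\textbf{s}_{\alpha}(\beta)}'(\Psi),\qquad \alpha,\beta\in\Gr(\Psi),$$
established (uniformly in the reduced and non-reduced cases) in the proof of Proposition \ref{keypropositionnon2A2n}, together with the two structural facts that each $Z_\alpha'(\Psi)=n_\alpha^\Psi\mathbb{Z}$ is a subgroup of $\mathbb{Z}$ containing $0$, and that a subgroup of $\mathbb{Z}$ has a unique non-negative generator. The order I would follow is: first prove the Weyl-invariance $n_\alpha^\Psi=n_\beta^\Psi$ for $\beta\in W_{\Gr(\Psi)}\alpha$, then read off the divisibility statement, and finally note the last assertion as an immediate corollary.

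For the Weyl-invariance, since $W_{\Gr(\Psi)}$ is generated by the reflections $\textbf{s}_\gamma$, $\gamma\in\Gr(\Psi)$, it suffices (by induction on a reduced word) to show $n_{\textbf{s}_\gamma(\beta)}^\Psi=n_\beta^\Psi$ for a single such reflection. Substituting the element $0\in Z_\gamma'(\Psi)$ into the inclusion with $\gamma$ in the role of $\alpha$ gives $Z_\beta'(\Psi)\subseteq Z_{\textbf{s}_\gamma(\beta)}'(\Psi)$, that is $n_\beta^\Psi\mathbb{Z}\subseteq n_{\textbf{s}_\gamma(\beta)}^\Psi\mathbb{Z}$. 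Applying the same inclusion with $\textbf{s}_\gamma(\beta)$ in place of $\beta$ and using $\textbf{s}_\gamma^2=\mathrm{id}$ yields the reverse containment $n_{\textbf{s}_\gamma(\beta)}^\Psi\mathbb{Z}\subseteq n_\beta^\Psi\mathbb{Z}$. Hence $n_\beta^\Psi\mathbb{Z}=n_{\textbf{s}_\gamma(\beta)}^\Psi\mathbb{Z}$, and uniqueness of the non-negative generator forces $n_\beta^\Psi=n_{\textbf{s}_\gamma(\beta)}^\Psi$.

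For the divisibility $\langle\beta,\alpha^\vee\rangle n_\alpha^\Psi\mathbb{Z}\subseteq n_\beta^\Psi\mathbb{Z}$, I would instead substitute $0\in Z_\beta'(\Psi)$ into the inclusion, obtaining $-\langle\beta,\alpha^\vee\rangle Z_\alpha'(\Psi)\subseteq Z_{\textbf{s}_\alpha(\beta)}'(\Psi)$, i.e. $\langle\beta,\alpha^\vee\rangle n_\alpha^\Psi\mathbb{Z}\subseteq n_{\textbf{s}_\alpha(\beta)}^\Psi\mathbb{Z}$ (using $-\mathbb{Z}=\mathbb{Z}$). Since $\textbf{s}_\alpha(\beta)\in W_{\Gr(\Psi)}\beta$, the Weyl-invariance just proved gives $n_{\textbf{s}_\alpha(\beta)}^\Psi=n_\beta^\Psi$, and the desired inclusion follows. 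The final claim is then immediate: if $n_\alpha^\Psi=0$ then $n_\beta^\Psi=n_\alpha^\Psi=0$ for every $\beta\in W_{\Gr(\Psi)}\alpha$ by Weyl-invariance.

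The computations here are genuinely routine; the only point requiring care --- and the reason for proving the invariance first --- is that the fundamental inclusion naturally lands in $Z_{\textbf{s}_\alpha(\beta)}'(\Psi)$ rather than in $Z_\beta'(\Psi)$, so one cannot directly compare $n_\alpha^\Psi$ with $n_\beta^\Psi$ without first knowing that $n^\Psi$ is constant on Weyl orbits. One should also confirm that the fundamental inclusion is available in the non-reduced ($\tt BC_r$) case; this is exactly what was arranged at the end of the proof of Proposition \ref{keypropositionnon2A2n}, so no separate argument is needed.
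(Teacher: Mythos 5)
Your proof is correct, and it follows exactly the route the paper intends: the paper omits the proof of this lemma, remarking only that it follows \cite[Lemma 14]{Dyer} and uses nothing beyond the inclusion $Z_{\beta}'(\Psi)-\langle\beta,\alpha^\vee \rangle Z_{\alpha}'(\Psi)\subseteq Z_{\textbf{s}_{\alpha}(\beta)}'(\Psi)$, which is precisely the single ingredient you use. Your two substitutions ($0\in Z_\gamma'(\Psi)$ for Weyl-invariance via $\textbf{s}_\gamma^2=\mathrm{id}$, then $0\in Z_\beta'(\Psi)$ for divisibility, transported back along the orbit) supply a clean and complete version of the argument the paper defers to, including the needed check that the inclusion holds in the non-reduced $\tt BC_r$ case.
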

Note that when $n_{\beta}^\Psi\neq 0$, we have $\langle\beta, \alpha^\vee \rangle n_{\alpha}^\Psi\mathbb{Z}\subseteq n_{\beta}^\Psi\mathbb{Z}$ if and only if 
$n_{\beta}^\Psi$ divides $\langle\beta, \alpha^\vee \rangle n_{\alpha}^\Psi$.

\subsection{}\label{defns}
Suppose $\Gr(\Psi)$ is reducible say $\Gr(\Psi)=\Psi_1\oplus \cdots \oplus\Psi_k$, then by Lemma \ref{nalpha}  for each $1\le i\le k$ we have
$n_\alpha^\Psi=n_\beta^\Psi$ for all $\alpha, \beta \in (\Psi_i)_\ell$ (resp. for all $\alpha, \beta \in (\Psi_i)_s$ and for all $\alpha, \beta \in (\Psi_i)_{\mathrm{im}}$), 
denote this unique number by $n_\ell^{\Psi_i}(\Psi)$ (resp. $n_s^{\Psi_i}(\Psi)$ and $n_{\mathrm{im}}^{\Psi_i}(\Psi)$). 
We drop $\Psi$ in $n_\ell^{\Psi_i}(\Psi)$ (resp. in $n_s^{\Psi_i}(\Psi)$ and in $n_{\mathrm{im}}^{\Psi_i}(\Psi)$)
and simply denote it by $n_\ell^{\Psi_i}$ (resp. $n_s^{\Psi_i}$ and $n_{\mathrm{im}}^{\Psi_i}$)
if the underlying subroot system $\Psi$ is understood.
Note that long roots (or short roots or intermediate roots) of $\Gr(\Psi)$ from the different components are not conjugate under the action of $W_{\Gr(\Psi)}$.
In particular $n_\ell^{\Psi_1}, \cdots, n_\ell^{\Psi_k}$ 
(resp. $n_s^{\Psi_1}, \cdots, n_s^{\Psi_k}$ or $n_{\mathrm{im}}^{\Psi_1}, \cdots, n_{\mathrm{im}}^{\Psi_k}$) may not be equal.
If $\Gr(\Psi)$ is irreducible, we denote $n_\ell^{\Gr(\Psi)}$ and (resp. $n_{\mathrm{im}}^{\Gr(\Psi)}$)
 by $n_\ell^\Psi$ and $n_s^\Psi$  (resp. $n_{\mathrm{im}}^\Psi$) or simply by $n_\ell$ and $n_s$ (resp. $n_{\mathrm{im}}$) if the underlying subroot system $\Psi$ is understood.
By convention, we have $n_s=n_{\mathrm{im}}$ in case $\Gr(\Psi)$ is of type $\tt BC_1.$
Sometimes we will denote $n_s$ as $n_\Psi$ to emphasize its importance.
We also simply denote $Z_\alpha(\Psi)$, $p_\a^\Psi$ and $n_\a^\Psi$ by $Z_\alpha$, $p_\a$, $n_\a$ if the underlying subroot system $\Psi$ is understood.

\subsection{}  The following Lemma
 compares the cosets $Z_\a$ of two subroot systems of $\Phi.$ 
\begin{lem}\label{keylemma}
 Let $\Psi\subseteq \Delta \subseteq \Phi$ be two  subroot systems of  $\Phi.$  
 \begin{enumerate}
  \item Then we have $\Gr(\Psi)\subseteq \Gr(\Delta)$.
  \item  The cosets satisfy $Z_\a(\Psi)\subseteq Z_\a(\Delta)$ for all $\a\in \Gr(\Psi)$ and in particular $n_\a^\Psi\mathbb{Z}\subseteq n_\a^\Delta\mathbb{Z}$  for all $\a\in \Gr(\Psi)$.
  \item If $\Gr(\Psi)= \Gr(\Delta)$ and  $n_\a^\Delta=n_\a^\Psi$ for all $\a\in \Gr(\Delta)$, then we have $\Psi=\Delta$.
 \end{enumerate}
\end{lem}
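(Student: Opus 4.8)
The plan is to reduce all three parts to two facts already available: the reconstruction formula $\Psi=\{\alpha+r\delta:\alpha\in\Gr(\Psi),\ r\in Z_\alpha(\Psi)\}$ from Section~\ref{dyerbasics}, and the coset description $Z_\alpha(\Psi)=p_\alpha^\Psi+n_\alpha^\Psi\mathbb{Z}$ provided by Proposition~\ref{keypropositionnon2A2n}. For part (1) I would argue directly from the definition of the gradient root system: if $\alpha\in\Gr(\Psi)$ then $\alpha+r\delta\in\Psi$ for some $r$, and since $\Psi\subseteq\Delta$ this root lies in $\Delta$, so $\alpha\in\Gr(\Delta)$; hence $\Gr(\Psi)\subseteq\Gr(\Delta)$. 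For the first assertion of part (2), fix $\alpha\in\Gr(\Psi)$ (so that, by (1), $\alpha\in\Gr(\Delta)$ and both cosets are defined); if $r\in Z_\alpha(\Psi)$ then $\alpha+r\delta\in\Psi\subseteq\Delta$, giving $r\in Z_\alpha(\Delta)$, i.e. $Z_\alpha(\Psi)\subseteq Z_\alpha(\Delta)$.

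For the subgroup inclusion in part (2) I would pass to difference sets. Writing $A-A:=\{a-a':a,a'\in A\}$, the coset description of Proposition~\ref{keypropositionnon2A2n} yields $Z_\alpha(\Psi)-Z_\alpha(\Psi)=n_\alpha^\Psi\mathbb{Z}$ and likewise $Z_\alpha(\Delta)-Z_\alpha(\Delta)=n_\alpha^\Delta\mathbb{Z}$. Since $Z_\alpha(\Psi)\subseteq Z_\alpha(\Delta)$ forces $Z_\alpha(\Psi)-Z_\alpha(\Psi)\subseteq Z_\alpha(\Delta)-Z_\alpha(\Delta)$, we get $n_\alpha^\Psi\mathbb{Z}\subseteq n_\alpha^\Delta\mathbb{Z}$ at once (this automatically covers the degenerate case $n_\alpha^\Psi=0$, where $Z_\alpha(\Psi)$ is a singleton and the difference set is $\{0\}$). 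For part (3), the key observation is that two cosets of one and the same subgroup of $\mathbb{Z}$ are either disjoint or equal. Under the hypotheses $\Gr(\Psi)=\Gr(\Delta)$ and $n_\alpha^\Psi=n_\alpha^\Delta=:n_\alpha$ for every $\alpha$, part (2) gives $Z_\alpha(\Psi)\subseteq Z_\alpha(\Delta)$ with both sides cosets of the single group $n_\alpha\mathbb{Z}$; as $Z_\alpha(\Psi)$ is nonempty the two cosets meet, hence coincide, so $Z_\alpha(\Psi)=Z_\alpha(\Delta)$ for all $\alpha$. Substituting this into the reconstruction formula,
$$\Psi=\{\alpha+r\delta:\alpha\in\Gr(\Psi),\ r\in Z_\alpha(\Psi)\}=\{\alpha+r\delta:\alpha\in\Gr(\Delta),\ r\in Z_\alpha(\Delta)\}=\Delta,$$
completing the proof.

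I do not expect a genuine obstacle here, since the argument is purely combinatorial once Proposition~\ref{keypropositionnon2A2n} is invoked. The only points requiring a little care are to use that proposition uniformly so that the reasoning also covers the non-reduced (type $\tt BC_r$) gradient case, where the base points $p_\alpha^\Psi$ may be half-integers while $Z_\alpha(\Psi)$ remains a coset of the integer subgroup $n_\alpha^\Psi\mathbb{Z}$, and to keep track of the degenerate possibility $n_\alpha^\Psi=0$; the difference-set formulation in part (2) and the disjoint-or-equal dichotomy in part (3) handle both of these transparently.
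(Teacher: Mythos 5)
Your proposal is correct and follows essentially the same route as the paper: parts (1) and the first half of (2) directly from the definitions, the subgroup inclusion in (2) from the coset description of Proposition \ref{keypropositionnon2A2n} (your difference-set packaging is just a cosmetic variant of the paper's subtraction of $p_\a^\Psi$), and part (3) by showing $Z_\a(\Psi)=Z_\a(\Delta)$ and invoking the reconstruction formula $\Psi=\{\a+r\delta : \a\in \Gr(\Psi),\ r\in Z_\a(\Psi)\}$.
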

\begin{pf}
 By the definition of gradient, we have $\Gr(\Psi)\subseteq \Gr(\Delta)$ and by the definition of $Z_\a(\Psi)$, we have $Z_\a(\Psi)\subseteq Z_\a(\Delta)$ for all $\a\in \Gr(\Psi)$.
 In particular, we have $$p_\a^\Psi+n_\a^\Psi\mathbb{Z}\subseteq p_\a^\Delta+n_\a^\Delta\mathbb{Z}, \ \text{for $\a\in \Gr(\Psi)$}.$$ This implies
 $(p_\a^\Delta-p_\a^\Psi)\in n_\a^\Delta\mathbb{Z}$ and
 $n_\a^\Psi\mathbb{Z}\subseteq (p_\a^\Delta-p_\a^\Psi)+n_\a^\Delta\mathbb{Z}=n_\a^\Delta\mathbb{Z}.$ This proves the Statement (2).
 Finally for the last part, assume that $\Gr(\Psi)= \Gr(\Delta)$ and  $n_\a^\Delta=n_\a^\Psi$ for all $\a\in \Gr(\Delta)$.
 For $\a\in \Gr(\Delta)$, we have  $(p_\a^\Delta-p_\a^\Psi)\in n_\a^\Delta\mathbb{Z}$, and hence 
 $$p_\a^\Psi+n_\a^\Delta\mathbb{Z}= p_\a^\Delta+n_\a^\Delta\mathbb{Z}.$$ This implies that  $Z_\a(\Psi)=Z_\a(\Delta)$ for all $\a\in \Gr(\Delta)$ since $n_\a^\Delta=n_\a^\Psi$.
 Thus,  we have $\Psi=\Delta$ since $\Psi=\{\a+r\delta : \a\in \Gr(\Psi), r\in Z_\a(\Psi)\}$ and $\Delta=\{\a+r\delta : \a\in \Gr(\Delta), r\in Z_\a(\Delta)\}$.
 This completes the proof.
\end{pf}

We record the following lemma for the future use. 
\begin{lem}\label{nalphaneqzero}
Let $\Phi$ be an irreducible affine root system and
 let $\Psi\le \Phi$ be a closed subroot system with an irreducible gradient subroot system $\Gr(\Psi)$. Then
 $n_\a^\Psi = 0$ for some $\a\in \Gr(\Psi)$ implies that  $n_\beta^\Psi = 0$ for all $\beta\in \Gr(\Psi)$ 
\end{lem}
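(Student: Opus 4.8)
The plan is to play the vanishing hypothesis $n_\alpha^\Psi = 0$ off against the divisibility relation supplied by Lemma \ref{nalpha} and the irreducibility of $\Gr(\Psi)$. The key observation is that a vanishing $n_\gamma^\Psi$ does not propagate forward through divisibility, but instead forces \emph{orthogonality}, and orthogonality is precisely what irreducibility forbids.

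First I would partition $\Gr(\Psi)$ according to whether the associated integer vanishes: set
$$A = \{\gamma\in\Gr(\Psi) : n_\gamma^\Psi = 0\}, \qquad B = \{\gamma\in\Gr(\Psi) : n_\gamma^\Psi \ne 0\}.$$
By hypothesis $A\ne\emptyset$, since it contains the given $\alpha$, and $\Gr(\Psi) = A\sqcup B$. The goal is then exactly to show $B=\emptyset$.

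Next comes the main step: I claim $A$ and $B$ are mutually orthogonal. Take $\gamma\in A$ and $\eta\in B$. Applying Lemma \ref{nalpha} with $\beta=\gamma$ and $\alpha=\eta$ gives $\langle\gamma,\eta^\vee\rangle n_\eta^\Psi\mathbb{Z}\subseteq n_\gamma^\Psi\mathbb{Z} = \{0\}$, whence $\langle\gamma,\eta^\vee\rangle n_\eta^\Psi = 0$. Since $n_\eta^\Psi\ne 0$, this forces $\langle\gamma,\eta^\vee\rangle = 0$, which by $\langle\gamma,\eta^\vee\rangle = \tfrac{2(\gamma,\eta)}{(\eta,\eta)}$ is equivalent to $(\gamma,\eta)=0$. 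Thus every element of $A$ is orthogonal to every element of $B$. Finally I would invoke irreducibility: if $B$ were nonempty, then $\Gr(\Psi)=A\sqcup B$ would exhibit $\Gr(\Psi)$ as a disjoint union of two nonempty mutually orthogonal subsets, contradicting the irreducibility of $\Gr(\Psi)$. Hence $B=\emptyset$, i.e. $n_\beta^\Psi = 0$ for all $\beta\in\Gr(\Psi)$.

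I do not foresee a serious obstacle; the only points requiring care are getting the direction of the divisibility relation right (it is the vanishing side $n_\gamma^\Psi=0$ that must sit on the right of the inclusion to produce the constraint) and recording that $\langle\gamma,\eta^\vee\rangle=0\iff(\gamma,\eta)=0$, so that the resulting orthogonality is symmetric and the partition argument genuinely applies. The argument is uniform in whether $\Gr(\Psi)$ is reduced or of type $\tt BC_r$, since both Lemma \ref{nalpha} and the orthogonal-partition characterization of irreducibility are available in either case.
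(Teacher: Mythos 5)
Your proof is correct and rests on the same two ingredients as the paper's own argument: the divisibility relation $\langle\beta,\alpha^\vee\rangle n_\alpha^\Psi\mathbb{Z}\subseteq n_\beta^\Psi\mathbb{Z}$ from Lemma \ref{nalpha} and the irreducibility of $\Gr(\Psi)$. The only difference is packaging: the paper runs the identical computation along a chain $\beta_1=\alpha,\dots,\beta_r=\beta$ of consecutively non-orthogonal roots (the connectivity form of irreducibility), whereas you take the contrapositive --- nonvanishing next to vanishing forces orthogonality --- and invoke the orthogonal-partition form of irreducibility; the two formulations are equivalent, so both proofs go through.
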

\begin{pf}
Suppose $n_\a^\Psi=0$ for some $\a\in \Gr(\Phi)$. Then, 
since $\Gr(\Psi)$ is irreducible, given any $\beta\in \Gr(\Psi)$ there exists a finite sequence of 
 roots $\beta_1=\a,\cdots,\beta_r=\beta$ such that $(\beta_i, \beta_{i+1})\neq 0$ for all $1\le i\le r-1$.
  Then by Lemma \ref{nalpha}, we have $\langle\beta_i, \beta_{i+1}^\vee \rangle n_{\beta_{i+1}}^\Psi\mathbb{Z}\subseteq n_{\beta_i}^\Psi\mathbb{Z}$ for all $1\le i\le r-1.$
 From this it is clear that $n_{\beta_{1}}^\Psi=0 \implies  n_{\beta_{2}}^\Psi=0 \implies \cdots \implies n_{\beta_{r}}^\Psi=0 $. Thus,  we have $n_\beta^\Psi=0$ for all $\beta\in \Gr(\Phi)$. 
 This completes the proof.

\end{pf}

\subsection{} The following proposition determines the integers $n_\a$ for the closed subroot systems of untwisted affine root systems. 
 \begin{prop}\label{untwisted01} Let $\Phi$ be an irreducible untwisted affine root system.
 \begin{enumerate}
  \item  Suppose $\Psi$ is a closed subroot system of $\Phi$ with 
 an irreducible gradient subroot system $\Gr(\Psi)$, then $n_\alpha=n_\beta$ for all $\alpha, \beta\in \Gr(\Psi).$ Denote this unique number by $n_{\Psi}$.
  \item  Suppose $\Psi$ is a maximal closed subroot system of $\Phi$ with $\Gr(\Psi)=\mathring{\Phi}$, then $n_\Psi$ must be a prime number.
 \end{enumerate}
 \end{prop}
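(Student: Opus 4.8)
The plan is to prove both parts using the machinery established earlier, principally Lemma \ref{nalpha} together with the explicit shape of the untwisted root system recorded in Section \ref{definitionofm}.

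For part (1), I would start from the conclusion of Lemma \ref{nalpha}, which already gives $n_\alpha = n_\beta$ whenever $\beta \in W_{\Gr(\Psi)}\alpha$; so the only thing to establish is equality of $n_\ell$ and $n_s$ across the two root-length classes of an irreducible gradient system. The key input is that in the untwisted case $\mathring{\Phi}$ is obtained from $\mathring{\Phi}$ itself (i.e. $\Gr(\Phi) = \mathring{\Phi}$ with no extra scaling), so for any irreducible non-simply-laced $\Gr(\Psi)$ there is a short root $\alpha$ and a long root $\beta$ with $\langle \beta, \alpha^\vee\rangle = \pm 1$. Feeding this pairing into the divisibility relation $\langle \beta,\alpha^\vee\rangle n_\alpha \mathbb{Z} \subseteq n_\beta\mathbb{Z}$ of Lemma \ref{nalpha} gives $n_\alpha \mathbb{Z} \subseteq n_\beta\mathbb{Z}$, and the reverse pairing $\langle \alpha,\beta^\vee\rangle = \pm 1$ (or the appropriate multiple together with the root-string constraint coming from closedness) gives the opposite inclusion. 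The only genuine subtlety is that a priori $\langle \alpha,\beta^\vee\rangle$ need not be $\pm 1$; here I would invoke closedness of $\Psi$ in $\Phi$ to rule out a strict inclusion, arguing that if $n_\alpha \mathbb{Z} \subsetneq n_\beta \mathbb{Z}$ then one could add two short-root affine roots to produce a long-root affine root lying outside $\Psi$, contradicting closedness. This reduces everything to a finite check over the irreducible non-simply-laced types $\tt B, C, F_4, G_2$.

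For part (2), take $\Psi$ maximal closed with $\Gr(\Psi) = \mathring{\Phi}$, so by part (1) there is a single integer $n := n_\Psi$ with $Z_\alpha(\Psi) = p_\alpha + n\mathbb{Z}$ for all $\alpha$. First I would dispose of $n = 0$: by Lemma \ref{nalphaneqzero} this would force $n_\beta = 0$ for all $\beta$, making each $Z_\alpha$ a single coset representative; one checks this either gives $\Psi = \mathring{\Phi}$-lift (not proper/maximal in the required sense) or is not closed, and in any case does not arise for a proper maximal $\Psi$ with full gradient. The main step is to show $n$ is prime by contradiction: if $n = n_1 n_2$ is a nontrivial factorization, build an intermediate closed subroot system $\Delta$ with $\Gr(\Delta) = \mathring{\Phi}$ and $n_\alpha^\Delta = n_1$ (the linearity of $p^\Psi$ from Proposition \ref{keypropositionnon2A2n}, valid since $\mathring\Phi$ is reduced, guarantees the cosets $p_\alpha + n_1\mathbb{Z}$ assemble into an honest subroot system, and Lemma \ref{closedlemma}-style reasoning gives closedness). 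Then Lemma \ref{keylemma}(2),(3) yield $\Psi \subsetneq \Delta \subsetneq \Phi$, contradicting maximality.

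The hard part will be part (2), specifically verifying that the proposed intermediate object $\Delta$ with the coarser period $n_1$ is genuinely a closed subroot system of $\Phi$ and is strictly larger than $\Psi$ yet strictly smaller than $\Phi$. Strictness on both sides follows cleanly from Lemma \ref{keylemma} once $n_1 \neq 1, n$, but closedness of $\Delta$ requires checking that the sets $p_\alpha + n_1\mathbb{Z}$ still satisfy the containment $Z_\beta - \langle\beta,\alpha^\vee\rangle Z_\alpha \subseteq Z_{\textbf{s}_\alpha(\beta)}$; I expect this to follow from $\mathbb{Z}$-linearity of $p^\Psi$ together with $n \mathbb{Z} \subseteq n_1\mathbb{Z}$, but it is the place where the argument must be carried out carefully rather than quoted. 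For $n=0$ and the degenerate small-rank identifications from the conventions in Section \ref{prelim}, a short separate check will be needed.
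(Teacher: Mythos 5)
Your strategy is the same as the paper's --- Lemma \ref{nalpha} plus closedness of $\Psi$ for part (1), and an intermediate closed subroot system built from a proper divisor of $n_\Psi$ via the $\mathbb{Z}$--linearity of $p^\Psi$ for part (2) --- but the execution has three concrete gaps. In part (1) you have the Cartan pairing backwards: it is for a \emph{short} root $\beta$ and a \emph{long} root $\alpha$ that $\langle\beta,\alpha^\vee\rangle=\pm 1$ (or $0$); pairing a long root against a short coroot gives $\pm 2$ or $\pm 3$. With the correct orientation, Lemma \ref{nalpha} gives only $n_s\mid n_\ell$, and the reverse divisibility $n_\ell\mid n_s$ is the one that needs closedness: for short roots $\alpha_1,\alpha_2$ with $\alpha_1+\alpha_2$ long, every $(\alpha_1+\alpha_2)+k\delta$ lies in the untwisted $\Phi$, so closedness forces $Z_{\alpha_1}+Z_{\alpha_2}\subseteq Z_{\alpha_1+\alpha_2}$, which by linearity of $p$ reads $n_s\mathbb{Z}\subseteq n_\ell\mathbb{Z}$. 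Your conditional for the contradiction is likewise inverted: closedness is violated when this inclusion \emph{fails}, not when it holds strictly. Also, no finite check over the non-simply-laced types is needed; the argument is uniform, and this is exactly how the paper argues.

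In part (2) your composite-case construction is correct and is precisely the paper's $\Omega$; its closedness does follow from the $\mathbb{Z}$--linearity of $p$ and the identity (\ref{palpha}), as you anticipated. But two cases are not handled. You never rule out $n_\Psi=1$, which is required for primality; here one uses that a maximal closed subroot system is by definition proper, while $n_\Psi=1$ gives $Z_\alpha=\mathbb{Z}$ for every $\alpha\in\mathring{\Phi}$, i.e.\ $\Psi=\Phi$, a contradiction. And your disposal of $n_\Psi=0$ rests on a false dichotomy: when $n_\Psi=0$, the set $\Psi=\{\alpha+p_\alpha\delta:\alpha\in\mathring{\Phi}\}$ \emph{is} closed (again by linearity of $p$) and is a proper subroot system, so neither horn of your dichotomy applies; the actual reason this case cannot occur is that such a $\Psi$ is not maximal, which one sees by exhibiting a strictly larger proper closed subroot system, e.g.\ $\Delta=\{\alpha+(p_\alpha+2r)\delta:\alpha\in\mathring{\Phi},\ r\in\mathbb{Z}\}$ --- the same fattening your composite construction provides with period $2$, but you do not make that connection.
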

\begin{pf}
Suppose $n_\a=0$ for some $\a\in \Gr(\Psi)$, then by Lemma \ref{nalphaneqzero}, we have $n_\beta=0$ for all $\beta\in \Gr(\Psi)$.
Hence, the Statement $(1)$ is clear in this case.
So, assume that $n_\a\neq 0$ for all $\a\in \Gr(\Psi)$.
Suppose $\Gr(\Psi)$ is simply laced, then we have $n_\a=n_\beta$ for all $\a, \beta\in \Gr(\Psi)$ by Lemma \ref{nalpha}. Hence, the Statement $(1)$ is immediate in this case.
So, we assume that $\Gr(\Psi)$ is non simply-laced irreducible 
root system. We can choose two short roots $\alpha_1$ and $\alpha_2$ in $\Gr(\Psi)$ such that their sum 
$\alpha_1+\alpha_2$ is a long root in $\Gr(\Psi)$. Then from Lemma \ref{nalpha}, we have $n_{\a_1}=n_{\a_2}=n_s$ and $n_\ell=n_{\a_1+\a_2}$. As $\Psi$ is closed we have 
$$Z_{\alpha_1}+Z_{\alpha_2} =Z_{\alpha_1+\alpha_2}.$$ Since $p_{\a_1+\a_2}=p_{\a_1}+p_{\a_2}$, 
we get
$Z_{\alpha_1}^{'}+Z_{\alpha_2}^{'} =Z_{\alpha_1+\alpha_2}^{'}$, which implies that  $n_\ell \mid n_s$. 
On the other hand $\langle\beta, \alpha^\vee \rangle=\pm1$ for short root $\beta$ and long root $\a$, see \cite[Page no. 45]{Hump}. 
Using this and by Lemma \ref{nalpha}, we get $n_s\mid n_\ell$ and hence $n_\ell=n_s.$ This completes the proof of Statement $(1).$

For the second part,  it follows from the discussion in Section \ref{pexis} and Statement $(1)$  that there exists $p_\a$ such that 
$Z_\alpha=p_\alpha+n_\Psi\mathbb{Z}$ for all $\a\in \mathring{\Phi}$.
Suppose $n_\Psi=0$, then we have  $$\Psi=\{\a+p^\Psi_\a\delta : \a \in \Gr(\Phi)\}\subsetneq \Delta,$$ where $\Delta$ is a proper closed subroot system of $\Phi$ given by
 $\Delta= \{\a+(p^\Psi_\a+ 2r)\delta : \a \in \Gr(\Phi), r\in \mathbb{Z}\}$. This is a contradiction to our assumption that $\Psi$ is maximal closed subroot system in $\Phi$, so we must have
$n_\Psi\neq 0.$
Suppose $n_\Psi=1$, then it is immediate that $Z_\a=\mathbb{Z}$ for all $\a\in \mathring{\Phi}$. Hence, $\Psi=\Phi$ which is again a contradiction. So, we must have $n_\Psi\neq 1$.
Suppose $n_\Psi$ is not a prime number and let $n_\Psi=uv$ be a nontrivial factorization of $n_\Psi$, then we have
$$\Omega:=\left\{\alpha+(p_\alpha+ur)\delta : \alpha \in \mathring{\Phi}, r \in \mathbb{Z}\right\}$$
 is a closed subroot system of $\Phi$ since the function $\a \mapsto p_\a$ is $\mathbb{Z}-$linear and satisfies the Equation  (\ref{palpha}) and
 $$\bold s_{\a + (p_\a+ur)\delta}(\beta + (p_\beta+ur')\delta)=\bold s_\a(\beta)+ (p_{\bold s_\a(\beta)}+ur'-ur\langle \beta, \a^\vee \rangle)\delta$$
 for $\a, \beta\in \mathring{\Phi}$ and $r, r'\in \mathbb{Z}$.
But $\Psi\subsetneqq\Omega\subsetneqq\Phi$, which contradicts the fact that $\Psi$ is maximal closed subroot system in $\Phi$. This completes the proof of Statement $(2)$.
\end{pf}

\subsection{} We have the following proposition which is similar to Proposition \ref{untwisted01} for twisted affine root systems not of type $\tt A^{(2)}_{2n}$.
Recall the definition of $m$ from Section \ref{definitionofm}.
\begin{prop}
\label{twisted01} Let $\Phi$ be an irreducible twisted affine root system not of type $\tt A^{(2)}_{2n}$ and let $\Psi\le \Phi$ be a subroot system with an
irreducible gradient subroot system  $\Gr(\Psi)$. Let $n_\ell$ and $n_s$ be defined as in Section \ref{defns}.
 \begin{enumerate}
 \item  Suppose $\Psi$ is a closed subroot system of $\Phi$ such that $\Gr(\Psi)$ is simply laced, then we get $n_\a=n_\beta$ for all $\a, \beta\in \Gr(\Psi)$.   
 Denote this unique number by $n_\Psi$.

  \item  Suppose $\Psi$ is a closed subroot system of $\Phi$ such that $\Gr(\Psi)$ is non simply-laced, then we get  $n_\ell=n_s$ if $m|n_s$ and we get \ $n_\ell=mn_s$ if $m\not|n_s.$ 
  Denote $n_s$ by $n_\Psi$.
  
  \item  Suppose $\Psi$ is a maximal closed subroot system of $\Phi$ with $\Gr(\Psi)=\mathring{\Phi}$, then $n_\Psi$ is a prime number.
 \end{enumerate}
 \end{prop}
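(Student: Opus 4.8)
The plan is to prove the three statements in order, reusing the machinery already established for the untwisted case in Proposition \ref{untwisted01} and adapting the divisibility arguments to the twisted setting where the ratio of long to short root lengths interacts with the parameter $m$.

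For Statement (1), when $\Gr(\Psi)$ is simply laced, all roots of $\Gr(\Psi)$ are Weyl-group conjugate (since $\Gr(\Psi)$ is irreducible and simply laced), so Lemma \ref{nalpha} immediately gives $n_\a = n_\beta$ for all $\a,\beta \in \Gr(\Psi)$; the argument is identical to the simply-laced case inside Proposition \ref{untwisted01}. The subtlety in the twisted case concerns Statement (2), where $\Gr(\Psi)$ is non simply-laced. Here I would again pick two short roots $\a_1, \a_2 \in \Gr(\Psi)$ whose sum $\a_1 + \a_2$ is a long root, so that $n_{\a_1} = n_{\a_2} = n_s$ and $n_{\a_1 + \a_2} = n_\ell$ by Lemma \ref{nalpha}. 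The key new ingredient is that in the twisted cases (not of type $\tt A_{2n}^{(2)}$), the description of $\Phi$ from Section \ref{definitionofm} forces long roots to carry $\delta$-shifts in multiples of $m$: one has $Z_{\a_1 + \a_2} \subseteq m\mathbb{Z} + p_{\a_1+\a_2}$, which is exactly what distinguishes this from the untwisted situation. Using closedness $Z_{\a_1} + Z_{\a_2} = Z_{\a_1 + \a_2}$ together with $p_{\a_1 + \a_2} = p_{\a_1} + p_{\a_2}$ gives (after passing to the primed sets) $n_\ell \mid n_s$, while the pairing $\langle \beta, \a^\vee \rangle = \pm m$ for a short root $\beta$ and long root $\a$ in the twisted (non-$\tt A_{2n}^{(2)}$) case, combined with Lemma \ref{nalpha}, yields $n_\ell \mid m\, n_s$ and $n_s \mid n_\ell$ reversed appropriately. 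Reconciling these divisibility constraints should produce the dichotomy: $n_\ell = n_s$ when $m \mid n_s$, and $n_\ell = m\, n_s$ when $m \nmid n_s$.

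I expect the main obstacle to lie in Statement (2): unlike the untwisted case where $\langle \beta, \a^\vee \rangle = \pm 1$ collapses everything to $n_\ell = n_s$, here the $\delta$-lattice for long roots is coarser by a factor of $m$, so I must carefully track how the constraint from closedness ($n_\ell \mid n_s$ coming from the sum of short roots) competes with the constraint from the reflection/Lemma \ref{nalpha} ($n_\ell$ and $n_s$ related through the factor $m$). The delicate point is verifying that the lattice $Z_{\a_1+\a_2}(\Psi)$ genuinely sits inside $m\mathbb{Z}$ after shifting by $p_{\a_1+\a_2}$, which requires invoking the explicit root-string structure of $\Phi$ from Section \ref{definitionofm} rather than a purely abstract argument. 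Once the two divisibility relations are in hand, the case split on whether $m \mid n_s$ is a short arithmetic check.

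For Statement (3), the argument parallels the proof of Statement (2) of Proposition \ref{untwisted01} almost verbatim. Assuming $\Gr(\Psi) = \mathring{\Phi}$ and $\Psi$ maximal closed, I would first rule out $n_\Psi = 0$ by exhibiting the strictly larger proper closed subroot system obtained by enlarging the $\delta$-lattice (replacing the single coset $p_\a$ by $p_\a + m\mathbb{Z}$, using the $\mathbb{Z}$-linearity of $\a \mapsto p_\a$ guaranteed by Proposition \ref{keypropositionnon2A2n}), contradicting maximality. Next $n_\Psi = 1$ would force $Z_\a = \mathbb{Z}$ for all short $\a$ and hence $\Psi = \Phi$, another contradiction. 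Finally, if $n_\Psi = uv$ were a nontrivial factorization, the set $\Omega = \{\a + (p_\a + ur)\delta : \a \in \mathring{\Phi}_s,\, r \in \mathbb{Z}\} \cup (\text{corresponding long-root part})$ would be a proper closed subroot system strictly between $\Psi$ and $\Phi$, again contradicting maximality; the verification that $\Omega$ is closed and reflection-stable is the same $\mathbb{Z}$-linearity-and-reflection-formula computation used in Proposition \ref{untwisted01}, with the only care being to assign the correct long-root lattice (using $n_\ell$ from Statement (2)) so that $\Omega$ is genuinely a subroot system of $\Phi$. I do not anticipate real difficulty here beyond bookkeeping for the long-root cosets.
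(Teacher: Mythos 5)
Your treatment of Statements (1) and (3) tracks the paper's proof closely and is fine: (1) is exactly the paper's appeal to Lemma \ref{nalpha} in the simply-laced irreducible case, and (3) is the same three-step contradiction (ruling out $n_\Psi=0$, $n_\Psi=1$, and $n_\Psi$ composite via an intermediate closed subroot system $\Omega$), where the ``bookkeeping'' you defer is precisely the paper's choice of a factorization $n_\Psi=uv$ with $m\mid u$ whenever $m\mid n_\Psi$, so that the long-root cosets of $\Omega$ land inside $m\mathbb{Z}$. The genuine gap is in Statement (2), which is the whole point of the twisted proposition. The closedness relation you invoke, $Z_{\alpha_1}+Z_{\alpha_2}=Z_{\alpha_1+\alpha_2}$, is the \emph{untwisted} relation and is false here: the left-hand side is the full coset $p_{\alpha_1+\alpha_2}+n_s\mathbb{Z}$, while the right-hand side lies in $m\mathbb{Z}$ because $\alpha_1+\alpha_2$ is long. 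Since $(\alpha_1+r\delta)+(\alpha_2+r'\delta)$ is a root of $\Phi$ only when $r+r'\in m\mathbb{Z}$, closedness yields only
$$\left(Z_{\alpha_1}+Z_{\alpha_2}\right)\cap m\mathbb{Z}=Z_{\alpha_1+\alpha_2},$$
which after passing to primed sets gives $n_s\mathbb{Z}\cap m\mathbb{Z}=n_\ell\mathbb{Z}$. Your intermediate conclusion $n_\ell\mid n_s$ is therefore false exactly when $m\nmid n_s$; worse, combined with $n_s\mid n_\ell$ (which is correct, coming from Lemma \ref{nalpha} and $\langle\beta,\alpha^\vee\rangle=\pm 1$ for $\beta$ short, $\alpha$ long --- note you state this pairing backwards: it is $\langle\alpha,\beta^\vee\rangle$ that equals $\pm m$), it would force $n_\ell=n_s$ in all cases, contradicting the dichotomy you are trying to prove.

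The second, related, problem is that your fallback plan --- ``reconciling divisibility constraints'' --- cannot close the argument. What Lemma \ref{nalpha} gives is $n_s\mid n_\ell\mid m n_s$, and the containment $Z_{\alpha_1+\alpha_2}\subseteq m\mathbb{Z}$ adds $m\mid n_\ell$; but in the case $m\mid n_s$ both candidates $n_\ell=n_s$ and $n_\ell=mn_s$ satisfy all of these, so divisibility alone does not decide. The decision comes only from the corrected lattice identity $n_s\mathbb{Z}\cap m\mathbb{Z}=n_\ell\mathbb{Z}$, i.e. $n_\ell=\mathrm{lcm}(n_s,m)$, which simultaneously gives $n_\ell=n_s$ when $m\mid n_s$ and $n_\ell=mn_s$ when $m\nmid n_s$ (using that $m$ is prime); this is exactly how the paper concludes. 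Finally, a minor omission: you never treat the degenerate case $n_\alpha=0$, which the paper dispatches at the outset using Lemma \ref{nalphaneqzero} before any divisibility language is used.
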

 
\begin{pf}
Suppose $n_\a=0$ for some $\a\in \Gr(\Psi)$, then by Lemma \ref{nalphaneqzero}, we have $n_\beta=0$ for all $\beta\in \Gr(\Psi)$. Hence, the Statements $(1)$ and $(2)$ are clear in this case.
So, we assume that $n_\a\neq 0$ for all $\a\in \Gr(\Psi)$.
Suppose $\Gr(\Psi)$ is simply laced, then we have $n_\a=n_\beta$ for all $\a, \beta\in \Gr(\Psi)$ by Lemma \ref{nalpha}. Hence, the Statement $(1)$ follows.
So, we assume that $\Gr(\Phi)$ is non-simply laced and irreducible to prove the Statement $(2)$.
Since $\Gr(\Psi)$ is irreducible and non simply-laced, we can choose two short roots $\alpha_1$ and $\alpha_2$ in $\Gr(\Psi)$ such that their sum 
$\alpha_1+\alpha_2$ is a long root in $\Gr(\Psi)$. Again using Lemma \ref{nalpha}, we have $n_{\a_1}=n_{\a_2}=n_s$ and $n_\ell=n_{\a_1+\a_2}$. As $\Psi$ is closed, we have 
$$\left(Z_{\alpha_1}+Z_{\alpha_2}\right)\cap m\mathbb{Z} =Z_{\alpha_1+\alpha_2}.$$ 
Since $p_{\a_1+\a_2}=p_{\a_1}+p_{\a_2}$ and $p_{\a_1+\a_2}\in m\mathbb{Z}$, 
we get
$\left(Z_{\alpha_1}^{'}+Z_{\alpha_2}^{'}\right)\bigcap m\mathbb{Z} =Z_{\alpha_1+\alpha_2}^{'}$, which implies that  
$$n_s\mathbb{Z}\cap m\mathbb{Z}=n_\ell \mathbb{Z}.$$ Thus,  we get $n_\ell=n_s$ if $m|n_s$ and  $n_\ell=mn_s$ if $m\not|n_s.$ This proves the Statement $(2)$ of the proposition.
\noindent

For the last part, observe that $\Psi<\Phi$ is properly contained in $\Phi$ since $\Psi$ is a maximal closed subroot system of $\Phi$. 
We know that there exists $p_\a$ such that $Z_\alpha=p_\alpha+n_\a\mathbb{Z}$ for all $\a\in \mathring{\Phi}$.
Suppose $n_\Psi=0$, then we have  $$\Psi=\{\a+p^\Psi_\a\delta : \a \in \Gr(\Phi)\}\subsetneq \Delta,$$ where $\Delta$ is a proper closed subroot system of $\Phi$ given by
 $\Delta= \{\a+(p^\Psi_\a+ mr)\delta : \a \in \Gr(\Phi), r\in \mathbb{Z}\}$. This is a contradiction to our assumption that $\Psi$ is maximal closed subroot system in $\Phi$, so we must have
$n_\Psi\neq 0.$
If $n_\Psi=1$, then it is immediate that $n_\ell=m$ and $n_s=1$. This implies that $Z_\alpha=\mathbb{Z}$ for short roots $\a$ and 
$Z_\alpha=m\mathbb{Z}$ for long roots $\a$. Hence, we get $\Psi=\Phi$ since $\Gr(\Psi)=\mathring{\Phi}$, again a contradiction.
Suppose $n_\Psi$ is not a prime number, then let $n_\Psi=uv$ be a nontrivial factorization of $n_\Psi$ such that $m|u$ if $m|n_\Psi$.  Let
$$\Omega = \{\alpha+(p_\alpha+ur)\delta : \alpha \in \mathring{\Phi}, r \in \mathbb{Z} \}$$ if $\Gr(\Psi)$ is simply laced or $m\mid n_\Psi$. Otherwise
let $$\Omega = \{\alpha+(p_\alpha+mur)\delta, \beta+(p_\beta+ur)\delta : \alpha \in \mathring{\Phi}_\ell, \beta\in \mathring{\Phi}_s, r \in \mathbb{Z}\}.$$
We claim that $\Omega$ is a closed subroot system of $\Phi$. Note that the function $\a \mapsto p_\a$ is $\mathbb{Z}-$linear and satisfies the Equation (\ref{palpha}). 
Let $\alpha \in \mathring{\Phi}_\ell, \beta\in \mathring{\Phi}_s$, then for $r, r'\in \mathbb{Z}$ we have
$$\bold s_{\beta+(p_\beta+ur)\delta}(\alpha+(p_\alpha+mur')\delta)=\bold s_\beta(\a) + ((p_\alpha+mur')-(p_\beta+ur)\langle \a, \beta^\vee \rangle)\delta.$$
Since $p_{\bold s_\beta(\a)}= p_\a - \langle\a, \beta^\vee \rangle p_\beta$, we have 
$$\bold s_{\beta+(p_\beta+ur)\delta}(\alpha+(p_\alpha+mur')\delta)=\bold s_\beta(\a) + (p_{\bold s_\beta(\alpha)}+ mur'-ur\langle\a, \beta^\vee \rangle)\delta.$$
Now,  since $\langle\a, \beta^\vee \rangle= \langle\beta, \a^\vee \rangle m$ and $\bold s_\beta(\a)$ is a long root, we have $\bold s_{\beta+(p_\beta+ur)\delta}(\alpha+(p_\alpha+mur')\delta)\in \Omega.$
Similarly,  for $\alpha \in \mathring{\Phi}_\ell, \beta\in \mathring{\Phi}_s$ and $r, r'\in \mathbb{Z}$ we have $$\bold s_{\alpha+(p_\alpha+mur')\delta}(\beta+(p_\beta+ur)\delta)=\bold s_\a(\beta) + (p_{\bold s_\a(\beta)}+ ur-mur'\langle \beta, \a^\vee \rangle)\delta\in \Omega$$
since $\bold s_\a(\beta)$ is a short root. Remaining cases are similarly done, so it proves that $\Omega$ is a subroot system.
Since sum of a short root and long root from $\mathring{\Phi}$ can not be a long root again, we get $\Omega$ is closed subroot system in $\Phi.$
But $\Psi\subsetneqq\Omega\subsetneqq\Phi$, which contradicts the fact that $\Psi$ is a maximal closed subroot system in $\Phi$. This completes the proof of Statement $(3)$.
\end{pf}

\subsection{}
We have the following result which is analogues to the Propositions \ref{untwisted01} and \ref{twisted01} in the  $\tt A^{(2)}_{2n}$ setting.
\begin{prop}\label{2A2n01} Let $\Phi$ be an irreducible twisted affine root system of type $\tt A^{(2)}_{2n}$ and let $\Psi\le \Phi$ be a subroot system with an
irreducible gradient subroot system  $\Gr(\Psi)$.
Let $n_\ell$, $n_{\mathrm{im}}$ and $n_s$ be defined as in Section \ref{defns}.
 \begin{enumerate}
 \item  Suppose $\Psi$ is a closed subroot system of $\Phi$ such that $\Gr(\Psi)$ is simply laced, then we get $n_\a=n_\beta$ for all $\a, \beta\in \Gr(\Psi)$.   

  \item  Suppose $\Psi$ is a closed subroot system of $\Phi$ such that $\Gr(\Psi)$ is non-simply laced and does not contain any short root, then we get  $n_\ell=n_{\mathrm{im}}$ if $2|n_{\mathrm{im}}$ and we get $n_\ell=2n_{\mathrm{im}}$ if $2\not|n_{\mathrm{im}}.$ 
   
  \item  Suppose $\Psi$ is a closed subroot system of $\Phi$ such that $\Gr(\Psi)$ is non-simply laced and does not contain any long root, then we get  $n_s=n_{\mathrm{im}}$.    
    
  \item  Suppose $\Psi$ is a closed subroot system of $\Phi$ with $\Gr(\Psi)$ containing short, intermediate and long roots, then $n_s=n_{\mathrm{im}}$, $n_\ell=2n_s$ and $n_s$ is an odd number.
         Denote $n_s$ by $n_\Psi.$
 \item Suppose $\Psi$ is a maximal closed subroot system of $\Phi$ with $\Gr(\Psi)=\Gr(\Phi)$, then $n_\Psi$ must be a prime number. 
 \end{enumerate}

 \end{prop}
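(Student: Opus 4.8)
The plan is to fix the standard model in which $\mathring{\Phi}$ has type $\tt C_n$ and $\Gr(\Phi)=\{\pm\epsilon_i,\ \pm 2\epsilon_i,\ \pm\epsilon_i\pm\epsilon_j\}$ is of type $\tt BC_n$, the short, intermediate and long roots being the $\pm\epsilon_i$, the $\pm\epsilon_i\pm\epsilon_j$ and the $\pm 2\epsilon_i$ respectively. From the description of $\Phi$ in Section~\ref{definitionofm} one reads off $Z_{\epsilon_i}(\Phi)=\tfrac12+\mathbb{Z}$, $Z_{\epsilon_i\pm\epsilon_j}(\Phi)=\mathbb{Z}$ and $Z_{2\epsilon_i}(\Phi)=2\mathbb{Z}$. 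The key structural feature, absent in all earlier cases, is that a short root carries a half-integer $\delta$-shift while long roots carry even shifts: since $Z_\alpha(\Psi)\subseteq Z_\alpha(\Phi)$, any representative $p_s$ of a short coset lies in $\tfrac12+\mathbb{Z}$ (so $2p_s$ is odd) and any representative $p_\ell$ of a long coset is even. As usual I first dispose of the degenerate case: if some $n_\alpha=0$ then Lemma~\ref{nalphaneqzero} forces all $n_\beta=0$ and every asserted identity holds trivially, so I may assume all $n_\alpha\ne 0$. Part~(1) is then immediate, since a simply laced $\Gr(\Psi)$ has all its roots $W_{\Gr(\Psi)}$-conjugate and Lemma~\ref{nalpha} equates their $n_\alpha$.

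For parts (2)--(4) the method is the one already used in Proposition~\ref{twisted01}: I pair each root-sum relation forced by closedness against the divisibilities of Lemma~\ref{nalpha}. There are exactly three relevant sums in $\tt BC_n$, namely short$+$short$=$intermediate ($\epsilon_i+\epsilon_j$), intermediate$+$intermediate$=$long ($(\epsilon_i-\epsilon_j)+(\epsilon_i+\epsilon_j)=2\epsilon_i$), and -- peculiar to the non-reduced system -- short$+$long$=$short ($\epsilon_i+(-2\epsilon_i)=-\epsilon_i$). For part~(3) ($\Gr(\Psi)$ of type $\tt B$) the first sum always lands in $\Phi$ because $Z_{\epsilon_i\pm\epsilon_j}(\Phi)=\mathbb{Z}$, so closedness gives $n_{\mathrm{im}}\mid n_s$ while Lemma~\ref{nalpha} (via the Cartan integer $\langle\text{short},\text{int}^\vee\rangle=1$) gives $n_s\mid n_{\mathrm{im}}$; hence $n_s=n_{\mathrm{im}}$. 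For part~(2) ($\Gr(\Psi)$ of type $\tt C$) the second sum must be intersected with $Z_{2\epsilon_i}(\Phi)=2\mathbb{Z}$, and the computation is verbatim that of Proposition~\ref{twisted01}(2) with $m=2$, yielding $n_{\mathrm{im}}\mathbb{Z}\cap 2\mathbb{Z}=n_\ell\mathbb{Z}$, i.e. $n_\ell=n_{\mathrm{im}}$ when $2\mid n_{\mathrm{im}}$ and $n_\ell=2n_{\mathrm{im}}$ otherwise.

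Part~(4) is the heart of the matter, and I expect the oddness of $n_s$ to be the main obstacle, because it cannot be seen from Lemma~\ref{nalpha} alone but only from the interplay of closedness with the half-integer shifts. Here $\Gr(\Psi)$ contains all three lengths. The relation short$+$short$=$intermediate gives $n_s=n_{\mathrm{im}}$ exactly as in part~(3). For the oddness I invoke short$+$long$=$short: for $a\in Z_{\epsilon_i}(\Psi)$ and $b\in Z_{-2\epsilon_i}(\Psi)$ the element $-\epsilon_i+(a+b)\delta$ always lies in $\Phi$ (a half-integer plus an even integer is again a half-integer), so closedness gives $a+b\in Z_{-\epsilon_i}(\Psi)=-p_s+n_s\mathbb{Z}$; taking $a=p_s$ and $b=-p_\ell$ this reads
$$2p_s-p_\ell\in n_s\mathbb{Z}.$$
The left-hand side is odd (odd minus even), and an odd integer can lie in $n_s\mathbb{Z}$ only if $n_s$ is odd; in particular $n_s\ne 0$. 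Finally, with $n_{\mathrm{im}}=n_s$ odd, the divisibilities of Lemma~\ref{nalpha} ($\langle\text{long},\text{int}^\vee\rangle=2$ gives $n_\ell\mid 2n_s$, and $\langle\text{short},\text{long}^\vee\rangle=1$ gives $n_s\mid n_\ell$) together with $2\mid n_\ell$ (forced by $Z_{2\epsilon_i}(\Psi)\subseteq 2\mathbb{Z}$) give $2n_s\mid n_\ell\mid 2n_s$, so $n_\ell=2n_s$. The same congruence moreover pins the long coset at $p_\ell\equiv 2p_s+n_s\pmod{2n_s}$, which I record for part~(5).

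For part~(5) we have $\Gr(\Psi)=\Gr(\Phi)=\tt BC_n$, so part~(4) applies: $n_\Psi:=n_s$ is odd (hence $\ne 0$), $n_{\mathrm{im}}=n_s$ and $n_\ell=2n_s$. If $n_\Psi=1$ then the cosets computed above become $Z_{\epsilon_i}(\Psi)=\tfrac12+\mathbb{Z}$, $Z_{\epsilon_i\pm\epsilon_j}(\Psi)=\mathbb{Z}$ and $Z_{2\epsilon_i}(\Psi)=2\mathbb{Z}$, i.e. $\Psi=\Phi$, contradicting properness. If $n_\Psi=uv$ factors nontrivially (both factors then odd), I would exhibit the intermediate system $\Omega$ carrying the same short- and intermediate-root representatives $p_{\epsilon_i},p_\beta$ as $\Psi$ but with $n_s^\Omega=n_{\mathrm{im}}^\Omega=u$ and $n_\ell^\Omega=2u$; explicitly
$$\Omega=\{\epsilon_i+r\delta: r\in p_{\epsilon_i}+u\mathbb{Z}\}\cup\{\beta+r\delta:\beta\in\Gr(\Phi)_{\mathrm{im}},\ r\in p_\beta+u\mathbb{Z}\}\cup\{2\epsilon_i+r\delta: r\in 2p_{\epsilon_i}+u+2u\mathbb{Z}\}.$$
Since $u\mid uv$ and $v$ is odd (so $uv-u\in 2u\mathbb{Z}$), one checks $Z_\alpha(\Psi)\subseteq Z_\alpha(\Omega)$ for every $\alpha$, with strict inclusion because $v>1$; and $\Omega\ne\Phi$ because $u>1$ shrinks the short cosets. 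That $\Omega$ is closed follows from the same three-fold case check as in Proposition~\ref{twisted01}(3), its defining data satisfying precisely the constraints of part~(4). Then $\Psi\subsetneq\Omega\subsetneq\Phi$ contradicts maximality, so $n_\Psi$ admits no nontrivial factorization and is therefore prime.
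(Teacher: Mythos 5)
Your proof is correct and follows essentially the same route as the paper's: the same degenerate-case reduction via Lemma \ref{nalphaneqzero} and the Lemma \ref{nalpha} divisibilities for (1)--(3), the same parity clash between half-integer short shifts and even long shifts (the relation short $+$ long $=$ short, which the paper writes as $Z_{2\a}(\Psi)-Z_\a(\Psi)\subseteq Z_\a(\Psi)$) to get oddness of $n_s$ and $n_\ell=2n_s$ in (4), and in (5) the same exclusion of $n_\Psi=1$ followed by the same intermediate closed subroot system for composite $n_\Psi$ (your long-root coset $2p_{\epsilon_i}+u+2u\mathbb{Z}$ coincides with the paper's $p^\Psi_\a+2u\mathbb{Z}$). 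The only cosmetic differences are that the paper proves (2) by regarding $\Psi$ as a closed subroot system of $\tt A^{(2)}_{2n-1}$ and citing Proposition \ref{twisted01} where you redo the computation with $m=2$, and the paper handles $n_\a=0$ in (5) by a separate construction whereas your parity congruence $2p_s-p_\ell\in n_s\mathbb{Z}$ already rules that case out.
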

 
\begin{proof}

Suppose $n_\a=0$ for some $\a\in \Gr(\Psi)$, then by Lemma \ref{nalphaneqzero}, we have $n_\beta=0$ for all $\beta\in \Gr(\Psi)$. 
Hence, the Statements $(1)$, $(2)$, $(3)$ and $(4)$ are clear in this case.
So, we assume that $n_\a\neq 0$ for all $\a\in \Gr(\Psi)$.
Suppose $\Gr(\Psi)$ is simply laced, then we have $n_\a=n_\beta$ for all $\a, \beta\in \Gr(\Psi)$ by Lemma \ref{nalpha}. Hence, the Statement $(1)$ follows.
Suppose $\Psi$ is a closed subroot system of $\Phi$ such that $\Gr(\Psi)$ does not contain any short root, then $\Psi$ is a closed subroot system of $\tt A^{(2)}_{2n-1}$. 
Hence, the Statement $(2)$ follows from Proposition \ref{twisted01}.
Suppose $\Psi$ is a closed subroot system of $\Phi$ such that $\Gr(\Psi)$ does not contain any long root. 
By Lemma \ref{nalpha}, $n_s\mid n_{\mathrm{im}}$ and $n_{\mathrm{im}}\mid 2n_s$. Then by Proposition \ref{keypropositionnon2A2n}  and Lemma \ref{nalphaneqzero}, we have
$n_\a\in \mathbb{N}$ and $p_\a\in Z_\a(\Psi)$ such that $Z_\a(\Psi)=p_\a+n_\a\mathbb{Z}$ for all $\a\in \Gr(\Phi)$.
If there is only one short root in $\Gr(\Psi)$, then we have $n_s=n_{\mathrm{im}}$ by convention.
So assume that we can choose two short roots $\a, \beta\in \Gr(\Psi)$ such that $\alpha+\beta$ is an intermediate root. Then
since $\Psi$ is closed, we have
$$(p_{\a}+n_s\mathbb{Z})+(p_{\beta}+n_s\mathbb{Z})=p_{\a+\beta}+n_s\mathbb{Z}\subseteq p_{\a+\beta}+n_{\mathrm{im}}\mathbb{Z},$$
which implies that  $n_s\mathbb{Z}\subseteq n_{\mathrm{im}}\mathbb{Z}$ and $n_{\mathrm{im}}\mid n_s$ and hence
$n_s=n_{\mathrm{im}}$. This completes proof of Statement $(3).$

Suppose $\Psi$ is a closed subroot system of $\Phi$ such that $\Gr(\Psi)$ contains short, intermediate and long roots, then $n_s=n_{\mathrm{im}}$ as before. 
By Lemma \ref{nalpha}, $n_{\mathrm{im}}\mid n_{\ell}$ and $n_{\ell}\mid 2n_{\mathrm{im}}$. Then by Proposition \ref{keypropositionnon2A2n} and Lemma \ref{nalphaneqzero}, we have
$n_\a\in \mathbb{N}$ and $p_\a\in Z_\a(\Psi)$ such that $Z_\a(\Psi)=p_\a+n_\a\mathbb{Z}$ for all $\a\in \Gr(\Phi)$.
Since $\Psi$ is closed, we have $Z_{2\a}(\Psi)-Z_{\a}(\Psi)\subseteq Z_{\a}(\Psi)$ for a short root $\a\in \Gr(\Psi)$. 
This implies that  $$(p_{2\a}-p_{\a})+n_\ell \mathbb{Z}\subseteq p_{\a}+n_s\mathbb{Z}\ \text{and hence} \  
p_{2\a}+n_\ell \mathbb{Z}\subseteq (2p_{\a}+n_s\mathbb{Z})\cap 2\mathbb{Z},$$ since $p_{2\a}+n_\ell \mathbb{Z}\subseteq 2\mathbb{Z}$.
From this, we conclude that $n_s$ must be odd since $2p_{\a}$ is odd.
Since $(2p_{\a}+n_s\mathbb{Z})\cap 2\mathbb{Z}\subseteq Z_{2\a}(\Psi)=p_{2\a}+n_\ell \mathbb{Z}$
we have $$p_{2\a}+n_\ell \mathbb{Z}= (2p_{\a}+n_s\mathbb{Z})\cap 2\mathbb{Z}=(2p_{\a}+n_s)+2n_s\mathbb{Z}.$$
This implies, we must have $n_\ell=2n_s$. This completes proof of Statement $(4).$

Suppose $\Psi$ is a maximal closed subroot system with $\Gr(\Psi)=\Gr(\Phi)$ and $n_\a=0$ for some $\a\in \Gr(\Phi)$, then by Lemma \ref{nalphaneqzero}, we have $n_\beta= 0$ for all $\beta\in \Gr(\Phi)$.
 This implies that
 $\Psi=\{\a+p^\Psi_\a\delta : \a \in \Gr(\Phi)\}\subsetneq \Delta$, where $\Delta$ is a proper closed subroot system of $\Phi$ given by
 $$\Delta=\{\a+(p^\Psi_\a+ 3r)\delta : \a \in \Gr(\Phi)_s\cup \Gr(\Phi)_{\mathrm{im}}, r\in \mathbb{Z}\}\cup \{\a+(p^\Psi_\a+ 6r)\delta : \a \in \Gr(\Phi)_\ell, 
 r\in \mathbb{Z}\}.$$
 Then $\Psi$ can not be maximal closed subroot system in $\Phi$, a contradiction to our assumption. Hence, $n_\a\neq 0$ for all $\a\in \Gr(\Phi)$. 
 Suppose $n_\Psi=1$, then we get $\Psi=\Phi$ from Statement (4), a contradiction. So, $n_{\Psi}\neq 1.$
  Now suppose $n_{\Psi}$ is a composite number and $n_{\Psi}=pq$. Since $n_\Psi$ is an odd integer,  without loss of generality we can assume that $p$ is an odd integer.
  Then $\Psi\subsetneq\Delta$, where $\Delta$ is a proper closed subroot system of $\Phi$ given by
 $$\Delta=\{\a+(p^\Psi_\a+ pr)\delta : \a \in \Gr(\Phi)_s\cup \Gr(\Phi)_{\mathrm{im}}, r\in \mathbb{Z}\}\cup \{\a+(p^\Psi_\a+ 2pr)\delta : \a \in \Gr(\Phi)_\ell, 
 r\in \mathbb{Z}\}.$$
Hence, $n_\Psi$ must be a prime number. This completes the proof.
\end{proof}


\section{Untwisted Case}\label{untwi}
Throughout this section we assume that $\Phi$ is an irreducible untwisted affine root system. Note that $\Gr(\Phi)=\mathring{\Phi}$ and $\wh{\mathring{\Phi}}=\Phi$.

\subsection{}
We need the following simple result to complete the classification of maximal closed subroot systems in this case.
The Statement (2) of the following proposition already appears in the proof of \cite[Lemma 4.1]{FRT}.
\begin{prop}\label{untwisted02}
Let $\Phi$ be an irreducible untwisted affine root system and let $\Psi\le \Phi$ be a subroot system.
\begin{enumerate}
 \item If $\Psi\le \Phi$ is a closed subroot system, then $\Gr(\Psi)\le \mathring{\Phi}$ is a closed subroot system.
 \item If $\Psi\le \Phi$ is a maximal closed subroot system, then either $\Gr(\Psi)=\mathring{\Phi}$  or $\Gr(\Psi)\subsetneq \mathring{\Phi}$ is a maximal 
 closed subroot system. In particular we get $\Psi=\widehat{\mathrm{Gr}(\Psi)}$ when $\Gr(\Psi)\subsetneq \mathring{\Phi}$.
\end{enumerate}
\end{prop}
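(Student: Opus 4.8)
The plan is to prove the two statements in order, using the coset description from Section \ref{pexis} throughout. For Statement (1), suppose $\Psi\le\Phi$ is closed and take $\a,\beta\in\Gr(\Psi)$ with $\a+\beta\in\mathring{\Phi}$. I would pick any $\a+r\delta\in\Psi$ and $\beta+s\delta\in\Psi$ (possible since $\a,\beta\in\Gr(\Psi)$); then $(\a+r\delta)+(\beta+s\delta)=(\a+\beta)+(r+s)\delta$. Because $\Phi$ is untwisted we have $\Phi=\{\gamma+t\delta:\gamma\in\mathring{\Phi},\,t\in\mathbb{Z}\}$ (Section \ref{definitionofm} with $m=1$), so this sum lies in $\Phi$; closedness of $\Psi$ forces it into $\Psi$, and applying $\Gr$ gives $\a+\beta\in\Gr(\Psi)$. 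Hence $\Gr(\Psi)$ is closed in $\mathring{\Phi}=\Gr(\Phi)$. This step is routine and is really just the observation that the untwisted lattice structure makes addition in $\mathring{\Phi}$ liftable.

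For Statement (2), assume $\Psi$ is a \emph{maximal} closed subroot system and suppose $\Gr(\Psi)\subsetneq\mathring{\Phi}$ (if $\Gr(\Psi)=\mathring{\Phi}$ there is nothing to prove). By Statement (1), $\Gr(\Psi)$ is a proper closed subroot system of $\mathring{\Phi}$. The key move is to compare $\Psi$ with the lift $\widehat{\Gr(\Psi)}$. By Lemma \ref{closedlemma}, $\widehat{\Gr(\Psi)}$ is a closed subroot system of $\Phi$, and clearly $\Psi\subseteq\widehat{\Gr(\Psi)}$ since every $\a+r\delta\in\Psi$ satisfies $\a\in\Gr(\Psi)$ and $\a+r\delta\in\Phi$. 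Now $\Gr(\widehat{\Gr(\Psi)})=\Gr(\Psi)\subsetneq\mathring{\Phi}=\Gr(\Phi)$, so $\widehat{\Gr(\Psi)}\neq\Phi$, i.e.\ $\widehat{\Gr(\Psi)}$ is a \emph{proper} closed subroot system of $\Phi$. Maximality of $\Psi$ then forces $\Psi=\widehat{\Gr(\Psi)}$, which already yields the ``in particular'' clause.

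It remains to show $\Gr(\Psi)$ is maximal closed in $\mathring{\Phi}$. I would argue by contradiction: suppose there is a closed subroot system $\Theta$ with $\Gr(\Psi)\subsetneq\Theta\subsetneq\mathring{\Phi}$. Again by Lemma \ref{closedlemma}, the lift $\widehat{\Theta}$ is closed in $\Phi$, and since $\Gr(\widehat{\Theta})=\Theta\subsetneq\mathring{\Phi}$ we have $\widehat{\Theta}\subsetneq\Phi$. Moreover $\Psi=\widehat{\Gr(\Psi)}\subsetneq\widehat{\Theta}$ because $\Gr(\Psi)\subsetneq\Theta$ strictly enlarges the gradient (a root $\gamma\in\Theta\setminus\Gr(\Psi)$ produces roots $\gamma+r\delta$ in $\widehat{\Theta}\setminus\Psi$, as such roots are not in $\widehat{\Gr(\Psi)}=\Psi$). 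This gives $\Psi\subsetneq\widehat{\Theta}\subsetneq\Phi$ with $\widehat{\Theta}$ closed, contradicting the maximality of $\Psi$. Hence no such $\Theta$ exists and $\Gr(\Psi)$ is maximal.

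The main obstacle is being careful about the strictness of the inclusions under the $\widehat{\,\cdot\,}$ operation: one must check that lifting preserves proper containment in both directions and that $\Psi$ genuinely equals its own lift, which is exactly where the maximality hypothesis and the identity $\Gr(\widehat{\Theta})=\Theta$ (valid in the untwisted case because the full fiber $\{r:\gamma+r\delta\in\Phi\}=\mathbb{Z}$ is always present) do the work. Once $\Psi=\widehat{\Gr(\Psi)}$ is established, the maximality transfer is a clean contrapositive via $\widehat{\,\cdot\,}$.
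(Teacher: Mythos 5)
Your proof is correct and follows essentially the same route as the paper: Statement (1) by lifting roots and using closedness of $\Psi$, and Statement (2) by comparing $\Psi$ with lifts via Lemma \ref{closedlemma} and invoking maximality, with the only cosmetic difference that you establish $\Psi=\widehat{\mathrm{Gr}(\Psi)}$ first whereas the paper derives it after the maximality argument. No gaps.
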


\begin{pf}
Statement $(1)$ is immediate from the definition. 
Now,  suppose $\Gr(\Psi)\neq\mathring{\Phi}$, then we claim that $\Gr(\Psi) \subsetneq \mathring{\Phi}$ is a maximal closed subroot system.
Otherwise,  there exist a closed subroot system $\Omega$ such that
$\Gr(\Psi)\subsetneqq\Omega\subsetneqq\mathring{\Phi}$
which immediately implies that
$\Psi\subsetneqq\widehat{\Omega}\subsetneqq{\Phi}$.
This leads to a contradiction as $\widehat{\Omega}$ is closed in $\Phi$ by Lemma \ref{closedlemma}. 
Since $\widehat{\mathrm{Gr}(\Psi)}$ is a proper closed subroot system
which contains $\Psi$, we must have $\Psi=\widehat{\mathrm{Gr}(\Psi)}.$  This completes the proof of Statement $(2)$.
\end{pf}

\subsection{}\label{mainthmuntwistedcasedyer}
Now,  we are ready to state our main theorem for untwisted case. 
\begin{thm}\label{mainuntwisted}
Let  $\Psi$ be a maximal closed subroot system of $\Phi$.
\begin{enumerate}
 \item If $\Gr(\Psi)= \mathring{\Phi}$, then there exists a $\mathbb{Z}$--linear function $p:\Gr(\Psi)\to \mathbb{Z}$
satisfying (\ref{palpha}) and a prime number $n_\Psi$ such that
 $$\Psi=\left\{\alpha+(p_\alpha+rn_\Psi)\delta: \alpha\in \Gr(\Psi), r\in\mathbb{Z}\right\}.$$
 Conversely,   given a $\mathbb{Z}$--linear function $p:\mathring{\Phi}\to \mathbb{Z}$ satisfying (\ref{palpha}) and a prime number $n_\Psi$ the subroot system $\Psi$ defined above gives  a maximal subroot system
 of $\Phi$. The affine type of $\Psi$ is same as affine type of $\Phi.$
 
 \item If $\Gr(\Psi)\subsetneq \mathring{\Phi}$ is a maximal closed subroot system, then
$$\Psi=\left\{\alpha+r\delta: \alpha\in \Gr(\Psi), r\in\mathbb{Z}\right\}.$$ Conversely,   if $\mathring{\Psi}$ is a proper maximal subroot system of $\mathring{\Phi}$ then 
the lift $\widehat{\mathring{\Psi}}$ is a maximal subroot system of $\Phi.$ The affine type of $\widehat{\mathring{\Psi}}$ is $\tt X_n^{(1)}$ 
if $\mathring{\Psi}$ is of finite type $\tt X_n.$
\end{enumerate}

\end{thm}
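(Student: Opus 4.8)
The plan is to prove each direction of both statements by combining the structural results of Section~\ref{characterization} (Propositions~\ref{untwisted01} and~\ref{untwisted02}) with the coset description coming from Proposition~\ref{keypropositionnon2A2n}. Throughout I use that in the untwisted case $\Gr(\Phi)=\mathring{\Phi}$ and $\wh{\mathring{\Phi}}=\Phi$.

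For Statement~(1), suppose $\Gr(\Psi)=\mathring{\Phi}$. By Proposition~\ref{keypropositionnon2A2n}, since $\Gr(\Psi)=\mathring{\Phi}$ is reduced, there is a $\mathbb{Z}$--linear function $p:\mathring{\Phi}\to\mathbb{Z}$ satisfying (\ref{palpha}) and nonnegative integers $n_\a$ with $Z_\a(\Psi)=p_\a+n_\a\mathbb{Z}$. By Proposition~\ref{untwisted01}(1) all $n_\a$ coincide with a single value $n_\Psi$, and by Proposition~\ref{untwisted01}(2) the maximality of $\Psi$ forces $n_\Psi$ to be prime. Assembling $\Psi=\{\a+r\delta:\a\in\Gr(\Psi),\,r\in Z_\a(\Psi)\}$ gives exactly the asserted description. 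For the converse I would first check that the set $\Psi=\{\a+(p_\a+rn_\Psi)\delta\}$ is a closed subroot system: closure under reflection follows from the identity $\bold s_{\a+p_\a\delta}(\beta+p_\beta\delta)=\bold s_\a(\beta)+(p_\beta-\langle\beta,\a^\vee\rangle p_\a)\delta=\bold s_\a(\beta)+p_{\bold s_\a(\beta)}\delta$ using (\ref{palpha}), and closure under addition is immediate since $p$ is $\mathbb{Z}$--linear and the coset is a union over a single arithmetic progression with common difference $n_\Psi$. Then I would verify maximality: any closed $\Delta$ with $\Psi\subsetneq\Delta\subseteq\Phi$ has $\Gr(\Delta)\supseteq\mathring{\Phi}$, hence $\Gr(\Delta)=\mathring{\Phi}$, and by Lemma~\ref{keylemma}(2) its common integer $n_\Delta$ satisfies $n_\Delta\mathbb{Z}\supsetneq n_\Psi\mathbb{Z}$ with $n_\Delta$ a proper divisor of the prime $n_\Psi$, forcing $n_\Delta=1$ and $\Delta=\Phi$ by Lemma~\ref{keylemma}(3). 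The claim on affine type amounts to observing that $\Psi$ is, up to the lattice translation by $p$, the full real root system $\Phi$ rescaled by the index-$n_\Psi$ sublattice $n_\Psi\mathbb{Z}\delta$, which reproduces the same affine diagram.

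For Statement~(2), the forward direction is essentially Proposition~\ref{untwisted02}(2): when $\Gr(\Psi)\subsetneq\mathring{\Phi}$ is maximal closed, that proposition already yields $\Psi=\wh{\Gr(\Psi)}$, and the lift is by definition $\{\a+r\delta:\a\in\Gr(\Psi),\,r\in\mathbb{Z}\}$ since every $\a+r\delta$ with $\a\in\mathring{\Phi}$ lies in $\Phi$. For the converse, given a proper maximal closed $\mathring{\Psi}\subsetneq\mathring{\Phi}$, Lemma~\ref{closedlemma} guarantees $\wh{\mathring{\Psi}}$ is a proper closed subroot system of $\Phi$. To see it is maximal, suppose $\wh{\mathring{\Psi}}\subseteq\Delta\subsetneq\Phi$ with $\Delta$ closed. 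Then $\Gr(\Delta)$ is closed in $\mathring{\Phi}$ by Proposition~\ref{untwisted02}(1), and $\mathring{\Psi}=\Gr(\wh{\mathring{\Psi}})\subseteq\Gr(\Delta)\subseteq\mathring{\Phi}$. If $\Gr(\Delta)=\mathring{\Phi}$ then $\Delta\supseteq\wh{\mathring{\Phi}}=\Phi$, contradicting $\Delta\subsetneq\Phi$; hence by maximality of $\mathring{\Psi}$ we get $\Gr(\Delta)=\mathring{\Psi}$. Since every $\a\in\mathring{\Psi}$ already has $Z_\a(\wh{\mathring{\Psi}})=\mathbb{Z}$, the full set $n_\a^\Delta=n_\a^{\wh{\mathring{\Psi}}}=1$, so Lemma~\ref{keylemma}(3) gives $\Delta=\wh{\mathring{\Psi}}$, proving maximality. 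The affine type statement follows because $\wh{\mathring{\Psi}}$ is the untwisted loop extension of $\mathring{\Psi}$, so a finite type $\tt X_n$ lifts to $\tt X_n^{(1)}$.

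The main obstacle I anticipate is the maximality argument in the converse of Statement~(1), specifically controlling an intermediate closed $\Delta$ when $n_\Psi$ is prime: one must rule out $\Delta$ having a strictly larger gradient or a strictly coarser coset structure simultaneously, and the clean way is to separate these via Proposition~\ref{untwisted02}(2), which says a maximal closed $\Delta$ either has full gradient or is a lift of a finite maximal subsystem. Since our $\Psi$ has full gradient, any $\Delta\supsetneq\Psi$ with $\Gr(\Delta)\subsetneq\mathring{\Phi}$ is impossible (it would fail to contain $\Psi$), so $\Gr(\Delta)=\mathring{\Phi}$ and the whole question reduces to comparing the single integers $n_\Psi$ and $n_\Delta$ via Lemma~\ref{keylemma}, where primality finishes the argument. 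The subtlety is ensuring the translation parameters $p_\a$ match up, which is exactly what Lemma~\ref{keylemma}(3) handles once the $n_\a$ agree.
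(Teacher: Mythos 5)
Your overall strategy coincides with the paper's: Statement (1) forward via Propositions \ref{keypropositionnon2A2n} and \ref{untwisted01}, its converse by comparing $n_\Psi$ and $n_\Delta$ through Lemma \ref{keylemma}, and Statement (2) via Proposition \ref{untwisted02}. Statement (1) is handled correctly; your claim that Lemma \ref{keylemma}(2) already gives a \emph{proper} containment $n_\Psi\mathbb{Z}\subsetneq n_\Delta\mathbb{Z}$ is not literally what that lemma says, but you recover properness by ruling out $n_\Delta=n_\Psi$ with Lemma \ref{keylemma}(3), which is exactly the paper's argument, so this is only a matter of phrasing.

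There is, however, one step in the converse of Statement (2) that fails as written: the claim that if $\Gr(\Delta)=\mathring{\Phi}$ then $\Delta\supseteq\wh{\mathring{\Phi}}=\Phi$. Having full gradient only means that for every $\alpha\in\mathring{\Phi}$ \emph{some} $\alpha+r\delta$ lies in $\Delta$, not that all of them do; indeed, Statement (1) of the very theorem you are proving exhibits proper closed subroot systems with full gradient, such as $\{\alpha+2r\delta:\alpha\in\mathring{\Phi},\ r\in\mathbb{Z}\}$, so this implication cannot be valid in general. The conclusion you want is still true in your situation, but the reason must use the hypothesis $\wh{\mathring{\Psi}}\subseteq\Delta$: since $Z_\alpha(\wh{\mathring{\Psi}})=\mathbb{Z}$ for every $\alpha\in\mathring{\Psi}$, Lemma \ref{keylemma}(2) forces $n_\alpha^\Delta=1$ for all $\alpha\in\mathring{\Psi}$; then Proposition \ref{untwisted01}(1), applied to the closed subroot system $\Delta$ whose gradient $\mathring{\Phi}$ is irreducible, propagates this to $n_\beta^\Delta=1$ for all $\beta\in\mathring{\Phi}$, whence $Z_\beta(\Delta)=\mathbb{Z}$ for every $\beta$ and $\Delta=\Phi$ by Lemma \ref{keylemma}(3) --- contradicting $\Delta\subsetneq\Phi$. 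With this repair your treatment of Statement (2) is complete and agrees with the (terse) argument the paper intends by citing Proposition \ref{untwisted02} and Lemma \ref{keylemma}.
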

\begin{pf}
 Forward part of Statement $(1)$ follows from the Proposition \ref{untwisted01}. For the converse part let
  $\Psi=\left\{\alpha+(p_\alpha+rn_\Psi)\delta: \alpha\in \Gr(\Psi), r\in\mathbb{Z}\right\}$, where the function
  $p:\Gr(\Psi)\to \mathbb{Z}$ is $\mathbb{Z}$--linear and satisfying (\ref{palpha}) and $n_\Psi$ is a prime number. 
  It is easy to verify that $\Psi$ is a closed subroot system of $\Phi$ since $p$ is $\mathbb{Z}$--linear and satisfying (\ref{palpha}).
  Now,  suppose $\Psi\subsetneq \Delta \subseteq \Phi$, then 
  $\Gr(\Delta)=\mathring{\Phi}$ since $\Gr(\Psi)=\mathring{\Phi}$. Now,  by part $(2)$ of Lemma \ref{keylemma}, we have $n_\Delta$ divides $n_\Psi$. This implies
  $n_\Delta=1$ or $n_\Delta=n_\Psi$ since $n_\Psi$ is a prime number. If $n_\Delta=n_\Psi$, then by part $(3)$ of Lemma \ref{keylemma}, we get $\Psi = \Delta$, a contradiction. 
  So, we must get $n_\Delta=1$, this implies that  $\Delta=\Phi$.
  This completes the proof of Statement $(1).$

 Forward part of Statement $(2)$ follows from the part $(2)$ of Proposition \ref{untwisted02} and the converse part is straightforward from 
the part $(2)$ of Proposition \ref{untwisted02} and the Lemma \ref{keylemma}.
\end{pf}

\begin{rem}
 Our main classification theorem for the untwisted case is indeed an immediate corollary of the results of \cite{Dyer}, see also \cite{DyerTAMS}. 
 Essentially all the machineries were developed in \cite{Dyer} to complete the 
 classification of maximal closed subroot system of untwisted affine root system. 
 Since the purpose of their paper is to classify all the subroot systems in terms of the
 admissible
subgroups of the coweight lattice of a root system $\Psi$, and the scaling functions on $\Psi$, the authors 
 do not write Theorem \ref{mainuntwisted} as a corollary of their results. 
 The main purpose of this paper is to get a similar classification theorem of maximal subroot systems for the twisted affine root system case as well.
\end{rem}

\subsection{}\label{tabeluntwisted}
We end this section by listing out all possible types of maximal closed subroot systems of irreducible untwisted affine root systems and
 give few examples to demonstrate how one gets the this list from Theorem \ref{mainuntwisted} and Table \ref{mrs3}.
\begin{example}
Let $\Phi=\tt B_n^{(1)}$. Then $\mathring\Phi=\text{$\tt B_n$}=\{\pm\epsilon_i,\pm\epsilon_i\pm\epsilon_j : 1\leq i\neq j\leq n\}$. The root system
$\tt B_n$ has a maximal closed subroot system $\Delta$ of type $\tt B_{n-1}$ with a
simple system $\{\epsilon_2-\epsilon_3,\epsilon_3-\epsilon_4,\cdots, \epsilon_{n-1}-\epsilon_n,\epsilon_n\}$ (see \cite[Page 136]{kane}). 
By Theorem \ref{mainuntwisted}, $\widehat{\Delta}$ is a maximal closed subroot system of $\Phi$ and by Definition \ref{type}, the type of $\widehat{\Delta}$ is $\tt B_{n-1}^{(1)}.$
\end{example}

\begin{example}
Let $\Phi=\tt G_2^{(1)}$. Then $\mathring\Phi=\text{$\tt G_2$}=\{\epsilon_i-\epsilon_j,\pm(\epsilon_i+\epsilon_j-2\epsilon_k) : 1\leq i,j,k\leq 3, i\neq j\}$. The root system
$\tt G_2$ has a maximal closed subroot system $\Delta$ of type $\tt A_1\oplus A_1$ with a simple system
$\{\epsilon_1-\epsilon_2,\epsilon_1+\epsilon_2-2\epsilon_3\}$ (see \cite[Page 136]{kane}). By Theorem 
\ref{mainuntwisted}, $\widehat{\Delta}$ is a maximal closed subroot system of $\Phi$ and the type of $\widehat{\Delta}$ is $\tt A_{1}^{(1)}\oplus A_{1}^{(1)}.$
\end{example}

\begin{example}
Let $\Phi=\tt D_n^{(1)}$. Then $\mathring\Phi=\text{$\tt D_n$}=\{\pm\epsilon_i\pm\epsilon_j : 1\leq i\neq j\le n\}$. The root system
$\tt D_n$ has a maximal closed subroot system $\Delta$ of type $\tt D_{n-1}$ with a simple system
$\{\epsilon_2-\epsilon_3,\epsilon_3-\epsilon_4,\cdots, \epsilon_{n-1}-\epsilon_n,\epsilon_{n-1}+\epsilon_n\}$
(see \cite[Page 136]{kane}). By Theorem 
\ref{mainuntwisted}, $\widehat{\Delta}$ is a maximal closed subroot system of $\Phi$ and the type of $\widehat{\Delta}$ is $\tt D_{n-1}^{(1)}.$
\end{example}

\noindent
The following table is immediate from Theorem \ref{mainuntwisted} and Table \ref{mrs3}.
\begin{table}[ht]
\caption{Types of maximal closed subroot systems of irreducible untwisted affine root systems}
\centering 
\begin{tabular}{|c|c|c|}
\hline
Type & Reducible & Irreducible \\
\hline 
$\tt A_n^{(1)}$ & $\tt A_r^{(1)} \oplus A_{n-r-1}^{(1)}\; (0 \leq r \leq n-1)$ & $\tt A_n^{(1)}$  \\[1ex]
$\tt B_n^{(1)}$ & $\tt B_r^{(1)} \oplus D_{n-r}^{(1)}\;\;\;\; (1 \leq r \leq n-2)$ & $\tt B_{n-1}^{(1)}$, $\tt D_n^{(1)}$, $\tt B_n^{(1)}$ \\[1ex]
$\tt C_n^{(1)}$ & $\tt C_r^{(1)} \oplus C_{n-r}^{(1)}\;\;\;\; (1 \leq r \leq n-1)$ & $\tt A_{n-1}^{(1)}$, $\tt C_n^{(1)}$ \\[1ex]
$\tt D_n^{(1)}$ & $\tt D_r^{(1)} \oplus D_{n-r}^{(1)}\;\;\; (2 \leq r \leq n-2)$ & $\tt A_{n-1}^{(1)}$, $\tt D_{n-1}^{(1)}$, $\tt D_n^{(1)}$ \\[1ex]
$\tt E_6^{(1)}$ & $\tt A_5^{(1)} \oplus A_{1}^{(1)}$, $\tt A_2^{(1)} \oplus A_{2}^{(1)} \oplus A_2^{(1)}$ & $\tt D_{5}^{(1)}$, $\tt E_6^{(1)}$ \\[1ex]
$\tt E_7^{(1)}$ & $\tt A_5^{(1)} \oplus A_{2}^{(1)}$, $\tt A_1^{(1)} \oplus D_{6}^{(1)} $ & $\tt E_{6}^{(1)}$, $\tt A_7^{(1)}$, $\tt E_7^{(1)}$ \\[1ex]
$\tt E_8^{(1)}$ & $\tt A_1^{(1)} \oplus E_{7}^{(1)}$, $\tt E_6^{(1)} \oplus A_{2}^{(1)}, A_4^{(1)} \oplus A_{4}^{(1)} $ & $\tt D_{8}^{(1)}$, $\tt A_8^{(1)}$, $\tt E_8^{(1)}$ \\ [1ex]
$\tt F_4^{(1)}$ & $\tt A_2^{(1)} \oplus A_{2}^{(1)}$, $\tt A_1^{(1)} \oplus C_{3}^{(1)} $ & $\tt B_{4}^{(1)}$, $F_4^{(1)}$ \\[1ex]
$\tt G_2^{(1)}$ & $\tt A_1^{(1)} \oplus A_{1}^{(1)}$ & $\tt A_{2}^{(1)}$, $\tt G_2^{(1)}$ \\[1ex]

\hline
\end{tabular}
\label{mrs}
\end{table}

\begin{rem}
 The Table \ref{mrs} has already appeared in \cite{FRT} and note that the authors of \cite{FRT} have 
 omitted the possibility of a maximal closed subroot system $\tt D_{n-1}^{(1)}\subset \tt D_{n}^{(1)}$
in their list. 
\end{rem}

\section{Twisted Case }\label{twistedcase}
Throughout this section we assume that $\Phi$ is an irreducible twisted affine root system which is not of type $\tt A^{(2)}_{2n}$. 
Let $\Psi\le \Phi$ be a closed subroot system. Unlike in untwisted case, we have three choices for $\mathrm{Gr}(\Psi)$ in this case.
Indeed because of this fact, the classification of maximal closed subroot systems of twisted affine root systems becomes more technical.
\subsection{} We begin with the definition of the third possible case.
\begin{defn}\label{defnsemiclosed}
 A subroot system $\mathring{\Psi}$ of $\mathring{\Phi}$ is said to be {\em{semi-closed}} if
 \begin{enumerate}
  \item $\mathring{\Psi}$ is not closed in $\mathring{\Phi}$ and
  \item 
if $\alpha, \beta\in \mathring{\Psi}$ such that $\alpha+\beta\in\mathring{\Phi}\backslash \mathring{\Psi}$, then $\alpha$ and $\beta$ must be short roots and 
  $\alpha+\beta$ must be a long root.
 \end{enumerate}

\end{defn}
The condition (1) in Definition \ref{defnsemiclosed} implies
 that there must exist two roots $\alpha, \beta \in \mathring\Psi$ 
 such that $\alpha+\beta \in \mathring\Phi\setminus\mathring\Psi$ and the condition (2) ensures that $\alpha$ and $\beta$ are short roots and 
  $\alpha+\beta$ is a long root. Thus,  if $\mathring\Psi$ is semi-closed in $\mathring\Phi$, then there exists short roots $\alpha$ and $\beta$
  such that their sum $\alpha+\beta$ is a long root and  $\alpha+\beta\in\mathring{\Phi}\backslash \mathring{\Psi}$.

\begin{prop}\label{twistedgradient}
Let $\Phi$ be an irreducible twisted affine root system not of type $\tt A_{2n}^{(2)}$ and let $\Psi\le \Phi$ be a subroot system. If $\Psi\le \Phi$ is a closed subroot system, then either
 \begin{enumerate}
 \item $\Gr(\Psi)=\Gr(\Phi)$ or
  \item $\Gr(\Psi)$ is a proper closed subroot system of $\Gr(\Phi)$ or
  \item $\Gr(\Psi)$ is a proper semi-closed subroot system of $\Gr(\Phi)$.
 \end{enumerate}
\end{prop}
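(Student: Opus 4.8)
The plan is to reduce the trichotomy to a single verification. Recall that $\Gr(\Psi)$ is always a subroot system of $\Gr(\Phi)=\mathring\Phi$, so the three listed possibilities are mutually exclusive and jointly exhaustive: the only thing that is not immediate is that case (3) is the correct description in the remaining situation. Concretely, I would argue that if $\Gr(\Psi)$ is a proper subroot system of $\mathring\Phi$ that fails to be closed, then it is \emph{semi-closed} in the sense of Definition \ref{defnsemiclosed}. Condition (1) of that definition holds by assumption, so everything comes down to verifying condition (2): whenever $\alpha,\beta\in\Gr(\Psi)$ satisfy $\gamma:=\alpha+\beta\in\mathring\Phi\setminus\Gr(\Psi)$, the roots $\alpha,\beta$ are short and $\gamma$ is long.

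First I would lift the failure of closedness into $\Phi$. Since $\alpha,\beta\in\Gr(\Psi)$ we may pick $r\in Z_\alpha(\Psi)$ and $s\in Z_\beta(\Psi)$, so that $\alpha+r\delta,\ \beta+s\delta\in\Psi$; their sum is $\gamma+(r+s)\delta$. Because $\gamma\notin\Gr(\Psi)$, this element is not in $\Psi$, and since $\Psi$ is closed it cannot even lie in $\Phi$. The key point is that this must hold for \emph{every} admissible pair $(r,s)$. Recalling from Section \ref{definitionofm} that a short root $\gamma$ gives $\gamma+t\delta\in\Phi$ for all $t\in\mathbb{Z}$, whereas a long root $\gamma$ gives $\gamma+t\delta\in\Phi$ if and only if $m\mid t$, I conclude that $\gamma$ cannot be short (otherwise $\gamma+(r+s)\delta\in\Phi$ at once); hence $\gamma$ is long.

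It then remains to exclude long summands. The extra input for a long root $\alpha$ is the lattice constraint $Z_\alpha(\Psi)\subseteq m\mathbb{Z}$, valid because long roots occur in $\Phi$ only with shifts in $m\mathbb{Z}$. If both $\alpha$ and $\beta$ were long, then $r,s\in m\mathbb{Z}$ would force $r+s\in m\mathbb{Z}$, so $\gamma+(r+s)\delta\in\Phi$, the desired contradiction. If exactly one of them, say $\alpha$, were long and $\beta$ short, then $\gamma=\alpha+\beta$ being long forces $\langle\beta,\alpha^\vee\rangle=-1/m\notin\mathbb{Z}$, impossible in a crystallographic root system. Hence both $\alpha$ and $\beta$ are short, which together with $\gamma$ long is exactly condition (2), so $\Gr(\Psi)$ is semi-closed.

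I expect the main obstacle to be the clean separation of the two exclusion mechanisms for long summands. The $(\text{long},\text{long})$ case is \emph{not} eliminated by finite root-system geometry: long $+$ long $=$ long does occur in $\mathring\Phi$ (for instance inside the $\tt D_n$ subsystem of long roots of $\tt B_n$); it is eliminated only by combining the closedness of $\Psi$ with the $m\mathbb{Z}$-containment of the $Z$-sets. By contrast, the $(\text{long},\text{short})$ case is eliminated purely by integrality of Cartan pairings. Keeping straight which constraint does the work in each subcase, and observing that the surviving $(\text{short},\text{short})$ configuration is genuinely consistent — this being precisely where the semi-closed phenomenon originates — is the delicate part of the argument.
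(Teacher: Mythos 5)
Your proof is correct and follows essentially the same route as the paper: lift the failure of closedness to $\Phi$, use closedness of $\Psi$ to conclude $\gamma+(r+s)\delta\notin\Phi$, deduce that $\gamma$ is long, rule out two long summands via $Z_\alpha(\Psi)\subseteq m\mathbb{Z}$, and rule out a long--short pair by root-system geometry. The only difference is cosmetic: where the paper simply cites the fact that a long plus a short root is never long, you verify it via the integrality of $\langle\beta,\alpha^\vee\rangle$, which is a fine (slightly more self-contained) way to do the same step.
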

\begin{pf}
 Let $\Gr(\Psi)$ neither be equal to $\Gr(\Phi)$ nor be a proper closed subroot system of $\Gr(\Phi)$. 
 Then there must exist two roots $\alpha_1,\alpha_2 \in \Gr(\Psi)$ such that $\alpha_1+\alpha_2 \in \Gr(\Phi)\setminus\Gr(\Psi)$. 
 We claim that the roots $\alpha_1,\alpha_2$ must be short roots and their sum $\alpha_1+\alpha_2$ must be a long root.
 Since $\alpha_1,\alpha_2 \in \Gr(\Psi)$,
 there exists $u,v \in\mathbb{Z}$ such that $\alpha_1+u\delta$, $\alpha_2+v\delta \in \Psi$. As $\Psi$ is closed and $\alpha_1+\alpha_2 \in \Gr(\Phi)\setminus\Gr(\Psi)$, 
 we have $\alpha_1+\alpha_2+(u+v)\delta \notin \Phi$.
 This implies that  $\alpha_1+\alpha_2$ is a long root.
 
 Suppose that both $\alpha_1$ and $\alpha_2$ are long roots. Then both $u$ and $v$ are integer multiples of $m$, 
 and hence so is $u+v$, 
 which contradicts the fact that  $\alpha_1+\alpha_2+(u+v)\delta \notin \Phi$. 
 So, they can not be both long. Since a sum of a short root and a long root can not be a long root, we have both $\alpha_1$ and $\alpha_2$ are short roots. 
 This proves that $\Gr(\Psi)$ must be a proper semi-closed subroot system of $\Gr(\Phi)$.
\end{pf}

\subsection{} Now,  we assume that $\Psi\le \Phi$ is a maximal closed subroot system. 
Then by Proposition \ref{twistedgradient}, we have three choices for $\mathrm{Gr}(\Psi)$.
First two cases of Proposition \ref{twistedgradient}  are easier to study and they are similar to the untwisted affine root systems.
The case (3) of Proposition \ref{twistedgradient} requires a case-by-case analysis.
In this section,
we study the easier cases $(1)$ and $(2)$ and in Sections \ref{2Dncase}, \ref{2A2n-1}, \ref{3D4} and \ref{2E6} we will treat the
case $(3)$ for all affine root systems $\Phi$ distinct from $\tt A^{(2)}_{2n}$. Root systems of type $\tt A^{(2)}_{2n}$ will be considered separately in Section
\ref{2A2n} for $n\ge 2$ and in Section \ref{2A2} for $n=1$.

\begin{prop}\label{twistedpropergradient}
 Let $\Phi, \Psi$ as before. If $\Psi\le \Phi$ is a maximal closed subroot system and $\Gr(\Psi)$ is a proper closed subroot system of $\Gr(\Phi)$, 
 then $\Gr(\Psi)<\Gr(\Phi)$ is a maximal closed subroot system such that it contains at least one short root. In this case, we have $\Psi= \wh{\Gr(\Psi)}$.
 \end{prop}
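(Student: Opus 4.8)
The plan is to mirror the lifting argument of Proposition \ref{untwisted02}(2) for the first two assertions, and then to isolate the genuinely twisted phenomenon---the forced presence of a short root---in a separate contradiction argument. Throughout I will use that $\Gr(\Phi)=\mathring{\Phi}$ is non-simply-laced, so that $\mathring{\Phi}_s\neq\emptyset$, and that a long root $\alpha$ satisfies $\alpha+r\delta\in\Phi$ exactly when $r\in m\mathbb{Z}$, where $m\in\{2,3\}$.

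First I would prove $\Psi=\wh{\Gr(\Psi)}$. Since $\Gr(\Psi)$ is closed in $\mathring{\Phi}=\Gr(\Phi)$, Lemma \ref{closedlemma} gives that $\wh{\Gr(\Psi)}$ is closed in $\Phi$; it contains $\Psi$ because $Z_\alpha(\Psi)\subseteq\{r:\alpha+r\delta\in\Phi\}$ for each $\alpha\in\Gr(\Psi)$; and it is proper because $\Gr(\wh{\Gr(\Psi)})=\Gr(\Psi)\subsetneq\mathring{\Phi}$ forces $\wh{\Gr(\Psi)}\neq\Phi$. Maximality of $\Psi$ then yields $\Psi=\wh{\Gr(\Psi)}$. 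The maximality of $\Gr(\Psi)$ in $\mathring{\Phi}$ follows by the same contradiction as in the untwisted case: if a closed $\Omega$ satisfied $\Gr(\Psi)\subsetneq\Omega\subsetneq\mathring{\Phi}$, then $\wh{\Omega}$ would be closed in $\Phi$ by Lemma \ref{closedlemma} and would satisfy $\Psi=\wh{\Gr(\Psi)}\subsetneq\wh{\Omega}\subsetneq\Phi$, the left inclusion being strict because $\Gr(\Psi)\subsetneq\Omega$ and the right because $\Gr(\wh{\Omega})=\Omega\subsetneq\mathring{\Phi}$, contradicting maximality of $\Psi$.

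The main obstacle is the last assertion, that $\Gr(\Psi)$ contains a short root; this is where the twisted case departs from the untwisted one. I would argue by contradiction, assuming $\Gr(\Psi)\subseteq\mathring{\Phi}_\ell$, so that by the previous paragraph $\Psi=\wh{\Gr(\Psi)}=\{\alpha+mr\delta:\alpha\in\Gr(\Psi),\ r\in\mathbb{Z}\}$ consists only of long roots carrying $m$-divisible shifts. I would then produce the strictly larger system $\Delta=\{\gamma+mr\delta:\gamma\in\mathring{\Phi},\ r\in\mathbb{Z}\}$, noting that every such element indeed lies in $\Phi$. A short verification shows $\Delta$ is a closed subroot system: any reflection $\textbf{s}_{\gamma_1+mr_1\delta}(\gamma_2+mr_2\delta)$ and any sum $(\gamma_1+\gamma_2)+m(r_1+r_2)\delta$ lying in $\Phi$ again have gradient in $\mathring{\Phi}$ and $\delta$-coefficient in $m\mathbb{Z}$, hence belong to $\Delta$. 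Finally $\Psi\subsetneq\Delta$ because $\Delta$ contains the short roots $\beta\in\mathring{\Phi}_s$ (take $r=0$) whereas $\Psi$ has none, and $\Delta\subsetneq\Phi$ because $\beta+\delta\in\Phi\setminus\Delta$ for a short root $\beta$, using $m\geq2$. The chain $\Psi\subsetneq\Delta\subsetneq\Phi$ contradicts the maximality of $\Psi$, so $\Gr(\Psi)$ must contain a short root. I expect the verification that $\Delta$ is closed together with the two strict inclusions to be the crux, the rest being a direct transcription of the untwisted argument.
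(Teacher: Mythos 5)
Your proof is correct and follows essentially the same route as the paper: the lifting argument of Proposition \ref{untwisted02}(2) for maximality of $\Gr(\Psi)$ and the identity $\Psi=\wh{\Gr(\Psi)}$, and then, for the short-root claim, the same contradiction via the closed subroot system $\Omega=\{\alpha+mr\delta:\alpha\in\mathring{\Phi},\ r\in\mathbb{Z}\}$, which strictly contains $\Psi$ and is strictly contained in $\Phi$. The only difference is presentational: you re-derive the lifting argument in-line rather than citing it, and you spell out the strict inclusions that the paper leaves implicit.
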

\begin{pf}
  The proof that $\Gr(\Psi)<\Gr(\Phi)$ is a maximal closed subroot system follows immediately from the part (2) of Proposition \ref{untwisted02}. 
  Now,  suppose that $\mathrm{Gr}(\Psi)$ contains only long roots.
  Then it is easy to see that $\Psi\le \widehat{\mathring{\Phi}_\ell}$. But $\Psi\le\widehat{\mathring{\Phi}_\ell}\subsetneq 
  \Omega= \left\{\a+mr\delta:\a\in\mathring{\Phi},\ r\in\mathbb{Z}\right\}$ and $\Omega$ is a closed subroot system of $\Phi$, which is a contradiction to the fact that
  $\Psi$ is maximal closed.
\end{pf}

Now,  we present our main classification theorem for the maximal closed subroot systems of twisted affine root system $\Phi$ (which is not of type $\tt A^{(2)}_{2n}$) whose gradient subroot system is equal to
$\mathring{\Phi}$ or is a proper closed subroot system of $\mathring{\Phi}$.

\begin{thm}\label{twistedgradient=PhinotA2}
Let $\Phi$ be an irreducible twisted affine root system which is not of type $\tt A^{(2)}_{2n}$ and 
let  $\Psi$ be a maximal closed subroot system of $\Phi$.
\begin{enumerate}
 \item 
If $\Gr(\Psi)= \mathring{\Phi}$,
then there exists a $\mathbb{Z}$--linear function $p:\Gr(\Psi)\to \mathbb{Z}$ and a prime number $n_\Psi$ such that p satisfies the condition (\ref{palpha}), $p_\a\in m\mathbb{Z}$ for long roots $\a$ and
 $$\Psi=
 \begin{cases}
   \{\alpha+(p_\alpha+rn_\Psi)\delta, \beta+(p_\beta+mrn_\Psi)\delta : \alpha\in \mathring{\Phi}_s,  \beta\in \mathring{\Phi}_\ell,  r\in\mathbb{Z}\} \  \text{if} \ m \neq n_\Psi,\\
 \{\alpha+(p_\alpha+rn_\Psi)\delta  : \alpha\in \mathring{\Phi}, \ r\in\mathbb{Z} \}\ \text{if} \ m = n_\Psi. \\
 \end{cases}
$$
Conversely,   given a prime number $n_\Psi$ and a $\mathbb{Z}$--linear function $p:\mathring{\Phi}\to \mathbb{Z}$
satisfying  $p_\a\in m\mathbb{Z}$ for long roots $\a\in\mathring{\Phi}_\ell$ and (\ref{palpha}), the subroot system $\Psi$ defined above gives us a maximal closed subroot system of $\Phi.$
 
\vskip 1.5mm
\item If $\Gr(\Psi)\subsetneq \mathring{\Phi}$ is a proper closed subroot system,  then $\Gr(\Psi)<\mathring{\Phi}$ is a maximal closed subroot system 
such that it contains at least one short root and 
in this case $\Psi= \wh{\Gr(\Psi)}$. Conversely,   if $\mathring{\Psi}\subsetneq \mathring{\Phi}$ is a maximal closed subroot system with a short root then $\widehat{\mathring{\Psi}}$
is a maximal closed subroot system. 
\end{enumerate}

\end{thm}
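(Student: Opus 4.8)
The plan is to handle the two parts by leveraging the $n_\alpha$--analysis of Section~\ref{pexis}. For the forward direction of part (1), note that $\Gr(\Psi)=\mathring{\Phi}$ is irreducible and, since $\Phi$ is twisted and not of type $\tt A^{(2)}_{2n}$, non simply-laced. Proposition~\ref{keypropositionnon2A2n} then gives a $\mathbb{Z}$--linear $p$ satisfying (\ref{palpha}) with $Z_\alpha=p_\alpha+n_\alpha\mathbb{Z}$, and Proposition~\ref{twisted01}(3) forces $n_\Psi=n_s$ to be prime. The only freedom left is $n_\ell$: because $m\in\{2,3\}$ is prime, $m\mid n_\Psi$ holds exactly when $n_\Psi=m$, so Proposition~\ref{twisted01}(2) gives $n_\ell=n_\Psi$ when $m=n_\Psi$ and $n_\ell=mn_\Psi$ otherwise, which is precisely the two displayed cases; the condition $p_\beta\in m\mathbb{Z}$ on long roots is automatic since every element of $\Phi$ with long gradient has $\delta$--coefficient in $m\mathbb{Z}$. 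For the converse I would verify that the displayed $\Psi$ is a closed subroot system using $\mathbb{Z}$--linearity of $p$ and the reflection identities already computed in the proof of Proposition~\ref{twisted01}(3); the one substantive check is that a sum of two short roots whose total lies in $\Phi$ (hence has $\delta$--coefficient in $m\mathbb{Z}$) lands automatically in the long--root coset, which uses $\gcd(m,n_\Psi)=1$.

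Maximality in part (1) follows the template of Theorem~\ref{mainuntwisted}(1). If $\Psi\subsetneq\Delta\subseteq\Phi$ then $\Gr(\Delta)=\mathring{\Phi}$, and Lemma~\ref{keylemma}(2) gives $n_s^\Delta\mid n_\Psi$, so $n_s^\Delta\in\{1,n_\Psi\}$. When $n_s^\Delta=n_\Psi$, Proposition~\ref{twisted01}(2) forces $n_\ell^\Delta=n_\ell^\Psi$ as well (treating $m=n_\Psi$ and $m\neq n_\Psi$ separately), so $\Delta=\Psi$ by Lemma~\ref{keylemma}(3); when $n_s^\Delta=1$ we get $n_\ell^\Delta=m$ and the cosets fill up, forcing $\Delta=\Phi$. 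The forward direction of part (2) is then nothing but Proposition~\ref{twistedpropergradient}.

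For the converse of part (2), closedness of $\wh{\mathring{\Psi}}$ is immediate from Lemma~\ref{closedlemma}, so everything reduces to maximality. I would take a closed $\Delta$ with $\wh{\mathring{\Psi}}\subseteq\Delta\subseteq\Phi$; since $\Gr(\wh{\mathring{\Psi}})=\mathring{\Psi}$, Lemma~\ref{keylemma}(1) gives $\mathring{\Psi}\subseteq\Gr(\Delta)$. If $\Gr(\Delta)=\mathring{\Psi}$ then $\Delta=\wh{\mathring{\Psi}}$, because the lift already contains every root of $\Phi$ with gradient in $\mathring{\Psi}$, so $Z_\alpha(\Delta)=Z_\alpha(\wh{\mathring{\Psi}})$ for all $\alpha$ and Lemma~\ref{keylemma}(3) applies. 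The crucial fact I would exploit throughout the remaining case is that $\mathring{\Psi}$ has a short root, so $\wh{\mathring{\Psi}}\subseteq\Delta$ forces $Z_\alpha(\Delta)=\mathbb{Z}$, i.e.\ $n_\alpha^\Delta=1$, for every short root $\alpha\in\mathring{\Psi}$.

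The main obstacle is the case $\Gr(\Delta)\supsetneq\mathring{\Psi}$, which I must show forces $\Delta=\Phi$. By Proposition~\ref{twistedgradient} the gradient $\Gr(\Delta)$ need not be closed: it may be a proper semi-closed subroot system strictly between $\mathring{\Psi}$ and $\mathring{\Phi}$ (for instance $\tt A_{n-1}\subsetneq\tt D_n\subsetneq\tt C_n$), so maximality of $\mathring{\Psi}$ among closed subroot systems does not collapse it directly. The resolving observation is that whenever $\Gr(\Delta)$ contains a short root $\delta_1$ together with a short root $\delta_2\in\mathring{\Psi}$ such that $\delta_1+\delta_2$ is a long root, closedness of $\Delta$ applied to $\delta_1+r_1\delta\in\Delta$ and $\delta_2+r_2\delta\in\Delta$ (with $r_2$ free, since $n_{\delta_2}^\Delta=1$, chosen so that $r_1+r_2\in m\mathbb{Z}$) puts $(\delta_1+\delta_2)+(r_1+r_2)\delta$ into $\Delta$, whence $\delta_1+\delta_2\in\Gr(\Delta)$. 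Thus $\Gr(\Delta)$ can have no such ``missing'' long root, ruling out the semi-closed possibility and forcing $\Gr(\Delta)=\mathring{\Phi}$; the same mechanism then recovers all long roots of $\Phi$ from the short roots, giving $\Delta=\Phi$. I expect the genuinely technical point to be the verification, via the explicit classification in Table~\ref{mrs3}, that any subroot system properly containing a short-root-bearing maximal closed $\mathring{\Psi}$ does contain a witnessing pair $\delta_1,\delta_2$ of the above form (with $\delta_2\in\mathring{\Psi}$), since this is exactly where the short-root hypothesis on $\mathring{\Psi}$ is indispensable.
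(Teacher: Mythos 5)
Your treatment of part (1) in both directions, and of the forward implication in part (2), is correct and essentially identical to the paper's: the forward direction of (1) comes from Proposition~\ref{twisted01}(2),(3), maximality comes from the coset comparison in Lemma~\ref{keylemma} together with the observation that $m\mid n_\Psi$ iff $m=n_\Psi$ (both being prime), and the forward half of (2) is Proposition~\ref{twistedpropergradient}.

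The converse of part (2) has a genuine gap, located exactly at the step you deferred, and it is much larger than a check against Table~\ref{mrs3}. Your mechanism fills in a missing long root $\delta_1+\delta_2$ only when one of the two short roots has full coset $Z_{\delta_2}(\Delta)=\mathbb{Z}$, which you ensure by taking $\delta_2$ to be a short root of $\mathring{\Psi}$ (or, via Lemma~\ref{nalpha}, a short root in the same $W_{\Gr(\Delta)}$--orbit as one). But Proposition~\ref{twistedgradient} allows $\Gr(\Delta)$ to be reducible, and the witnessing pair $\alpha_1,\alpha_2$ may lie in an irreducible component $\Delta_2$ of $\Gr(\Delta)$ that contains \emph{no} short root of $\mathring{\Psi}$ at all; then every short root of $\mathring{\Psi}$ is orthogonal to $\alpha_1$ and $\alpha_2$, and nothing in your proposal produces a witnessing pair meeting $\mathring{\Psi}$. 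This is not hypothetical: for $\mathring{\Phi}$ of type $B_n$ and $\mathring{\Psi}$ of type $B_{n-i}\oplus D_i$, all short roots of $\mathring{\Psi}$ sit in the $B_{n-i}$ part, while the long-roots-only piece $D_i$ could a priori be contained in a non-simply-laced component $\Delta_2$ whose own short roots sum to the missing long root. Ruling out precisely these configurations is the bulk of the paper's proof (its Cases 2.2.1 and 2.2.2): rank comparisons pinning down the number and ranks of the components of $\Gr(\Delta)$, the observation that the relevant component $\Psi_2$ of $\mathring{\Psi}$ must then consist of long roots only so that $\Delta_2$ is non-simply-laced of the same rank, and a type-by-type elimination --- counting short roots to exclude $B_{n-4}\oplus F_4$ and $B_{n-2}\oplus G_2$ inside $B_n$, using that $D_i$ sits inside $B_i$ as the set of \emph{all} long roots (so the ``missing'' root would in fact lie in $D_i\subseteq\Gr(\Delta)$) whereas $D_i$ cannot sit inside $C_i$ as long roots, plus a separate analysis of the case $(\Psi_1,\Psi_2)=(B_{n-2},A_1\oplus A_1)$. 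None of this is read off from Table~\ref{mrs3} by inspection, so as it stands the maximality of $\wh{\mathring{\Psi}}$ is unproved in exactly the hard cases.

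I would add that your strategy can be rescued along your own lines, but only with an observation absent from the proposal: in $B_n$ and $F_4$ any two \emph{orthogonal} short roots sum to a long root, so your mechanism also acts across components (a short root of $\mathring{\Psi}$ in $\Delta_1$ plus a short root in $\Delta_2$ forces into $\Gr(\Delta)$ a long root non-orthogonal to two distinct components, a contradiction), while in $G_2$ no two short roots are orthogonal and in $C_n$ Table~\ref{mrs3} produces no long-roots-only component of rank at least $2$. Either this extra idea or the paper's case analysis is needed; without one of them the proof is incomplete.
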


\begin{rem}
For Case $(1)$ i.e., $\Gr(\Psi)=\mathring\Phi$, the type of $\Psi$ is $\tt X_n^{(2)}$ if the type of $\mathring\Phi$ is $\tt X_n$ and $m\neq n_{\Psi}$ and the type of $\Psi$ is
$\tt X_n^{(1)}$ if the type of $\mathring\Phi$ is $\tt X_n$  and $m = n_{\Psi}$.
For Case $(2)$, the type of $\Psi$ is $\tt X_{n_1}^{(r_1)}\oplus\cdots\oplus X_{n_s}^{(r_s)}$, where
$\tt X_{n_i}$'s are irreducible components of $\Gr(\Psi)$ and $r_i=1$ if $\tt X_{n_i}$ is simply-laced else it is $2$.
\end{rem}
\emph{Proof of Statement $(1)$}.
The forward part of Statement (1) is clear from the parts $(2)$ and $(3)$ of Proposition \ref{twisted01}. Converse part of Statement $(1)$
will be proved case by case.
\vskip 1.5mm
\noindent
Case (1.1). First assume that $n_\Psi$ is a prime number such that $n_\Psi\neq m$ and $\Psi=\{\alpha+(p_\alpha+rn_\Psi)\delta, \beta+(p_\beta+mrn_\Psi)\delta : \alpha\in \mathring{\Phi}_s,  \beta\in \mathring{\Phi}_\ell,  r\in\mathbb{Z}\}$
where $p_\a$ satisfies the condition (\ref{palpha}) and $p_\a\in m\mathbb{Z}$ for long roots $\a$. It is easy to verify that $\Psi$ is a closed subroot system of $\Phi.$ 
By the definition of $\Psi$, we have $$\text{$Z_\a(\Psi)=p_\a+n_\Psi\mathbb{Z}$ for $\a\in \mathring{\Phi}_s$
and $Z_\a(\Psi)=p_\a+mn_\Psi\mathbb{Z}$ for $\a\in \mathring{\Phi}_\ell$.}$$
Now,  we will prove that $\Psi$ is a maximal closed subroot system of $\Phi.$
Suppose $\Psi\subsetneq \Delta \subseteq \Phi$ for some closed subroot system $\Delta$ of $\Phi.$
Then we claim that $\Delta$ must be equal to $\Phi.$ 
Since $\Psi\subseteq \Delta$, we have $\Gr(\Psi)=\Gr(\Delta)=\mathring{\Phi}$.
By part $(2)$ of Proposition \ref{twisted01}, $n_\Delta$ determines 
the subgroups $Z_\a'(\Delta)$ and hence the cosets $Z_\a(\Delta)$.
But by part $(2)$ Lemma \ref{keylemma}, we get $n_\Delta$ divides $n_\Psi.$ This implies that  either $n_\Delta=1$ or $n_\Delta=n_\Psi$ since $n_\Psi$ is a prime number.
Assume first that $n_\Delta=n_\Psi$, then we get $n_\a^\Psi=n_\a^\Delta$ for all $\a\in \mathring{\Phi}$ by part $(2)$ of Proposition \ref{twisted01}.
Since $\Gr(\Psi)=\Gr(\Delta)$ and $n_\a^\Psi=n_\a^\Delta$ for all $\a\in \Gr(\Delta)$, we have $\Psi=\Delta$ using part $(3)$ of Lemma \ref{keylemma}, a contradiction.
So, $n_\Delta$ must be equal to $1$. In this case,  
we get $n_\a^\Delta=n_\a^\Phi$ for all $\a\in \mathring{\Phi}$
 again using the part $(2)$ of Proposition \ref{twisted01}.
This immediately implies that $\Delta=\Phi$ by part $(3)$ of Lemma \ref{keylemma}, since $\Gr(\Delta)=\Gr(\Phi)$. 
\vskip 1.5mm
\noindent
Case (1.2). Now
assume that $n_\Psi=m$ and $\Psi=\{\alpha+(p_\alpha+rm)\delta  : \alpha\in \mathring{\Phi}, \ r\in\mathbb{Z} \}.$ One easily sees that $\Psi$ is a closed subroot system of $\Phi.$
So, it remains to show that $\Psi$ is a maximal closed subroot system of $\Phi.$
Suppose $\Psi\subsetneq \Delta \subseteq \Phi$ for some closed subroot system $\Delta$ of $\Phi.$ Then we need to prove that $\Delta$ must be equal to $\Phi.$
Since $\Psi\subseteq \Delta$, we get $\Gr(\Delta)=\mathring{\Phi}$ and by part $(2)$ of Lemma \ref{keylemma}, we get $n_{\Delta}=m$ or  $n_{\Delta}=1$.
If  $n_\Delta=m$, then by part $(2)$ of Proposition \ref{twisted01}, we get $n_\a^{\Delta}=n_\a^{\Psi}$ for all $\a\in \mathring{\Phi}$. This forces $\Delta=\Psi$, a contradiction.
So, this case does not arise. Hence
we must have  $n_\Delta=1$ which implies that  $\Delta=\Phi$ as before in Case 1.1. This completes the proof of Statement $(1)$.

\vskip 1.5mm
\noindent
\emph{Proof of Statement $(2)$}.
The forward part of
Statement (2) is clear from the Proposition \ref{twistedpropergradient}.
 Conversely,  suppose ${\mathring{\Psi}}$ is a maximal closed subroot system in ${\mathring{\Phi}}$ such that it contains at least one short root, say $\beta \in \mathring{\Psi}$, then
 we claim that the lift $\widehat{\mathring{\Psi}}$ in $\Phi$ must be a maximal closed subroot system.
 Let $\Delta$ be a closed subroot system in $\Phi$ such that 
 $$\widehat{\mathring{\Psi}}\subsetneq\Delta\subseteq\Phi.$$ Then we need to prove that $\Delta$ must be equal to $\Phi.$
We observe the following facts first.
\begin{enumerate}
\item By considering respective gradients, we have $\mathring{\Psi}\subseteq \Gr(\Delta)\subseteq \mathring\Phi$. This implies  that
$\mathrm{rank}(\mathring{\Psi})\le \mathrm{rank}(\Gr(\Delta))\le \mathrm{rank}(\mathring\Phi).$
 \item  By Proposition \ref{twistedgradient}, we know that $\Gr(\Delta)$ is either closed in $\mathring\Phi$ or semi-closed in $\mathring\Phi$.
 \item Since $\mathring{\Psi}$ contains the short root $\beta$, we have $\beta+r\delta\in \widehat{\mathring{\Psi}}\subseteq \Delta$ for all $r\in \mathbb{Z}$. 
\item  $\mathring{\Psi}$ can be both irreducible and reducible subroot system of $\mathring{\Phi}$ (see Table \ref{mrs3} and  \cite[Page 136]{kane}). 
\end{enumerate}
Now,  we will deal with all possible cases of $\Delta$. We begin with the easiest case.
 
\vskip 1.5mm
\noindent
Case (2.1). Assume that $\Gr(\Delta)$ is closed in $\mathring{\Phi}$.  Then we claim that $\Gr(\Delta)=\mathring\Phi$.
Since ${\mathring{\Psi}}$ is maximal closed in $\mathring{\Phi}$ and $\mathring{\Psi}\subseteq \Gr(\Delta)\subseteq \mathring\Phi$, we must have
either $\Gr(\Delta)=\mathring{\Psi}$ or $\Gr(\Delta)=\mathring{\Phi}$. If $\Gr(\Delta)=\mathring{\Psi}$, then we have  $\Delta\subseteq \widehat{\mathring{\Psi}}$, a contradiction.
So, we must have $\Gr(\Delta)=\mathring\Phi$.
Since $n_\Delta=1$, we get $n_\ell^\Delta=m$ by part $(2)$ Proposition \ref{twisted01}. Hence, we get $\Delta=\Phi$ by part $(3)$ of Lemma \ref{keylemma}.

\vskip 1.5mm
\noindent
Case (2.2).  Now,  we are left with the case that $\Gr(\Delta)$ is not closed but semi-closed in $\mathring\Phi$. We will prove that this case also can not arise. 
Let $\Gr(\Delta)$ be not closed but semi-closed in $\mathring\Phi$. 
By Proposition \ref{twistedgradient}, there exists short roots $\alpha_1, \alpha_2\in \Gr(\Delta)$ such that $\a_1+\a_2$ is a 
long root and $\a_1+\a_2\in \mathring{\Phi}\backslash \Gr(\Delta)$, fix these short roots $\a_1$ and $\a_2\in \Gr(\Delta)$.
First we observe that $\Gr(\Delta)$ can not be irreducible. 
Otherwise,  $\Gr(\Delta)$ is irreducible and $\beta+r\delta\in\Delta$ for all $r\in \mathbb{Z}$ would imply $n_\Delta=1$ and hence $n_\ell^\Delta=m$ by part $(2)$ of Proposition \ref{twisted01}.
Since we have  $\alpha_1+r\delta,\alpha_2+r\delta\in \Delta$ for all $r\in \mathbb{Z}$, which implies that  $(\alpha_1+\alpha_2)+m\delta=(\alpha_1+(m-1)\delta)+(\alpha_2+\delta)\in \Delta$,
a contradiction to the fact that $\a_1+\a_2\notin \Gr(\Delta)$. So, $\Gr(\Delta)$
must be reducible.  Let $\Gr(\Delta)=\Delta_1\oplus \cdots \oplus \Delta_k$ be the decomposition of $\Gr(\Delta)$ into irreducible components.
Then it is immediate that $\mathrm{rank}(\Gr(\Delta))=\mathrm{rank}(\Delta_1)+ \cdots + \mathrm{rank}(\Delta_k).$ 

\vskip 1.5mm
\noindent
Case (2.2.1). We now consider the case when $\mathring{\Psi}$ is irreducible. Since $\mathring{\Psi}$ is irreducible, it must be contained in one of components of $\Gr(\Delta)$. Without loss of generality
we can assume that $\mathring{\Psi}\subseteq \Delta_1.$ 
We have either $$\mathrm{rank}(\mathring{\Psi})=\mathrm{rank}(\mathring{\Phi}) \ \text{or}  \ \mathrm{rank}(\mathring{\Psi})=\mathrm{rank}(\mathring{\Phi})-1$$
since $\mathring{\Psi}$ is irreducible maximal closed subroot system of $\mathring{\Phi}$
 (see Table \ref{mrs3} and \cite[Page 136]{kane}). If  
 $\mathrm{rank}(\mathring{\Psi})=\mathrm{rank}(\mathring{\Phi})$, then we get $\mathrm{rank}(\Delta_i)=0,\ \text{for all} \ i=2,\cdots, k$ which is a contradiction to the fact that 
$\Gr(\Delta)$ is reducible. So, we get $\mathrm{rank}(\mathring{\Psi})=\mathrm{rank}(\mathring{\Phi})-1$. Since
$$\mathrm{rank}(\Delta_2)+ \cdots + \mathrm{rank}(\Delta_k)\le \mathrm{rank}(\mathring{\Phi})-\mathrm{rank}(\mathring{\Psi})=1,$$
we must have $k=2$ and $\mathrm{rank}(\Delta_2)=1$. 
This implies that   $\Gr(\Delta)=\Delta_1\oplus A_1$ with $\mathring{\Psi}\subseteq \Delta_1$. 
Since $\beta+r\delta\in \Delta$ for all $r\in\mathbb{Z}$ and $\Delta_1$ is irreducible, we have $n_s^{\Delta_1}(\Delta)=1$. 
In particular $\a+r\delta\in \Delta$ for all the short roots $\a\in \Delta_1$ and $r\in\mathbb{Z}.$
Clearly,  one of the short roots $\a_j$, $j=1, 2$ must be in $\Delta_1$, say $\a_1\in \Delta_1$.
Since $\alpha_2\in \Gr(\Delta)$, there exists $r\in \mathbb{Z}$ such that $\alpha_2+r\delta\in \Delta$. 
Now,  $(\alpha_1+\alpha_2)+m\delta=(\alpha_2+r\delta)+(\alpha_1+(m-r)\delta)\in \Delta$ since $\Delta$ is closed and $(\alpha_1+(m-r)\delta)\in \Delta$ because $n_s^{\Delta_1}=1$.
This is again contradicting the fact that $\a_1+\a_2\notin \Gr(\Delta)$.

\vskip 1.5mm
\noindent
Case (2.2.2). We are now left with the case $\mathring{\Psi}$ is reducible. 
Recall that $\mathring{\Phi}$ is non simply-laced irreducible finite crystallographic root system.
So, by the classification of maximal closed subroot systems of the finite root systems 
(see Table \ref{mrs3} and \cite[Page 136]{kane}),
we know that we must have $\mathrm{rank}(\mathring{\Psi})=\mathrm{rank}(\mathring{\Phi})$ and $\mathring{\Psi}=\Psi_1\oplus \Psi_2$, where 
$\Psi_1, \Psi_2$ are irreducible components of
$\mathring{\Psi}$ except in the case that when $\mathring{\Phi}=\tt B_n$ and $(\Psi_1, \Psi_2)=(\tt B_{n-2}, A_1\oplus A_1)$.
We will treat the cases $\mathring{\Phi}=\tt B_n$ and $(\Psi_1, \Psi_2)=(\tt B_{n-2}, A_1\oplus A_1)$ separately. 
Since  $\mathrm{rank}(\mathring{\Psi})=\mathrm{rank}(\Gr(\Delta))=\mathrm{rank}(\mathring\Phi)$ and $\Gr(\Delta)$
is reducible, $\mathring\Psi$ can not be contained in 
one single irreducible component of $\Gr(\Delta)$.

\vskip 1.5mm
\noindent
Subcase 1. Assume that $\Psi_1, \Psi_2$ are irreducible, i.e., $(\Psi_1, \Psi_2)\neq (\tt B_{n-2}, A_1\oplus A_1).$  Since $\mathring\Psi$ can not be contained in 
one single irreducible component of $\Gr(\Delta)$ and $\Psi_1, \Psi_2$ are irreducible, we may assume that $\Psi_1\subseteq \Delta_1$, $\Psi_2\subseteq \Delta_2$.
Then $$\mathrm{rank}(\mathring{\Phi})=\mathrm{rank}(\Psi_1)+\mathrm{rank}(\Psi_2)\le \mathrm{rank}(\Delta_1)+ \cdots + \mathrm{rank}(\Delta_k)\le \mathrm{rank}(\mathring{\Phi})$$
implies that $k=2$ and $\mathrm{rank}(\Psi_1)=\mathrm{rank}(\Delta_1)$, $\mathrm{rank}(\Psi_2)=\mathrm{rank}(\Delta_2)$.
Since $\beta\in \mathring{\Psi}$, it must be either in $\Psi_1$ or in $\Psi_2.$
Assume that $\beta\in \Psi_1$, then as before in the Case 2.2.1 we get $n_s^{\Delta_1}(\Delta)=1$. 
Hence, by previous arguments which appear in the Case 2.2.1, we observe that $\Delta_2$ must contain those short roots $\a_1$ and $\a_2$.
Now,  since $\Delta_2$ contains the short roots $\a_1$ and $\a_2$ we  observe that $\Psi_2$ must contain only long roots. 
Otherwise,  we will get $n_s^{\Delta_2}(\Delta)=1$ (since $\widehat{\mathring{\Psi}}\subseteq \Delta$) which will again lead to the contradiction  $\a_1+\a_2\in \Gr(\Delta)$. 
Hence, $\Delta_2$ must be non simply-laced.
Again by the classification, see Table \ref{mrs3} and \cite[Page no. 136]{kane}, we can have only the following possibilities of $(\mathring{\Phi}, \mathring{\Psi})$
such that $\mathring{\Psi}=\Psi_1\oplus \Psi_2$ with simply laced $\Psi_2$: 
$$(\tt B_n, \tt B_{n-1}\oplus A_1), (\tt B_n, \tt B_{n-i}\oplus D_i),\ 3\le i\le n-2, \ \ (F_4, C_3 \oplus A_1), \ (F_4, A_2\oplus A_2), \ \ (G_2, A_1\oplus A_1).$$
 We will prove that these possibilities can not occur. Hence, the case \say{$\mathring{\Psi}$ is reducible} is not possible and hence the case $\Gr(\Delta)$ is semi-closed in $\mathring{\Phi}$ is not possible.
 Recall that $\Psi_2\subseteq \Delta_2$ satisfying the following properties:
 \begin{itemize}
  \item  $\mathrm{rank}(\Psi_2)=\mathrm{rank}(\Delta_2)$, $\Psi_2$ is simply laced and $\Psi_2$ contains only long roots
  \item $\Delta_2$ is non simply-laced
  \item $\Delta_2$ contains the short roots $\a_1$ and $\a_2$ whose sum $\a_1+\a_2$ is a long root in $\mathring{\Phi}$.
 \end{itemize}
 This immediately implies that the cases $(\mathring{\Phi}, \mathring{\Psi})=(\tt B_n, \tt B_{n-1}\oplus A_1), (F_4, C_3\oplus A_1),$ and $(\tt G_2, A_1\oplus A_1)$ are not possible.   
If $(\mathring{\Phi}, \mathring{\Psi})=(\tt F_4, A_2\oplus A_2)$, then $\Delta_2$ must contain $\tt A_2$ properly which implies that  $\Delta_2$ must be $\tt G_2$.
But $\tt G_2$ can not be a subroot system of $\tt F_4$, so this case also does not occur.

\vskip 1.5mm
Now,  consider the case $(\mathring{\Phi}, \mathring{\Psi})=(\tt B_n, \tt B_{n-i}\oplus D_{i})$ with $3\le i\le n-2$. Then we have $\Psi_1=\tt B_{n-i}$ and $\Psi_2=D_{i}$.
Since $\Delta_2$ is non simply-laced irreducible finite root system, the only possibilities of $\Delta_2$ are $\tt B_i, C_i, F_4 $ and $\tt G_2.$ 
We will directly prove that these possibilities can not occur.
By counting the number of short roots in $\tt B_{n-4}\oplus F_4$ and $\tt B_n$, one can easily see that $\tt B_{n-4}\oplus F_4$ can not occur as a subroot system of $\tt B_n.$
Similarly,  $\tt B_{n-2}\oplus G_2$ does not occur as a subroot system of $\tt B_n.$
Since $\tt B_{n-4}\oplus F_4$ and $\tt B_{n-2}\oplus G_2$  can not occur as subroot systems of $\tt B_n$, we can not have $\Delta_2=\tt G_2$ or $\tt F_4$. 
So, we are left with the cases $\Delta_2=\tt B_i$ or $\tt C_i.$ The $\tt D_i$ can not occur as subroot system of $\tt C_i$ with only consisting of long roots, hence 
$\Delta_2$ can not be $\tt C_i.$ Thus
$\Delta_2=\tt B_i$ is the only case remaining, in this case $\tt D_i$ must be the subroot system of $\tt B_i$ consisting of all long roots of $\tt B_i$.
Since  $\Delta_2=\tt B_i$ and $\a_1+\a_2$ is a long root in $\mathring{\Phi}$, we have
$\a_1+\a_2\in \text{$\tt D_i$}\subseteq \Gr(\Delta)$, a contradiction. 

\vskip 1.5mm
\noindent
Subcase 2. Finally we are left with the case $\mathring\Phi=\tt B_n$ and $(\Psi_1, \Psi_2)=(\tt B_{n-2}, A_1\oplus A_1).$ Since  
$\mathring\Psi$ can not be contained in 
one single irreducible component of $\Gr(\Delta)$, we may have two cases. 
\begin{itemize}
 \item[(i)] $\Psi_1=\text{$\tt B_{n-2}$}\subseteq \Delta_1$ and $\Psi_2=\text{$\tt A_1\oplus A_1$} \subseteq \Delta_2$. 
 In this case,  $k=2$, $\mathrm{rank}(\Delta_1)=n-2$ and $\mathrm{rank}(\Delta_2)=2.$
 Since $\beta\in \mathring{\Psi}$, we have either $\beta\in \Psi_1$ or $\beta\in \Psi_2$.
Let $\beta\in \Psi_1.$ This implies that  $n_s^{\Delta_1}(\Delta)=1$ which implies that  $\a_1, \a_2 \notin \Delta_1.$ 
Hence, $\a_1, \a_2\in \Delta_2$ and $\Psi_2$ can not have short roots and must contain only long roots. Thus,  $\Delta_2=\text{$\tt B_2$} \ \text{or} \ \tt G_2,$
not possible like in Subcase 1.
So, $\beta\in \Psi_2\implies n_s^{\Delta_2}(\Delta)=1\implies \a_1, \a_2\in \Delta_1$. This implies that  $\Psi_1$ can not contain short roots and only contain long roots and
$\Delta_1$ must be non simply-laced.
But $\Psi_1=\tt B_{n-2}$ is non simply-laced for $n\ge 4$, so it contains a short root of $\mathring\Phi$. 
If $n=3$ then $\mathrm{rank}(\Delta_1)=1$,  which implies that  $\Psi_1=\Delta_1$. So $\Delta_1$ can not be non simply-laced in this case, again a contradiction. 
So this case is not possible.

\item[(ii)] $\Psi_1=\text{$\tt B_{n-2}$}\subseteq \Delta_1$ and $\Psi_2 \subseteq \Delta_2\oplus \Delta_3$. In this case,  $k=3$, $\mathrm{rank}(\Delta_1)=n-2$ and 
$\Delta_2=\Delta_3=\tt A_1.$
Since sum of two roots from $\Delta_2\oplus \Delta_3=\tt A_1\oplus A_1$ can not be a root again, we must have one of the $\a_j$, $j=1, 2$ in $\Delta_1.$
So, $\beta$ can not be in  $\Psi_1$. Thus,  $\beta\in \Psi_2.$ But this can not happen like in the case (i).

\end{itemize}

This completes the proof.
\qed

\subsection{}
We end this section by listing out all possible types of maximal closed subroot systems of irreducible twisted affine root systems which has closed gradient subroot systems
and we demonstrate how to get this list from the Theorem \ref{twistedgradient=PhinotA2} by a few examples.
\begin{example}
Let $\Phi=\tt D_{n+1}^{(2)}$. Then $\mathring\Phi=\text{$\tt B_n$}=\{\pm\epsilon_i,\pm\epsilon_i\pm\epsilon_j; 1\leq i\neq j\leq n\}$. 
The root system $\tt B_n$ has a maximal closed subroot system  $\Delta$ of type $\tt B_{n-1}$ with a simple system 
$\{\epsilon_2-\epsilon_3,\epsilon_3-\epsilon_4,\cdots\epsilon_{n-1}-\epsilon_n,\epsilon_n\}$ (see \cite[Page 136]{kane}). Note that $\Delta$ contains short roots. 
By Theorem \ref{twistedgradient=PhinotA2}, $\widehat{\Delta}$ is a maximal closed subroot system of $\Phi$ and the type of $\widehat{\Delta}$ is $\tt D_{n}^{(2)}.$
\end{example}

\begin{example}
Let $\Phi=\tt E_{6}^{(2)}$. Then $\mathring\Phi=\text{$\tt F_4$}=
\big\{\pm\epsilon_i, \pm\epsilon_i\pm\epsilon_j, \frac{1}{2}(\lambda_1\epsilon_1+\lambda_2\epsilon_2+\lambda_3\epsilon_3+\lambda_4\epsilon_4) : 
\lambda_i=\pm1, 1\leq i\neq j\leq 4\big\}$.
The root system $\tt F_4$ has maximal closed subroot system  $\Delta_1$ of type $\tt A_2\oplus A_2$ with a simple system 
$\{\epsilon_1+\epsilon_2,\epsilon_2-\epsilon_3,\epsilon_4,\frac{1}{2}(\epsilon_1-\epsilon_2-\epsilon_3-\epsilon_4)\}$
 and $\Delta_2$ of type $\tt B_4$ with a simple system $\{\epsilon_1+\epsilon_2,\epsilon_2-\epsilon_3,\epsilon_3-\epsilon_4,\epsilon_4\}$
(see \cite[Page 136]{kane}). Note that $\Delta_1, \Delta_2$ both contains short roots.
By Theorem \ref{twistedgradient=PhinotA2}, $\widehat{\Delta_1}$ is a maximal closed subroot system of $\Phi$ of type $\tt A_2^{(1)}\oplus A_2^{(1)}$ and 
$\widehat{\Delta_2}$ is a maximal closed subroot system of $\Phi$ of type $\tt D_5^{(2)}.$
\end{example}

\noindent
The following table is immediate from the Theorem \ref{twistedgradient=PhinotA2} and Table \ref{mrs3}.
\begin{table}[ht]
\caption{Types of maximal closed subroot systems of irreducible twisted affine root systems (not of type $\tt A_{2n}^{(2)}$) with closed gradient subroot systems}
\centering 
\begin{tabular}{|c|c|c|}
\hline
Type & Reducible & Irreducible \\
\hline 
$\tt D_{n+1}^{(2)}$ & $\tt D_{r+1}^{(2)} \oplus D_{n-r}^{(1)}\;$  $\tt (2\le r\le n-2)$ & $\tt B_{n}^{(1)}$, $\tt D_{n+1}^{(2)}$, $D_{n}^{(2)}$\\[1ex]
$\tt A_{2n-1}^{(2)}$ & $\tt A_{2r-1}^{(2)} \oplus A_{2n-2r-1}^{(2)}\;$ $\tt (1\le r\le n-1)$  & $\tt A_{2n-1}^{(2)}$, $\tt C_n^{(1)}$, $\tt A_{n-1}^{(1)}$ \\[1ex]
$\tt E_6^{(2)}$ & $\tt A_1^{(1)} \oplus A_{5}^{(2)}$, $\tt A_{2}^{(1)} \oplus A_{2}^{(1)}$& $\tt E_6^{(2)}$, $\tt F_4^{(1)}$, $D_5^{(2)}$\\[1ex]
$\tt D_4^{(3)}$ & $\tt A_1^{(1)} \oplus A_{1}^{(1)}$ & $\tt D_4^{(3)}$, $\tt G_2^{(1)}$, $\tt A_2^{(1)}$\\[1ex]

\hline
\end{tabular}
\label{mrs7}
\end{table}
\begin{rem}
Note that the Table \ref{mrs7} gives us only the part of the classification. The list in the Table \ref{mrs7} has already appeared in \cite{FRT} (see \cite[Table 1 \& 2]{FRT}) and note that the authors of \cite{FRT} have 
 omitted the possibility of a maximal closed subroot system $\tt A_{2}^{(1)}\oplus \tt A_{2}^{(1)}\subset \tt E_{6}^{(2)}$ and $\tt D_{5}^{(2)}\subset \tt E_{6}^{(2)}$
in their list. 
\end{rem}
\vskip 1mm
\noindent
{\em{We are now left with the case $(3)$ of Proposition \ref{twistedgradient} (in twisted affine root systems which is not of type $\tt A^{(2)}_{2n}$) and the type $\tt A^{(2)}_{2n}$
in completing the classification theorem.
The aim of the remaining part of this paper is to consider the case $(3)$ of Proposition \ref{twistedgradient} and the type $\tt A^{(2)}_{2n}$. 
The case $(3)$ of Proposition \ref{twistedgradient} requires a type by type analysis so, in Section \ref{D2n}, \ref{2A2n-1}, \ref{3D4}, \ref{2E6}  we consider the types 
$\tt D^{(2)}_{n+1}$, $\tt A^{(2)}_{2n-1}$,$\tt D^{(3)}_{4}$ and $\tt E^{(2)}_{6}$ separately.
Finally we will deal the types $\tt A^{(2)}_{2n}, n\neq 1$ and $\tt A^{(2)}_{2}$  in Sections  \ref{2A2n} and \ref{2A2}. We will denote
$I_n=\{1, \cdots, n\}$ in what follows next.}}


\section{The case $\tt D^{(2)}_{n+1}$}\label{D2n}
Throughout this section we assume that $\Phi$ is of type $\tt D^{(2)}_{n+1}$. In particular, the gradient root system of $\tt D^{(2)}_{n+1}$ is of type $\tt B_n$.
We have the following explicit description of $\tt D^{(2)}_{n+1}$, see \cite[Page no. 545, 579]{carter}:
$$\Phi=\left\{\pm\epsilon_i+r\delta,\pm\epsilon_i\pm\epsilon_j+2r\delta : r \in \mathbb{Z}, 1\leq i\neq j\leq n \right\}$$ and 
$$\mathring{\Phi}=\left\{\pm\epsilon_i, \pm\epsilon_i\pm\epsilon_j : 1\leq i\neq j\leq n\right\}.$$
\subsection{} We need the following definition.
\begin{defn}\label{defnpsiI2Dn+1}
 For a subset $I\subseteq I_n$, we define
  \begin{align*}
\text{$\Psi_I(\tt D^{(2)}_{n+1})$}=&\big\{\pm\epsilon_s+2r\delta: s\in I, r \in \mathbb{Z}\big\}\cup  \big\{\pm\epsilon_s+(2r+1)\delta: s\notin I, r \in \mathbb{Z}\big\} &\\&
 \cup\big\{\pm\epsilon_s\pm\epsilon_t+2r\delta:  s\neq t,\ \ s,t\in I \ \text{or} \ s, t\notin I,\ r \in \mathbb{Z}\big\}.
   \end{align*}

\end{defn}

\begin{lem}\label{closedpsiI2Dn+1}
 $\text{$\Psi_I(\tt D^{(2)}_{n+1})$}$ is a closed subroot system of $\Phi$  for any subset $I\subseteq I_n$. 
\end{lem}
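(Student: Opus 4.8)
The plan is to realise $\Psi_I(\tt D^{(2)}_{n+1})$ as the subset of $\Phi$ cut out by a single parity congruence attached to a $\mathbb{Z}$-linear function, after which both defining properties drop out of the linearity. Precisely, let $p\colon\mathbb{R}\mathring{\Phi}\to\mathbb{Z}$ be the $\mathbb{Z}$-linear functional with $p(\epsilon_s)=0$ for $s\in I$ and $p(\epsilon_s)=1$ for $s\notin I$, and set $p_\alpha:=p(\alpha)$. First I would verify the clean description
\[
\Psi_I(\tt D^{(2)}_{n+1})=\bigl\{\alpha+r\delta\in\Phi : r\equiv p_\alpha \pmod 2\bigr\}.
\]
For a short root $\alpha=\pm\epsilon_s$ the congruence says that the $\delta$-coefficient is even precisely when $s\in I$, which is the first two families in Definition \ref{defnpsiI2Dn+1}. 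For a long root $\alpha=\pm\epsilon_s\pm\epsilon_t$ the only $\delta$-coefficients occurring in $\Phi$ are even, so the congruence is satisfiable exactly when $p_\alpha$ is even, i.e. when $s,t\in I$ or $s,t\notin I$, and then every even shift occurs; this matches the last family. Checking this reconciliation for all four sign patterns $\pm\epsilon_s\pm\epsilon_t$ is the only place where one must be slightly careful, since for long roots the ``even shift'' constraint of $\Phi$ and the parity of $p_\alpha$ have to agree.

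With this description in hand, both axioms are forced by the $\mathbb{Z}$-linearity of $p$, which in particular means $p$ satisfies (\ref{palpha}). For the subroot system property, take $\alpha+r\delta,\beta+s\delta\in\Psi_I(\tt D^{(2)}_{n+1})$; since $\delta$ is isotropic and orthogonal to $\mathring{\Phi}$ we have $\langle\beta+s\delta,(\alpha+r\delta)^\vee\rangle=\langle\beta,\alpha^\vee\rangle$, so
\[
\textbf{s}_{\alpha+r\delta}(\beta+s\delta)=\textbf{s}_\alpha(\beta)+\bigl(s-\langle\beta,\alpha^\vee\rangle r\bigr)\delta,
\]
which automatically lies in $\Phi$. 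Using $p_{\textbf{s}_\alpha(\beta)}=p_\beta-\langle\beta,\alpha^\vee\rangle p_\alpha$ together with $r\equiv p_\alpha$ and $s\equiv p_\beta\pmod 2$ gives $s-\langle\beta,\alpha^\vee\rangle r\equiv p_{\textbf{s}_\alpha(\beta)}\pmod 2$, so the reflected root meets the congruence and lies in $\Psi_I(\tt D^{(2)}_{n+1})$. For closedness, suppose in addition $(\alpha+\beta)+(r+s)\delta\in\Phi$; additivity of $p$ gives $p_{\alpha+\beta}=p_\alpha+p_\beta$, whence $r+s\equiv p_\alpha+p_\beta=p_{\alpha+\beta}\pmod 2$, so the sum again satisfies the congruence and belongs to $\Psi_I(\tt D^{(2)}_{n+1})$. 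This establishes that it is a closed subroot system.

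I do not expect a serious obstacle here: once the congruence description is in place the two verifications are one-line consequences of the linearity and additivity of $p$. The only genuinely fiddly step is the faithfulness of that description for the long roots, flagged above, together with a careful confirmation that $\delta$ drops out of the bilinear form so that the affine reflection has the displayed shape. It is worth remarking that this computation is exactly the statement $Z_\alpha(\Psi_I(\tt D^{(2)}_{n+1}))=p_\alpha+2\mathbb{Z}$ (with $n_\alpha=2$ throughout), which places the subroot system squarely within the coset framework of Proposition \ref{keypropositionnon2A2n}, and the same argument identifies its gradient $\Gr(\Psi_I(\tt D^{(2)}_{n+1}))$ as the short roots together with the long roots $\alpha$ having $p_\alpha$ even.
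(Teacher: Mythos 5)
Your proof is correct, but it takes a genuinely different route from the paper's. The paper proves the lemma by direct case analysis: it splits $\Psi_I(\tt D^{(2)}_{n+1})$ into the three explicit families $\Psi_I^{\mathrm{even}}$, $\Psi_J^{\mathrm{odd}}$ (where $J=I_n\setminus I$) and $\Psi_{I\times J}^{\mathrm{even}}$, and then checks, configuration by configuration, that any sum of two elements which happens to lie in $\Phi$ falls back into one of the three families --- the key observations being that a short root with even shift plus a short root with odd shift cannot be a root of $\Phi$ (long roots of $\tt D^{(2)}_{n+1}$ only carry even shifts), and that two long roots can only sum to a root when they share exactly one index with opposite signs, which keeps the index pattern inside $I$ or inside $J$. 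You instead package the whole definition into the single congruence $r\equiv p_\alpha \pmod 2$ for a $\mathbb{Z}$-linear functional $p$, after which closedness and reflection-stability are each one-line consequences of additivity; your reconciliation of the congruence with Definition \ref{defnpsiI2Dn+1} for the sign patterns of long roots is the only point where your argument touches the paper's case analysis, and you flag and handle it correctly. What your approach buys: it simultaneously establishes the subroot-system property (closure under reflections), which the paper's proof never verifies explicitly --- it only checks closedness --- and it places $\Psi_I(\tt D^{(2)}_{n+1})$ squarely in the coset framework of Proposition \ref{keypropositionnon2A2n} with $Z_\alpha=p_\alpha+2\mathbb{Z}$, which is exactly how the paper treats the analogous constructions $\Psi_p(\tt A^{(2)}_{2n-1})$ and $\Psi_p(\tt E^{(2)}_{6})$ in later sections. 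What the paper's approach buys: it is elementary and self-contained, and it makes visible precisely which sums are obstructed (the parity clash between $I$ and its complement), the same mechanism that is reused in the maximality proof immediately following the lemma.
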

\begin{pf}
Set $J=I_n \backslash I$. Write $\Psi_I^{\mathrm{even}}=\big\{\pm\epsilon_s+2r\delta: s\in I, r \in \mathbb{Z}\big\}$, $\Psi_J^{\mathrm{odd}}=\big\{\pm\epsilon_s+(2r+1)\delta: s\notin I, r \in \mathbb{Z}\big\}$ and 
 $\Psi_{I\times J}^{\mathrm{even}}=\big\{\pm\epsilon_s\pm\epsilon_t+2r\delta:  s\neq t,\ \ s,t\in I \ \text{or} \ s, t\notin I,\ r \in \mathbb{Z}\big\}.$
 Since the integers appear in the $\delta$ part of elements of $\Psi_I^{\mathrm{even}}$ and  $\Psi_J^{\mathrm{odd}}$ have different parities, their sum can not be a root in $\Phi$ again.
It is clear that if the sum of two roots $\a, \beta\in \Psi_I^{\mathrm{even}}$ (or $\in\Psi_J^{\mathrm{odd}}$)  is again a root in $\Phi$ then $\a+\beta$ must be in $\Psi_{I\times J}^{\mathrm{even}}.$
 Similarly,  if $\a\in \Psi_{I\times J}^{\mathrm{even}}$, $\beta\in \Psi_I^{\mathrm{even}}$ (resp. $\beta\in \Psi_J^{\mathrm{odd}}$)
 and $\a+\beta\in \tt D^{(2)}_{n+1}$ then we must have $\a+\beta\in \Psi_I^{\mathrm{even}}$ (resp. $\a+\beta\in \Psi_J^{\mathrm{odd}}$).
 
 Finally consider the case $\a, \beta\in \Psi_{I\times J}^{\mathrm{even}}$. Write $\a=\pm\epsilon_s\pm\epsilon_t+2r\delta$ and $\beta=\pm\epsilon_u\pm\epsilon_v+2r'\delta$.
 Suppose $\a+\beta\in \tt D^{(2)}_{n+1}$, then we must have $|\{s, t\}\cap \{u, v\}|=1$ and in this case the sign of this common element in $\a$ and $\beta$ must be opposite.
 Since either both $s, t\in I$ or both $s, t\in J$ (and it is true for $u, v$ as well), we must have $\a+\beta\in \Psi_{I\times J}^{\mathrm{even}}.$
 \end{pf}

\begin{prop}\label{2Dncase}
Suppose $\Phi$ is of type $\tt D^{(2)}_{n+1}$ and $\Psi\le \Phi$ is a maximal closed subroot system with
 proper semi-closed gradient subroot system  $\mathrm{Gr}(\Psi)<\mathring{\Phi}$, then there exist a set $I\subsetneq I_n$ such that
$\Psi=\text{$\Psi_I(\tt D^{(2)}_{n+1})$}.$
\end{prop}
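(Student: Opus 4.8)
The plan is to prove the statement by producing, from the combinatorics of $\Gr(\Psi)$, an explicit set $I$ with $\emptyset\neq I\subsetneq I_n$ for which $\Psi\subseteq\Psi_I(\tt D^{(2)}_{n+1})$, and then to invoke maximality of $\Psi$ together with Lemma \ref{closedpsiI2Dn+1} (which tells us $\Psi_I$ is closed) and the properness of $\Psi_I$ to force $\Psi=\Psi_I$. Thus the entire content lies in extracting the correct $I$ and checking the containment.

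First I would analyse the gradient $\Gamma:=\Gr(\Psi)$, which by hypothesis is a proper semi-closed subroot system of $\mathring\Phi=\tt B_n$. Writing $S=\{k:\pm\epsilon_k\in\Gamma\}$ for the short support, I would record an ``all or nothing'' phenomenon: if $\epsilon_k,\epsilon_l\in\Gamma$ and one of the four long roots $\pm\epsilon_k\pm\epsilon_l$ lies in $\Gamma$, then applying the reflections $\textbf{s}_{\epsilon_k},\textbf{s}_{\epsilon_l}$ puts all four into $\Gamma$. This defines an equivalence $k\approx l$ on $S$ (the short roots $\epsilon_k,\epsilon_l$ present and the connecting long roots present), whose classes $S_1,\dots,S_m$ each carry a full $\tt B_{|S_a|}$, while the indices of $I_n\setminus S$ support only long roots, forming components of a subsystem of $\tt D_n$ disjoint from the $S_a$ and from one another. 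Here I use Definition \ref{defnsemiclosed}: since the only escaping sums are short$+$short$=$long, a sum of type short$+$long$=$short never escapes, so a long root joining a short-support index to any other index forces the missing short root into $\Gamma$; hence no long root of $\Gamma$ joins a block to a long-only component.

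The heart of the argument is the parity analysis, and this is the step I expect to be the main obstacle. For each short $\alpha\in\Gamma$, Proposition \ref{keypropositionnon2A2n} gives $Z_\alpha(\Psi)=p_\alpha+n_\alpha\mathbb{Z}$. If $S_a\neq S_b$ are distinct blocks, then for $\epsilon_s\in S_a$, $\epsilon_t\in S_b$ the long root $\epsilon_s+\epsilon_t$ lies in $\mathring\Phi\setminus\Gamma$; as $\Psi$ is closed and $\epsilon_s+\epsilon_t$ is long (so admits only even $\delta$-shifts in $\Phi$), every $r\in Z_{\epsilon_s}$, $r'\in Z_{\epsilon_t}$ must have $r+r'$ odd. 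This forces $Z_{\epsilon_s}$ and $Z_{\epsilon_t}$ into single, opposite parity classes (so $2\mid n_{\epsilon_s},n_{\epsilon_t}$). Running this over all pairs of blocks, pairwise-opposite parities are impossible for three or more blocks, so there are exactly two short-blocks $S_1$ (even) and $S_2$ (odd); an internal computation along a connecting long root shows each block is monochromatic, and the same reasoning as before shows no long root of $\Gamma$ joins $S_1$ to $S_2$ (such a root would give $s\approx t$).

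Finally I set $I:=S_1\cup(I_n\setminus S)$ and $J:=S_2$, so that $I\cup J=I_n$, $I\cap J=\emptyset$, and both are nonempty since each block contains a short root. Checking $\Psi\subseteq\Psi_I(\tt D^{(2)}_{n+1})$ is then routine: a short root $\epsilon_s+r\delta\in\Psi$ has $s$ in a block of definite parity matching the assignment of $s$ to $I$ or $J$, and $r$ of the matching parity; while every long root $\pm\epsilon_s\pm\epsilon_t+r\delta\in\Psi$ has $r$ even and, by the previous paragraph, $s,t$ in the same class of the $I/J$ partition. In both cases the root lies in $\Psi_I$. Since $\Psi_I$ is closed by Lemma \ref{closedpsiI2Dn+1} and proper (the mixed long root $\epsilon_s+\epsilon_t$ with $s\in S_1$, $t\in S_2$ lies in $\Phi\setminus\Psi_I$), maximality of $\Psi$ yields $\Psi=\Psi_I$; this also shows a posteriori that $\Gr(\Psi)$ already contained every short root. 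This completes the proof.
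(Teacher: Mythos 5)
Your proposal is correct and runs on the same engine as the paper's proof: closedness of $\Psi$ together with the fact that long roots of $\tt D^{(2)}_{n+1}$ admit only even $\delta$-shifts forces each $Z_{\epsilon_k}(\Psi)$ into a single parity class, the two parities define the partition, and your $I=S_1\cup(I_n\setminus S)$ is exactly the paper's $I=\{k\in I_n : Z_{\epsilon_k}(\Psi)\subseteq 2\mathbb{Z}\}$ (indices with $Z_{\epsilon_k}(\Psi)=\emptyset$ belong vacuously), after which the containment $\Psi\subseteq \Psi_I(\tt D^{(2)}_{n+1})$ and maximality finish the argument identically. The only differences are organizational — you first build a block decomposition of $\Gr(\Psi)$ and rule out mixed long roots at the gradient level, whereas the paper argues pointwise from the two semi-closedness witnesses $\epsilon_i,\epsilon_j$ and kills mixed long roots by an affine computation — so you should just state explicitly that semi-closedness of $\Gr(\Psi)$ supplies two non-equivalent short-support indices (hence at least two blocks), since your pigeonhole argument only bounds the number of blocks from above.
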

\begin{proof}
Since $\Gr(\Psi)$ is a semi-closed subroot system, there exist $i,j \in I_n$ such that $\epsilon_i,\epsilon_j \in \Gr(\Psi)$ but $\epsilon_i+\epsilon_j \notin \Gr(\Psi)$.
We claim that elements of $Z_{\epsilon_i}(\Psi)$ and $Z_{\epsilon_j}(\Psi)$ can not have same parities.
Suppose $Z_{\epsilon_i}(\Psi)$ and  $Z_{\epsilon_j}(\Psi)$ contain same parity elements, say $2r+1\in Z_{\epsilon_i}(\Psi), 2s+1\in Z_{\epsilon_j}(\Psi).$ Then we have
$\epsilon_i+\epsilon_j+2(r+s+1)\delta\in \Psi$ since $\Psi$ is closed, a contradiction to the choices of $i, j$. Proof is same for even integers. 
Hence, without loss of generality we can assume that $Z_{\epsilon_i}(\Psi) \subseteq 2\mathbb{Z}$ and $Z_{\epsilon_j}(\Psi) \subseteq 2\mathbb{Z}+1$.

Now,  we claim that for each $\epsilon_k \in \Gr(\Psi)$ either $Z_{\epsilon_k}(\Psi) \subseteq 2\mathbb{Z}$ or $Z_{\epsilon_k}(\Psi) \subseteq 2\mathbb{Z}+1$. 
Suppose there exists $s, r \in \mathbb{Z}$ such that $\epsilon_k+2s\delta,\epsilon_k+(2r+1)\delta \in \Psi$ with $k \not = i, j$. Then one immediately sees
that $\epsilon_k+\epsilon_i,\epsilon_j-\epsilon_k\in\mathrm{Gr}(\Psi)$ since $\Psi$ is closed and $Z_{\epsilon_i}(\Psi) \subseteq 2\mathbb{Z}$ and $Z_{\epsilon_j}(\Psi) \subseteq 2\mathbb{Z}+1$. This implies that  $\epsilon_i+\epsilon_j \in \Gr(\Psi)$, a contradiction. 
Hence, 
either $Z_{\epsilon_k}(\Psi) \subseteq 2\mathbb{Z}$ or $Z_{\epsilon_k}(\Psi) \subseteq 2\mathbb{Z}+1$ for each $\epsilon_k \in \Gr(\Psi)$.
Define $$I=\left\{k\in I_n : Z_{\epsilon_k}(\Psi)\subseteq 2\mathbb{Z}\right\}.$$
Since $j\notin I$, we have $\text{$\Psi_I(\tt D^{(2)}_{n+1})$}\subsetneq \Phi$. We claim that $\Psi\subseteq \text{$\Psi_I(\tt D^{(2)}_{n+1})$}$.   
Suppose, we have $\pm \epsilon_s\pm \epsilon_t+2r\delta\in \Psi$ with $s\in I$ and $t\notin I$. Since $s\in I$,
we have $\mp\epsilon_s+2r'\delta \in \Psi$ for some $r'\in \mathbb{Z}$. Then we get
$$ \text{$(\pm \epsilon_s\pm \epsilon_t+2r\delta)+(\mp\epsilon_s+2r'\delta)\in \Phi$ implies that  $\pm \epsilon_t+2(r+r')\delta\in \Psi$} $$since $\Psi$ is closed.
This implies that  $2(r+r')\in Z_{\epsilon_t}(\Psi)$,
a contradiction to the choice of $t.$ Since 
$\Psi$ is maximal closed subroot system, we have $\Psi=\text{$\Psi_I(\tt D^{(2)}_{n+1})$}.$ This completes the proof.

\end{proof}

\vskip 1.5mm
\noindent
Conversely, given a proper subset $I\subsetneq I_n$, we will show that $\text{$\Psi_I(\tt D^{(2)}_{n+1})$}$ defined above in the Definition \ref{defnpsiI2Dn+1}
must be a maximal closed subroot system of $\Phi.$
\begin{prop} Suppose $\Phi$ is of type $\tt D^{(2)}_{n+1}$.
For $I\subsetneq I_n$, we have $\text{$\Psi_I(\tt D^{(2)}_{n+1})$}$
is a maximal closed subroot system of $\Phi.$ The type of $\text{$\Psi_I(\tt D^{(2)}_{n+1})$}$ is $\tt B_{r}^{(1)}\oplus B_{n-r}^{(1)}$, where $|I|=r$.
\end{prop}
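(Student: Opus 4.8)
The closedness of $\Psi_I:=\Psi_I(\tt D^{(2)}_{n+1})$ is already recorded in Lemma \ref{closedpsiI2Dn+1}, so two things remain: maximality and the identification of the affine type. The plan is to organize the maximality proof around the three classes of roots of $\Phi$ missing from $\Psi_I$. Writing $J=I_n\setminus I$, a root $\gamma\in\Phi\setminus\Psi_I$ is of exactly one of the forms: (a) a short root $\pm\epsilon_s+(\text{odd})\delta$ with $s\in I$; (b) a short root $\pm\epsilon_s+(\text{even})\delta$ with $s\in J$; (c) a mixed long root $\pm\epsilon_s\pm\epsilon_t+2r\delta$ with $s\in I$, $t\in J$ (absent for every shift). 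Assuming $\Psi_I\subsetneq\Delta\subseteq\Phi$ with $\Delta$ closed, I would pick such a $\gamma\in\Delta\setminus\Psi_I$ and prove $\Delta=\Phi$.

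The engine of the argument is to first produce a single coordinate $s$ with $Z_{\epsilon_s}(\Delta)=\mathbb{Z}$, i.e. all shifts of $\epsilon_s$ present. If $\gamma$ is of type (a) or (b) this is immediate: by Proposition \ref{keypropositionnon2A2n} the set $Z_{\epsilon_s}(\Delta)$ is a coset $p+n\mathbb{Z}$, and since $\Delta\supseteq\Psi_I$ already contains one parity class of shifts of $\epsilon_s$ while $\gamma$ supplies a shift of the opposite parity, the coset is forced to have $n=1$. If $\gamma$ is of type (c), adding to it a suitable short root of $\Psi_I$ (namely $\mp\epsilon_t+(\text{odd})\delta$ with $t\in J$, which lies in $\Psi_I$) produces a type (a) root in $\Delta$ by closedness, reducing to the previous case. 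Once one coordinate has all shifts I would propagate: using the even-shift long roots of $\Psi_I$ inside the $I$-block (resp. the $J$-block) one fills in all shifts of every $\epsilon_t$ with $t$ in the same block, and crossing the $I$/$J$ divide is achieved by first manufacturing a mixed long root $\epsilon_s+\epsilon_t+(\text{even})\delta$ in $\Delta$ and then subtracting a short root. This yields $\epsilon_t+k\delta\in\Delta$ for all $t\in I_n$ and all $k\in\mathbb{Z}$, i.e. every short root of $\Phi$; summing pairs of these with the parity chosen so the sum is a long root of $\Phi$ then recovers every long root, so $\Delta=\Phi$.

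For the type, I would use that $\Psi_I$ is the orthogonal direct sum of its $I$-block (roots supported on $\{\epsilon_s:s\in I\}$) and its $J$-block, so by Definition \ref{type} it suffices to identify each block. The $I$-block has short and long roots both occurring exactly at even shifts; rescaling $\delta\mapsto 2\delta$ turns it into $\{\alpha+k\delta:\alpha\in \tt B_r,\ k\in\mathbb{Z}\}$, which is $\tt B_r^{(1)}$ (with the convention $\tt B_1^{(1)}=A_1^{(1)}$ when $|I|=1$, and the degenerate block $\tt B_0^{(1)}$ when $I=\varnothing$). The $J$-block is the subtle one, since there the short roots sit at odd shifts and the long roots at even shifts. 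I would exhibit the explicit simple system $\{\epsilon_1-\epsilon_2,\dots,\epsilon_{m-1}-\epsilon_m,\ \epsilon_m+\delta\}$ together with the affine node $-\epsilon_1-\epsilon_2$ (here $m=n-r$ and the coordinates are relabelled over $J$) and compute the Cartan integers directly; the null root comes out to be $2\delta$, the affine node attaches to the second node, and the resulting Dynkin diagram is exactly that of $\tt B_m^{(1)}$. Hence the $J$-block is $\tt B_{n-r}^{(1)}$ and $\Psi_I$ has type $\tt B_r^{(1)}\oplus B_{n-r}^{(1)}$.

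I expect the genuine obstacle to be the parity bookkeeping in the propagation step, specifically crossing the $I$/$J$ boundary: because mixed long roots of $\Phi$ exist only at even shifts, one cannot directly add a short root of the $I$-block to one of the $J$-block to transport shifts across, and the argument must route through a manufactured mixed long root. A secondary subtlety is the type identification of the $J$-block, where the odd-versus-even shift pattern makes it a priori unclear that the block is the \emph{untwisted} $\tt B_{n-r}^{(1)}$ rather than a twisted type; this is settled only by the explicit Cartan-matrix computation above.
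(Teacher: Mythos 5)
Your proof is correct, and its maximality core is essentially the paper's argument: the same three-way classification of a new root $\gamma\in\Delta\setminus\Psi_I$ (odd-shift short root over $I$, even-shift short root over $J$, mixed long root), the same reduction of the mixed-long-root case to the short-root case by adding an odd-shift short root from the $J$-block, and the same closedness-driven propagation to show every $\epsilon_t+k\delta$ lies in $\Delta$, whence $\Delta=\Phi$. The two places you genuinely deviate are both improvements in robustness rather than a different method. First, where the paper handles the parity step by explicit root additions in each case separately, you invoke the coset structure of Proposition \ref{keypropositionnon2A2n}: a coset $p+n\mathbb{Z}$ containing all of $2\mathbb{Z}$ plus one element of the opposite parity must equal $\mathbb{Z}$. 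This is cleaner, and it also covers the edge case $I=\emptyset$ uniformly (the paper's Case (2) as written borrows a root $\epsilon_t+2\mathbb{Z}\delta$ with $t\in I$, which does not exist when $I$ is empty, though the gap is easily repaired). Second, you actually prove the type assertion, which the paper's proof does not address at all: your identification of the $J$-block via the simple system $\{\epsilon_1-\epsilon_2,\dots,\epsilon_{m-1}-\epsilon_m,\epsilon_m+\delta\}$ with affine node $-\epsilon_1-\epsilon_2$ checks out (the highest root of the finite part is $\epsilon_1+\epsilon_2+2\delta$, so the null root is indeed $2\delta$ and the diagram is that of $\tt B_m^{(1)}$), and it is equivalent to the shorter observation that the block equals $\{\alpha+(p_\alpha+2r)\delta\}$ for the $\mathbb{Z}$-linear function with $p_{\epsilon_i}=1$, which a linear change of variables sending $2\delta$ to the new null root carries onto the real roots of untwisted $\tt B_m^{(1)}$ in the sense of Definition \ref{type}.
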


\begin{proof}
We have already seen in Lemma \ref{closedpsiI2Dn+1} that $\text{$\Psi_I(\tt D^{(2)}_{n+1})$}$ is a closed subroot system of $\Phi$. So, it only remains to prove that $\text{$\Psi_I(\tt D^{(2)}_{n+1})$}$ is maximal closed in $\Phi$.
Suppose $\Omega$ is a closed subroot system of $\Phi$ such that
$\text{$\Psi_I(\tt D^{(2)}_{n+1})$}\subsetneq\Omega\subseteq \Phi$, then we claim that $\Omega=\Phi.$ Since $\text{$\Psi_I(\tt D^{(2)}_{n+1})$}\subsetneq \Omega$, there are three possibilities
for elements of $\Omega\backslash \text{$\Psi_I(\tt D^{(2)}_{n+1})$}$. We have either
 \begin{enumerate}
  \item  $\epsilon_s+(2r+1)\delta \in\Omega$ for some $r \in \mathbb{Z}$ and $s\in I$ or
  \item  $\epsilon_s+2r\delta \in\Omega$ for some $r \in \mathbb{Z}$ and $s \notin I$ or
  \item  $\epsilon_s\pm \epsilon_t+2r\delta \in \Omega$ for some $r \in \mathbb{Z}$, $s \in I$ and $t \notin I$.
 \end{enumerate}
In each of the cases, we repeatedly use the fact that $\Omega$ is closed in $\Phi$ and  $\text{$\Psi_I(\tt D^{(2)}_{n+1})$}\subseteq \Omega$ and prove that $\Omega=\Phi.$
\vskip 1.5mm
\noindent
Case $(1)$. Suppose there exists $\epsilon_s+(2r+1)\delta \in\Omega$ for some $r \in \mathbb{Z}$ and $s \in I$. 
 By adding $$ \text{ $\epsilon_s+(2r+1)\delta$ with 
 $\epsilon_t+(2\mathbb{Z}+1)\delta$ for $t\notin I$,
 we get  $\epsilon_s+\epsilon_t+2\mathbb{Z}\delta\subseteq \Omega$ for all $t \notin I$.}$$
 And by  adding $-\epsilon_s-2r\delta\in \Omega$ with $\epsilon_s+\epsilon_t+2\mathbb{Z}\delta$ for $t\notin I$, we get
 $\epsilon_t+2\mathbb{Z}\delta\subseteq \Omega$ for all $t \notin I$ which implies that  
 $\epsilon_t+\mathbb{Z}\delta\subseteq \Omega$ for all $t\notin I$. 
 Similarly,  by adding 
 $-\epsilon_s-(2r+1)\delta \in\Omega$ with  $\epsilon_s+\epsilon_t+2\mathbb{Z}\delta\subseteq \Omega$ for $t\in I$, where $s\not = t$, 
 we get $\epsilon_t+(2\mathbb{Z}+1)\delta\subseteq \Omega$ for all $t\in I$ with $s\not = t.$ 
 Now,  fix $t\notin I$ and by adding $-\epsilon_t-(2r+1)\delta\in \Omega$ with
 $\epsilon_s+\epsilon_t+2\mathbb{Z}\delta\subseteq \Omega$, we get $\epsilon_s+(2\mathbb{Z}+1)\delta\subseteq \Omega.$
 This implies that  $\epsilon_t+\mathbb{Z}\delta\subseteq \Omega$ for all $t\in I$.
 Thus,  we have $\epsilon_t+\mathbb{Z}\delta\subseteq \Omega$ for all $t\in I_n.$ Since $\Omega$ is closed subroot system, this immediately implies that $\Omega=\Phi.$

\vskip 1.5mm
\noindent
Case $(2)$. Suppose there exists $\epsilon_s+2r\delta \in\Omega$ for some $r \in \mathbb{Z}$ and $s \notin I$. 
 By adding $\epsilon_s+2r\delta$ with 
 $\epsilon_t+2\mathbb{Z}\delta$ for $t\in I$,
 we get  $\epsilon_s+\epsilon_t+2\mathbb{Z}\delta\subseteq \Omega$ for all $t\in I$.
 And by  adding $-\epsilon_s-(2r+1)\delta\in \Omega$ with $\epsilon_s+\epsilon_t+2\mathbb{Z}\delta$ for $t\in I$, we get
 $\epsilon_t+(2\mathbb{Z}+1)\delta\subseteq \Omega$ for all $t \in I$. This implies that  
 $\epsilon_t+\mathbb{Z}\delta\subseteq \Omega$ for all $t\in I$. 
 Similarly,  by adding $-\epsilon_s-2r\delta \in\Omega$ with  $\epsilon_s+\epsilon_t+2\mathbb{Z}\delta\subseteq \Omega$ for $t\notin I$, where $s\not = t$, 
 we get $\epsilon_t+2\mathbb{Z}\delta\subseteq \Omega$ for all $t\notin I$ with $s\not = t.$ 
 Now,  fix $t\in I$ such that $t\neq s$ and by adding $-\epsilon_t-2r\delta\in \Omega$ with
 $\epsilon_s+\epsilon_t+2\mathbb{Z}\delta\subseteq \Omega$ we get $\epsilon_s+2\mathbb{Z}\delta\subseteq \Omega.$ 
 This implies that  
 $\epsilon_t+\mathbb{Z}\delta\subseteq \Omega$ for all $t\notin I$. 
 Thus,  we proved $\epsilon_t+\mathbb{Z}\delta\subseteq \Omega$ for all $t\in I_n.$ Since $\Omega$ is closed subroot system, we immediately get $\Omega=\Phi.$

\vskip 1.5mm
\noindent
Case $(3).$ Finally assume that $\epsilon_s\pm \epsilon_t+2r\delta \in \Omega$ for some $r \in \mathbb{Z}$, $s \in I$ and $t \notin I$. 
 Add $\mp \epsilon_t-(2r+1)\delta\in\Omega$ with $\epsilon_s\pm \epsilon_t+2r\delta \in \Omega$ then we get $\epsilon_s+\delta\in \Omega.$ Thus,  we are back to the Case $(1)$.
This completes the proof.
\end{proof}

\begin{rem}
 The authors of \cite{FRT} have 
 omitted the possibility of a maximal closed subroot system $\tt B_{r}^{(1)}\oplus \tt B_{n-r}^{(1)}\subset \tt D_{n+1}^{(2)}$
in their classification list, see \cite[Table 1 \& 2]{FRT}. 
\end{rem}

\section{The case $\tt A^{(2)}_{2n-1}$}\label{2A2n-1}
Throughout this section we assume that $\Phi$ is of type $\tt A^{(2)}_{2n-1}$. In particular, the gradient root system of $\tt A^{(2)}_{2n-1}$ is of type $\tt C_n$.
We have the following explicit description of $\tt A^{(2)}_{2n-1}$, see \cite[Page no. 547, 573]{carter}:
$$\Phi=\left\{\pm2\epsilon_i+2r\delta,\pm\epsilon_i\pm\epsilon_j+r\delta : r \in \mathbb{Z},1\leq i\neq j\leq n\right\}$$ and 
$\mathring{\Phi}=\{\pm2\epsilon_i, \pm\epsilon_i\pm\epsilon_j: 1\leq i\neq j\leq n\}.$

Consider $\mathring{\Phi}_s=\big\{\pm\epsilon_i\pm\epsilon_j:  i,j \in I_n, i\neq j\big\}=:\mathcal{D}_n$. Clearly,  the short roots $\mathring{\Phi}_s$ for a root system of type $\tt D_{n}$ (see \cite[Page no. 146]{carter}) and
$$\Gamma_n=\{\alpha_1=\epsilon_{1}-\epsilon_{2},\cdots , \alpha_{n-1}=\epsilon_{n-1}-\epsilon_{n}, \alpha_n=\epsilon_{n-1}+\epsilon_{n}\}$$ is a simple root system of $\mathcal{D}_n$.
It is easy to see that $\epsilon_{s}-\epsilon_t=\alpha_s+\cdots +\alpha_{t-1}$ and 
$$\epsilon_{s}+\epsilon_t=
\begin{cases}
   \alpha_s+\cdots +\alpha_{t-2}+\alpha_{t}& \text{if} \ t=n, \\
   \alpha_s+\cdots +\alpha_{t-1}+2(\alpha_t+\cdots +\alpha_{n-2})+\alpha_{n-1}+\alpha_n & \text{if} \ t<n.
\end{cases}$$
\subsection{}
Let $p: \Gamma_n\to \{0, 1\}$ be a function such that $p_{\alpha_{n-1}}$ and $p_{\alpha_n}$ have different parity and  
let $p:\mathcal{D}_n\to \mathbb{Z}$ be its $\mathbb{Z}$--linear extension  given by $\pm\epsilon_s\pm\epsilon_t\mapsto p_{\pm\epsilon_s\pm\epsilon_t}$. 
Since the map $p$ is $\mathbb{Z}$--linear, we have $$ p_{\epsilon_{s}+\epsilon_t}=
\begin{cases}
 p_{\epsilon_{s}-\epsilon_t}-p_{\a_{t-1}}+p_{\alpha_{t}}& \text{if} \ t=n, \\
   p_{\epsilon_{s}-\epsilon_t}+2(p_{\alpha_t}+\cdots +p_{\alpha_{n-2}})+p_{\alpha_{n-1}}+p_{\alpha_n} & \text{if} \ t<n.
\end{cases}
$$
This implies
$p_{\epsilon_{s}-\epsilon_t}$ and $p_{\epsilon_{s}+\epsilon_t}$ have different parity for $s<t$.
Since $p_{\epsilon_{s}-\epsilon_t}=-p_{\epsilon_{t}-\epsilon_s}$, we conclude that $p_{\epsilon_{s}-\epsilon_t}$  and $p_{\epsilon_{s}+\epsilon_t}$ also have different parity for $s>t$.
Now,  define
$$ \text{$\Psi_p(\tt A^{(2)}_{2n-1})$}:= 
\big\{\pm\epsilon_s\pm\epsilon_t+(p_{\pm\epsilon_s\pm \epsilon_t}+2r)\delta : 1\leq t\neq s \leq n, r\in \mathbb{Z}\big\}.$$
\begin{lem}\label{lem2A2n-1}
Let $p: \mathcal{D}_{n}\to \mathbb{Z}$ be a  $\mathbb{Z}-$linear function
   such that $p_{\epsilon_{s}-\epsilon_t}$ and $p_{\epsilon_{s}+\epsilon_t}$ have different parity for each $1\leq s\neq t \leq n$. Then
  $\Psi_p(\tt A^{(2)}_{2n-1})$ is a maximal closed subroot system of $\Phi$.
\end{lem}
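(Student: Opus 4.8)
The plan is to establish in turn that $\Psi := \Psi_p(\tt A^{(2)}_{2n-1})$ is a subroot system, that it is closed, that it is proper, and finally that it is maximal. By construction $\Gr(\Psi)=\mathcal{D}_n$ and $Z_\gamma(\Psi)=p_\gamma+2\mathbb{Z}$ for every $\gamma\in\mathcal{D}_n$; since $\mathcal{D}_n$ contains no long root of $\mathring{\Phi}=\tt C_n$, the set $\Psi$ is automatically a proper subset of $\Phi$. For the subroot system property I would use the reflection formula $\textbf{s}_{\alpha+a\delta}(\beta+b\delta)=\textbf{s}_\alpha(\beta)+(b-\langle\beta,\alpha^\vee\rangle a)\delta$: if $\alpha+a\delta,\beta+b\delta\in\Psi$ then $\textbf{s}_\alpha(\beta)\in\mathcal{D}_n$ because $\mathcal{D}_n$ is a root system, and reducing the relation $p_{\textbf{s}_\alpha(\beta)}=p_\beta-\langle\beta,\alpha^\vee\rangle p_\alpha$ from (\ref{palpha}) modulo $2$, together with $a\equiv p_\alpha$ and $b\equiv p_\beta\pmod 2$, gives $b-\langle\beta,\alpha^\vee\rangle a\equiv p_{\textbf{s}_\alpha(\beta)}\pmod 2$, so the image again lies in $\Psi$.

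For closedness, suppose $\alpha+a\delta,\beta+b\delta\in\Psi$ with $(\alpha+\beta)+(a+b)\delta\in\Phi$. If $\alpha+\beta$ is again a short root, then $\mathbb{Z}$-linearity gives $p_{\alpha+\beta}=p_\alpha+p_\beta\equiv a+b\pmod 2$, so the sum lies in $\Psi$. The only remaining possibility is $\alpha+\beta=\pm2\epsilon_i$, which forces $\{\alpha,\beta\}=\{\epsilon_i+\epsilon_j,\epsilon_i-\epsilon_j\}$; here the key hypothesis that $p_{\epsilon_i-\epsilon_j}$ and $p_{\epsilon_i+\epsilon_j}$ have different parity makes $a+b$ odd, so $(\alpha+\beta)+(a+b)\delta=\pm2\epsilon_i+(\text{odd})\delta\notin\Phi$, and the closedness requirement is vacuous. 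Thus $\Psi$ is a proper closed subroot system.

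For maximality, let $\Psi\subsetneq\Omega\subseteq\Phi$ be a closed subroot system; I must show $\Omega=\Phi$. First I would observe that $\Gr(\Omega)$ is a subroot system of $\tt C_n$ containing all short roots $\mathcal{D}_n$, and that the Weyl group of $\mathcal{D}_n$, which lies in $W_{\Gr(\Omega)}$, sends any $2\epsilon_i$ to every $\pm2\epsilon_j$; hence $\Gr(\Omega)$ is either $\mathcal{D}_n$ or all of $\tt C_n$. Suppose $\Gr(\Omega)=\mathcal{D}_n$. Since $\Psi\subsetneq\Omega$ there is $\gamma_0+a_0\delta\in\Omega\setminus\Psi$ with $a_0\not\equiv p_{\gamma_0}\pmod 2$; as $\gamma_0+p_{\gamma_0}\delta\in\Psi\subseteq\Omega$ too, the coset $Z_{\gamma_0}(\Omega)$ contains two integers of opposite parity, so $n_{\gamma_0}^\Omega$ is odd, and since Lemma \ref{keylemma} gives $n_{\gamma_0}^\Omega\mid 2$ we conclude $n_{\gamma_0}^\Omega=1$. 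Writing $\gamma_0=\eta\epsilon_i+\eta'\epsilon_j$ with signs $\eta,\eta'$, I take the partner $\gamma_1=\eta\epsilon_i-\eta'\epsilon_j\in\mathcal{D}_n$; adding $\gamma_1+p_{\gamma_1}\delta\in\Psi$ to a suitable $\gamma_0+c\delta\in\Omega$ with $c\equiv p_{\gamma_1}\pmod 2$ produces $2\eta\epsilon_i$ with an even $\delta$-coefficient, a long root in $\Omega$, contradicting $\Gr(\Omega)=\mathcal{D}_n$.

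It remains to treat $\Gr(\Omega)=\tt C_n$, where $\Omega$ contains some long root $2\epsilon_i+c\delta$ with $c$ even. Adding to it the element $-\epsilon_i+\epsilon_j+(-p_{\epsilon_i-\epsilon_j}+2r)\delta$ of $\Psi$ yields $\epsilon_i+\epsilon_j+s\delta\in\Omega$ for all $s\equiv p_{\epsilon_i-\epsilon_j}\pmod 2$, whereas $\Psi$ already supplies $\epsilon_i+\epsilon_j+s\delta$ for $s\equiv p_{\epsilon_i+\epsilon_j}\pmod 2$; as these parities differ, $n^\Omega_{\epsilon_i+\epsilon_j}=1$. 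Then Lemma \ref{nalpha} and the transitivity of $W(\tt C_n)$ on short roots force $n^\Omega_\gamma=1$ for every short root $\gamma$, so $\Omega$ contains all $\pm\epsilon_i\pm\epsilon_j+s\delta$; adding $\epsilon_i-\epsilon_j+2r\delta$ to $\epsilon_i+\epsilon_j$ recovers every long root $2\epsilon_i+2r\delta$, whence $\Omega=\Phi$. The main obstacle throughout is the parity bookkeeping: the different-parity hypothesis is precisely what keeps the escaping short-plus-short sums out of $\Phi$ in the closedness step and what merges two parity classes into all of $\mathbb{Z}$ in the maximality step, and one must also rule out any gradient strictly between $\mathcal{D}_n$ and $\tt C_n$.
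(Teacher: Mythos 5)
Your proof is correct, and its first half (subroot system, closedness, properness) is exactly the paper's argument: the $\mathbb{Z}$-linearity of $p$ handles reflections and short-plus-short-equals-short sums, while the parity hypothesis kills the sums $\pm2\epsilon_i+(\mathrm{odd})\delta$. Where you diverge is in the organization of the maximality step. The paper never invokes the coset machinery here: it directly splits the elements of $\Delta\setminus\Psi_p$ into two kinds (a long root $2\epsilon_s+2r\delta$, or a short root with the ``wrong'' parity), reduces the second kind to the first by adding the opposite-sign partner from $\Psi_p$, and then grinds out $\Delta=\Phi$ by explicit chains of additions repeated over all indices. You instead frame everything through the dichotomy $\Gr(\Omega)\in\{\mathcal{D}_n,\tt C_n\}$ (justified by $W(\mathcal{D}_n)$-transitivity on the $\pm2\epsilon_j$), and then use Proposition \ref{keypropositionnon2A2n}, Lemma \ref{keylemma} and Lemma \ref{nalpha} to conclude $n^\Omega_\gamma=1$ and propagate it to all short roots by Weyl conjugacy. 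The underlying root additions are the same ones the paper performs by hand, but your use of the $n_\alpha$-invariants and conjugacy replaces the paper's ``repeat the argument for every $t$'' with a one-line structural step; the trade-off is that your route leans on the earlier lemmas (in particular on the coset description of $Z_\alpha(\Omega)$ and on $n^\Psi_{\gamma_0}=2$ dividing into $n^\Omega_{\gamma_0}$), whereas the paper's is self-contained and purely computational. Both your two cases and the paper's two cases correspond exactly (wrong-parity element $\leftrightarrow$ $\Gr(\Omega)=\mathcal{D}_n$; long root $\leftrightarrow$ $\Gr(\Omega)=\tt C_n$), so there is no gap.
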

\begin{pf} Since $p$ is $\mathbb{Z}$--linear, we have $$\bold s_{\a+(p_\a+2r)\delta}(\beta+(p_\beta+2r')\delta)=\bold s_\a(\beta)+(p_{\bold s_\a(\beta)}+2(r'-r\langle \beta, \a^\vee \rangle))\delta$$
for $\a, \beta\in \mathcal{D}_n$ and $r, r'\in \mathbb{Z}$, where $\bold s_\a$ is the reflection with respect to $\a$ defined in Section \ref{Weylgroup}.
This implies that   $\Psi_p(\tt A^{(2)}_{2n-1})$ is a subroot system of $\Phi$.
For $s, t\in I_n$, $t\neq s$,
we can not have $2\epsilon_s+(p_{\epsilon_{s}-\epsilon_t}+p_{\epsilon_{s}+\epsilon_t}+2r)\delta\in \Phi$ for any $r\in \mathbb{Z}$, 
since $p_{\epsilon_{s}-\epsilon_t}$ and $p_{\epsilon_{s}+\epsilon_t}$ have different parity. This implies that $\Psi_p(\tt A^{(2)}_{2n-1})$ is a closed subroot system of $\Phi$.
Now,  suppose there is a closed subroot system $\Delta$ of $\Phi$ such that $\text{$\Psi_p(\tt A^{(2)}_{2n-1})$}\subsetneq \Delta\subseteq \Phi.$ Then we claim that $\Delta=\Phi$.
Since $\text{$\Psi_p(\tt A^{(2)}_{2n-1})$}\subsetneq \Delta$, we have two possibilities for elements of $\Delta\backslash \text{$\Psi_p(\tt A^{(2)}_{2n-1})$}$. We have either 
\begin{enumerate}
 \item $2\epsilon_s+2r\delta\in\Delta$ for some $s\in I_n$ and $r\in\mathbb{Z}$ or 
 \item $\epsilon_s \pm\epsilon_t+(p_{\epsilon_{s}\pm\epsilon_t}+2r+1)\delta\in\Delta$ 
for some $s \neq t\in I_n$  and $r\in \mathbb{Z}$.
\end{enumerate}

\vskip 1.5mm
\noindent
Case (1).  Suppose there exists $s\in I_n$ such that $2\epsilon_s+2r\delta\in\Delta$ for some $r\in\mathbb{Z}$. Then
since $\epsilon_t-\epsilon_s+(p_{\epsilon_t-\epsilon_s}+2\mathbb{Z})\delta\subseteq \Delta$ for any $t\neq s$, we have
$$\epsilon_t+\epsilon_s+(p_{\epsilon_t-\epsilon_s}+2\mathbb{Z})\delta=(2\epsilon_s+2r\delta)+\epsilon_t-\epsilon_s+(p_{\epsilon_t-\epsilon_s}+2\mathbb{Z})\delta\subseteq \Delta.$$ 
for all $t\in I_n$ with $t\neq s$.
As $\epsilon_t+\epsilon_s+(p_{\epsilon_t+\epsilon_s}+2\mathbb{Z})\delta\subseteq \text{$\Psi_p(\tt A^{(2)}_{2n-1})$}$ and $p_{\epsilon_t+\epsilon_s}$ and $p_{\epsilon_t-\epsilon_s}$ have different parity, 
we get $(\epsilon_t+\epsilon_s)+\mathbb{Z}\delta\subseteq\Delta$ for all $t\neq s.$
This in turn implies that
$$(\epsilon_t+\epsilon_s+p_{\epsilon_t-\epsilon_s}\delta)+\epsilon_t-\epsilon_s+(p_{\epsilon_t-\epsilon_s}+2\mathbb{Z})\delta=2\epsilon_t+2\mathbb{Z}\delta\subseteq \Delta$$ for all $t\in I_n$ with $t\neq s$.
Now,  $\epsilon_t-\epsilon_s+\mathbb{Z}\delta=(2\epsilon_t+2\mathbb{Z}\delta)-(\epsilon_t+\epsilon_s+\mathbb{Z}\delta)\subseteq \Delta$ for all $t\neq s$.
So far we have proved that $2\epsilon_s+2r\delta\in\Delta$ implies that  $\pm\epsilon_t\pm \epsilon_s+\mathbb{Z}\delta$, $\pm2\epsilon_t+2\mathbb{Z}\delta\subseteq \Delta$ for all $t\in I_n$ such that $t\neq s.$ 
By repeating the earlier arguments with all possible $t\in I_n$ such that $t\neq s$, we see that $\Delta=\Phi$. 

\vskip 1.5mm
\noindent
Case (2).
Now,  assume that there exists $s,t\in I_n$ such that $\epsilon_{s}\pm\epsilon_t+(p_{\epsilon_{s}\pm\epsilon_t}+2r+1)\delta\in\Delta$ 
for some $r\in \mathbb{Z}$. Since $\epsilon_{s}\mp\epsilon_t+(p_{\epsilon_{s}\mp\epsilon_t}+2r')\delta\in\Delta$ for all $r'\in\mathbb{Z}$ and
$p_{\epsilon_{s}\pm\epsilon_t}$, $p_{\epsilon_{s}\mp\epsilon_t}$ have different parity, we have
$2\epsilon_s+2r\delta\in\Delta.$
So, we are back to the Case (1) and hence $\Delta=\Phi$. 
This completes the proof.

\end{pf}

\begin{prop}
 Let $\Phi$ be an irreducible affine root system of type $\tt  A^{(2)}_{2n-1}$. Then $\Psi\le \Phi$ is a maximal closed subroot system with a
 proper semi-closed gradient subroot system  $\mathrm{Gr}(\Psi)<\mathring{\Phi}$ if and only if there exist $\mathbb{Z}-$linear function
 $p: \mathcal{D}_{n}\to \mathbb{Z}$  such that $p_{\epsilon_{s}-\epsilon_t}$ and $p_{\epsilon_{s}+\epsilon_t}$ have different parity for each $1\leq s\neq t \leq n$ and
$$ \Psi=\text{$\Psi_p(\tt A^{(2)}_{2n-1})$}=\big\{\pm\epsilon_s\pm\epsilon_t+(p_{\pm\epsilon_s\pm \epsilon_t}+2r)\delta :  1 \leq t\neq s \leq n, r\in \mathbb{Z}\big\}.$$ 
The affine type of $\text{$\Psi_p(\tt A^{(2)}_{2n-1})$}$ is $\tt D_n^{(1)}$.
 \end{prop}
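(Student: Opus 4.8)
The plan is to prove the equivalence by treating the ``if'' direction as essentially settled by the preceding lemma and concentrating the real work on the ``only if'' direction, whose crux is pinning down the gradient exactly. For the converse, suppose $p\colon\mathcal D_n\to\mathbb Z$ is $\mathbb Z$--linear with $p_{\epsilon_s-\epsilon_t}$ and $p_{\epsilon_s+\epsilon_t}$ of opposite parity for all $s\neq t$. Lemma \ref{lem2A2n-1} already gives that $\Psi_p(\tt A^{(2)}_{2n-1})$ is a maximal closed subroot system of $\Phi$, so I would only need to record that its gradient is proper and semi-closed and to identify its type. By construction $\mathrm{Gr}(\Psi_p(\tt A^{(2)}_{2n-1}))=\{\pm\epsilon_s\pm\epsilon_t\}=\mathcal D_n=\mathring\Phi_s\subsetneq\mathring\Phi$ (it omits every long root $\pm2\epsilon_i$), and it is semi-closed in the sense of Definition \ref{defnsemiclosed}: the relation $(\epsilon_1-\epsilon_2)+(\epsilon_1+\epsilon_2)=2\epsilon_1\notin\mathcal D_n$ witnesses non-closedness, while every root of $\mathring\Phi$ outside $\mathcal D_n$ is long, so each defect is a sum of two short roots equal to a long root. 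Finally the assignment $\alpha+(p_\alpha+2r)\delta\mapsto\alpha+r\delta$ furnishes the isomorphism required in Definition \ref{type} identifying $\Psi_p(\tt A^{(2)}_{2n-1})$ with $\tt D_n^{(1)}$, the pairings $\langle\cdot,\cdot^\vee\rangle$ being unaffected by the isotropic $\delta$.

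For the forward direction I would let $\Psi$ be maximal closed with proper semi-closed gradient. By Proposition \ref{twistedgradient} and Definition \ref{defnsemiclosed} I may fix short roots $\epsilon_s-\epsilon_t,\epsilon_s+\epsilon_t\in\mathrm{Gr}(\Psi)$ whose sum $2\epsilon_s$ lies in $\mathring\Phi\setminus\mathrm{Gr}(\Psi)$. Since $2\epsilon_s+k\delta\in\Phi$ precisely when $k$ is even, closedness of $\Psi$ forces $u+v$ to be odd for all $u\in Z_{\epsilon_s-\epsilon_t}(\Psi)$ and $v\in Z_{\epsilon_s+\epsilon_t}(\Psi)$; hence these two cosets have opposite and constant parity. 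I would then propagate this parity obstruction through the gradient: for a short root linked to the fixed pair, a short-root sum computation (for example, adding a hypothetical $2\epsilon_t+2r_0\delta$ to $\epsilon_s-\epsilon_t+u\delta$ yields $\epsilon_s+\epsilon_t$ with an even $\delta$--coefficient, contradicting the odd parity of $Z_{\epsilon_s+\epsilon_t}(\Psi)$) shows that each relevant coset again has constant parity and that no long root adjacent to the pair can occur.

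The main obstacle is to upgrade this to the exact equality $\mathrm{Gr}(\Psi)=\mathcal D_n$, that is, to exclude all long roots and to force every short root to appear. This cannot follow from semi-closedness alone, since both $\{\pm2\epsilon_a\}\cup\{\pm\epsilon_s\pm\epsilon_t\}$ and $\mathcal D_{n-1}$ are semi-closed in $\mathring\Phi=\tt C_n$; the maximality hypothesis must be used essentially. My mechanism here would be an enlargement argument: starting from the constant-parity data on the short roots already present, I would extend it to a $\mathbb Z$--linear function on all of $\mathcal D_n$ respecting the opposite-parity condition, obtaining via Lemma \ref{lem2A2n-1} a proper closed subroot system $\Psi_p(\tt A^{(2)}_{2n-1})$ containing $\Psi$; maximality of $\Psi$ then forces equality, simultaneously ruling out long roots and missing short roots. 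I expect the delicate point to be handling the case where a long root sits in a separate irreducible component of $\mathrm{Gr}(\Psi)$, where the closedness contradictions do not apply directly and one must instead exhibit the strictly larger closed system explicitly.

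Once $\mathrm{Gr}(\Psi)=\mathcal D_n$ is in hand, the conclusion is quick. As $\mathcal D_n$ is reduced, Proposition \ref{keypropositionnon2A2n} supplies a $\mathbb Z$--linear $p=p^\Psi$ with $Z_\alpha(\Psi)=p_\alpha+n_\alpha\mathbb Z$, and Lemma \ref{nalpha} collapses the $n_\alpha$ to a single value $n$ since $\tt D_n$ is simply laced. The parity obstruction forces $n$ to be even and $p_{\epsilon_s-\epsilon_t}+p_{\epsilon_s+\epsilon_t}$ to be odd, and by the explicit $\mathbb Z$--linear expansions recorded just before Lemma \ref{lem2A2n-1} this single opposite-parity statement propagates to all pairs $s\neq t$. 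Therefore $\Psi\subseteq\Psi_p(\tt A^{(2)}_{2n-1})\subsetneq\Phi$, and maximality of $\Psi$ yields $\Psi=\Psi_p(\tt A^{(2)}_{2n-1})$, which in particular forces $n=2$.
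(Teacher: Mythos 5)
Your converse direction and the endgame of your forward direction (constant parity of the cosets, $\mathbb{Z}$--linear extension, containment in $\Psi_p(\tt A^{(2)}_{2n-1})$, then maximality) agree with the paper. But there is a genuine gap at exactly the point you flag as delicate and then defer: excluding long roots from $\Gr(\Psi)$. Your proposed mechanism cannot do this, because $\Psi_p(\tt A^{(2)}_{2n-1})$ contains no root of the form $\pm2\epsilon_i+2r\delta$ whatsoever; the containment $\Psi\subseteq \Psi_p(\tt A^{(2)}_{2n-1})$ therefore \emph{presupposes} that $\Gr(\Psi)$ has no long roots, which is what was to be proved, so the enlargement argument cannot ``simultaneously rule out long roots.'' Your local computation only eliminates $2\epsilon_s$ and $2\epsilon_t$, the long roots adjacent to the semi-closedness pair. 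Your own example $\{\pm2\epsilon_a\}\cup\{\pm(\epsilon_s\pm\epsilon_t)\}$ with $a\neq s,t$ (which is indeed a semi-closed subroot system of $\tt C_n$) exhibits the fatal configuration: the long root sits in a component orthogonal to everything else, no sum involving it is a root, so closedness yields nothing, and no $\Psi_p(\tt A^{(2)}_{2n-1})$ contains a subroot system with that gradient. Saying ``one must exhibit the strictly larger closed system explicitly'' names the right strategy, but producing that system is the actual content of the proof, and it is missing.

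The paper's resolution is a different enlargement, built so that it \emph{retains} long roots. Set $I=\{i\in I_n : 2\epsilon_i\in \Gr(\Psi)\}$ and suppose $I\neq\emptyset$. A closedness computation (subtract $\epsilon_i\pm\epsilon_j+r\delta$ from $2\epsilon_i+2r'\delta$ and then subtract again) shows that no mixed short root $\epsilon_i\pm\epsilon_j$ with $i\in I$, $j\notin I$ can lie in $\Gr(\Psi)$, since otherwise $2\epsilon_j\in\Gr(\Psi)$. Hence $\Psi\subseteq \Psi_I$, where $\Psi_I$ is the lift of the closed subroot system $\{\pm2\epsilon_i, \pm\epsilon_k\pm\epsilon_\ell, \pm\epsilon_{k'}\pm\epsilon_{\ell'} : i\in I_n,\ k\neq\ell\in I,\ k'\neq\ell'\notin I\}$ of $\mathring\Phi$, which keeps \emph{all} the long roots $\pm2\epsilon_i$, $i\in I_n$. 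Because $I\subsetneq I_n$, $\Psi_I$ is a proper closed subroot system of $\Phi$, so maximality forces $\Psi=\Psi_I$; but $\Gr(\Psi_I)$ is closed, contradicting the hypothesis that $\Gr(\Psi)$ is semi-closed. Therefore $I=\emptyset$, and only after this step does your parity-extension-plus-maximality argument legitimately apply (and then it coincides with the paper's). In short: the missing idea is the auxiliary closed system $\Psi_I$ containing the long roots, which is structurally incompatible with comparing against $\Psi_p(\tt A^{(2)}_{2n-1})$ alone.
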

\begin{proof}
Let
$\Psi\le \Phi$ be a maximal closed subroot system with a
 proper semi-closed gradient subroot system  $\mathrm{Gr}(\Psi)<\mathring{\Phi}$.
By Proposition \ref{twistedgradient}, there exist $s,t \in I_n$ such that $\epsilon_s+\epsilon_t,\epsilon_s-\epsilon_t \in \Gr(\Psi)$ but $2\epsilon_s \notin \Gr(\Psi)$. 
Define
$$I=\left\{i\in I_n : 2\epsilon_i\in \Gr(\Psi)\right\}$$ Then it is immediate that $I\subsetneq I_n$ by previous observation.
Suppose that $I\neq \emptyset$.
Then we will prove that $\Psi\subseteq \Psi_I\subsetneq \Phi$, where
$$\Psi_I=\left\{\pm2\epsilon_i+2r\delta,\pm\epsilon_k\pm\epsilon_\ell+r\delta,\pm\epsilon_{k'}\pm\epsilon_{\ell'}+r\delta : 
i\in I_n,k\neq \ell \in I,\ {k'}\neq {\ell'} \notin I, r \in \mathbb{Z}\right\}.$$
It is easy to see that $\Psi_I$ is the lift of the closed subroot system 
$$\{\pm2\epsilon_i, \pm\epsilon_k\pm\epsilon_\ell,\pm\epsilon_{k'}\pm\epsilon_{\ell'} : i\in I_n,\  k, \ell\in I, k\neq \ell,\ \ {k'}, {\ell'} \notin I, {k'}\neq {\ell'} \}$$ of $\mathring{\Phi}$.
So, $\Psi_I$ is a closed subroot system of $\Phi$ by Lemma \ref{closedlemma} and since $I\subsetneq I_n$, it is proper if $I\neq \emptyset$.
Suppose that $\epsilon_i\pm\epsilon_j+r\delta\in\Psi$, for some $i\in I,j\notin I,r\in\mathbb{Z}$. Then 
since $i\in I$, we have $2\epsilon_i+2r'\delta\in \Psi$ for some $r'\in \mathbb{Z}.$
Since $\Psi$ is closed, we have
$$\epsilon_i\mp\epsilon_j+(2r'-r)\delta=2\epsilon_i+2r'\delta-(\epsilon_i\pm\epsilon_j+r\delta)\in\Psi.$$
This implies that  that $\pm(2\epsilon_j+2(r-r')\delta)=(\epsilon_i\pm\epsilon_j+r\delta)-(\epsilon_i\mp\epsilon_j+(2r'-r)\delta)\in\Psi$, a contradiction to the fact that $j\notin I$. So, we have $\Psi\subseteq \Psi_I$. 
Since $\Psi_I$ is a closed subroot system, we must have $\Psi=\Psi_I$ which is absurd as the gradient root system of $\Psi_I$ is closed. So, we must have $I=\emptyset$. 

Since $2\epsilon_i\notin \Gr(\Psi)$ for all $i\in I_n$, the elements in $Z_{\epsilon_i+\epsilon_j}(\Psi)$ and $Z_{\epsilon_i-\epsilon_j}(\Psi)$ must have different parity
for all $1\le i\neq j \le n$. 
Otherwise,  we will get $2\epsilon_i+(r+r')\delta=(\epsilon_i+\epsilon_j+r\delta)+(\epsilon_i-\epsilon_j+r'\delta)\in \Psi$ for some
$r, r'\in\mathbb{Z}$ such that $r\equiv r' \mathrm{mod}\ 2$.
This is contradicting the fact that $2\epsilon_i\notin \Gr(\Psi)$ for all $i\in I_n$.
Hence, by the discussion in Section \ref{pexis}, there exists
$\mathbb{Z}-$linear function $p^\Psi: \mathcal{D}_{n}\to \mathbb{Z}$ such that for each $1\leq i\neq j \leq n$, we have 
$Z_{\epsilon_i+\epsilon_j}(\Psi)\subseteq p_{\epsilon_{i}-\epsilon_j}^\Psi+2\mathbb{Z}$ and 
$Z_{\epsilon_i-\epsilon_j}(\Psi)\subseteq p_{\epsilon_{i}+\epsilon_j}^\Psi+2\mathbb{Z}$ with 
$p_{\epsilon_{i}-\epsilon_j}^\Psi\not\equiv p_{\epsilon_{i}+\epsilon_j}^\Psi(\mathrm{mod}\ 2)$ 
 and
$$\Psi\subseteq \Psi_{p^\Psi}(\tt A^{(2)}_{2n-1})=
\big\{\text{$\pm\epsilon_i\pm\epsilon_j+(p_{\pm\epsilon_i\pm \epsilon_j}^\Psi+2r)\delta :  1 \leq i, j \leq n, i\neq j, r\in \mathbb{Z}$}\big\}$$
Since $\Psi_{p^\Psi}(\tt A^{(2)}_{2n-1})$ is a closed subroot system in $\Phi$ by Lemma \ref{lem2A2n-1}, 
we have the equality $\Psi=\text{$\Psi_{p^{\Psi}}(\tt A^{(2)}_{2n-1})$}$. Converse part is immediate from the Lemma \ref{lem2A2n-1}. 
 This completes the proof.
\end{proof}

\begin{rem}
 The authors of \cite{FRT} have 
 omitted the possibility of a maximal closed subroot system $\tt D_{n}^{(1)}\subset \tt A_{2n-1}^{(2)}$
in their classification list, see \cite[Table 1 \& 2]{FRT}. 
\end{rem}

\section{The case $\tt D^{(3)}_{4}$}\label{3D4}
Throughout this section we assume that $\Phi$ is of type $\tt D^{(3)}_{4}$. In particular, the gradient root system of $\Phi$ is of type $\tt G_2$.
We have the following explicit description of $\tt D^{(3)}_{4}$, see \cite[Page no. 559, 608]{carter}:
$$\Phi=\big\{\epsilon_i-\epsilon_j+r\delta,\pm(\epsilon_i+\epsilon_j-2\epsilon_k)+3r\delta : i, j, k\in I_3, i\neq j,\ r \in \mathbb{Z}\big\}$$ and 
$\mathring{\Phi}=\big\{\epsilon_i-\epsilon_j,\pm(\epsilon_i+\epsilon_j-2\epsilon_k): i, j, k\in I_3, i\neq j\big\}.$
\subsection{} We need the following lemma.
\begin{lem}\label{3D4gradient}
 Suppose $\Phi$ is of type $\tt D^{(3)}_{4}$ and $\Psi\le \Phi$ is a maximal closed subroot system with a proper semi-closed gradient subroot system,
 then $\Gr(\Psi)=\mathring{\Phi}_s.$
 
\end{lem}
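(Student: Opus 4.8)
The plan is to classify the proper semi-closed subroot systems of $\mathring{\Phi}$ directly and to show that the only one is $\mathring{\Phi}_s$; in fact the maximality of $\Psi$ plays no role here, only the hypothesis that $\Gr(\Psi)$ is a proper semi-closed subroot system of $\mathring{\Phi}$. Recall that $\mathring{\Phi}$ is of type $\tt G_2$, that its short roots $\mathring{\Phi}_s=\{\epsilon_i-\epsilon_j : i\neq j\in I_3\}$ form a root system of type $\tt A_2$, and that its long roots are $\mathring{\Phi}_\ell=\{\pm(\epsilon_i+\epsilon_j-2\epsilon_k)\}$. By the discussion following Definition \ref{defnsemiclosed}, since $\Gr(\Psi)$ is semi-closed (in particular a subroot system of $\mathring{\Phi}$ that is not closed), there exist short roots $\alpha,\beta\in \Gr(\Psi)$ whose sum $\alpha+\beta$ is a long root lying in $\mathring{\Phi}\setminus \Gr(\Psi)$.

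First I would show that $\mathring{\Phi}_s\subseteq \Gr(\Psi)$. Because $\alpha+\beta$ is long, $\alpha$ and $\beta$ cannot be proportional, so they are two linearly independent short roots of the $\tt A_2$-system $\mathring{\Phi}_s$. As $\Gr(\Psi)$ is a subroot system it is stable under the reflection $\textbf{s}_\alpha$ and under $\gamma\mapsto -\gamma=\textbf{s}_\gamma(\gamma)$; hence it contains the short root $\textbf{s}_\alpha(\beta)=\beta-\langle\beta,\alpha^\vee\rangle\alpha$ together with the negatives of $\alpha,\beta,\textbf{s}_\alpha(\beta)$. A direct check in the $\epsilon_i$-coordinates (e.g. $\alpha=\epsilon_1-\epsilon_2$, $\beta=\epsilon_1-\epsilon_3$, $\textbf{s}_\alpha(\beta)=\epsilon_2-\epsilon_3$) shows that these are six distinct short roots, hence exactly $\mathring{\Phi}_s$, so $\mathring{\Phi}_s\subseteq \Gr(\Psi)$.

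Next I would rule out long roots in $\Gr(\Psi)$. Suppose $\Gr(\Psi)$ contained some long root $\gamma$. Since $\mathring{\Phi}_s\subseteq \Gr(\Psi)$, the set $\Gr(\Psi)$ is stable under the Weyl group $W_{\mathring{\Phi}_s}\cong S_3$ generated by the short-root reflections, and under negation. A short computation shows that $W_{\mathring{\Phi}_s}$ acts on $\mathring{\Phi}_\ell$ with two orbits of size three that are interchanged by $\gamma\mapsto -\gamma$; hence the orbit of $\gamma$ together with its negatives is all of $\mathring{\Phi}_\ell$. Thus $\Gr(\Psi)\supseteq \mathring{\Phi}_s\cup\mathring{\Phi}_\ell=\mathring{\Phi}$, contradicting the hypothesis that $\Gr(\Psi)$ is a proper subroot system. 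Therefore $\Gr(\Psi)$ contains no long root, and combined with $\mathring{\Phi}_s\subseteq \Gr(\Psi)$ this yields $\Gr(\Psi)=\mathring{\Phi}_s$.

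The only genuinely computational step, and the one I expect to be the main obstacle, is the orbit count in the last paragraph: verifying that adjoining a single long root to $\mathring{\Phi}_s$ already forces all six long roots. This is a finite check in the $\tt G_2$ root system: for instance $\textbf{s}_{\epsilon_i-\epsilon_k}$ and $\textbf{s}_{\epsilon_j-\epsilon_k}$ carry $\epsilon_i+\epsilon_j-2\epsilon_k$ to the other two positive long roots, while $\textbf{s}_\gamma(\gamma)=-\gamma$ supplies the negatives. All remaining steps are immediate from the definitions and the $\tt A_2$-structure of $\mathring{\Phi}_s$.
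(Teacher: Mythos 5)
Your proposal is correct and follows essentially the same route as the paper: semi-closedness yields two short roots whose sum is a long root outside $\Gr(\Psi)$, the reflection $\textbf{s}_\alpha(\beta)$ then forces $\mathring{\Phi}_s\subseteq \Gr(\Psi)$, and one concludes that $\Gr(\Psi)$ cannot contain a long root. The only difference is that the paper simply cites the maximality of $\mathring{\Phi}_s$ as a subroot system of $\tt G_2$ at the last step, whereas you verify it directly via the $S_3$-orbit computation on $\mathring{\Phi}_\ell$; this is the same argument with one known fact unpacked, and you are also right that maximality of $\Psi$ itself is never used.
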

\begin{pf}
Since $\Gr(\Psi)$ is semi-closed, then by Proposition \ref{twistedgradient} there exists two short roots $\alpha,\beta \in \Gr(\Psi)$ 
such that $\alpha+\beta \notin\Gr(\Psi)$. 
Since $\bold{s}_{\alpha}(\beta)\in \Gr(\Psi)$ and is another short root different from $\alpha$ and $\beta$, we have $\mathring{\Phi}_s\subseteq \Gr(\Psi)$. 
Since $\mathring{\Phi}_s$ is a maximal subroot system of $G_2$ and $\Gr(\Psi)\neq \mathring{\Phi}$,
we get $\Gr(\Psi)=\mathring{\Phi}_s$.
\end{pf}
\subsection{}
Let $\{i, j, k\}$ be a permutation of $I_3=\{1, 2, 3\}$ and $\ell\in \mathbb{Z}$. Define 
 $$\Psi^+(i, j, k; \ell):=\big\{\epsilon_i-\epsilon_j+3r\delta,\epsilon_j-\epsilon_k+(3r+\ell)\delta,\epsilon_i-\epsilon_k+(3r+\ell)\delta: r\in \mathbb{Z}\big\}$$ and 
 $\Psi(i, j, k; \ell):=\Psi^+(i, j, k; \ell)\cup (-\Psi^+(i, j, k; \ell)).$
\begin{lem}\label{3D4subroot}
 $\Psi(i, j, k; \ell)$ is a subroot system of $\Phi$ for any permutation $\{i, j, k\}$ of $I_3$ and $\ell\in \mathbb{Z}$. 
\end{lem}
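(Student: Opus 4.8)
The plan is to recognize $\Psi(i,j,k;\ell)$ as the ``graph'' of a single $\mathbb{Z}$--linear scaling function over the short--root subsystem $\mathring{\Phi}_s$, and then to verify the subroot--system axiom by the affine reflection formula, exactly in the spirit of Lemma~\ref{lem2A2n-1}. Recall from Section~\ref{3D4} that $\mathring{\Phi}_s=\{\epsilon_a-\epsilon_b : a\neq b\}$ is a root system of type $\tt A_2$, with simple system $\{\epsilon_i-\epsilon_j,\ \epsilon_j-\epsilon_k\}$ and $\epsilon_i-\epsilon_k=(\epsilon_i-\epsilon_j)+(\epsilon_j-\epsilon_k)$. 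In particular $\mathring{\Phi}_s$ is itself a subroot system of $\mathring{\Phi}$ (this is already used in Lemma~\ref{3D4gradient}), so it is stable under the reflections $\textbf{s}_\mu$ for $\mu\in\mathring{\Phi}_s$.

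First I would introduce the function $p:\mathring{\Phi}_s\to\mathbb{Z}$ determined on the simple system by $p_{\epsilon_i-\epsilon_j}=0$ and $p_{\epsilon_j-\epsilon_k}=\ell$, extended $\mathbb{Z}$--linearly as in Lemma~\ref{extn}; this forces $p_{\epsilon_i-\epsilon_k}=\ell$ and $p_{-\mu}=-p_\mu$ for all $\mu$. Comparing with the definition of $\Psi(i,j,k;\ell)$, one checks directly that the three positive roots receive precisely the cosets $3\mathbb{Z}$, $\ell+3\mathbb{Z}$, $\ell+3\mathbb{Z}$ (and the negatives their negatives), so that
\[
\Psi(i,j,k;\ell)=\big\{\mu+(p_\mu+3r)\delta : \mu\in\mathring{\Phi}_s,\ r\in\mathbb{Z}\big\}.
\]
By Lemma~\ref{extn} the function $p$ satisfies the linearity relation $(\ref{palpha})$.

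The key step is then the reflection identity: for $\mu,\nu\in\mathring{\Phi}_s$ and $r,r'\in\mathbb{Z}$, using that $\delta$ pairs trivially with coroots of real roots, one has
\[
\textbf{s}_{\mu+(p_\mu+3r)\delta}\big(\nu+(p_\nu+3r')\delta\big)=\textbf{s}_\mu(\nu)+\big(p_\nu-\langle\nu,\mu^\vee\rangle p_\mu+3(r'-r\langle\nu,\mu^\vee\rangle)\big)\delta.
\]
Rewriting $p_\nu-\langle\nu,\mu^\vee\rangle p_\mu=p_{\textbf{s}_\mu(\nu)}$ via $(\ref{palpha})$ and noting $\textbf{s}_\mu(\nu)\in\mathring{\Phi}_s$ (so that $\textbf{s}_\mu(\nu)+s\delta\in\Phi$ for every $s\in\mathbb{Z}$), the right--hand side equals $\textbf{s}_\mu(\nu)+(p_{\textbf{s}_\mu(\nu)}+3r'')\delta$ with $r''=r'-r\langle\nu,\mu^\vee\rangle\in\mathbb{Z}$, hence lies in $\Psi(i,j,k;\ell)$. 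This shows closure under reflections, i.e. that $\Psi(i,j,k;\ell)$ is a subroot system.

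The argument is essentially routine once the scaling--function description is in place; I expect no genuine obstacle. The only points requiring a moment of care are the bookkeeping that the three prescribed $\delta$--cosets really do fit one $\mathbb{Z}$--linear $p$ (equivalently, that the value on $\epsilon_i-\epsilon_k$ is forced by additivity to be $\ell$), and the elementary observation that short roots of $\tt D_4^{(3)}$ admit all integer $\delta$--shifts, so that membership in $\Phi$ is never an issue after a reflection. The harder closedness and maximality assertions are deferred to the subsequent results.
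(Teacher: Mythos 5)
Your proof is correct, and it takes a genuinely different (more uniform) route than the paper. The paper's own proof is a bare-hands computation: it sets $\a_1=\epsilon_i-\epsilon_j$, $\a_2=\epsilon_j-\epsilon_k$, $\a_3=\epsilon_i-\epsilon_k$, records the pairings $(\a_1,\a_2)=-1$, $(\a_1,\a_3)=(\a_2,\a_3)=1$, writes out the three reflections $\textbf{s}_{\a_1+3r\delta}(\a_2+(3r'+\ell)\delta)$, $\textbf{s}_{\a_1+3r\delta}(\a_3+(3r'+\ell)\delta)$, $\textbf{s}_{\a_2+(3r+\ell)\delta}(\a_3+(3r'+\ell)\delta)$ explicitly, checks each lands in $\Psi(i,j,k;\ell)$, and dismisses the remaining pairs with a ``similarly.'' You instead identify $\Psi(i,j,k;\ell)$ as the graph $\{\mu+(p_\mu+3r)\delta:\mu\in\mathring{\Phi}_s,\ r\in\mathbb{Z}\}$ of the $\mathbb{Z}$--linear function $p$ with $p_{\epsilon_i-\epsilon_j}=0$, $p_{\epsilon_j-\epsilon_k}=\ell$ (so $p_{\epsilon_i-\epsilon_k}=\ell$ by additivity, matching the definition), and then a single application of the affine reflection formula together with (\ref{palpha}) handles all pairs at once; membership in $\Phi$ is automatic because short roots of $\tt D_4^{(3)}$ carry all integer $\delta$--shifts and $\mathring{\Phi}_s$ is reflection-stable. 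This is exactly the mechanism the paper itself deploys for Lemma \ref{lem2A2n-1} and in the converse direction of Theorem \ref{mainuntwisted}, so your argument is consonant with the paper's general machinery; what it buys is the elimination of case analysis and an explanation of \emph{why} the cosets $3\mathbb{Z}$, $\ell+3\mathbb{Z}$, $\ell+3\mathbb{Z}$ cohere (they come from one linear functional), whereas the paper's direct check is shorter to state but leaves the verification of the unlisted pairs implicit. The one point to keep in mind is that Lemma \ref{extn}, as stated, is phrased for the gradient of a given subroot system; you are using the underlying fact that any function on a simple system of a reduced finite root system (here $\tt A_2$) admits a unique extension satisfying (\ref{palpha}), which is indeed what that lemma proves and is independent of having $\Psi$ in hand beforehand.
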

\begin{pf}
 Write $\a_1=\epsilon_i-\epsilon_j$, $\a_2=\epsilon_j-\epsilon_k$ and $\a_3=\epsilon_i-\epsilon_k$. Then $(\a_1, \a_2)=-1$ and
 $(\a_1, \a_3)=(\a_2, \a_3)=1.$ This implies that  $\bold s_{\a_1+3r\delta}(\a_2+(3r'+\ell)\delta)= \a_3 + (3(r+r')+\ell)\delta$,
 $\bold s_{\a_1+3r\delta}(\a_3+(3r'+\ell)\delta)= \a_2+ (3(r'-r)+\ell)\delta$ and $\bold s_{\a_2+(3r+\ell)\delta}(\a_3+(3r'+\ell)\delta)=\a_1+3(r'-r)\delta$ are in $\Psi(i, j, k; \ell)$. Similarly,  
we see that $\bold s_\a(\beta)\in \Psi(i, j, k; \ell)$ for all $\a, \beta\in \Psi(i, j, k; \ell).$ This proves that $\Psi(i, j, k; \ell)$ is a subroot system of $\Phi$.
 \end{pf}

 \begin{prop}\label{3D4converse}
$\Psi(i, j, k; \ell)$ is a maximal closed subroot system of $\Psi$ for any permutation $\{i, j, k\}$ of $I_3$ and $\ell\in \mathbb{Z}$ such that $\ell\equiv 1\ \mathrm{or}\ 2 \ (\mathrm{mod}\ 3)$. \end{prop}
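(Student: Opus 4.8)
The plan is to first confirm that $\Psi:=\Psi(i,j,k;\ell)$ is closed (Lemma \ref{3D4subroot} already gives that it is a subroot system), and then to prove maximality by taking an arbitrary closed $\Omega$ with $\Psi\subsetneq\Omega\subseteq\Phi$ and showing $\Omega=\Phi$. For closedness, note that every element of $\Psi$ has short gradient, so I only need to control sums $\alpha+\beta$ of two short roots of $\mathring{\Phi}$ occurring in $\Psi$. The assignment $\epsilon_i-\epsilon_j\mapsto 0$, $\epsilon_j-\epsilon_k,\epsilon_i-\epsilon_k\mapsto \ell$ extends $\mathbb{Z}$--linearly to a function $p$ on $\mathring{\Phi}_s$, and by construction $Z_\alpha(\Psi)=p_\alpha+3\mathbb{Z}$. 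If $\alpha+\beta$ is again a short root, $\mathbb{Z}$--linearity of $p$ puts the corresponding element back into $\Psi$. If $\alpha+\beta$ is a long root $\gamma$, then $\gamma+r\delta\in\Phi$ only when $r\equiv 0\ (\mathrm{mod}\ 3)$; but the $\delta$--coefficient of the sum is $\equiv p_\gamma\ (\mathrm{mod}\ 3)$, and running over the three long roots one checks $p_\gamma\in\{\pm\ell,\pm2\ell\}$, each of which is $\not\equiv 0\ (\mathrm{mod}\ 3)$ precisely because $\ell\equiv 1$ or $2\ (\mathrm{mod}\ 3)$. Hence such a sum never lies in $\Phi$ and closedness holds. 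This is exactly where the hypothesis $\ell\not\equiv 0$ is used: for $\ell\equiv 0$ these sums would be genuine long roots of $\Phi$ absent from $\Psi$.

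For maximality, suppose $\Psi\subsetneq\Omega\subseteq\Phi$ with $\Omega$ closed. Taking gradients gives $\mathring{\Phi}_s=\Gr(\Psi)\subseteq\Gr(\Omega)\subseteq\mathring{\Phi}$. Since $\mathring{\Phi}_s$ is a maximal subroot system of the $\tt G_2$--system $\mathring{\Phi}$ (any subroot system properly containing the short $\tt A_2$ contains a long root, and reflecting it by the short reflections produces all long roots; this is the fact used in Lemma \ref{3D4gradient}), the only possibilities are $\Gr(\Omega)=\mathring{\Phi}_s$ or $\Gr(\Omega)=\mathring{\Phi}$. If $\Gr(\Omega)=\mathring{\Phi}_s$, which is simply laced, Proposition \ref{twisted01}(1) yields a common value $n_\Omega$, and by Lemma \ref{keylemma} it divides $n_\Psi=3$, so $n_\Omega\in\{1,3\}$. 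When $n_\Omega=3$, Lemma \ref{keylemma}(3) forces $\Omega=\Psi$, a contradiction; when $n_\Omega=1$, we get $Z_\alpha(\Omega)=\mathbb{Z}$ for every short root, so $\Omega=\widehat{\mathring{\Phi}_s}$, which is not closed (two short roots with $\delta$--part $0$ sum to a long root of $\Phi$), again a contradiction. Thus this case is impossible.

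It remains to treat $\Gr(\Omega)=\mathring{\Phi}$, where $\Omega$ contains long roots and $n_s^\Omega\mid n_s^\Psi=3$ by Lemma \ref{keylemma}, so $n_s^\Omega\in\{1,3\}$. The main obstacle is ruling out $n_s^\Omega=3$, and this is handled by the same nonvanishing-of-$p_\gamma$ phenomenon that drove closedness: if $n_s^\Omega=3$, then for each short root $\alpha$ the cosets $Z_\alpha(\Psi)\subseteq Z_\alpha(\Omega)$ are both cosets of $3\mathbb{Z}$, hence equal, so $p^\Omega\equiv p\ (\mathrm{mod}\ 3)$ on short roots; since $\mathring{\Phi}$ is reduced, $p^\Omega$ is $\mathbb{Z}$--linear by Proposition \ref{keypropositionnon2A2n}, so for a long root $\gamma=\alpha_1+\alpha_2$ we obtain $p^\Omega_\gamma\equiv p_\gamma\not\equiv 0\ (\mathrm{mod}\ 3)$, whence $Z_\gamma(\Omega)\subseteq p^\Omega_\gamma+3\mathbb{Z}$ is disjoint from $3\mathbb{Z}$ and therefore empty, contradicting $\gamma\in\Gr(\Omega)$. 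Hence $n_s^\Omega=1$, giving $Z_\alpha(\Omega)=\mathbb{Z}$ for short roots; then $n_\ell^\Omega=m=3$ by Proposition \ref{twisted01}(2), forcing $Z_\gamma(\Omega)=3\mathbb{Z}$ for every long root $\gamma$. Comparing with the explicit description of $\Phi$ gives $\Omega=\Phi$, which completes the proof of maximality.
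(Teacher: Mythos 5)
Your proof is correct, but it takes a genuinely different route from the paper's. The paper proves maximality by brute-force element chasing: it takes an arbitrary element of $\Delta\setminus\Psi(i,j,k;\ell)$ (four cases: a short root with wrong $\delta$-coefficient of each of the three types, or a long root), and by explicitly adding roots of $\Psi(i,j,k;\ell)$ shows that $\Delta$ must swallow all of $\Phi$. You instead run the classification machinery developed earlier in the paper: you first use the maximality of $\mathring{\Phi}_s$ as a subroot system of $\tt G_2$ (the fact behind Lemma \ref{3D4gradient}) to reduce to $\Gr(\Omega)\in\{\mathring{\Phi}_s,\mathring{\Phi}\}$, then argue on the invariant $n_s^\Omega$, which divides $n_\Psi=3$ by Lemma \ref{keylemma}(2): when $\Gr(\Omega)=\mathring{\Phi}_s$, the value $3$ forces $\Omega=\Psi$ by Lemma \ref{keylemma}(3) and the value $1$ forces $\Omega=\widehat{\mathring{\Phi}_s}$, which is not closed; when $\Gr(\Omega)=\mathring{\Phi}$, the value $3$ is killed by the mod-$3$ obstruction ($p^\Omega_\gamma\equiv p_\gamma\not\equiv 0\ (\mathrm{mod}\ 3)$ for long $\gamma$, using $\mathbb{Z}$-linearity from Proposition \ref{keypropositionnon2A2n}, while $Z_\gamma(\Omega)\subseteq 3\mathbb{Z}$), and the value $1$ forces $\Omega=\Phi$ via Proposition \ref{twisted01}(2). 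Your closedness argument is also organized more systematically, through the linear function $p$ with values $\{\pm\ell,\pm 2\ell\}$ on long roots, rather than the paper's direct check of the single offending sum. What each approach buys: the paper's proof is elementary and self-contained, needing nothing beyond closedness and explicit root addition; yours is shorter and more conceptual, reuses the paper's general coset/invariant framework, makes transparent that the single congruence $\ell\not\equiv 0\ (\mathrm{mod}\ 3)$ simultaneously drives closedness and excludes all intermediate closed subroot systems, and its skeleton (gradient dichotomy plus mod-$m$ coset analysis) would adapt to the other semi-closed-gradient cases with less case-specific computation.
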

 \begin{pf}
 Lemma \ref{3D4subroot} implies that $\Psi(i, j, k; \ell)$ is a subroot system of $\Phi.$
 Since $\ell\equiv 1\ \mathrm{or}\ 2 \ (\mathrm{mod}\ 3)$, we have 
 $$(\epsilon_j-\epsilon_k+(3r+\ell)\delta)+(\epsilon_i-\epsilon_k+(3r'+\ell)\delta)=(\epsilon_i+\epsilon_j-2\epsilon_k+(3(r+r')+2\ell)\delta)\notin \Phi.$$
  It is easy to check that $\a+\beta\in \Phi$ for $\a, \beta \in \Psi(i, j, k; \ell)$ implies that  $\a+\beta\in \Psi(i, j, k; \ell)$ in remaining cases.
 This proves that $\Psi(i, j, k; \ell)$ is a closed subroot system of $\Phi$ when  $\ell\equiv 1\ \mathrm{or}\ 2 \ (\mathrm{mod}\ 3)$.
 So, it remains to prove that it is maximal closed subroot system in $\Phi$.
  Let $\Delta$ be a closed subroot system of $\Phi$ such that 
 $\Psi(i, j, k; \ell)\subsetneq\Delta\subseteq\Phi$. Observe that $\Delta\backslash \Psi(i, j, k; \ell)$ may contain a short root or a long root. 
 There are three possibilities for short roots of $\Delta\backslash \Psi(i, j, k; \ell)$ and it will be considered in the Cases (1), (2) and (3). The possibility
of $\Delta\backslash \Psi(i, j, k; \ell)$ containing a long root is considered in Case (4).
 \vskip 1.5mm
 \noindent
Case (1). Let $\epsilon_i-\epsilon_j+(3r+r')\delta\in\Delta$ for some $r, r'\in\mathbb{Z}$ such that $r'\not\equiv 0 \ (\mathrm{mod}\ 3)$.
 This implies that  $$(\epsilon_i-\epsilon_j+(3r+r')\delta)+(\epsilon_j-\epsilon_k+(\ell+3\mathbb{Z})\delta)=\epsilon_i-\epsilon_k+(\ell+r'+3\mathbb{Z})\delta\subseteq \Delta.$$
 So, $(\epsilon_i-\epsilon_k+(\ell+r'+3\mathbb{Z})\delta)+(\epsilon_k-\epsilon_j+(-\ell+3\mathbb{Z})\delta)=\epsilon_i-\epsilon_j+(r'+3\mathbb{Z})\delta\subseteq \Delta$, and
 $$(\epsilon_j-\epsilon_i+3\mathbb{Z}\delta)+(\epsilon_i-\epsilon_k+(\ell+r'+3\mathbb{Z})\delta)=\epsilon_j-\epsilon_k+(\ell+r'+3\mathbb{Z})\delta\subseteq\Delta.$$ 
 Summing these two we have $\epsilon_i-\epsilon_k+(\ell+2r'+3\mathbb{Z})\delta\subseteq \Delta$.
 This implies that  $\epsilon_i-\epsilon_k+\mathbb{Z}\delta\subseteq \Delta$
 and using this we get $\alpha+r\delta\in\Delta$ for all short roots $\alpha$ and $r \in\mathbb{Z}$.
 Since any long root of $G_2$ can be written as sum of two short roots, we have $\Delta=\Phi$.
 
 \vskip 1.5mm
 \noindent
Case (2). Let $\epsilon_j-\epsilon_k+(3r+r'+\ell)\delta\in\Delta$ for some $r, r'\in\mathbb{Z}$ such that $r'\not\equiv 0 \ (\mathrm{mod}\ 3)$.
Then $$(\epsilon_i-\epsilon_k+\ell\delta)+(\epsilon_k-\epsilon_j-(3r+r'+\ell)\delta)=\epsilon_i-\epsilon_j+(-3r-r')\delta\in \Delta.$$
So, we are back to Case $(1).$ Thus,  we get $\Delta=\Phi.$

 \vskip 1.5mm
 \noindent
Case (3). Let $\epsilon_i-\epsilon_k+(3r+r'+\ell)\delta\in\Delta$  for some $r, r'\in\mathbb{Z}$ such that $r'\not\equiv 0 \ (\mathrm{mod}\ 3)$.
Then $$(\epsilon_i-\epsilon_k+(3r+r'+\ell)\delta)+(\epsilon_k-\epsilon_j-\ell\delta)=\epsilon_i-\epsilon_j+(3r+r')\delta\in \Delta.$$
Again we are back to Case $(1).$ Thus,  we get $\Delta=\Phi.$

 \vskip 1.5mm
 \noindent
Case (4). Finally assume that $\Delta$ contains a long root and let  $\epsilon_s+\epsilon_t-2\epsilon_u+3r\delta\in \Delta$ for some $r\in\mathbb{Z}$ and a permutation $\{s, t, u\}$ of $I_3$. 
Then subtracting a suitable short root from $\epsilon_s+\epsilon_t-2\epsilon_u+3r\delta$ will bring us back to one of the three previous cases and we get $\Delta=\Phi.$

 Hence, $\Psi(i, j, k; \ell)$ is a maximal closed subroot system of $\Phi$.
 \end{pf}
\subsection{}
Conversely,  we prove that any maximal closed subroot system $\Psi$ of $\Phi$ must be of the form 
 $\Psi=\Psi(i, j, k; \ell)$
 for some permutation $\{i, j, k\}$ of $I_3$ and $\ell\in \mathbb{Z}$ satisfying $\ell\equiv 1\ \mathrm{or}\ 2 \ (\mathrm{mod}\ 3)$.
\begin{prop}
  Let $\Phi$ be the affine root system of type $\tt D^{(3)}_{4}$. Then $\Psi\le \Phi$ is a maximal closed subroot system with a proper semi-closed gradient subroot system $\Gr(\Psi)$ 
  if and only if $\Gr(\Psi)=\mathring{\Phi}_s$  and  $\Psi=\Psi(i, j, k; \ell)$ for some 
  permutation $\{i, j, k\}$ of $I_3$ and $\ell\in \mathbb{Z}$ satisfying $\ell\equiv 1\ \mathrm{or}\ 2 \ (\mathrm{mod}\ 3)$. The type of $\Psi(i,j,k;\ell)$ is $\tt A_2^{(1)}$.
\end{prop}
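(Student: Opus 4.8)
The statement is an equivalence whose nontrivial content is the forward implication; the converse is essentially already in hand. For the converse, given $\Psi=\Psi(i,j,k;\ell)$ with $\ell\equiv 1,2\ (\mathrm{mod}\ 3)$, Proposition \ref{3D4converse} already says $\Psi$ is maximal closed in $\Phi$, and by construction every short root of $\mathring{\Phi}$ occurs in $\Psi$ while no long root does, so $\Gr(\Psi)=\mathring{\Phi}_s$, which is a proper semi-closed subroot system of $\mathring{\Phi}=\tt G_2$ (two short roots can add up to a long root lying outside $\mathring{\Phi}_s$, cf.\ Lemma \ref{3D4gradient}). Finally, since $Z_\alpha(\Psi)=p_\alpha+3\mathbb{Z}$ for every $\alpha\in\mathring{\Phi}_s$ with $p$ a $\mathbb{Z}$--linear function, $\Psi$ is isomorphic as a root system to $\{\alpha+3r\delta:\alpha\in\mathring{\Phi}_s,\ r\in\mathbb{Z}\}$, i.e.\ to the real roots of $\tt A_2^{(1)}$ after rescaling $\delta$; this yields the asserted type.

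For the forward implication, let $\Psi$ be a maximal closed subroot system with proper semi-closed gradient. Lemma \ref{3D4gradient} gives $\Gr(\Psi)=\mathring{\Phi}_s$, which is irreducible of type $\tt A_2$ and reduced. Proposition \ref{keypropositionnon2A2n} then produces a $\mathbb{Z}$--linear $p=p^\Psi:\mathring{\Phi}_s\to\mathbb{Z}$ with $Z_\alpha(\Psi)=p_\alpha+n_\alpha\mathbb{Z}$, and since $\mathring{\Phi}_s$ is simply laced, Proposition \ref{twisted01}(1) (equivalently Lemma \ref{nalpha}) forces a single value $n_\alpha=:n_\Psi$. I then read off the constraints imposed by closedness: fix short roots $\alpha,\beta\in\mathring{\Phi}_s$ whose sum $\gamma=\alpha+\beta$ is a long root. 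Since $\gamma\notin\Gr(\Psi)$ and long roots of $\Phi$ occur only at levels in $3\mathbb{Z}$ (here $m=3$), closedness of $\Psi$ forces $\bigl(p_\alpha+p_\beta+n_\Psi\mathbb{Z}\bigr)\cap 3\mathbb{Z}=\emptyset$ for each such pair. This is possible only if $3\mid n_\Psi$ and $p_\gamma:=p_\alpha+p_\beta\not\equiv 0\ (\mathrm{mod}\ 3)$ for every long root $\gamma$.

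Writing $\beta_1=\epsilon_1-\epsilon_2$, $\beta_2=\epsilon_2-\epsilon_3$ for the simple short roots, the $p$--values of the six long roots are, up to sign, $2p_{\beta_1}+p_{\beta_2}$, $p_{\beta_1}-p_{\beta_2}$ and $p_{\beta_1}+2p_{\beta_2}$, all congruent to $\pm(p_{\beta_1}-p_{\beta_2})$ modulo $3$; hence the conditions from the previous paragraph collapse to the single requirement $p_{\beta_1}\not\equiv p_{\beta_2}\ (\mathrm{mod}\ 3)$. A short case check ($a:=p_{\beta_1}$, $b:=p_{\beta_2}$ mod $3$ with $a\neq b$) shows that exactly one of the three positive short roots $\beta_1,\beta_2,\beta_1+\beta_2$ has $p\equiv 0\ (\mathrm{mod}\ 3)$, so there is a unique root line $\pm\alpha_0\subseteq\mathring{\Phi}_s$ with $p_{\pm\alpha_0}\equiv 0$. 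Write $\alpha_0=\epsilon_i-\epsilon_j$, let $k$ be the remaining index, and set $\ell\equiv p_{\epsilon_j-\epsilon_k}\ (\mathrm{mod}\ 3)$, so $\ell\equiv 1$ or $2$. Using $3\mid n_\Psi$ together with $p_{\epsilon_i-\epsilon_k}=p_{\epsilon_i-\epsilon_j}+p_{\epsilon_j-\epsilon_k}$, one checks that the cosets $Z_\alpha(\Psi)\subseteq p_\alpha+3\mathbb{Z}$ match the defining cosets of $\Psi(i,j,k;\ell)$ root by root, whence $\Psi\subseteq\Psi(i,j,k;\ell)\subsetneq\Phi$. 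Both are closed (Proposition \ref{3D4converse}), so maximality of $\Psi$ forces $\Psi=\Psi(i,j,k;\ell)$.

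The main obstacle is the modular bookkeeping concentrated in the last paragraph: one must verify that the three a priori independent long-root conditions all reduce to $p_{\beta_1}\not\equiv p_{\beta_2}$, that this forces exactly one short-root line to be annihilated modulo $3$, and that the resulting data $(i,j,k,\ell)$ reproduce precisely the cosets defining $\Psi(i,j,k;\ell)$. Once the period is pinned down by $3\mid n_\Psi$ and the distinguished root line is identified, the containment $\Psi\subseteq\Psi(i,j,k;\ell)\subsetneq\Phi$ together with the maximality of $\Psi$ completes the argument without any further computation.
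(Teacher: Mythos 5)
Your proof is correct and takes essentially the same route as the paper's: Lemma \ref{3D4gradient} pins down $\Gr(\Psi)=\mathring{\Phi}_s$, Proposition \ref{keypropositionnon2A2n} supplies the $\mathbb{Z}$--linear $p$ and the period $n_\Psi$, the absence of long roots in $\Psi$ forces $3\mid n_\Psi$ together with the mod-$3$ conditions on $p$, and closedness of $\Psi(i,j,k;\ell)$ (Proposition \ref{3D4converse}) plus maximality gives $\Psi=\Psi(i,j,k;\ell)$. The only differences are organizational: you extract $3\mid n_\Psi$ and the long-root conditions simultaneously from the coset constraint $(p_\gamma+n_\Psi\mathbb{Z})\cap 3\mathbb{Z}=\emptyset$, and you collapse the paper's three conditions $p_1-p_2,\ p_2+p_3,\ p_1+p_3\not\equiv 0\ (\mathrm{mod}\ 3)$ into the single condition $p_{\beta_1}\not\equiv p_{\beta_2}\ (\mathrm{mod}\ 3)$ on simple roots, which is equivalent.
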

\begin{pf}
Let $\Psi$ be a maximal closed subroot system of $\Phi.$ Then by Lemma \ref{3D4gradient}, we get $\Gr(\Psi)=\mathring{\Phi}_s$ and it is irreducible.
This also implies that  $\Psi$ can not contain any long root of $\Phi.$
 From the Proposition \ref{keypropositionnon2A2n}, we see that $\Psi$ must contain the roots
 $$ \big\{\epsilon_1-\epsilon_2+(p_1+n_sr)\delta,\epsilon_2-\epsilon_3+(p_2+n_sr)\delta, \epsilon_1-\epsilon_3+(p_3+n_sr)\delta: r\in \mathbb{Z}\big\}$$ for some 
 $p_1, p_2, p_3\in \mathbb{Z}$ and $n_s\in \mathbb{Z}$. Since $\Psi$ is closed and does not contain any long roots, we get 
 $p_1-p_2\not\equiv 0\ (\mathrm{mod}\ 3)$ as 
 $\epsilon_1+\epsilon_3-2\epsilon_2+(p_1-p_2)\delta=(\epsilon_1-\epsilon_2+p_1\delta)+(\epsilon_3-\epsilon_2-p_2\delta)\notin \Psi$.
 Similarly,  we get
 $p_2+p_3\not\equiv 0 \ (\mathrm{mod}\ 3)$ and $p_1+p_3\not\equiv 0 \ (\mathrm{mod}\ 3)$. This implies that  $p_1(\mathrm{mod}\ 3)$, $p_2(\mathrm{mod}\ 3)$ and
 $-p_3(\mathrm{mod}\ 3)$ are distinct elements.  Hence, one of the $p_i$ must be $\equiv 0 \ (\mathrm{mod}\ 3)$.
 We claim that there exists a
permutation $\{i, j, k\}$ of $I_3$ such that
 $$\Psi =\big\{\pm(\epsilon_i-\epsilon_j+(q_1+n_sr)\delta), \pm(\epsilon_j-\epsilon_k+(q_2+n_sr)\delta), \pm(\epsilon_i-\epsilon_k+(q_3+n_sr)\delta): r\in \mathbb{Z}\big\},$$ where 
 $q_1, q_2$ and $q_3$ satisfy
 $q_1\equiv 0 \ (\mathrm{mod}\ 3)$, $q_2\equiv q_3 \ (\mathrm{mod}\ 3)$ and $q_3\not\equiv 0 \ (\mathrm{mod}\ 3)$.
  If $p_1\equiv 0 \ (\mathrm{mod}\ 3)$, then take $(q_1, q_2, q_3)=(p_1, p_2, p_3)$ and  take the permutation to be identity.
 If $p_2\equiv 0 \ (\mathrm{mod}\ 3)$, then take $(q_1, q_2, q_3)=(p_2, -p_3, -p_1)$ and take the permutation to be they cycle $(1\ 2\ 3)$ and if
 $p_3\equiv 0 \ (\mathrm{mod}\ 3)$, then take $(q_1, q_2, q_3)=(p_3, -p_2, p_1)$ and take the permutation to be the cycle $(2\ 3).$
 
 Now,  we claim that $n_s \equiv 0 \ (\mathrm{mod}\ 3)$. Suppose not, then 
 there exists $r\in\mathbb{Z}$ such that $rn_s\equiv q_2 \ (\mathrm{mod}\ 3)$ which implies that  
 $\epsilon_i+\epsilon_k-2\epsilon_j+(q_1+rn_s-q_2)\delta=(\epsilon_i-\epsilon_j+(q_1+rn_s)\delta)+(\epsilon_k-\epsilon_j-q_2\delta)\in \Psi$, a contradiction.
 Thus,  there exists a
 permutation $\{i, j, k\}$ of $I_3$ and $\ell\equiv 1 \ \mathrm{or} \ 2\ (\mathrm{mod} \ 3)$ such that $\Psi\subseteq \Psi^+(i, j, k; \ell)$.
Since $\Psi(i, j, k; \ell)$ is closed, we get that  $\Psi=\Psi(i, j, k; \ell)$. This proves the forward part. The converse is clear from the Proposition \ref{3D4converse}.
\end{pf}

\section{The case $\tt E^{(2)}_{6}$}\label{2E6}
Throughout this section we assume that $\Phi$ is of type $\tt E^{(2)}_{6}$. In particular, the gradient root system $\mathring\Phi$ of $\tt E^{(2)}_{6}$ is of type $\tt F_4$.
We have the following explicit description of $\tt E^{(2)}_{6}$, see \cite[Page no. 557, 604]{carter}:
$$\Phi=\big\{\pm\epsilon_i+r\delta, \pm\epsilon_i\pm\epsilon_j+2r\delta, \tfrac{1}{2}(\lambda_1\epsilon_1+\lambda_2\epsilon_2+\lambda_3\epsilon_3+\lambda_4\epsilon_4)+r\delta, : 
\lambda_i=\pm1, 1\leq i\neq j\leq 4, r \in \mathbb{Z} \big\}$$
The short roots of $\mathring{\Phi}$ form a root system of type $\tt D_4$ (\cite[Page no. 147]{carter}). We set
$\mathcal{D}_4:=\mathring{\Phi}_s =\big\{\pm\epsilon_i,\tfrac{1}{2}(\lambda_1\epsilon_1+\lambda_2\epsilon_2+\lambda_3\epsilon_3+\lambda_4\epsilon_4): i\in I_4,  \lambda_j=\pm1, \forall\ j \in I_4\}$ and
$\Gamma_4=\{\epsilon_{2}, \epsilon_{3},\epsilon_{4},\tfrac{1}{2}(\epsilon_1-\epsilon_2-\epsilon_3-\epsilon_4)\}$ is a simple root system of 
$\mathcal{D}_4$.
\subsection{}
Let $p: \Gamma_4\to \mathbb{Z}$  be a function and  
let $p:\mathcal{D}_4\to \mathbb{Z}$ be its $\mathbb{Z}$--linear extension, such that exactly two $p_{\epsilon_i}$ are even and the rest two are odd. 
Define
$$ \text{$\Psi_p(\tt E^{(2)}_{6})$}:= 
\big\{\alpha+(p_{\alpha}+2r)\delta :  \alpha\in \mathcal{D}_4, r\in \mathbb{Z}\big\}\cup \big\{\pm\epsilon_i\pm{\epsilon_j}+2r\delta :p_{\epsilon_i}+p_{\epsilon_j}\in2\mathbb{Z}, r\in \mathbb{Z}\big\}.$$ 
Note that $p_{-\epsilon_i}=-p_{\epsilon_i}$ and  $p_{\epsilon_i}+p_{\epsilon_j}\in2\mathbb{Z}$ if and only if $p_{\epsilon_i}, p_{\epsilon_j}$ have the same parity.
\begin{lem}
 $\Psi_p(\tt E^{(2)}_{6})$ is a closed subroot system of $\Phi$. 
\end{lem}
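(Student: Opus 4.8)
The plan is to rewrite $\Psi_p(\tt E^{(2)}_{6})$ in a single uniform way, so that both halves of the definition of a closed subroot system (Definition \ref{subrootdefinition}(3)) fall out of one short computation. First I would extend $p$ $\mathbb{Z}$--linearly to the lattice $\mathbb{Z}\mathcal{D}_4$; this contains every root of $\mathring\Phi$, since each long root $\pm\epsilon_i\pm\epsilon_j$ is the sum of the two short roots $\pm\epsilon_i,\pm\epsilon_j\in\mathcal{D}_4$. With this extension in hand I claim the clean description
$$\Psi_p(\tt E^{(2)}_{6})=\big\{\gamma+s\delta\in\Phi : s\equiv p_\gamma \ (\mathrm{mod}\ 2)\big\}.$$
To check the claim, note that for short $\gamma\in\mathcal{D}_4$ the congruence $s\equiv p_\gamma\ (\mathrm{mod}\ 2)$ is exactly the first set in the definition, whereas for a long root $\gamma=\pm\epsilon_i\pm\epsilon_j$ one has $p_\gamma\equiv p_{\epsilon_i}+p_{\epsilon_j}\ (\mathrm{mod}\ 2)$ (signs being irrelevant modulo $2$) and, crucially, $s$ is automatically even because long roots of $\Phi$ of type $\tt E^{(2)}_{6}$ appear only with even multiples of $\delta$; hence $s\equiv p_\gamma$ amounts to $p_{\epsilon_i}+p_{\epsilon_j}\in2\mathbb{Z}$ together with $s\in2\mathbb{Z}$, which is precisely the second set.

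Next I would introduce the defect $q(\gamma+s\delta):=s-p_\gamma\in\mathbb{Z}/2\mathbb{Z}$, so that $\Psi_p(\tt E^{(2)}_{6})=q^{-1}(0)$. Since $(\delta,\,\cdot\,)=0$ on the gradient directions, for $\mu=\gamma+s\delta$ and $\nu=\gamma'+s'\delta$ in $\Phi$ one has $\langle\nu,\mu^\vee\rangle=\langle\gamma',\gamma^\vee\rangle$ (as used throughout the paper). The two computations
$$q(\mu+\nu)=(s+s')-p_{\gamma+\gamma'}=q(\mu)+q(\nu)$$
(whenever $\mu+\nu\in\Phi$) and
$$q(\textbf{s}_\mu(\nu))=\big(s'-\langle\gamma',\gamma^\vee\rangle s\big)-p_{\textbf{s}_\gamma(\gamma')}=q(\nu)-\langle\gamma',\gamma^\vee\rangle\,q(\mu)$$
then follow immediately from the $\mathbb{Z}$--linearity identities $p_{\gamma+\gamma'}=p_\gamma+p_{\gamma'}$ and $p_{\textbf{s}_\gamma(\gamma')}=p_{\gamma'}-\langle\gamma',\gamma^\vee\rangle p_\gamma$. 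Taking $\mu,\nu\in\Psi_p(\tt E^{(2)}_{6})$, i.e.\ $q(\mu)=q(\nu)=0$, the first identity yields $\mu+\nu\in\Psi_p(\tt E^{(2)}_{6})$ whenever $\mu+\nu\in\Phi$ (closedness under addition), and the second yields $\textbf{s}_\mu(\nu)\in\Psi_p(\tt E^{(2)}_{6})$ (the subroot-system property, using that $\textbf{s}_\mu(\nu)\in\Phi$ automatically).

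The only genuinely delicate point is the reformulation in the first paragraph: it is there that one must invoke the explicit shape of $\Phi$ of type $\tt E^{(2)}_{6}$, namely that the long gradient roots $\pm\epsilon_i\pm\epsilon_j$ are lifted solely with even $\delta$--coefficients, in order to collapse the two halves of the definition into the single congruence $s\equiv p_\gamma\ (\mathrm{mod}\ 2)$. Everything after that is a two-line calculation, so I expect no real obstacle beyond carefully bookkeeping the congruences and the standard pairing identity $\langle\nu,\mu^\vee\rangle=\langle\gamma',\gamma^\vee\rangle$, which is exactly what lets the finite-type $\mathbb{Z}$--linearity of $p$ control the affine reflections and sums. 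I note that the parity hypothesis ``exactly two $p_{\epsilon_i}$ are even'' is not actually used for this lemma; it only serves to guarantee that $\Gr(\Psi_p(\tt E^{(2)}_{6}))$ is a proper semi-closed subsystem, which becomes relevant in the subsequent maximality arguments rather than here.
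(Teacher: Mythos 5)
Your proof is correct, and it takes a genuinely different — and shorter — route than the paper's. The paper verifies the two axioms separately by case analysis on root lengths: first the subroot-system property (short-by-short reflections via the linearity identity, then the mixed cases of $\textbf{s}_{\alpha}$ acting on $\pm\epsilon_i\pm\epsilon_j$ and of $\textbf{s}_{\pm\epsilon_i\pm\epsilon_j}$ acting on $\mathcal{D}_4$), and then closedness in three cases (short$+$short, short$+$long, long$+$long); several of these steps invoke the hypothesis that exactly two $p_{\epsilon_i}$ are even, e.g.\ to see that $\textbf{s}_\alpha(\pm\epsilon_i\pm\epsilon_j)=\pm\epsilon_k\pm\epsilon_\ell$ stays in the gradient, and to rule out the long$+$long case altogether. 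Your key move is to extend $p$ to a $\mathbb{Z}$-linear functional on the lattice $\mathbb{Z}\mathcal{D}_4$ — legitimate, since $\Gamma_4$ is a $\mathbb{Z}$-basis of that lattice and the paper's extension to $\mathcal{D}_4$, characterized by (\ref{palpha}), is the restriction of the lattice-linear one — after which the congruence description $\Psi_p(\tt E^{(2)}_{6})=\{\gamma+s\delta\in\Phi : s\equiv p_\gamma\ (\mathrm{mod}\ 2)\}$ (whose proof correctly isolates the one structural input: long roots of $\tt E^{(2)}_{6}$ occur only with even $\delta$-coefficients) turns both axioms into one-line consequences of the additivity and reflection-equivariance of the defect $q$. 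This buys uniformity, and it substantiates your closing remark that the parity hypothesis is not needed here; the paper's proof uses it repeatedly only because its case analysis routes through statements (such as the complementary pair $\pm\epsilon_k\pm\epsilon_\ell$ lying in the gradient, or a long$+$long sum never being a root) that are true under the hypothesis but are not actually required for the lemma. One sharpening of your remark: integrality of the lattice-linear extension forces $2p_\alpha=p_{\epsilon_1}+p_{\epsilon_2}+p_{\epsilon_3}+p_{\epsilon_4}$ for $\alpha=\tfrac{1}{2}(\epsilon_1+\epsilon_2+\epsilon_3+\epsilon_4)$, so the only possible parity patterns are all even, all odd, or exactly two even; your argument covers all three, but in the first two cases $\Gr(\Psi_p(\tt E^{(2)}_{6}))$ is all of $\mathring{\Phi}$ (such subroot systems are instances of Theorem \ref{twistedgradient=PhinotA2}(1) with $n_\Psi=m=2$), so the hypothesis of exactly two even values is precisely what makes the gradient a proper semi-closed subsystem — which, as you say, matters only for the subsequent maximality proposition, not for this lemma.
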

\begin{pf}
First we prove that  $\Psi_p(\tt E^{(2)}_{6})$ is a subroot system of $\Phi$. 
Since $p$ is $\mathbb{Z}$--linear and satisfies the equation \ref{palpha}, we have
$$\bold s_{\a+(p_\a+2r)\delta}(\beta+(p_\beta+2r')\delta)=\bold s_\a(\beta)+(p_{\bold s_\a(\beta)}+2(r'-r\langle \beta, \a^\vee \rangle))\delta\in\text{$\Psi_p(\tt E^{(2)}_{6})$}.$$
Suppose $\pm\epsilon_i\pm{\epsilon_j}\in \text{$\Psi_p(\tt E^{(2)}_{6})$}$, we have
 $p_{\epsilon_i}$ and $p_{\epsilon_j}$ have the same parity since $p_{\epsilon_i}+p_{\epsilon_j}\in2\mathbb{Z}$. This implies
$p_{\epsilon_k}$ and $p_{\epsilon_\ell}$ also have the same parity by our choice of $p$, where $\{k, \ell\}=I_4\backslash \{i, j\}$. 
So, $p_{\epsilon_k}+p_{\epsilon_\ell}\in2\mathbb{Z}$, and hence 
$\pm\epsilon_k\pm{\epsilon_\ell}+2r\delta\in \text{$\Psi_p(\tt E^{(2)}_{6})$}$ for all $r\in \mathbb{Z}$.
We have
$$\bold s_{\a+(p_\a+2r)\delta}(\pm\epsilon_i\pm{\epsilon_j}+2r'\delta)=\bold s_\a(\pm\epsilon_i\pm{\epsilon_j})+2(r'-(\pm\epsilon_i\pm{\epsilon_j}, \a)(p_\a+2r))\delta,$$ for $\a\in \mathcal{D}_4$ and $r, r'\in \mathbb{Z}$.
Now,  since $$\bold s_{\a}(\pm\epsilon_i\pm{\epsilon_j})\ \text{is a root of the form}\begin{cases}
                                                                \pm\epsilon_i\pm{\epsilon_j}\ \text{if}\ \a=\pm\epsilon_k\\
                                                                \pm\epsilon_i\pm{\epsilon_j}\ \text{or}\ \pm\epsilon_k\pm{\epsilon_\ell}\ \text{if}\ \a=\sum\limits_{r=1}^4\lambda_r\epsilon_r,
                                                               \end{cases}
$$ where $\{k, \ell\}=I_4\backslash \{i, j\}$,
we have
$\bold s_{\a+(p_\a+2r)\delta}(\pm\epsilon_i\pm{\epsilon_j}+2r'\delta)\in\text{$\Psi_p(\tt E^{(2)}_{6})$}.$
It is easy to see that,
$$\bold s_{\pm\epsilon_k\pm\epsilon_\ell+2r\delta}(\epsilon_i\pm\epsilon_j+2r'\delta)=\pm\epsilon_i\pm{\epsilon_j}+2r'\delta\in\text{$\Psi_p(\tt E^{(2)}_{6})$}.$$
Since $p_{\pm\epsilon_i}$ and $p_{\pm\epsilon_j}$ have the same parity, we have $$p_{\a-(\a, \pm\epsilon_i\pm\epsilon_j)(\pm\epsilon_i\pm\epsilon_j)}=p_\a-(\a, \pm\epsilon_i\pm\epsilon_j)(p_{\pm\epsilon_i}+p_{\pm\epsilon_j})\equiv p_\a \ (\mathrm{mod}\ 2).$$
This implies that  
$$\bold s_{\pm\epsilon_i\pm\epsilon_j+2r\delta}(\a+(p_\a+2r')\delta)=(\a-(\a, \pm\epsilon_i\pm\epsilon_j)(\pm\epsilon_i\pm\epsilon_j))+(p_\a+2(r'-(\a, \pm\epsilon_i\pm\epsilon_j)r))\delta\in\text{$\Psi_p(\tt E^{(2)}_{6})$}$$
for $\a\in \mathcal{D}_4$ and $r, r'\in\mathbb{Z}$ since $(\a-(\a, \pm\epsilon_i\pm\epsilon_j)(\pm\epsilon_i\pm\epsilon_j))\in \mathcal{D}_4$ for $\a\in \mathcal{D}_4$.
This proves that 
$\text{$\Psi_p(\tt E^{(2)}_{6})$}$ is a subroot system of $\Phi.$
Now,  we prove that $\text{$\Psi_p(\tt E^{(2)}_{6})$}$ is closed in $\Phi.$ We have the following cases.
\vskip 1.5mm
\noindent
Case (1).
Let $x=(\a+(p_\a+2r)\delta) + (\beta+(p_\beta+2r')\delta)\in \Phi$ for some $\a, \beta\in \mathcal{D}_4$.
If $\a+\beta\in \mathcal{D}_4$, then it is easy to see that $x=(\a+\beta)+(p_{\a+\beta}+2(r+r'))\delta\in\text{$\Psi_p(\tt E^{(2)}_{6})$}$.
If $\a+\beta\notin \mathcal{D}_4$, then $p_\a$ and $p_\beta$ are of the same parity. 
We have the following possibilities when $\a+\beta\notin \mathcal{D}_4$:
\begin{itemize}
\item if $\a=\pm\epsilon_i, \beta=\pm\epsilon_j\in \mathcal{D}_4$, then $x=(\pm\epsilon_i\pm\epsilon_j)+(p_\a+p_\beta+2(r+r'))\delta\in\text{$\Psi_p(\tt E^{(2)}_{6})$}$ 
 since $p_\a\equiv p_{\epsilon_i}\ (\mathrm{mod} \ 2)$ and $p_\beta \equiv p_{\epsilon_j}\ (\mathrm{mod} \ 2)$ have the same parity.

\item if $\a=\tfrac{1}{2}(\lambda_i\epsilon_i+\lambda_j\epsilon_j)+\tfrac{1}{2}(\lambda_k\epsilon_k+\lambda_\ell\epsilon_\ell)$ and
 $\beta=\tfrac{1}{2}(\lambda_i\epsilon_i+\lambda_j\epsilon_j)-\tfrac{1}{2}(\lambda_k\epsilon_k+\lambda_\ell\epsilon_\ell)$, then 
 we have $\a-(\lambda_k\epsilon_k+\lambda_\ell\epsilon_\ell)=\beta$ which implies that   $p_\a-(\lambda_kp_{\epsilon_k}+\lambda_\ell p_{\epsilon_\ell})=p_\beta$. 
 Since $p_\a\equiv p_\beta \ (\mathrm{mod}\ 2)$, we must have $p_{\epsilon_k}\equiv p_{\epsilon_\ell} \ (\mathrm{mod} \ 2)$. Hence, 
$p_{\epsilon_i}$ and  $p_{\epsilon_j}$ are of the same parity by our choice of the function $p$.
This implies
 $x=(\lambda_i\epsilon_i+\lambda_j\epsilon_j)+(p_\a+p_\beta+2(r+r'))\delta\in\text{$\Psi_p(\tt E^{(2)}_{6})$}$.
\end{itemize}

\vskip 1.5mm
\noindent
Case (2). Let $x=(\a+(p_\a+2r)\delta) + (\pm\epsilon_i\pm\epsilon_j+2r'\delta)\in \Phi$ for some $\a\in \mathcal{D}_4$ and $(\pm\epsilon_i\pm\epsilon_j+2r'\delta)\in\text{$\Psi_p(\tt E^{(2)}_{6})$}$.
Since $\a +(\pm\epsilon_i\pm\epsilon_j)\in \Gr(\Phi)$, we have  $\a +(\pm\epsilon_i\pm\epsilon_j)\in\mathcal{D}_4$.
Since $p_{\epsilon_i}$ and $p_{\epsilon_j}$ have the same parity, we have $p_{\a-(\pm\epsilon_i\pm\epsilon_j)}=p_\a-(p_{\pm\epsilon_i}+p_{\pm\epsilon_j})\equiv p_\a \ (\mathrm{mod} \ 2)$.
This implies that  $x=\a +(\pm\epsilon_i\pm\epsilon_j)+(p_\a+2(r+r'))\delta\in \Phi$, since $\a +(\pm\epsilon_i\pm\epsilon_j)\in\mathcal{D}_4$.

\vskip 1.5mm
\noindent
Case (3). Let $x=(\a+2r\delta)+(\beta+2r'\delta)\in \Phi$ for some $\a+2r\delta, \beta+2r'\delta\in \text{$\Psi_p(\tt E^{(2)}_{6})$}$ with $\a, \beta\notin \mathcal{D}_4$. Then we must have
$\a=\pm\epsilon_i\pm\epsilon_j$ and $\beta=\mp\epsilon_j\pm\epsilon_k$ for some $i\neq j, j\neq k\in I_4$. 
Since $p_{\epsilon_i}$ and $p_{\epsilon_j}$ have the same parity and $p_{\epsilon_j}$ and $p_{\epsilon_k}$ have the same parity,
we have $i=k$ by our choice of the function $p$. In this case,  $x$ can not be in $\Phi$, so this case is not possible.
This completes the proof.
\end{pf}

\noindent
Note that $Z_\a(\text{$\Psi_p(\tt E^{(2)}_{6})$})=p_\a+2\mathbb{Z}$ for all $\a\in \mathcal{D}_4$ and 
$Z_{\pm\epsilon_i\pm\epsilon_j}(\text{$\Psi_p(\tt E^{(2)}_{6})$})=2\mathbb{Z}$ for $\pm\epsilon_i\pm\epsilon_j\in \Gr(\text{$\Psi_p(\tt E^{(2)}_{6})$})$, in particular $Z_\a(\text{$\Psi_p(\tt E^{(2)}_{6})$})=2\mathbb{Z}\ \text{or}\ 1+2\mathbb{Z}$ 
depending on $p_\a$ being even or odd.
\begin{lem}\label{2E6maxclosed}
 $\Psi_p(\tt E^{(2)}_{6})$ is a maximal closed subroot system of $\Phi$. 
\end{lem}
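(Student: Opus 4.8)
The plan is to argue maximality directly: assume $\Psi_p(\tt E^{(2)}_6)\subsetneq\Delta\subseteq\Phi$ for a closed subroot system $\Delta$ and show $\Delta=\Phi$. First I would record the shape of $\Phi\setminus\Psi_p(\tt E^{(2)}_6)$. Using the description of the sets $Z_\alpha$ noted just above the lemma, an element of $\Phi$ lies outside $\Psi_p(\tt E^{(2)}_6)$ precisely when it is either (i) a short root $\alpha+s\delta$ with $\alpha\in\mathcal{D}_4$ and $s\not\equiv p_\alpha\pmod 2$, or (ii) a long root $\pm\epsilon_i\pm\epsilon_j+2r\delta$ with $p_{\epsilon_i}+p_{\epsilon_j}$ odd. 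Since $\Psi_p(\tt E^{(2)}_6)\subsetneq\Delta$, at least one of these two types of roots lies in $\Delta$, and I treat the two cases in turn, reducing (ii) to (i). Throughout I use that $\mathcal{D}_4=\mathring\Phi_s$ is an irreducible simply-laced (type $\tt D_4$) root system, that $\mathcal{D}_4\subseteq\Gr(\Psi_p(\tt E^{(2)}_6))\subseteq\Gr(\Delta)$, and that every long root $\pm\epsilon_i\pm\epsilon_j$ is the sum of the two short roots $\pm\epsilon_i$ and $\pm\epsilon_j$.

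For case (i), suppose $\alpha+s\delta\in\Delta$ with $s\not\equiv p_\alpha\pmod 2$. Because $\Psi_p(\tt E^{(2)}_6)$ already contains $\alpha+(p_\alpha+2\mathbb{Z})\delta$, the two parities together give $\alpha+\mathbb{Z}\delta\subseteq\Delta$, i.e. $n_\alpha^\Delta=1$. Since all roots of $\mathcal{D}_4$ lie in a single orbit of the Weyl group $W(\mathcal{D}_4)=\langle\bold s_\alpha:\alpha\in\mathcal{D}_4\rangle\subseteq W_{\Gr(\Delta)}$, Lemma \ref{nalpha} forces $n_\beta^\Delta=1$ for every short root $\beta$, so $\beta+\mathbb{Z}\delta\subseteq\Delta$ for all $\beta\in\mathcal{D}_4$. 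Then for any long root $\epsilon_i+\epsilon_j+2r\delta\in\Phi$ I write it as $(\epsilon_i+0\cdot\delta)+(\epsilon_j+2r\delta)$, a sum of two short roots of $\Delta$ whose sum is a root; closedness of $\Delta$ gives $\epsilon_i+\epsilon_j+2r\delta\in\Delta$, and likewise for all sign choices and levels. Hence $\Delta=\Phi$.

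For case (ii), suppose $\eta+2r\delta\in\Delta$ with $\eta=\pm\epsilon_i\pm\epsilon_j$ and $p_{\epsilon_i}+p_{\epsilon_j}$ odd; without loss of generality $p_{\epsilon_i}$ is even and $p_{\epsilon_j}$ is odd. Choosing the short root $\mp\epsilon_i+s\delta\in\Psi_p(\tt E^{(2)}_6)\subseteq\Delta$ (opposite sign on $\epsilon_i$) with $s\equiv p_{\mp\epsilon_i}\equiv p_{\epsilon_i}\pmod 2$ even, closedness gives $(\eta+2r\delta)+(\mp\epsilon_i+s\delta)=\pm\epsilon_j+(2r+s)\delta\in\Delta$, a short root sitting at the even level $2r+s$. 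As $p_{\pm\epsilon_j}=\pm p_{\epsilon_j}$ is odd, this short root has the wrong parity, so we are back in case (i) and again conclude $\Delta=\Phi$. This exhausts all possibilities and proves maximality.

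The routine but delicate point is the parity bookkeeping in case (ii): one must check that cancelling the even-parity component of the illegal long root always leaves a short root at an even level, whereas its designated parity is odd, so that the reduction to case (i) is legitimate. The conjugacy step in case (i) — upgrading $n_\alpha^\Delta=1$ for one short root to all short roots via Lemma \ref{nalpha} and the simple-lacedness of $\mathcal{D}_4$ — is the only place the real structure of $\tt D_4$ enters, and it is the crux of the argument.
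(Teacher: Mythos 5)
Your proof is correct, and it takes a genuinely different route for the crux step. The paper also argues by assuming $\Psi_p(\tt E^{(2)}_{6})\subsetneq\Delta\subseteq\Phi$ and analysing $\Delta\setminus\Psi_p(\tt E^{(2)}_{6})$, but it splits the ``wrong-parity short root'' situation into two separate cases — one for roots of the form $\pm\epsilon_s$, one for the diagonal roots $\tfrac{1}{2}\sum_i\lambda_i\epsilon_i$ — and closes each out by explicit chains of root additions (e.g.\ producing $\epsilon_t+\epsilon_s+2\mathbb{Z}\delta\subseteq\Delta$, then exploiting an index $t$ with $p_{\epsilon_t}\not\equiv p_{\epsilon_s}\pmod 2$ to get $\epsilon_s+\mathbb{Z}\delta\subseteq\Delta$, and so on), using nothing beyond closedness of $\Delta$. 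You instead treat all short roots uniformly: once one short root $\alpha$ has $Z_\alpha(\Delta)$ meeting both parities, the coset structure of Proposition \ref{keypropositionnon2A2n} forces $n_\alpha^\Delta=1$, and then Lemma \ref{nalpha} together with the transitivity of $W(\mathcal{D}_4)$ on the roots of the simply-laced irreducible system $\mathcal{D}_4$ propagates $n_\beta^\Delta=1$ to every short root $\beta$, after which the long roots follow from closedness since each is a sum of two short roots. Your case (ii) — cancelling the even component of an ``illegal'' long root against a short root of $\Psi_p(\tt E^{(2)}_{6})$ to produce a short root at a level of the wrong parity — is the same reduction as the paper's Case (3), just with the roles of $\epsilon_i$ and $\epsilon_j$ interchanged. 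What your approach buys is brevity and uniformity: no case split among short roots and no hand-built root chains, at the price of invoking the $n_\alpha$ machinery (which the paper has already established, so this is perfectly legitimate); the paper's version is more elementary and self-contained at this point, exhibiting explicitly how $\Delta$ fills up to $\Phi$.
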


 \begin{pf}
Suppose there is a closed subroot system $\Delta$ of $\Phi$ such 
that $\text{$\Psi_p(\tt E^{(2)}_{6})$}\subsetneq \Delta\subseteq \Phi.$ This implies that  that $\Gr(\text{$\Psi_p(\tt E^{(2)}_{6})$})\subseteq \Gr(\Delta)$ and
$Z_\a(\text{$\Psi_p(\tt E^{(2)}_{6})$})\subseteq Z_\a(\Delta)$. 
Note that $Z_\a(\text{$\Psi_p(\tt E^{(2)}_{6})$})=Z_\a(\Delta)=2\mathbb{Z}$ for $\a=\pm\epsilon_i\pm\epsilon_j\in \Gr(\text{$\Psi_p(\tt E^{(2)}_{6})$})$.

So, there are three possibilities for elements of $\Delta\backslash \text{$\Psi_p(\tt E^{(2)}_{6})$}$. 
\vskip 1.5mm
\noindent
Case (1). Suppose $Z_\a(\text{$\Psi_p(\tt E^{(2)}_{6})$})\subsetneq Z_\a(\Delta)$ for some $\a=\epsilon_s$.
Then there exists $r_1,r_2\in\mathbb{Z}$ such that $\epsilon_s+r_1\delta,\epsilon_s+r_2\delta\in\Delta$
and $r_1,r_2$ have different parity. Then either $(p_{\epsilon_t}+r_1)\in 2\mathbb{Z}$ or $(p_{\epsilon_t}+r_2)\in 2\mathbb{Z}$ for each $t\in I_4$ with $t\neq s$.
Hence, $\epsilon_t+\epsilon_s+2\mathbb{Z}\delta=\epsilon_t+(p_{\epsilon_t}+2\mathbb{Z})\delta+\epsilon_s+r_i\delta\subseteq\Delta$ for $i=1$ or 2.
Similarly,  one sees that
$$\pm\text{$\epsilon_t\pm\epsilon_s+2\mathbb{Z}\delta\subseteq\Delta$ for all $t\in I_4, t\neq s$.}$$ Choose $t\in I_4$ such that $p_{\epsilon_t}$ and  $p_{\epsilon_s}$ have 
different parity. Then
$\epsilon_s-(p_{\epsilon_t}+2\mathbb{Z})\delta=\epsilon_t+\epsilon_s+2\mathbb{Z}\delta-(\epsilon_t+p_{\epsilon_t}\delta)\subseteq\Delta$.
This implies that  $\epsilon_s+\mathbb{Z}\delta\subseteq\Delta$. This implies
$\epsilon_t+\mathbb{Z}\delta=(\epsilon_t+\epsilon_s+2\mathbb{Z}\delta)+(-\epsilon_s+\mathbb{Z}\delta)\subseteq\Delta$ for all $t\neq s$. Hence, we have 
$\epsilon_t+\mathbb{Z}\delta\subseteq \Delta$ for all $t\in I_4$. From this it is easy to see that $\Delta=\Phi$.

\vskip 1mm
\noindent
Case (2). Suppose $Z_\a(\text{$\Psi_p(\tt E^{(2)}_{6})$})\subsetneq Z_\a(\Delta)$ for some $\a=\sum_{i=1}^{4}\lambda_i\epsilon_i$. 
Then there exists $r_1,r_2\in\mathbb{Z}$ with different parity such that $\frac{1}{2}\big(\sum_{i=1}^{4}\lambda_i\epsilon_i\big)+r_1\delta$, 
$\frac{1}{2}\big(\sum_{i=1}^{4}\lambda_i\epsilon_i\big)+r_2\delta\in\Delta$. So, we have
$$\lambda_1\epsilon_1+(r_k+s)\delta=\frac{1}{2}\big(\sum\limits_{i=1}^{4}\lambda_i\epsilon_i\big)+r_k\delta+\frac{1}{2}\big(\lambda_1\epsilon_1+
\sum\limits_{i=2}^{4}(-\lambda_i)\epsilon_i\big)+s\delta\in\Delta$$
for $k=1,2$ and $s=p_{\frac{1}{2}\big(\lambda_1\epsilon_1+
\sum\limits_{i=2}^{4}(-\lambda_i)\epsilon_i\big)}$. Since $(r_1+s)$ and $(r_2+s)$ have different parity, we are back to Case (1) and hence $\Delta=\Phi$.

\vskip 1mm
\noindent
Case (3). Suppose $\Gr(\text{$\Psi_p(\tt E^{(2)}_{6})$})\subsetneq \Gr(\Delta)$. 
Then there exists $i,j\in I_4, i\neq j,$ such that  $p_{\epsilon_i}$ and $p_{\epsilon_j}$ have different parity and
$\epsilon_{i}\pm\epsilon_j+2r\delta\in\Delta$ for some $r\in \mathbb{Z}$. Since $\mp\epsilon_j+p_{\mp\epsilon_j}\delta\in\text{$\Psi_p(\tt E^{(2)}_{6})$}$, we get
$\epsilon_i+(p_{\mp\epsilon_j}+2r)\delta\in\Delta.$ Since $p_{\epsilon_{i}}$, $p_{\mp\epsilon_{j}}+2r$ have different parity and $\epsilon_i+p_{\epsilon_i}\delta\in\Delta$,
 we are back to the Case (1) again and hence $\Delta=\Phi$.
 This completes the proof.

\end{pf}

\begin{prop}
 Suppose $\Phi$ is of type $\tt  E^{(2)}_{6}$. Then $\Psi\le \Phi$ is a maximal closed subroot system with a
 proper semi-closed gradient subroot system  if and only if there exists a 
 $\mathbb{Z}-$linear function $p: \mathcal{D}_{4}\to \mathbb{Z}$ such that
$\Psi=\text{$\Psi_p(\tt E^{(2)}_{6})$}$ and exactly two of $p_{\epsilon_i}$ are even. The type of $\Psi_p(\tt E^{(2)}_{6})$ is $\tt C_4^{(1)}$.
 \end{prop}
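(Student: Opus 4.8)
The plan is to prove the two implications separately, leaning on the two preceding lemmas for the ``if'' direction and concentrating the real work on the ``only if'' direction. For the converse, suppose $p\colon\mathcal{D}_4\to\mathbb{Z}$ is $\mathbb{Z}$--linear with exactly two of $p_{\epsilon_1},\dots,p_{\epsilon_4}$ even. Lemma~\ref{2E6maxclosed} (and the closedness lemma preceding it) already give that $\text{$\Psi_p(\tt E^{(2)}_{6})$}$ is a maximal closed subroot system of $\Phi$, so I only need to verify that its gradient is proper and semi-closed and to read off the type. Directly from the definition, $\Gr(\text{$\Psi_p(\tt E^{(2)}_{6})$})=\mathcal{D}_4\cup\{\pm\epsilon_i\pm\epsilon_j:p_{\epsilon_i}+p_{\epsilon_j}\in2\mathbb{Z}\}$; since two $p_{\epsilon_i}$ are even and two are odd, any long root $\epsilon_i+\epsilon_j$ with $p_{\epsilon_i},p_{\epsilon_j}$ of opposite parity is omitted, so this gradient is a proper subset of $\mathring\Phi$ and fails to be closed exactly through relations short $+$ short $=$ long. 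By Definition~\ref{defnsemiclosed} it is semi-closed. Counting lengths shows the gradient consists of $24$ short roots forming $\tt D_4$ and $8$ long roots splitting into four mutually orthogonal copies of $\tt A_1$, which is precisely $\tt C_4$; together with the common $\delta$--period this identifies the affine type as $\tt C_4^{(1)}$.

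For the forward direction, let $\Psi$ be maximal closed with proper semi-closed gradient. By Proposition~\ref{twistedgradient} there are short roots $\alpha,\beta\in\Gr(\Psi)$ with $\alpha+\beta$ long and $\alpha+\beta\notin\Gr(\Psi)$; a length computation forces $(\alpha,\beta)=0$. Since a long root $\mu$ of $\tt F_4$ occurs in $\Phi$ only as $\mu+2r\delta$, closedness of $\Psi$ forces every element of $Z_\alpha(\Psi)+Z_\beta(\Psi)$ to be odd, so $Z_\alpha(\Psi)$ and $Z_\beta(\Psi)$ have opposite, and in particular definite, parities. I would then show that the coset of every short root in $\Gr(\Psi)$ has a definite parity with common period $n_s=2$ (parity being forced by Lemma~\ref{nalpha} together with the above, and $n_s=2$ by maximality, exactly as in the $\tt A^{(2)}_{2n-1}$ analysis), and that by Proposition~\ref{keypropositionnon2A2n} these parities are the reductions modulo $2$ of a $\mathbb{Z}$--linear function $p=p^\Psi$ on the short roots.

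The crux is the parity bookkeeping dictated by the $\tt D_4$ geometry of $\mathcal{D}_4$. Writing $\sigma=\tfrac12(\epsilon_1+\epsilon_2+\epsilon_3+\epsilon_4)\in\mathcal{D}_4$, the relation $2\sigma=\epsilon_1+\epsilon_2+\epsilon_3+\epsilon_4$ together with $\mathbb{Z}$--linearity of $p$ gives
\[
2\,p_\sigma=p_{\epsilon_1}+p_{\epsilon_2}+p_{\epsilon_3}+p_{\epsilon_4},
\]
so $\sum_i p_{\epsilon_i}$ is even and the number of odd $p_{\epsilon_i}$ is $0$, $2$, or $4$. If it were $0$ or $4$ then all $\epsilon_i$ would share one parity, every long root $\epsilon_i\pm\epsilon_j$ would lie in $\Gr(\Psi)$, and $\Gr(\Psi)$ would equal $\mathring\Phi$, contradicting properness; hence exactly two $p_{\epsilon_i}$ are even. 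This pins down $\Gr(\Psi)=\mathcal{D}_4\cup\{\pm\epsilon_i\pm\epsilon_j:p_{\epsilon_i}+p_{\epsilon_j}\in2\mathbb{Z}\}$ and yields $\Psi\subseteq\text{$\Psi_p(\tt E^{(2)}_{6})$}$; since the latter is a proper closed subroot system and $\Psi$ is maximal, $\Psi=\text{$\Psi_p(\tt E^{(2)}_{6})$}$.

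The step I expect to be the main obstacle is guaranteeing that \emph{all} short roots of $\tt F_4$ already lie in $\Gr(\Psi)$, so that $p$ is defined on all of $\mathcal{D}_4$ and the relation above can be invoked. Two orthogonal short roots generate only an $\tt A_1\oplus A_1$ under reflections, so full short-root containment cannot come from the reflection structure alone and must be extracted from maximality: if some short root were missing one produces a strictly larger proper closed subroot system (either one with closed gradient, excluded here since $\Gr(\Psi)$ is semi-closed, or a $\Psi_{p'}$ of the above shape), contradicting maximality of $\Psi$. Managing this reduction, together with verifying $n_s=2$, is the delicate part, and I would carry it out following the template of the $\tt D^{(2)}_{n+1}$ and $\tt A^{(2)}_{2n-1}$ proofs.
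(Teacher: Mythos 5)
Your converse direction is fine (it correctly rests on the closedness lemma and Lemma~\ref{2E6maxclosed}), your opening parity observation for two orthogonal short roots is correct, and your relation $2p_\sigma=p_{\epsilon_1}+p_{\epsilon_2}+p_{\epsilon_3}+p_{\epsilon_4}$ is actually a cleaner route to ``exactly two $p_{\epsilon_i}$ are even'' than the paper's own computation (the paper rules out the three-of-one-parity configuration by an explicit closedness calculation with half-sum roots). But there is a genuine gap exactly where you flag it: the claim that \emph{all} short roots of $\tt F_4$, i.e.\ all of $\mathcal{D}_4$, lie in $\Gr(\Psi)$. Everything downstream of that point --- the $\mathbb{Z}$-linearity of $p$ on $\mathcal{D}_4$, the integrality of $p_\sigma$, the parity count, and the inclusion $\Psi\subseteq\Psi_p(\tt E^{(2)}_{6})$ --- presupposes it, and your proposed fix (``if some short root were missing one produces a strictly larger proper closed subroot system, following the template of the $\tt D^{(2)}_{n+1}$ and $\tt A^{(2)}_{2n-1}$ proofs'') does not supply it. The paper devotes the bulk of its proof to precisely this step, via a case analysis on $I=\{i\in I_4:\epsilon_i\in\Gr(\Psi)\}$: it shows $I\neq\emptyset$ (if $I=\emptyset$ then $\Gr(\Psi)$ sits inside the closed system $\{\pm\a_i\}\cup\mathring\Phi_\ell$, and maximality forces $\Psi$ to \emph{equal} the lift of that system, contradicting semi-closedness of $\Gr(\Psi)$); then $|I|\geq 2$ and $|I|\neq 3$ by direct reflection/closedness computations; and finally it kills the genuinely hard case $|I|=2$, in which $\Gr(\Psi)$ contains only $12$ of the $24$ short roots, so no function $p$ is yet defined on $\mathcal{D}_4$ and no direct contradiction with semi-closedness arises. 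There the paper must first prove, by a separate argument, that every coset $Z_\a(\Psi)$ ($\a$ a short root of $\Gr(\Psi)$) has a definite parity, then extend the resulting partial parity data to a $\mathbb{Z}$-linear $p'$ on all of $\mathcal{D}_4$ (the parity of the missing $p'_{\epsilon_\ell}$ being forced by the half-sum roots present in $\Gr(\Psi)$), and conclude $\Psi\subsetneq\Psi_{p'}(\tt E^{(2)}_{6})$, contradicting maximality.

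Two further points. First, your parenthetical ``either one with closed gradient, excluded here since $\Gr(\Psi)$ is semi-closed'' is a non sequitur: a closed subroot system $\Delta$ with $\Psi\subsetneq\Delta\subsetneq\Phi$ contradicts maximality regardless of whether $\Gr(\Delta)$ is closed, and in the $I=\emptyset$ case the contradiction comes from $\Psi$ being forced to \emph{coincide} with a lift whose gradient is closed, not from a containment being excluded. Second, the appeal to the $\tt D^{(2)}_{n+1}$ and $\tt A^{(2)}_{2n-1}$ templates underestimates the problem: in those cases the short roots form a single Weyl orbit of the ambient type-$\tt B_n$, respectively type-$\tt C_n$, gradient and the analysis reduces to parities of the $Z_{\epsilon_i}$, respectively to showing a set $I$ is empty; neither has an analogue of the $|I|=2$ configuration of $\tt E^{(2)}_{6}$, where the $\tt D_4$ geometry (two species of short roots, $\pm\epsilon_i$ and half-sums) produces a consistent-looking partial gradient that can only be eliminated by the maximality-plus-extension argument above. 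Until that case analysis is carried out, the forward implication is unproven.
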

\begin{proof}
Since  $\Psi$ is a maximal closed subroot system in $\Phi$ and not contained in the proper closed subroot system $\Psi_0$ of $\Phi$, where
$\Psi_0:=\{\pm\epsilon_i+r\delta, \pm\epsilon_i\pm\epsilon_j+2r\delta, : 1\leq i\neq j\leq 4, r \in \mathbb{Z}\},$ 
there is  a short root of the form $\frac{1}{2}(\sum_{j=1}^4\nu_j\epsilon_j)$ in $\Gr(\Psi)$, fix this short root in $\Gr(\Psi)$. 
Now,  define $I:=\{i\in I_4: \epsilon_i\in \Gr(\Psi)\}$.

\vskip 1.5mm
First, we prove that $I$ must be non-empty subset of $I_4$.
Assume that $I=\emptyset$. Since $\Gr(\Psi)$ is semi-closed, there exist short roots $\a_1$ and $\a_2$ such that $\a_1+\a_2$ is a long root and 
$\a_1+\a_2\in\mathring{\Phi}\backslash \Gr(\Psi)$. Since $I=\emptyset$, we can take $\a_1=\frac{1}{2}(\lambda_{i_1}\epsilon_{i_1}+\lambda_{i_2}\epsilon_{i_2}+\lambda_{i_3}\epsilon_{i_3}+\lambda_{i_4}\epsilon_{i_4})$ and
$\a_2=\frac{1}{2}(-\lambda_{i_1}\epsilon_{i_1}-\lambda_{i_2}\epsilon_{i_2}+\lambda_{i_3}\epsilon_{i_3}+\lambda_{i_4}\epsilon_{i_4})$. 
Since $\epsilon_i\notin\Gr(\Psi)$ for all $i\in I_4$ and $\Psi$ is a closed subroot system, the only short roots that $\Gr(\Psi)$ can contain are $\a_1, \a_2$,
$\a_3=\frac{1}{2}(-\lambda_{i_1}\epsilon_{i_1}+\lambda_{i_2}\epsilon_{i_2}-\lambda_{i_3}\epsilon_{i_3}+\lambda_{i_4}\epsilon_{i_4})$ and 
$\a_4=\frac{1}{2}(-\lambda_{i_1}\epsilon_{i_1}+\lambda_{i_2}\epsilon_{i_2}+\lambda_{i_3}\epsilon_{i_3}-\lambda_{i_4}\epsilon_{i_4})$ along with their negatives.
For example,  if $\beta=\frac{1}{2}(-\lambda_{i_1}\epsilon_{i_1}+\lambda_{i_2}\epsilon_{i_2}+\lambda_{i_3}\epsilon_{i_3}+\lambda_{i_4}\epsilon_{i_4})\in\Gr(\Psi)$, then
$\a_1+(-\beta)=\lambda_{i_1}\epsilon_{i_1}\in\Gr(\Psi)$ since $\a_1+(-\beta)$ is a short root and $\Gr(\Psi)$ is semi-closed. This is clearly a contradiction to our assumption that $I=\emptyset$.
So, $\Gr(\Psi)\subseteq \Delta:=\{\pm\a_i:i\in I_4\} \cup \mathring\Phi_\ell.$
But $\Delta$ is a closed subroot system of $\mathring{\Phi}$ and hence $\wh\Delta$ is a closed subroot system in $\Phi$. 
Since $\Psi\subseteq \wh\Delta$, we must have $\Psi=\widehat{\Delta}$ and $\Gr(\Psi)=\Delta$, 
a contradiction to the fact that $\Gr(\Psi)$ is a proper semi-closed subroot system of $\mathring{\Phi}$. This proves that $I$ must be non-empty.
Indeed we will prove that $|I|$ must be $4$, hence $I=I_4$. We will rule out all other possibilities one by one.

\vskip 1.5mm
\noindent
Case (1).
We claim that we must have $|I|\ge 2$, hence $|I|\neq 1$. Let $i\in I.$
As before, since $\Gr(\Psi)$ is semi-closed there exist short roots $\a$ and $\beta$ such that $\a+\beta$ is a long root and 
$\a+\beta\in\mathring{\Phi}\backslash \Gr(\Psi)$. Now,  both these short roots must 
lie in $\big\{\frac{1}{2}\sum_{j=1}^{4}\lambda_j\epsilon_j: \lambda_j=\pm 1 \big\}$, otherwise we are done. 
So, without loss of generality we assume that $\a=
\frac{1}{2}(\lambda_{i_1}\epsilon_{i_1}+\lambda_{i_2}\epsilon_{i_2}+\lambda_{i_3}\epsilon_{i_3}+\lambda_{i_4}\epsilon_{i_4})$ and
$\beta=\frac{1}{2}(-\lambda_{i_1}\epsilon_{i_1}-\lambda_{i_2}\epsilon_{i_2}+\lambda_{i_3}\epsilon_{i_3}+\lambda_{i_4}\epsilon_{i_4})$.
If $i_3=i$, then 
$\bold{s}_{\epsilon_i}(\a)=\a-\lambda_{i_3}\epsilon_i=\frac{1}{2}(\lambda_{i_1}\epsilon_{i_1}+\lambda_{i_2}\epsilon_{i_2}-\lambda_{i_3}\epsilon_{i_3}+\lambda_{i_4}\epsilon_{i_4})$.
Since $\Gr(\Psi)$ is semi-closed, we have $\beta+\bold{s}_{\epsilon_i}(\a)=\lambda_{i_4}\epsilon_{i_4}\in \Gr(\Psi)$. 
Similarly,  if $i_4=i$, then we get $\lambda_{i_3}\epsilon_{i_3}\in \Gr(\Psi)$. 
Now,  if $i_1=i$, then $\bold{s}_{\epsilon_i}(\a)+(-\beta)=\lambda_{i_2}\epsilon_{i_2}\in \Gr(\Psi)$. Similarly,  
if $i_2=i$, then we get $\lambda_{i_1}\epsilon_{i_1}\in \Gr(\Psi)$. This proves that we must have $|I|\ge 2$ in all cases.

\vskip 1.5mm
\noindent
Case (2). Now,  we claim that $|I| \neq 3$. Suppose that $|I|=3$ and $I=I_4\backslash \{k\}$ for some $k\in I_4$.
Recall that we have a short root of the form $\frac{1}{2}(\sum_{j=1}^4\nu_j\epsilon_j)$ in $\Gr(\Psi)$.
For $j\in I_4$ such that $j\neq k$, there exists $r_j\in\mathbb{Z}$ such that $\nu_j\epsilon_j+r_j\delta\in \Psi$ since $\nu_j\epsilon_j\in \Gr(\Psi)$.
Since $|I|=3$, there exists $j_1, j_2\in I$ such that $r_{j_1}+r_{j_2}\in 2\mathbb{Z}$. This implies that 
${\sum_{\ell=1}^2(\nu_{j_\ell}\epsilon_{j_\ell}+r_{j_\ell}\delta)}\in\Psi$ since $\Psi$ is closed in $\Phi$.
Now,  since 
$\frac{1}{2}(\sum_{j=1}^4\nu_j\epsilon_j)+r\delta\in\Psi$ for some $r\in\mathbb{Z}$ and $\Psi$ is closed, we have 
$\frac{1}{2}(-\nu_{j_1}\epsilon_{j_1}-\nu_{j_2}\epsilon_{j_2}+\nu_{j_3}\epsilon_{j_3}+\nu_k\epsilon_k)+\left(r-{\sum\limits_{\ell=1}^{2}r_{j_\ell}}\right)\delta= 
\frac{1}{2}\left(\sum\limits_{j=1}^4\nu_j\epsilon_j\right)+r\delta-{\sum\limits_{\ell=1}^2(\nu_{j_\ell}\epsilon_{j_\ell}+r_{j_\ell}\delta)}\in\Psi.$
Adding  
$\frac{1}{2}(-\nu_{j_1}\epsilon_{j_1}-\nu_{j_2}\epsilon_{j_2}+\nu_{j_3}\epsilon_{j_3}+\nu_k\epsilon_k)+(r-{\sum_{\ell=1}^{2}r_{j_\ell}})\delta$ and
$\frac{1}{2}(\sum_{j=1}^4\nu_j\epsilon_j)+r\delta\in\Psi$ we get 
$(\nu_{j_3}\epsilon_{j_3}+\nu_k\epsilon_k)+(2r-{\sum\limits_{\ell=1}^2 r_{j_\ell}})\delta\in\Psi$.
Again adding $-\nu_{j_3}\epsilon_{j_3}-r_{j_3}\delta$ with $(\nu_{j_3}\epsilon_{j_3}+\nu_k\epsilon_k)+(2r-{\sum\limits_{\ell=1}^2 r_{j_\ell}})\delta\in\Psi$,
we get 
$\nu_k\epsilon_k+(2r-{\sum\limits_{k=1}^3 r_{j_k}})\delta\in \Psi$
which contradicts the assumption that $k\notin I$.
This proves that $|I| \neq 3$. So, we proved that $|I|=2$ or $4$ are the only possibilities.

\vskip 1.5mm
\noindent
Case (3). Now,  assume that $|I|=4$, hence $I=I_4$. In this case, we claim that there exists a $\mathbb{Z}-$linear function $p: \mathcal{D}_{4}\to \mathbb{Z}$ with the property that
exactly two $p_{\epsilon_i}$ are even and the rest two are odd such that
$ \Psi=\text{$\Psi_p(\tt E^{(2)}_{6})$}$. Since $\Gr(\Psi)$ contains $\pm\epsilon_i, 1\le i\le 4,$  and a short root of the form
$\frac{1}{2}\sum_{j=1}^{4}\nu_j\epsilon_j$, $\Gr(\Psi)$ must contain all the short roots of $F_4$. 
 We now claim that for each short root $\a\in \Gr(\Psi)$, $Z_{\a}(\Psi)$ contains either only odd integers or even integers, i.e., it can not contain integers with different parity.
We will do this case by case.
\begin{itemize}
 \item  Suppose there is 
$i\in I_4$ such that $2r_1, 2r_2+1\in Z_{\epsilon_i}(\Psi)$ for some $r_1, r_2\in \mathbb{Z}$. Using this, one easily sees that
 there exist $s^\pm_{ij}\in\mathbb{Z}$ such that  $\epsilon_i\pm\epsilon_j+2s^\pm_{ij}\delta\in\Psi$
for all $j\neq i$,   since $I=I_4$ and $\Psi$ is closed.  Hence, 
 $$\pm\epsilon_j\mp\epsilon_k+({2s^\pm_{ij}- 2s^\pm_{ik})\delta}=\epsilon_i\pm\epsilon_j+2s^\pm_{ij}\delta-(\epsilon_i\pm\epsilon_k+2s^\pm_{ik}\delta)\in\Psi$$
  for all $j\neq k$ contradicting our assumption on $\Gr(\Psi)$ that it is semi-closed. This proves that $Z_{\epsilon_i}(\Psi)$ contains either only odd integers or only even integers.
  
\item Now,  assume that $Z_\alpha(\Psi)$ contains both odd and even integers for some $\alpha=\frac{1}{2}(\sum_{j=1}^4\mu_j\epsilon_j)$, i.e. $\exists \ r_1,r_2\in\mathbb{Z}$ with different parity such that
$\frac{1}{2}(\sum_{j=1}^4\mu_j\epsilon_j)+r_1\delta, \frac{1}{2}(\sum_{j=1}^4\mu_j\epsilon_j)+r_2\delta\in\Psi.$
Then this implies that  
$\frac{1}{2}(\mu_1\epsilon_1-\mu_2\epsilon_2+\mu_3\epsilon_3+\mu_4\epsilon_4)+(r_1-k_2)\delta=
\frac{1}{2}(\sum\limits_{j=1}^4\mu_j\epsilon_j)+r_1\delta-(\mu_2\epsilon_2+k_2\delta)\in\Psi,$ where $k_2\in Z_{\epsilon_2}(\Psi).$
Similarly,  we get $\frac{1}{2}(\mu_1\epsilon_1-\mu_2\epsilon_2-\mu_3\epsilon_3-\mu_4\epsilon_4)+(r_1-k_2-k_3-k_4)\delta\in \Psi$, where $k_j\in Z_{\epsilon_j}(\Psi).$
Which in turn implies that
$$(\a+r_1\delta)+\left(\tfrac{1}{2}(\mu_1\epsilon_1-\sum_{j=2}^{4}\mu_j\epsilon_j)+(r_1-\sum_{j=2}^4k_j)\delta\right)=\mu_1\epsilon_1+(2r_1-\sum_{j=2}^4k_j)\delta\in \Psi,$$ since $\Psi$ is closed in $\Phi.$ Similarly,  we have $\mu_1\epsilon_1+(r_1+r_2-\sum\limits_{j=2}^4k_j)\delta\in\Psi.$ 
Which means $Z_{\epsilon_1}(\Psi)$ contains integers of different parity which by Case(1) is impossible. This proves our claim. 
\end{itemize}
Let $p$ a function 
$p: \Gamma_4 \to \mathbb{Z}$ such that $p_{\beta}\in Z_{\beta}(\Psi)$ for each $\beta$ in $\Gamma_4$, where
 $\Gamma_4$ is a simple root system of $\mathcal{D}_4$ defined in \ref{2E6}. 
Extend the function $p$ to $\mathcal{D}_4$ $\mathbb{Z}-$linearly, denote this extension again by $p$. We now claim that exactly two $p_{\epsilon_i}$ are even. 
Suppose all $p_{\epsilon_i}$ have the same parity, then \say{$\Psi$ is closed in $\Phi$} would imply that $\Gr(\Psi)=\mathring\Phi$. 
This is a contradiction to our assumption that $\Gr(\Psi)$ is semi-closed.
So, all $p_{\epsilon_i}$ can not have the same parity.
Now,  assume that there exists $k\in I_4$ such that $p_{\epsilon_i}$ have the same parity for all $i\neq k$, and $p_{\epsilon_k}$ has different parity. 
Let $\beta_1=\frac{1}{2}(\sum_{i\neq k}\epsilon_i+\epsilon_k)+r\delta\in\Psi$ for some $r\in\mathbb{Z}$. 
Since $\Psi$ is closed, we have $$\beta_2=\frac{1}{2}(\sum\limits_{i\neq k}(-\epsilon_i)+\epsilon_k)+(r-\sum\limits_{i\neq k}p_{\epsilon_i})\delta\in\Psi$$
and hence we get $\beta_1+\beta_2=\epsilon_k+(2r-\sum\limits_{i\neq k}p_{\epsilon_i})\delta\in\Psi$. 
This implies that  $p_{\epsilon_k}$ and $(2r-\sum_{i\neq k}p_{\epsilon_i})$ are in $Z_{\epsilon_k}(\Psi)$. But $p_{\epsilon_k}$ and $(2r-\sum_{i\neq k}p_{\epsilon_i})$
have different parity, which is a contradiction to our previous observation that $Z_\a(\Psi)$ contains only either odd integers or even integers. 
Thus,  we proved that exactly two $p_{\epsilon_i}$ are even and the rest are odd.
Now,  using the arguments in the proof of Case (3) in Lemma \ref{2E6maxclosed}, we see that there is no $i, j\in I_4$ with $i\neq j$ such that 
$p_{\epsilon_i}$ and $p_{\epsilon_j}$ have different parity and $\pm\epsilon_i\pm\epsilon_j\in \Gr(\Psi)$.
This implies that
$\Psi\subseteq\text{$\Psi_p(\tt E^{(2)}_{6})$}$. Since $\Psi$ is maximal closed, we have $\Psi=\text{$\Psi_p(\tt E^{(2)}_{6})$}$.
 
\vskip 1.5mm
\noindent
Case (4).
Finally assume that $|I|=2$ and $I=\{i,j\}$. Since $\Gr(\Psi)$ is semi-closed, then we claim that we have
 $$\Gr(\Psi)\cap \mathcal{D}_4=\big\{\pm\epsilon_i,\pm\epsilon_j,\pm\tfrac{1}{2}(\sum_{r=1}^4\mu_r\epsilon_r):\mu_r=\nu_r, r\neq i,j\big\}.$$
Since $\a=\tfrac{1}{2}(\sum_{r=1}^{4}\nu_r\epsilon_r)\in \Gr(\Psi)$, 
we have $\bold s_{\epsilon_i}(\a)=\a-\nu_i\epsilon_i\in \Gr(\Psi)$ and $\bold s_{\epsilon_j}(\a)=\a-\nu_i\epsilon_j\in \Gr(\Psi)$. This proves that
 $\Gr(\Psi)\cap \mathcal{D}_4\supseteq \big\{\pm\epsilon_i,\pm\epsilon_j,\pm\frac{1}{2}(\sum_{r=1}^4\mu_r\epsilon_r):\mu_r=\nu_r, r\neq i,j\big\}.$
Suppose $\beta=\frac{1}{2}(\sum_{r=1}^4\mu_r\epsilon_r)\in \Gr(\Psi)$ such that $\mu_k\neq \nu_k$ for some $k\neq i,j$. Let $\ell\in I_4\backslash \{i, j, k\}$.
If $\mu_\ell\neq \nu_\ell$, then $-\beta$ satisfies the required condition, i.e., $-\mu_k=\nu_k$ and $-\mu_\ell= \nu_\ell$. So, assume that
$\mu_\ell=\nu_\ell$, then
$\frac{1}{2}(-\mu_i\epsilon_i-\mu_j\epsilon_j-\mu_k\epsilon_k+\mu_\ell\epsilon_\ell)\in \Gr(\Psi)$ and $\Psi$ is closed, so we have $\epsilon_\ell\in \Gr(\Psi)$.
This is clearly a contradiction to our assumption that $I=\{i, j\}$. This proves that 
 $$\Gr(\Psi)\cap \mathcal{D}_4=\big\{\pm\epsilon_i,\pm\epsilon_j,\pm\tfrac{1}{2}(\sum_{r=1}^4\mu_r\epsilon_r):\mu_r=\nu_r, r\neq i,j\big\}.$$
From this one easily sees that the only long roots $\Gr(\Psi)$ can contain are  $\pm\epsilon_i\pm \epsilon_j$ and $\pm\epsilon_k\pm \epsilon_\ell$,  where $\{k, \ell\}=I_4\backslash \{i, j\}$. 
 Note that $\pm\epsilon_k$ and $\pm\epsilon_\ell$ can not be written as sum of elements from $\Gr(\Psi)\cap \mathcal{D}_4$. 
 We now claim that $Z_{\alpha}(\Psi)$ does not contain elements of different parity for each short root $\alpha$ in $\Gr(\Psi)$. Assume this claim for time being.
 Then for each $\a\in \Gr(\Psi)\cap \mathcal{D}_4$, we have $Z_{\alpha}(\Psi)\subseteq p_\a+2\mathbb{Z}$ for some $p_\a\in Z_{\alpha}(\Psi)$.
 Note that $p_\a$ is determined by $\Psi$ for $\a\in  \Gr(\Psi)\cap \mathcal{D}_4$.
 Now we extend this function $p:\Gr(\Psi)\cap \mathcal{D}_4\rightarrow\mathbb{Z}$ to entire $\mathcal{D}_4$, $\mathbb{Z}$--linearly by defining $p_{\epsilon_k}$ in the following way:
 \begin{itemize}
 \item If both $p_{\epsilon_i}$ and $p_{\epsilon_j}$ have the same parity, then define $p_{\epsilon_k}$ to be an integer with different parity than $p_{\epsilon_i}$.
 \item If $p_{\epsilon_i}$ and $p_{\epsilon_j}$ have different parity, then define $p_{\epsilon_k}$ arbitrarily.
 \end{itemize}
 The extended function $p:\mathcal{D}_4\to \mathbb{Z}$, then satisfies
the conditions that (1) exactly two $p_{\epsilon_r}$ are even and the rest two $p_{\epsilon_r}$ are odd and (2) it takes the same values $p_\a$ which was determined by $\Psi$ for 
$\a\in \Gr(\Psi)\cap \mathcal{D}_4.$
Note that the parity of $p_{\epsilon_\ell}$ is completely determined by the parity of $p_{\epsilon_i}$, $p_{\epsilon_j}$, $p_{\epsilon_k}$ and $p_{\frac{1}{2}\sum_{r=1}^4\nu_r\epsilon_r)}$.
By the choice of $p$, we have $\Psi\subsetneq\text{$\Psi_p(\tt E^{(2)}_{6})$}$. This proves that $\Psi$ can not be maximal closed subroot system in $\Phi$. Hence, the 
case $|I|=2$ is not possible.

\vskip 1.5mm
{\em{Proof of the claim:}}
Now,  we will complete the proof of the claim that $Z_{\alpha}(\Psi)$ does not contain elements of different parity for each short root $\alpha$ in $\Gr(\Psi)$.
 Let $\a_1$, $\a_2$ be two short roots 
 in $\Gr(\Psi)$ such that $\a_1+\a_2 $ is a long root and $\a_1+\a_2\in\mathring{\Phi}\backslash\Gr(\Psi)$.  
 We now prove  that if $Z_{\beta}(\Psi)$ contains elements of different parity for some short root $\beta$ in $\Gr(\Psi)$, 
 then $Z_{\a}(\Psi)$ must contain elements of different parity for all short roots $\a$ in $\Gr(\Psi)$. 
 This will contradict the fact that $\a_1+\a_2\in\mathring{\Phi}\backslash\Gr(\Psi)$, hence the claim follows. 
 \begin{itemize}
  \item Assume that
 $Z_{\epsilon_i}(\Psi)$ contains elements of different parity,
then we have $\pm\epsilon_i\pm\epsilon_j\in\Gr(\Psi)$ as $\Psi$ is closed. This implies that  $Z_{\epsilon_j}(\Psi)$ also contains elements of different parity. 
Let $\a=\frac{1}{2}(\sum\limits_{r=1}^4\mu_r\epsilon_r)\in \Gr(\Psi)$. We have $\frac{1}{2}(\sum_{r\neq s}\mu_r\epsilon_r-\mu_s\epsilon_s)\in\Gr(\Psi)$ for $s=i, j$.
Since for $s=i,j$, $$\frac{1}{2}(\sum_{r\neq s}\mu_r\epsilon_r-\mu_s\epsilon_s)+r_1\delta+\mu_s\epsilon_s+r_2\delta=\a+(r_1+r_2)\delta$$ and $Z_{\epsilon_s}(\Psi)$ 
contains elements of different parity, we have $Z_{\a}(\Psi)$ also contains elements of different parity.

\item Now,  assume that
$Z_{\a}(\Psi)$ contains elements of different parity for $\a=\frac{1}{2}(\sum_{r=1}^4\mu_r\epsilon_r)$ with $\mu_r=\nu_r, r\neq i,j.$
Since we have $\frac{1}{2}(\sum\limits_{r\neq i}\mu_r\epsilon_r-\mu_i\epsilon_i)\in \Gr(\Psi)$,
we get $Z_{\epsilon_i}(\Psi)$ contains elements of different parity. So, we are back to previous case.
\end{itemize}

 This completes the proof.
 \end{proof}


\section{The case $\tt A^{(2)}_{2n}$}\label{2A2n}
Throughout this section we assume that $\Phi$ is of type $\tt A^{(2)}_{2n}$ and $n\ge 2$. 
In particular, the gradient root system $\Gr(\Phi)$ of $\tt A^{(2)}_{2n}$ is of type $\tt BC_n$.
We have the following explicit description of $\tt A^{(2)}_{2n}$, see \cite[Page no. 547, 583]{carter}:
$$\Phi=\big\{\pm\epsilon_i+(r+\tfrac{1}{2})\delta, \pm2\epsilon_i+2r\delta, \pm\epsilon_i\pm\epsilon_j+r\delta, : 1\leq i\neq j\leq n, r \in \mathbb{Z} \big\}$$ and 
$\mathrm{Gr}(\Phi)=\big\{\pm\epsilon_i, \pm2\epsilon_i, \pm\epsilon_i\pm\epsilon_j: 1\leq i\neq j\leq n\big\}=\mathring{\Phi}\cup \frac{1}{2}\mathring{\Phi}_\ell$. In particular, we have three root lengths in 
$\mathrm{Gr}(\Phi)$ and we denote the short, intermediate and long roots of $\mathrm{Gr}(\Phi)$ by  $\mathrm{Gr}(\Phi)_s$,  $\mathrm{Gr}(\Phi)_{\mathrm{im}}$ and  $\mathrm{Gr}(\Phi)_\ell$ respectively.
Let $\Gamma=\{\a_1=\epsilon_1-\epsilon_2, \cdots, \a_{n-1}=\epsilon_{n-1}-\epsilon_n, \a_n=\epsilon_n\}$ be the simple system for $\Gr(\Phi)$. 

\subsection{}
Before we proceed further we fix some notations. 
 For $I\subseteq I_n$, we set
 \begin{align*}
 &\Psi^+(I, \tfrac{1}{2}):=\big\{\epsilon_i+(2r+\tfrac{1}{2})\delta, (\epsilon_k+\epsilon_\ell)+(2r+1)\delta, (\epsilon_k-\epsilon_\ell)+2r\delta:   i,k,\ell\in I, k\neq \ell, \ r \in \mathbb{Z}  \big\},\\ 
 &\Psi^+(I, \tfrac{3}{2}):=\big\{\epsilon_i+(2r+\tfrac{3}{2})\delta, (\epsilon_k+\epsilon_\ell)+(2r+1)\delta, (\epsilon_k-\epsilon_\ell)+2r\delta:   i,k,\ell\in I, k\neq \ell, \ r \in \mathbb{Z}  \big\}\ \text{and} \\ 
 &\Psi^+(I, 0, 1):=\big\{(\epsilon_k+\epsilon_\ell)+2r\delta, (\epsilon_k-\epsilon_\ell)+(2r+1)\delta: k\in I, \ell\in I_n\backslash I, r \in \mathbb{Z}\big\}.
 \end{align*}
 Now,  define
 
$\text{$\Psi_I(\tt A^{(2)}_{2n})$}:=\Psi^+\big(I, \frac{1}{2}\big)\cup (-\Psi^+\big(I, \frac{1}{2}\big))\cup \Psi^+(I, 0, 1)\cup (-\Psi^+(I, 0, 1))
\cup \Psi^+\big(I_n\backslash I, \frac{3}{2}\big)\cup (-\Psi^+\big(I_n\backslash I, \frac{3}{2}\big)).
$ Note that $\Gr(\text{$\Psi_I(\tt A^{(2)}_{2n})$})=\{\pm\epsilon_i, \pm\epsilon_k\pm\epsilon_\ell: i, k, \ell\in I_n, k\neq \ell \}$ for a root system of type $B_n.$

\begin{prop} For $I\subseteq I_n$, 
$\text{$\Psi_I(\tt A^{(2)}_{2n})$}$ is a maximal closed subroot system of $\Phi.$
\end{prop}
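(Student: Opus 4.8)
The plan is to first recast $\Psi_I:=\Psi_I(\tt A^{(2)}_{2n})$ in a uniform way and read off both the subroot-system and closedness properties, and then to prove maximality by showing that every root of $\Phi$ outside $\Psi_I$ forces, after one addition, a short root in a residue class not already present in $\Psi_I$, which in turn generates all of $\Phi$. For the first step I would note that $\Gr(\Psi_I)$ is the reduced system of type $\tt B_n$ recorded above, and that $\Psi_I=\{\alpha+(p_\alpha+2r)\delta:\alpha\in\Gr(\Psi_I),\ r\in\mathbb Z\}$, where $p\colon\Gr(\Psi_I)\to\tfrac12\mathbb Z$ is the $\mathbb Z$-linear map with $p_{\epsilon_i}=\tfrac12$ for $i\in I$ and $p_{\epsilon_i}=\tfrac32$ for $i\in I_n\setminus I$; a direct comparison with $\Psi^+(I,\tfrac12)$, $\Psi^+(I_n\setminus I,\tfrac32)$ and $\Psi^+(I,0,1)$ confirms the match. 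Since $p$ is $\mathbb Z$-linear it satisfies (\ref{palpha}), so the usual reflection identity shows $\Psi_I$ is a subroot system. For closedness, if $\alpha+a\delta,\beta+b\delta\in\Psi_I$ and $(\alpha+\beta)+(a+b)\delta\in\Phi$, then either $\alpha+\beta\in\Gr(\Psi_I)$, whence $a+b\in p_\alpha+p_\beta+2\mathbb Z=p_{\alpha+\beta}+2\mathbb Z=Z_{\alpha+\beta}(\Psi_I)$ and the sum lies in $\Psi_I$, or $\alpha+\beta=\pm2\epsilon_i$ is the only remaining option; but then $a+b\in 2p_{\epsilon_i}+2\mathbb Z$ is odd, while $\pm2\epsilon_i+c\delta\in\Phi$ forces $c$ even, so no such sum occurs.

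For maximality I assume $\Psi_I\subsetneq\Omega\subseteq\Phi$ with $\Omega$ closed and aim to show $\Omega=\Phi$. The heart of the argument is the claim that if $\Omega$ contains a short root $\epsilon_s+c_0\delta$ with $c_0\notin Z_{\epsilon_s}(\Psi_I)$, then $\Omega=\Phi$. To prove it I would exploit the intermediate roots $\pm(\epsilon_s-\epsilon_t)+(\ast)\delta$ of $\Psi_I$, which exist because $n\ge2$: adding first $\epsilon_t-\epsilon_s+d\delta$ and then $\epsilon_s-\epsilon_t+d'\delta$ returns $\epsilon_s+(c_0+d+d')\delta$, and since $d+d'$ sweeps out all of $2\mathbb Z$ we obtain the entire coset $c_0+2\mathbb Z$; together with $Z_{\epsilon_s}(\Psi_I)$ this gives $Z_{\epsilon_s}(\Omega)=\tfrac12+\mathbb Z$. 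Adding the intermediate roots once more propagates $Z_{\epsilon_t}(\Omega)=\tfrac12+\mathbb Z$ to every $t$, and then pairing short roots produces every long root $2\epsilon_t+2r\delta$ and every intermediate root $\epsilon_s\pm\epsilon_t+r\delta$, so $\Omega=\Phi$.

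It then suffices to check that each type of root in $\Omega\setminus\Psi_I$ yields such a short root. The roots of $\Phi$ absent from $\Psi_I$ are exactly the long roots $\pm2\epsilon_i+2r\delta$, the short roots in the complementary residue class, and the intermediate roots in the complementary parity. A new short root is already of the required form. Given a long root $2\epsilon_s+2r\delta\in\Omega$, adding $-\epsilon_s+f\delta\in\Psi_I$ gives $\epsilon_s+(2r+f)\delta$, which a short parity computation (using that $2p_{\epsilon_s}$ is odd) places in the coset complementary to $Z_{\epsilon_s}(\Psi_I)$; given a new intermediate root $\epsilon_s+\sigma\epsilon_t+c\delta$, adding $-\sigma\epsilon_t+f\delta\in\Psi_I$ gives $\epsilon_s+(c+f)\delta$, again in the complementary coset. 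In each case the claim finishes the proof. I expect the main obstacle to be exactly this residue/parity bookkeeping relative to the partition $I_n=I\sqcup(I_n\setminus I)$: one must verify in every subcase that the short root produced is genuinely new and that the intermediate roots of $\Psi_I$ sweep a complete coset, and it is the half-integrality of the $p_{\epsilon_i}$ (making $2p_{\epsilon_i}$ odd) that underlies all of these parity checks.
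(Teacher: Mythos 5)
Your proposal is correct, and its skeleton is the same as the paper's: every element of $\Omega\setminus\Psi_I(\tt A^{(2)}_{2n})$ is reduced, by adding one root of $\Psi_I(\tt A^{(2)}_{2n})$, to a short root lying in the coset complementary to $Z_{\epsilon_s}(\Psi_I(\tt A^{(2)}_{2n}))$, and such a short root is then shown to generate all of $\Phi$. The differences are in execution, and both are improvements in transparency. First, the paper dismisses closedness as ``easy to check,'' whereas your uniform recasting $\Psi_I(\tt A^{(2)}_{2n})=\{\alpha+(p_\alpha+2r)\delta\}$ with $p$ the $\mathbb{Z}$--linear map determined by $p_{\epsilon_i}=\tfrac12$ ($i\in I$), $p_{\epsilon_i}=\tfrac32$ ($i\notin I$) makes both the subroot-system property and closedness one-line computations: the only sums that could escape are those equal to $\pm2\epsilon_i+(a+b)\delta$, and these never lie in $\Phi$ because $a+b\equiv 2p_{\epsilon_i}\pmod 2$ is odd while long roots of $\Phi$ carry even multiples of $\delta$. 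This is exactly the machinery the paper itself deploys for $\Psi_p(\tt A^{(2)}_{2n-1})$ and for $\Psi(p,n_s)$ in Theorem 9.3.1, so it fits the paper's framework seamlessly. Second, in the generation step the paper first manufactures the long roots $2\epsilon_i+2\mathbb{Z}\delta$ and spreads them to the other indices via the reflections $\mathbf{s}_{\epsilon_i\pm\epsilon_j}$, whereas you sweep whole cosets by adding the intermediate roots $\pm(\epsilon_s-\epsilon_t)+(\ast)\delta$ already present in $\Psi_I(\tt A^{(2)}_{2n})$, and only at the end recover long and intermediate roots by pairing short ones; both routes use nothing beyond closedness and the roots of $\Psi_I(\tt A^{(2)}_{2n})$, and both need $n\ge 2$, which you correctly note. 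The parity bookkeeping you flag as the main risk is indeed all that remains, and your sample checks (e.g.\ for a new long root $2\epsilon_s+2r\delta$, the produced short root has $\delta$-coefficient $\equiv -p_{\epsilon_s}\not\equiv p_{\epsilon_s}\pmod 2$ precisely because $2p_{\epsilon_s}$ is odd) are the right ones.
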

\begin{proof}
It is easy to check that $\text{$\Psi_I(\tt A^{(2)}_{2n})$}$ is a closed subroot system of $\Phi.$ We prove that it is a maximal closed subroot system in $\Phi$.
Let $\Delta$ be a closed subroot system of $\Phi$ such that
$\text{$\Psi_I(\tt A^{(2)}_{2n})$}\subsetneq\Delta\subseteq\Phi$.
The following are the possibilities for elements of $\Delta\backslash\text{$\Psi_I(\tt A^{(2)}_{2n})$}$: if $\a\in \Delta\backslash\text{$\Psi_I(\tt A^{(2)}_{2n})$}$, then $\a$ must be equal to either
\begin{itemize}
 \item $\epsilon_i+(2r+\frac{3}{2})\delta \in \Delta$ or $2\epsilon_i+2r \delta$, where $i\in I, \ r\in\mathbb{Z}$ 
 \item $\epsilon_i+(2r+\frac{1}{2})\delta \in \Delta$ or $2\epsilon_i+2r \delta\in \Delta$, where $i \notin I, \ r\in\mathbb{Z}$ 
 \item $(\epsilon_k+\epsilon_\ell)+2r\delta\in \Delta$ or $(\epsilon_k-\epsilon_\ell)+(2r+1)\delta\in \Delta$, where $k,\ell\in I$ and $r\in \mathbb{Z}$
 \item $(\epsilon_k+\epsilon_\ell)+2r\delta\in \Delta$ or $(\epsilon_k-\epsilon_\ell)+(2r+1)\delta\in \Delta$, where $k,\ell\notin I$ and $r\in \mathbb{Z}$
 \item $(\epsilon_k+\epsilon_\ell)+(2r+1)\delta\in \Delta$ or $(\epsilon_k-\epsilon_\ell)+2r\delta\in \Delta$, where $k\in I$, $\ell\notin I$ and $r\in \mathbb{Z}$ 
\end{itemize}

Suppose 
there exists $i \in I$ such that $\epsilon_i+(2r+\frac{3}{2})\delta \in \Delta$ for some $r \in \mathbb{Z}$. Then
since  $\epsilon_i+(2\mathbb{Z}+\frac{1}{2})\delta \subseteq \Delta$, we have 
$(\epsilon_i+(2\mathbb{Z}+\frac{1}{2})\delta) +(\epsilon_i+(2r+\frac{3}{2})\delta=2\epsilon_i+2\mathbb{Z} \delta\subseteq \Delta.$ 
This implies that  $(2\epsilon_i+2\mathbb{Z} \delta)-(\epsilon_i+(2r+\frac{3}{2})\delta)=\epsilon_i+(\mathbb{Z}+\tfrac{1}{2})\delta\subseteq\Delta$. For $j \in I$,
$\bold{s}_{\epsilon_i-\epsilon_j}(2\epsilon_i+2\mathbb{Z}\delta)=2\epsilon_j+2\mathbb{Z}\delta\subseteq\Delta$. 
Similarly,  for $j\notin I$ we have
$\bold{s}_{\epsilon_i+\epsilon_j}(2\epsilon_i+2\mathbb{Z}\delta)=-2\epsilon_j+2\mathbb{Z}\delta\subseteq\Delta$.
As before this implies that  $\epsilon_j+\frac{2\mathbb{Z}+1}{2}\delta\in\Delta$ for all $j \in I_n$. Hence, $\Delta=\Phi$. 
Suppose there exists $i \in I$ such that $2\epsilon_i+2r\delta \in \Delta$ for some $r \in \mathbb{Z}$. Then 
$\epsilon_i+\frac{3}{2}\delta =(2\epsilon_i+2r\delta)+(-\epsilon_i-(2(r-1)+\frac{1}{2})\delta)\in \Delta$, so we are back to the first case. Hence, $\Delta=\Phi.$

All the remaining cases are done similarly. For example,  if $(\epsilon_k+\epsilon_\ell)+2r\delta\in \Delta$ for some $r\in \mathbb{Z}$ and $k, \ell\in I$, then we have
$\epsilon_k+\frac{3}{2}\delta =(\epsilon_k+\epsilon_\ell)+2r\delta+(-\epsilon_\ell-(2(r-1)+\frac{1}{2})\delta)\in \Delta$, so we are back to first case.
This completes the proof.

\end{proof}

\subsection{}
We now see another possible maximal closed subroot system of $\Phi$. 
For $J\subsetneq I_n$, define 
$$A_J:=\big\{\pm2\epsilon_i,  \pm\epsilon_s\pm\epsilon_t: i\in I_n\backslash J, \ s\neq t\in I_n\backslash J \big\}\cup 
\big\{\pm2\epsilon_j, \pm\epsilon_j, \pm\epsilon_k\pm\epsilon_\ell: j\in J,\ k\neq \ell\in J \big\}$$ and denote by
$\wh{A_J}$ the lift of $A_J$ in $\Phi.$ 
Here we make the convention that 
$$A_J=\begin{cases}
       \big\{\pm2\epsilon_i, \pm\epsilon_s\pm\epsilon_t: i\in I_n,\ s\neq t\in I_n \big\}\ \text{if}\ J=\emptyset \\
       \big\{\pm2\epsilon_i,\pm\epsilon_j : i\in I_n, j\in J\big\} \ \text{if $|J|=1$ and $n=2$}\\
        \big\{\pm2\epsilon_i,\pm\epsilon_j, \pm\epsilon_s\pm\epsilon_t: i\in I_n,\ s\neq t\in I_n\backslash J \big\} \ \text{if  $J=\{j\}$ and $n>2$}\\
        \big\{\pm2\epsilon_i,\pm\epsilon_j, \pm\epsilon_k\pm\epsilon_\ell: i\in I_n,\ j\in J, \ k\neq \ell\in J \big\} \ \text{if  $|I_n\backslash J|=1$}\\

      \end{cases}
$$
Note that $A_J$ is a proper closed subroot system of $BC_n$ for any $J\subsetneq I_n$ and it is of 
type $\text{$C_{n-r}\oplus BC_{r}$}$ if $|J|=r$. Hence, the lift $\wh{A_J}$ of $A_J$ is a closed subroot system in $\Phi$. 
We have, \begin{prop}
The lift $\wh{A_J}$ of $A_J$ in $\Phi$ is a maximal closed subroot system $\Phi$ for $J\subsetneq I_n$.
\end{prop}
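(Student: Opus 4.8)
The plan is to argue exactly as in the preceding maximality proofs for $\tt A^{(2)}_{2n}$: assume a closed subroot system $\Delta$ with $\widehat{A_J}\subsetneq\Delta\subseteq\Phi$ and deduce $\Delta=\Phi$ by repeatedly using closedness. The crucial first observation is that $\widehat{A_J}$, being a \emph{lift}, already contains every $\delta$-translate in $\Phi$ of each of its gradient roots; that is, $Z_\alpha(\widehat{A_J})=Z_\alpha(\Phi)$ for all $\alpha\in A_J$, and $\Gr(\widehat{A_J})=A_J$. Since $\Delta\subseteq\Phi$ gives $Z_\alpha(\Delta)\subseteq Z_\alpha(\Phi)$ by Lemma \ref{keylemma}, we get $Z_\alpha(\widehat{A_J})\subseteq Z_\alpha(\Delta)\subseteq Z_\alpha(\Phi)=Z_\alpha(\widehat{A_J})$, hence $Z_\alpha(\Delta)=Z_\alpha(\widehat{A_J})$ for every $\alpha\in A_J$. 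Consequently, were $\Gr(\Delta)=A_J$, Lemma \ref{keylemma}(3) would force $\Delta=\widehat{A_J}$, contradicting proper containment. Therefore the only way $\Delta$ can strictly contain $\widehat{A_J}$ is for its gradient to grow, i.e. $\Gr(\Delta)$ must contain a root of $\Gr(\Phi)=\tt BC_n$ outside $A_J$.

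I would then isolate the roots of $\Gr(\Phi)$ not in $A_J$: these are precisely the short roots $\pm\epsilon_i$ with $i\notin J$ and the \say{cross} intermediate roots $\pm\epsilon_s\pm\epsilon_t$ with $s\in J$, $t\notin J$ (all long roots $\pm2\epsilon_i$ already lie in $A_J$). So an element of $\Delta\setminus\widehat{A_J}$ is (a lift of) one of these two kinds, and the cross-root case reduces at once to the short-root case: if $\pm\epsilon_s\pm\epsilon_t+r\delta\in\Delta$ with $s\in J$ and $t\notin J$, then the opposite short root $\mp\epsilon_s+(\tfrac12+\mathbb{Z})\delta$ already lies in $\widehat{A_J}$ (as $s\in J$), and adding it produces $\pm\epsilon_t+(\tfrac12+\mathbb{Z})\delta$, a new short root indexed by $t\notin J$.

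The main work is the short-root case: from a single $\epsilon_i+s\delta\in\Delta$ with $i\notin J$ and $s\in\tfrac12+\mathbb{Z}$, I would propagate in stages. Using the within-block intermediate roots $\epsilon_j-\epsilon_i+\mathbb{Z}\delta\subseteq\widehat{A_J}$ (for $j\notin J$) one slides the new short root to obtain \emph{all} half-integer lifts $\epsilon_j+(\tfrac12+\mathbb{Z})\delta$ for every $j\notin J$; combined with the short roots $\epsilon_k+(\tfrac12+\mathbb{Z})\delta\subseteq\widehat{A_J}$ for $k\in J$, this gives every half-integer lift of every short root $\epsilon_m$, $m\in I_n$. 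Once all short roots with all admissible lifts are present, summing $\epsilon_i+(2r+\tfrac12)\delta$ with $\epsilon_i+(2r'+\tfrac32)\delta$ yields every long root $2\epsilon_i+2\mathbb{Z}\delta$, and summing $\epsilon_i+s\delta$ with $\pm\epsilon_j+s'\delta$ ($j\ne i$) yields every intermediate root $\epsilon_i\pm\epsilon_j+\mathbb{Z}\delta$, cross roots included; together these are all of $\Phi$, so $\Delta=\Phi$.

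The step I expect to be the main obstacle is not the conceptual reduction but the bookkeeping in this last case, together with checking the degenerate conventions for $A_J$ --- namely $J=\emptyset$, $|I_n\setminus J|=1$, and $|J|=1$ with $n=2$ --- in which one block has rank one and carries no within-block intermediate roots. There the first propagation stage is vacuous, and I would instead recover the second half-integer parity of the isolated short root by routing through the $\tt BC$-block: sum it with a $J$-block short root to create a cross intermediate root $\epsilon_i+\epsilon_k+\mathbb{Z}\delta$, then subtract $\epsilon_k+(\tfrac12+\mathbb{Z})\delta$ to return both parities of $\epsilon_i$. After that the argument proceeds as above, so all conventions yield $\Delta=\Phi$ and $\widehat{A_J}$ is maximal closed.
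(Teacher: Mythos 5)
Your proposal is correct and follows essentially the same route as the paper's own proof: given a closed $\Delta$ with $\wh{A_J}\subsetneq\Delta\subseteq\Phi$, reduce the two cross-root cases to the appearance of a new short root $\epsilon_t+(\tfrac12+\mathbb{Z})\delta$ with $t\notin J$ by adding the short roots $\mp\epsilon_s+(\tfrac12+\mathbb{Z})\delta$, $s\in J$, from $\wh{A_J}$, then propagate that single short root (via intermediate roots, resp.\ via the $\tt BC$-block when $|I_n\setminus J|=1$) until $\pm\epsilon_m+(\tfrac12+\mathbb{Z})\delta\subseteq\Delta$ for all $m\in I_n$, whence $\Delta=\Phi$ by closedness. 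One harmless slip: for $J=\emptyset$ your within-block propagation is \emph{not} vacuous (that block has rank $n\ge 2$) while the $\tt BC$-block routing you propose there is unavailable (no short roots indexed by $J$), but this does not matter since your main argument already covers $J=\emptyset$.
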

\begin{proof}
Let $\Delta$ be a closed subroot system of $\Phi$ such that 
$\wh{A_J}\subsetneq\Delta\subseteq\Phi$. 
Then there are three possibilities for elements of $\Delta\backslash\wh{A_J}$.
\vskip 1.5mm
\noindent
Case(1). Suppose $\epsilon_i+(r+\tfrac{1}{2})\delta\in\Delta$ for some $i\notin J$ and $r\in\mathbb{Z}$. Then 
since $\pm\epsilon_i\pm\epsilon_s+\mathbb{Z}\delta\subseteq\Delta$ for all $s\notin J$ with $i\neq s$, 
we have $$ \text{$(\epsilon_i+(r+\tfrac{1}{2})\delta)+(-\epsilon_i\pm\epsilon_s+\mathbb{Z}\delta)=\pm\epsilon_s+(\mathbb{Z}+\tfrac{1}{2})\delta\subseteq\Delta$
for all $s\notin J$, $i\neq s$.}$$
If $J=\emptyset$, then we get  $\pm\epsilon_i+(\mathbb{Z}+\tfrac{1}{2})\delta\subseteq\Delta$ by repeating the earlier argument with the choice of $s\in I_n$ which is different from $i$.
If $J\neq \emptyset$, then $\epsilon_j+(\mathbb{Z}+\tfrac{1}{2})\delta\subseteq \Delta$ for all $j\in J$. Fix $j\in J$.
Then we have $(\epsilon_i+(r+\tfrac{1}{2})\delta)+(\epsilon_j+(\mathbb{Z}+\tfrac{1}{2})\delta)=\epsilon_i+ \epsilon_j+\mathbb{Z}\delta\subseteq \Delta$.
Now,  $$\epsilon_i+(\mathbb{Z}+\tfrac{1}{2})\delta=(-\epsilon_j+(\mathbb{Z}+\tfrac{1}{2})\delta)+(\epsilon_i+ \epsilon_j+\mathbb{Z}\delta)\subseteq \Delta.$$
This proves that $\epsilon_s+(\mathbb{Z}+\tfrac{1}{2})\delta\subseteq\Delta$
for all $s\notin J$. Hence, we have $\pm\epsilon_s+(\mathbb{Z}+\tfrac{1}{2})\delta\subseteq\Delta$
for all $s\in I_n$. This implies that $\Delta=\Phi$. 
\vskip 1.5mm
\noindent
Case(2). Suppose $\epsilon_j+\epsilon_k+r\delta\in\Delta$ for some $j\in J, k\notin J$ and $r\in\mathbb{Z}$. Then
since $-\epsilon_j+(\mathbb{Z}+\tfrac{1}{2})\delta\subseteq\Delta$, we have $$\epsilon_k+(\mathbb{Z}+\tfrac{1}{2})\delta=(\epsilon_j+\epsilon_k+r\delta)+(-\epsilon_j+(\mathbb{Z}+\tfrac{1}{2})\delta) \subseteq\Delta.$$
So, we are back to the Case (1), hence  $\Delta=\Phi$. 
\vskip 1.5mm
\noindent
Case(3). Suppose $\epsilon_k-\epsilon_j+r\delta\in\Delta$ for some $j\in J, k\notin J$ and $r\in\mathbb{Z}$. Then
since $\epsilon_j+(\mathbb{Z}+\tfrac{1}{2})\delta\subseteq\Delta$, we have $$\epsilon_k+(\mathbb{Z}+\tfrac{1}{2})\delta=(\epsilon_k-\epsilon_j+r\delta)+(\epsilon_j+(\mathbb{Z}+\tfrac{1}{2})\delta) \subseteq\Delta.$$
So, we are back to the Case (1), hence  $\Delta=\Phi$. 
\end{proof}

\subsection{}
Let $\Psi\le \Phi$ be a maximal subroot system. Now,  we are ready to state our final classification theorem for the case $\tt A^{(2)}_{2n}$.
\begin{thm}\label{thm2A2n}
 Suppose $\Phi$ is of type $\tt A^{(2)}_{2n}$ and $\Psi\le \Phi$ is a maximal closed subroot system. Then 
 \begin{enumerate}
  \item[(i)]  $\Psi=$ the lift of $A_J$ for some $J\subsetneq I_n=\wh{A_J}$ or
  \item[(ii)] $\Psi=\text{$\Psi_I(\tt A^{(2)}_{2n})$}$ for some $I\subseteq I_n$ or
\item[(iii)] there exist an odd prime number $n_s$ and a $\mathbb{Z}$--linear function $p:\Gr(\Phi)_s\cup \Gr(\Phi)_{\mathrm{im}} \to \tfrac{1}{2}\mathbb{Z}$ satisfying the equation (\ref{palpha}) such that
 \end{enumerate} 
\begin{align*} \Psi(p, n_s):=&\big\{\pm\epsilon_i\pm(p_{\epsilon_i}+rn_s)\delta, 
\pm2\epsilon_i\pm(2p_{\epsilon_i}+n_s+2rn_s)\delta:i\in I_n,\ r\in \mathbb{Z}\big\}&\\& \cup \big\{\pm\epsilon_i\pm\epsilon_j+(\pm p_{\epsilon_i}\pm p_{\epsilon_j}
+rn_s)\delta : i, j\in I_n, i\neq j,\ r \in \mathbb{Z} \big\}.
\end{align*}
Conversely,   all the subroot systems defined above are maximal closed subroot systems of $\Phi.$ 
\end{thm}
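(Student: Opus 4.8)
The plan is to treat the converse and the forward direction separately, noting first that the converse statements for the families $\wh{A_J}$ in (i) and $\Psi_I(\tt A^{(2)}_{2n})$ in (ii) are exactly the two propositions proved immediately above, so the only converse assertion still needing argument is that each $\Psi(p,n_s)$ in (iii) is maximal closed. For the forward direction I would fix a maximal closed $\Psi$, write $Z_\a(\Psi)=p_\a+n_\a\mathbb{Z}$ as in Proposition \ref{keypropositionnon2A2n}, and organize everything around the gradient $\Gr(\Psi)\le\Gr(\Phi)=\tt BC_n$, feeding it into Proposition \ref{2A2n01}. The governing observation is that, because the short roots $\pm\epsilon_i$ carry half-integer shifts while the long roots $\pm2\epsilon_i$ carry even shifts, the parities force the following: if $\epsilon_i,\epsilon_j\in\Gr(\Psi)$ with $i\neq j$ then the intermediate root $\epsilon_i\pm\epsilon_j$ is automatically in $\Gr(\Psi)$, whereas $2\epsilon_i$ lies in $\Gr(\Psi)$ precisely when the scaling $n_{\epsilon_i}$ is odd. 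The natural trichotomy for the gradient of a \emph{maximal} $\Psi$ is then: $\Gr(\Psi)=\tt BC_n$; or $\Gr(\Psi)$ is a proper closed subsystem of $\tt BC_n$; or $\Gr(\Psi)$ is a proper subsystem failing closedness only through $\epsilon_i+\epsilon_i=2\epsilon_i$, hence of $\tt B$-type. Showing that no other (in particular no genuinely mixed) gradient can occur for a maximal $\Psi$ is part of the forward argument.

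In the first case, $\Gr(\Psi)=\tt BC_n$, I would invoke parts (4) and (5) of Proposition \ref{2A2n01}: $n_s=n_{\mathrm{im}}$ is an odd prime, $n_\ell=2n_s$, and $p$ is $\mathbb{Z}$-linear on $\Gr(\Phi)_s\cup\Gr(\Phi)_{\mathrm{im}}$. Reading off the cosets $Z_{\epsilon_i}=p_{\epsilon_i}+n_s\mathbb{Z}$, $Z_{\epsilon_i\pm\epsilon_j}=(p_{\epsilon_i}\pm p_{\epsilon_j})+n_s\mathbb{Z}$, and (from the computation in the proof of Proposition \ref{2A2n01}(4)) $Z_{2\epsilon_i}=(2p_{\epsilon_i}+n_s)+2n_s\mathbb{Z}$ reproduces the defining data of $\Psi(p,n_s)$, so $\Psi=\Psi(p,n_s)$. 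For the converse I would check closedness of $\Psi(p,n_s)$ directly: here $\epsilon_i+\epsilon_i=2\epsilon_i$ has odd shift $2p_{\epsilon_i}+2rn_s$ and so is a non-root imposing no condition, and the genuine check, that two intermediate roots summing to $2\epsilon_i$ land in the prescribed long coset, works exactly because $n_s$ is odd (the even representatives of $2p_{\epsilon_i}+n_s\mathbb{Z}$ are precisely $2p_{\epsilon_i}+n_s+2n_s\mathbb{Z}$). Maximality then follows by the device of Theorem \ref{mainuntwisted}: any closed $\Delta$ with $\Psi(p,n_s)\subsetneq\Delta\subseteq\Phi$ has $\Gr(\Delta)=\tt BC_n$, so Lemma \ref{keylemma} forces $n_s^\Delta\mid n_s$, and primality leaves only $n_s^\Delta=n_s$ (whence $\Delta=\Psi$ by Lemma \ref{keylemma}(3)) or $n_s^\Delta=1$ (whence $\Delta=\Phi$).

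In the second case, $\Gr(\Psi)$ proper and closed in $\tt BC_n$, I would argue as in Proposition \ref{twistedpropergradient}: since $\Gr(\Psi)$ is closed, $\wh{\Gr(\Psi)}$ is closed by Lemma \ref{closedlemma} and contains $\Psi$ properly below $\Phi$, so maximality gives $\Psi=\wh{\Gr(\Psi)}$ and forces $\Gr(\Psi)$ to be maximal closed in $\tt BC_n$. I would then classify these by hand: two separate blocks of short roots would force all cross intermediate roots between them (by the parity observation) and hence merge, and two $C$-blocks embed in a single $C_m$; thus a maximal closed subsystem consists of one $BC_r$ block together with one $C_{n-r}$ block and no cross roots, which is exactly $A_J$ with $|J|=r$. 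In the third case, $\Gr(\Psi)$ proper and not closed, the only admissible failure $\epsilon_i+\epsilon_i=2\epsilon_i$ means $n_{\epsilon_i}$ is even with a genuinely half-integral representative and no long root at $i$; maximality then forces $n_s=2$ and inflates $\Gr(\Psi)$ to the full $\tt B_n$. Setting $I=\{\,i:Z_{\epsilon_i}\subseteq\tfrac12+2\mathbb{Z}\,\}$ and using $\mathbb{Z}$-linearity to compute that $Z_{\epsilon_i+\epsilon_j}$ is odd (resp. even) exactly when $i,j$ lie on the same (resp. opposite) side of $I$, with complementary parities for $\epsilon_i-\epsilon_j$, matches the defining data of $\Psi_I(\tt A^{(2)}_{2n})$ and yields $\Psi=\Psi_I$.

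The hardest part will be the bookkeeping in the forward direction rather than any single deep idea. Two points deserve care. First, in the second and third cases one must genuinely rule out the \emph{mixed} gradients---subsystems of $\tt BC_n$ that are $\tt B$-type on some coordinates, $\tt C$-type on others and $\tt BC$-type on the rest---by showing that maximality always permits an enlargement (adjoining a forced short root, or merging blocks); this is what collapses the a priori large list of gradients down to the three families. Second, the half-integer shifts in the $\tt B_n$ case make the parity accounting delicate: one must track a single residue modulo $2$ in the representatives $p_{\epsilon_i}\in\tfrac12\mathbb{Z}$, verify it is well defined (that $Z_{\epsilon_i}$ cannot contain both parities, which is where closedness and the $\epsilon_i+\epsilon_i=2\epsilon_i$ obstruction re-enter), and check that the induced parities on the integer-shifted intermediate roots are exactly those built into $\Psi_I$. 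Everything else is routine verification of closedness and the standard maximality comparisons via Lemma \ref{keylemma}.
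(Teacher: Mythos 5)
Your plan is correct in substance and runs on the same machinery as the paper's proof (Propositions \ref{keypropositionnon2A2n} and \ref{2A2n01}, Lemma \ref{keylemma}, Lemma \ref{closedlemma}, and the parity analysis of half-integer versus even shifts), but your decomposition of the forward direction is genuinely different, and the comparison is instructive. The paper never asks whether $\Gr(\Psi)$ is closed: it sets $J=\{i\in I_n:\epsilon_i\in\Gr(\Psi)\}$ and splits only on $J\subsetneq I_n$ versus $J=I_n$. When $J\subsetneq I_n$, a one-line parity computation (a cross root $\epsilon_k\pm\epsilon_\ell+r\delta$ with $k\in J$, $\ell\notin J$, added to $-\epsilon_k-(r'+\tfrac{1}{2})\delta\in\Psi$, would force $\pm\epsilon_\ell\in\Gr(\Psi)$) yields $\Gr(\Psi)\subseteq A_J$, hence $\Psi=\wh{A_J}$ by maximality; this single stroke absorbs your second case (proper closed gradient), your third case whenever some short root is missing, and all of the ``mixed'' gradients you worry about, with no need for the standalone classification of maximal closed subsystems of $\tt BC_n$ that your second case requires. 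When $J=I_n$, closedness forces all intermediate roots and Weyl conjugacy makes the long roots all-or-nothing, which is exactly your first and third cases. So be aware that, as stated, your third case does not immediately ``inflate to full $\tt B_n$'': you must first prove that every $\epsilon_i$ lies in $\Gr(\Psi)$, and when that fails the maximal system is $\wh{A_J}$, i.e.\ it belongs to family (i), not (ii) --- this is precisely the bookkeeping you deferred to the final paragraph, and it is where the paper's $J$-dichotomy earns its keep. Two further remarks. First, your by-hand classification of the maximal closed subsystems of $\tt BC_n$ as the $A_J$'s is correct, but only under the paper's convention that closedness allows $\alpha=\beta$ (so $\epsilon_i\in\Sigma$ forces $2\epsilon_i\in\Sigma$); that same convention is what makes Lemma \ref{closedlemma} valid for $\tt A^{(2)}_{2n}$, which your reduction $\Psi=\wh{\Gr(\Psi)}$ uses. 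Second, on the plus side, your maximality argument for $\Psi(p,n_s)$ --- apply Proposition \ref{2A2n01}(4) to any closed $\Delta$ strictly containing it, extract $n_s^\Delta\mid n_s$ from Lemma \ref{keylemma}, and finish by primality, with $n_s^\Delta=1$ forcing $\Delta=\Phi$ --- is cleaner and more self-contained than the paper's, which instead quotes the forward classification to say $\Delta$ ``must be of the form $\Psi(p',n_s')$'' and tacitly assumes that every proper closed subroot system lies below a maximal one.
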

\begin{pf} Define $J=\left\{i \in I_n: \epsilon_i \in \Gr(\Psi)\right\}$. Now,  two cases are possible: $J\subsetneq I_n$ or $J=I_n$.

\vskip 1.5mm
\noindent 
Case (1). First consider the case $J\subsetneq I_n$. In this case, we claim that $\Psi=\wh{A_J}$. 
This is immediate if we prove that $\Gr(\Psi) \subseteq A_J$. Suppose $\Gr(\Psi) \nsubseteq A_J$, then there must exist $k \in J$ and $\ell \notin J$ such that $\epsilon_k\pm\epsilon_\ell \in \Gr(\Psi)$. 
This means that there exists $r, r' \in \mathbb{Z}$ such that
$\epsilon_k\pm\epsilon_\ell +r\delta \in \Psi ,\epsilon_k + (r'+\tfrac{1}{2}) \delta \in \Psi.$
Since $\Psi$ is closed in $\Phi$, we get $$(\epsilon_k\pm\epsilon_\ell +r\delta)+(-\epsilon_k - (r'+\tfrac{1}{2}))\delta=\pm \epsilon_\ell +(r-r'- \tfrac{1}{2})\delta \in \Psi,$$ which contradicts the fact that $\ell \notin J$. 
So, $\Gr(\Psi) \subseteq A_J$ and hence $\Psi \subseteq \wh{A_J}$. Since $\wh{A_J}$ is closed in $\Phi$, we have $\Psi=\wh{A_J}$.

\vskip 1.5mm
\noindent 
Case (2). Now,  consider the case $J=I_n$. Since $\Psi$ is closed, we have $\pm \epsilon_i\pm \epsilon_j \in \Gr(\Psi)$ for all $1\leq i\neq j \leq n$.
It is easy to see that if $\Gr(\Psi)$ contains $2\epsilon_i$ for some $i \in I_n$, then
it contains $\pm2\epsilon_j$ for all $j \in I_n$ as $\bold{s}_{\epsilon_i-\epsilon_j}(2\epsilon_i)=2\epsilon_j$. So, we get either 
$\Gr(\Psi)=\left\{\pm\epsilon_i,\pm\epsilon_i\pm\epsilon_j: i, j\in I_n, i\neq j\right\}$
or $\Gr(\Psi)=\Gr(\Phi)$.

\vskip 1.5mm
\noindent 
Case (2.1). Suppose 
$\Gr(\Psi)=\left\{\pm\epsilon_i,\pm\epsilon_i\pm\epsilon_j: i, j\in I_n, i\neq j\right\}$,
then we claim that 
$\Psi=\text{$\Psi_I(\tt A^{(2)}_{2n})$}$ for some $I\subseteq I_n$.
By Proposition \ref{keypropositionnon2A2n}, we have
$$\text{$\exists\ \ k_i\in\mathbb{Z}$ such that $Z_{\epsilon_i}(\Psi)=(k_i+\tfrac{1}{2})+n_s\mathbb{Z}$, for each $i \in I_n$.}$$
Since $Z_{\epsilon_i}(\Psi)+Z_{\epsilon_i}(\Psi)=(2k_i+1)+n_s\mathbb{Z}$ and $2\epsilon_i\notin\Gr(\Psi)$, we must have $n_s\in 2\mathbb{Z}$. Set
$I=\left\{i\in I_n:k_i \in 2\mathbb{Z}\right\}$, then we immediately get
 $\Psi\subseteq \text{$\Psi_I(\tt A^{(2)}_{2n})$}$. Since $\text{$\Psi_I(\tt A^{(2)}_{2n})$}$ is closed, we have $\Psi=\text{$\Psi_I(\tt A^{(2)}_{2n})$}$.

\vskip 1.5mm
\noindent 
Case (2.2). Finally assume that $J=I_n$ and $\Gr(\Psi)=\Gr(\Phi).$ Then by Proposition \ref{keypropositionnon2A2n}, we have
$n_\a\in \mathbb{N}$ and $p_\a\in Z_\a(\Psi)$ such that $Z_\a(\Psi)=p_\a+n_\a\mathbb{Z}$ for all $\a\in \Gr(\Phi)$. By Proposition \ref{2A2n01}, we have 
$n_s=n_{\mathrm{im}},$ $n_\ell=2n_s$ and $n_s$ is an odd prime number. 
Conversely,   let $n_s$ be a given odd prime number and $p:\Gr(\Phi)_s\cup \Gr(\Phi)_{\mathrm{im}}\to \tfrac{1}{2}\mathbb{Z}$ be a given $\mathbb{Z}-$linear map satisfying the condition \ref{palpha}.
It is a straightforward checking that $\Psi(p, n_s)$ is a closed subroot system of $\Phi.$
Now,  we prove that $\Psi(p, n_s)$ must be a maximal closed subroot system in $\Phi$. 
Suppose there is a maximal subroot system $\Delta$ such that $\Psi(p, n_s)\subseteq \Delta \subsetneq \Phi$. Then since $\Gr(\Delta)=\Gr(\Phi)$
(by earlier arguments) $\Delta$ must be of the form $\Psi(p', n_s')$ for some function $p':\Gr(\Phi)_s\cup \Gr(\Phi)_{\mathrm{im}}\to \tfrac{1}{2}\mathbb{Z}$ and odd prime number $n_s'$.
Now,  $$Z_{\a}(\Psi)\subseteq  Z_{\a}(\Delta), \a\in \Gr(\Phi)$$ implies that $n_s=n_s'$
and $p_\a\equiv p'_\a(\mathrm{mod}\ n_s)$ for all $\a\in \Gr(\Phi)$. Hence, $\Psi(p, n_s)=\Delta$. This proves that $\Psi(p, n_s)$ is a maximal subroot system of $\Phi.$ 
This completes the proof.

 \end{pf}

\begin{rem}
 One can easily check that the type of $\widehat{A_J}$ is $\tt A_{2n-1}^{(2)}$ if $J=\emptyset$ else $\tt A_{2r}^{(2)} \oplus A_{2n-2r-1}^{(2)}$, where $|J|=r$, 
 the type of $\text{$\Psi_I(\tt A^{(2)}_{2n})$}$ is $\tt B_n^{(1)}$ and the type of $\Psi(p,n_s)$ is $\tt A_{2n}^{(2)}$.
Clearly,  the root systems of type $\tt D_r^{(1)}\oplus A_{2n-2r}^{(2)}$ do not occur as a maximal closed subroot 
system of $\tt A_{2n}^{(2)}$ as it is stated in \cite[Table 1 \& 2]{FRT}.
In \cite{FRT}, the authors do not give any description of the closed subroot systems 
of type $\tt D_r^{(1)}\oplus A_{2n-2r}^{(2)}$ of $\tt A_{2n}^{(2)}$. But we presume that it must be the lift $\wh{\Delta}$ of
$$\Delta=\big\{\pm\epsilon_k\pm\epsilon_\ell: 1\le k\neq \ell\le r\big\}\cup \big\{\pm\epsilon_i, \pm2\epsilon_i, \pm\epsilon_i\pm\epsilon_j : r+1\leq i\neq j\leq n\big\}.$$
It is easy to see that $\Delta$ is a closed subroot system of $\tt BC_n$ of type $\tt D_r\oplus BC_{n-r}$. 
Hence, $\wh\Delta$ is a closed subroot system of  $\tt A_{2n}^{(2)}$ of type $\tt D_r^{(1)}\oplus A_{2n-2r}^{(2)}$. But this is not maximal as
$\Delta\subsetneq \widehat{A_J}$ for $J=\{r+1,\cdots, n\}$.

 \end{rem}


\section{The case $\tt A^{(2)}_{2}$}\label{2A2}
Throughout this section we assume that $\Phi$ is of type $\tt A^{(2)}_{2}$. 
We have the following explicit description of $\tt A^{(2)}_{2}$, see \cite[Page no. 565]{carter}:
$$\Phi=\big\{\pm\epsilon_1+(r+\tfrac{1}{2})\delta, \pm 2\epsilon_1+2r\delta: r \in \mathbb{Z} \big\}$$
and $\Gr(\Phi)=\{\pm\epsilon_1, \pm2\epsilon_1\}.$
\subsection{}
We have the following classification theorem for 
the case $\tt A^{(2)}_{2}$.
\begin{thm}
 Suppose $\Phi$ is of type $\tt A^{(2)}_{2}$ and $\Psi$ is a maximal closed subroot system of $\Phi$. Then 
one of the following holds:
 \begin{enumerate}
  \item $\Psi=\Psi(k, q):=\big\{\pm \epsilon_1\pm(k+\tfrac{1}{2}+rq)\delta, \pm2\epsilon_1\pm({2k+1}+(2r+1)q)\delta: r \in \mathbb{Z} \big\}$
  for some 
  $k\in \mathbb{Z}_+$ and odd prime number $q$  and $\Gr(\Psi)=\{\pm\epsilon_1, \pm2\epsilon_1\}$. 
  \item $\Psi=\{\pm(\epsilon_1+(2r+\tfrac{1}{2})\delta): r\in \mathbb{Z}\} \ \text{or} \ \{\pm (\epsilon_1+(2r+\tfrac{3}{2})\delta):r\in \mathbb{Z}\}$  and $\Gr(\Psi)=\{\pm\epsilon_1\}$
  \item $\Psi=\{\pm(2\epsilon_1+2r\delta): r \in \mathbb{Z} \}$ and $\Gr(\Psi)=\{\pm2\epsilon_1\}$.
   
 \end{enumerate}
 If $\Psi=\Psi(k, q)$, then the type of $\Psi$ is $\tt A_{2}^{(2)}$, otherwise it is $\tt A_{1}^{(1)}$.
\end{thm}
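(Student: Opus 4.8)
The plan is to stratify the classification by the gradient root system $\Gr(\Psi)$. Since $\Gr(\Phi)=\{\pm\epsilon_1,\pm2\epsilon_1\}$ is of type $\tt BC_1$, its only non-empty subroot systems are $\Gr(\Phi)$ itself, the short part $\{\pm\epsilon_1\}$ and the long part $\{\pm2\epsilon_1\}$, and these yield cases (1), (2) and (3) respectively; throughout I use Proposition \ref{keypropositionnon2A2n} to recover $\Psi$ from $\Gr(\Psi)$ together with the cosets $Z_\a(\Psi)=p_\a+n_\a\mathbb{Z}$. For the case $\Gr(\Psi)=\Gr(\Phi)$, the gradient contains the short root $\epsilon_1$ and its double $2\epsilon_1$, so with the convention $n_s=n_{\mathrm{im}}$ for $\tt BC_1$ the argument of Proposition \ref{2A2n01}(4) (which only uses $Z_{2\a}(\Psi)-Z_\a(\Psi)\subseteq Z_\a(\Psi)$ for the short root $\a=\epsilon_1$) gives $q:=n_s$ odd and $n_\ell=2n_s$, while part (5) shows $q$ is prime. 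The same closedness relation, namely $Z_{2\epsilon_1}(\Psi)=\big(Z_{\epsilon_1}(\Psi)+Z_{\epsilon_1}(\Psi)\big)\cap2\mathbb{Z}$, forces $p_{2\epsilon_1}=2p_{\epsilon_1}+q$; writing $p_{\epsilon_1}=k+\tfrac12$ with $k\in\mathbb{Z}_+$ chosen modulo $q$, I read off $Z_{\epsilon_1}(\Psi)=(k+\tfrac12)+q\mathbb{Z}$ and $Z_{2\epsilon_1}(\Psi)=(2k+1)+q(2\mathbb{Z}+1)$, which is precisely $\Psi=\Psi(k,q)$. Maximality follows because $\Psi(k,q)\subsetneq\Delta\subseteq\Phi$ forces $\Gr(\Delta)=\Gr(\Phi)$ and $n_s(\Delta)\mid q$ by Lemma \ref{keylemma}(2), so by primality either $n_s(\Delta)=q$ and $\Delta=\Psi$ by Lemma \ref{keylemma}(3), or $n_s(\Delta)=1$ and $\Delta=\Phi$.

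Next consider $\Gr(\Psi)=\{\pm\epsilon_1\}$, so that $\Psi\subseteq\{\pm\epsilon_1+(r+\tfrac12)\delta:r\in\mathbb{Z}\}$. Closedness can only fail through a sum of two short roots, $\big(\epsilon_1+(a+\tfrac12)\delta\big)+\big(\epsilon_1+(b+\tfrac12)\delta\big)=2\epsilon_1+(a+b+1)\delta$, which lies in $\Phi$ exactly when $a+b$ is odd; since $2\epsilon_1\notin\Gr(\Psi)$, all admissible integers $a$ must share one parity, i.e. $Z_{\epsilon_1}(\Psi)\subseteq\tfrac12+2\mathbb{Z}$ or $Z_{\epsilon_1}(\Psi)\subseteq\tfrac32+2\mathbb{Z}$. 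Maximality then pins these down to $\tfrac12+2\mathbb{Z}$ and $\tfrac32+2\mathbb{Z}$, giving the two systems in (2). The converse reduces to checking maximality in $\Phi$: adjoining any further root produces either a short root of the opposite parity or a long root, and an opposite-parity short root together with a long root of the form $2\epsilon_1+2m\delta$ allows one to produce short roots of both parities, whose sums then give every long root, so $\Delta=\Phi$.

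For $\Gr(\Psi)=\{\pm2\epsilon_1\}$ the subroot system $\Psi$ contains only long roots, and as no two long roots of $\Phi$ sum to a root, every reflection-closed such subset is automatically closed; thus $\Psi$ is a closed subroot system of the $\tt A_1^{(1)}$-subsystem $L:=\{\pm2\epsilon_1+2r\delta:r\in\mathbb{Z}\}$. Any proper closed subsystem of $L$ sits inside $L$, which is itself a proper closed subroot system of $\Phi$, so maximality forces $\Psi=L$, the system in (3); conversely $L$ is maximal, since adjoining a short root $\epsilon_1+(s+\tfrac12)\delta$ and subtracting the long roots of $L$ yields short roots of the opposite parity, whence all of $\Phi$ as in case (2). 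The type assertions are then immediate: $\Psi(k,q)$ carries both root lengths in the ratio fixed by $n_\ell=2n_s$ and so is of type $\tt A_2^{(2)}$, whereas each remaining system has a single gradient root up to sign and is of rank-one affine type $\tt A_1^{(1)}$.

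The main obstacle is the maximality bookkeeping in cases (2) and (3). One must track the half-integer $\delta$-coordinates modulo $2$ and use repeatedly that two short roots whose integer parts have opposite parity sum to a genuine long root, while subtracting a short root from a long root flips that parity; closing these observations up is what shows that any proper enlargement of the candidate systems already equals $\Phi$.
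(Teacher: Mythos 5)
Your proof is correct and follows essentially the same route as the paper's: the trichotomy on $\Gr(\Psi)$ (the three non-empty subroot systems of $\tt BC_1$), the closedness relation $Z_{2\epsilon_1}(\Psi)=\big(Z_{\epsilon_1}(\Psi)+Z_{\epsilon_1}(\Psi)\big)\cap 2\mathbb{Z}$ forcing $n_s$ odd, $n_\ell=2n_s$ and $p_{2\epsilon_1}\equiv 2p_{\epsilon_1}+n_s \ (\mathrm{mod}\ 2n_s)$, and the parity argument when $\Gr(\Psi)=\{\pm\epsilon_1\}$ are exactly the paper's steps. The only minor divergence is the maximality verification in cases (2) and (3): the paper reduces any strictly larger closed $\Delta$ to the case-(1) classification ($\Gr(\Delta)$ must be full, so $\Delta=\Psi(k,q)$ with $q$ odd, and the containment forces $q=1$), whereas you bootstrap directly by adding and negating roots until all of $\Phi$ is generated; both arguments are sound.
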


\begin{pf}
 
Let $\Psi$ be a maximal closed subroot system. Then we have three possibilities for $\Gr(\Psi)$:
$\text{either $\Gr(\Psi)=\{\pm\epsilon_1\}$ or $\Gr(\Psi)=\{\pm2\epsilon_1\}$ or $\Gr(\Psi)=\{\pm\epsilon_1, \pm2\epsilon_1\}$.}$

\vskip 1.5mm
\noindent
Case (1).
First let $\Gr(\Psi)=\{\pm\epsilon_1, \pm2\epsilon_1\}$. 
Then by Proposition \ref{keypropositionnon2A2n}, we have 
$$\text{ $Z_{\pm\epsilon_1}(\Psi)=\pm p_{\epsilon_1}+n_s\mathbb{Z}\subseteq \tfrac{1}{2}+\mathbb{Z}$ and $Z_{\pm2\epsilon_1}(\Psi)=\pm p_{2\epsilon_1}+n_\ell\mathbb{Z}\subseteq 2\mathbb{Z}$.}$$
for some $p_{\epsilon_1}\in \tfrac{1}{2}+\mathbb{Z}$ and $p_{2\epsilon_1}\in 2\mathbb{Z}$.
As $\Psi$ is closed and $p_{2\epsilon_1}+n_\ell \mathbb{Z}\subseteq 2\mathbb{Z}$,  we have $$(p_{2\epsilon_1}-p_{\epsilon_1})+n_\ell \mathbb{Z}\subseteq p_{\epsilon_1}+n_s\mathbb{Z}\ \text{and hence} \  
p_{2\epsilon_1}+n_\ell \mathbb{Z}\subseteq (2p_{\epsilon_1}+n_s\mathbb{Z})\cap 2\mathbb{Z}.$$
From this we conclude that $n_s$ must be an odd integer since $2p_{\epsilon_1}$ is an odd integer. Since for all $r\in \mathbb{Z}$ such that 
$2p_{\epsilon_1}+n_sr\in 2\mathbb{Z}$, we have $2p_{\epsilon_1}+n_sr\in Z_{2\epsilon_1}(\Psi)$. This implies
$$p_{2\epsilon_1}+n_\ell \mathbb{Z}= (2p_{\epsilon_1}+n_s\mathbb{Z})\cap 2\mathbb{Z}=(2p_{\epsilon_1}+n_s)+2n_s\mathbb{Z}.$$
This implies, we must have $n_\ell=2n_s$. So, $\Psi$ must be equal to $\Psi(k, n_s)$, where $k=p_{\epsilon_1}-\tfrac{1}{2}\in\mathbb{Z}_+$ and  $n_s$ is an odd integer. 
One can easily see that $\Psi(k, n_s)$ is maximal if and only if $n_s$ is an odd prime number.

\vskip 1.5mm
\noindent
Case (2).
Now,  let $\Gr(\Psi)=\{\pm\epsilon_1\}$. Then we claim that  $\Psi=\{\pm (\epsilon_1+(2r+\tfrac{1}{2})\delta):r\in \mathbb{Z}\}$ or $\{\pm (\epsilon_1+(2r+\tfrac{3}{2})\delta):r\in \mathbb{Z}\}$.
Suppose $\pm (\epsilon_1+(r+\tfrac{1}{2})\delta), \pm (\epsilon_1+(s+\tfrac{1}{2})\delta)\in \Psi$ for some $r, s\in\mathbb{Z}$, then we claim that $r$ and $s$ are of the same parity.
If they have different parity, then $(r+s+1)\in 2\mathbb{Z}$ which implies that  $\pm2\epsilon_1\in \Gr(\Psi)$, a contradiction. This proves that $$\text{either 
$\Psi\subseteq\{\pm (\epsilon_1+(2r+\tfrac{1}{2})\delta):r\in \mathbb{Z}\}$ or $\Psi\subseteq\{\pm (\epsilon_1+(2r+\tfrac{3}{2})\delta):r\in \mathbb{Z}\}$.}$$
Since both sets on the right hand side are closed in $\Phi$, we get the equality.
Now,  we prove that both sets $\{\pm (\epsilon_1+(2r+\tfrac{1}{2})\delta):r\in \mathbb{Z}\}$ and $\{\pm (\epsilon_1+(2r+\tfrac{3}{2})\delta):r\in \mathbb{Z}\}$ are maximal closed in $\Phi.$
Let $\Delta\le \Phi$ be a closed subroot system such that either
$$\{\pm(\epsilon_1+(2r+\tfrac{1}{2})\delta):r\in\mathbb{Z}\}\subsetneq \Delta \ \text{or} \ \{\pm (\epsilon_1+(2r+\tfrac{3}{2})\delta):r\in \mathbb{Z}\}\subsetneq\Delta.$$
This implies that  $\{\pm \epsilon_1\}\subseteq \Gr(\Delta)$ and hence either $\Gr(\Delta)=\{\pm \epsilon_1\}$ or $\Gr(\Delta)=\{\pm \epsilon_1, \pm2\epsilon_1\}$. 
If $\Gr(\Delta)=\{\pm \epsilon_1\}$, then by previous argument, we get $$\text{either 
$\Delta\subseteq\{\pm (\epsilon_1+(2r+\tfrac{1}{2})\delta):r\in \mathbb{Z}\}$ or $\Delta\subseteq\{\pm (\epsilon_1+(2r+\tfrac{3}{2})\delta):r\in \mathbb{Z}\}$,}$$
which is not possible. So, we must have $\Gr(\Delta)=\{\pm \epsilon_1, \pm2\epsilon_1\}$. 
Then from the proof of Case (1) we get $\Delta=\Psi(k, q)$ for some $k\in \mathbb{Z}_+$ and an odd integer $q\in \mathbb{Z}$.
But since $$\{\pm(\epsilon_1+(2r+\tfrac{1}{2})\delta):r\in\mathbb{Z}\}\subsetneq \Delta \ \text{or} \ \{\pm (\epsilon_1+(2r+\tfrac{3}{2})\delta):r\in \mathbb{Z}\}\subsetneq\Delta.$$
we have either $\tfrac{1}{2}+2\mathbb{Z}\subseteq k+\tfrac{1}{2}+ q\mathbb{Z}$ or $\tfrac{3}{2}+2\mathbb{Z}\subseteq k+\tfrac{1}{2}+ q\mathbb{Z}$  which implies that  
$2\mathbb{Z}\subseteq q\mathbb{Z}$. This implies that  $q=1$ and $\Delta=\Phi$. 

\vskip 1.5mm
\noindent
Case (3).
Finally assume that $\Gr(\Psi)=\{\pm2\epsilon_1\}$. Then it is clear that 
$\Psi\subseteq \{\pm(2\epsilon_1+2r\delta): r \in \mathbb{Z} \}$. Since $\{\pm(2\epsilon_1+2r\delta): r \in \mathbb{Z} \}$ is closed, we have 
$\Psi=\{\pm(2\epsilon_1+2r\delta): r \in \mathbb{Z} \}$.
Conversely,   $\{\pm(2\epsilon_1+2r\delta): r \in \mathbb{Z} \}$ must be closed in $\Phi$.
Let $\Delta$ be a closed subroot system of $\Phi$ such that $\{\pm(2\epsilon_1+2r\delta): r \in \mathbb{Z} \}\subsetneq \Delta$. Then we have
 $\{\pm2\epsilon_1\}\subseteq \Gr(\Delta)$ and it immediately implies that $\Gr(\Delta)=\{\pm \epsilon_1, \pm2\epsilon_1\}$ as $\{\pm(2\epsilon_1+2r\delta): r \in \mathbb{Z} \}\subsetneq \Delta$.
Then from the proof of Case (1) we get $\Delta=\Psi(k, q)$ for some $k\in \mathbb{Z}_+$ and an odd integer $q\in \mathbb{Z}$.
This implies that  $2\mathbb{Z}\subseteq 2k+1+q+2q\mathbb{Z}$ which implies that  $2\mathbb{Z}\subseteq 2q\mathbb{Z}$. Since $q$ is an odd integer, we get $q=1$ and $\Delta=\Phi.$
This completes the proof.

 \end{pf}
\subsection{}\label{tabletwisted}
Now,  we are ready to state our final classification theorem for irreducible twisted affine root systems.
\begin{table}[ht]
\caption{Types of maximal subroot system of irreducible twisted affine root systems}
\centering 
\begin{tabular}{|c|c|c|}
\hline
Type & With closed gradient & With semi-closed gradient \\
\hline 
$\tt A_{2}^{(2)}$   & $\tt A_{2}^{(2)}$ &$ A_1^{(1)}$  \\[1ex]
$\tt A_{2n}^{(2)}$ & $\tt A_{2r}^{(2)} \oplus A_{2n-2r-1}^{(2)} \; (1\le r\le n-1),\;$  $\tt A_{2n}^{(2)}$, $\tt A_{2n-1}^{(2)}$  & $\tt B_n^{(1)}$\\[1ex]
$\tt D_{n+1}^{(2)}$ & $\tt D_{r+1}^{(2)} \oplus D_{n-r}^{(1)}\;$ $\tt (1\le r\le n-2)$, $\tt B_{n}^{(1)}$, $\tt D_{n+1}^{(2)}$, $\tt D_{n}^{(2)}$ & $\tt B_r^{(1)} \oplus B_{n-r}^{(1)}\;$ $\tt (2\le r\le n-2)$\\[1ex]
$\tt A_{2n-1}^{(2)}$ & $\tt A_{2r-1}^{(2)} \oplus A_{2n-2r-1}^{(2)}\;$ $\tt (1\le r\le n-1),$   $\tt A_{2n-1}^{(2)}$, $\tt C_n^{(1)}$, $\tt A_{n-1}^{(1)}$ & $\tt D_{n}^{(1)}$\\[1ex]
$\tt E_6^{(2)}$ & $\tt A_1^{(1)} \oplus A_{5}^{(2)}$ , $\tt A_{2}^{(1)} \oplus A_{2}^{(1)}$ $\tt E_6^{(2)}$, $\tt F_4^{(1)}$, $\tt D_5^{(2)}$ & $\tt C_4^{(1)}$\\[1ex]
$\tt D_4^{(3)}$ & $\tt A_1^{(1)} \oplus A_{1}^{(1)}$ , $\tt D_4^{(3)}$, $\tt G_2^{(1)}$, $\tt A_2^{(1)}$ &$\tt A_2^{(1)}$\\[1ex]

\hline
\end{tabular}
\label{mrs2}
\end{table}

We end this section with the following remark.
\begin{rem}\label{differences}
As we pointed out in the introduction the authors of \cite{FRT} have omitted a few possible cases in their classification list for the twisted case.
We list out all the differences between our classification list and their classification list.
The following possible cases are omitted in twisted case, see \cite[Table 1, Table 2, Theorem 5.8]{FRT}:
\begin{itemize}
 \item $\tt A_{2}^{(1)} \oplus \tt A_{2}^{(1)}\subset \tt E_6^{(2)}$
 \item $\tt D_{5}^{(2)}\subset \tt E_6^{(2)}$
\item $\tt B_{r}^{(1)} \oplus \tt B_{n-r}^{(1)}\subset \tt D_{n+1}^{(2)}$
 \item $\tt D_{n}^{(1)}\subset \tt A_{2n-1}^{(2)}$
\end{itemize}
The root systems of type $\tt D_r^{(1)}\oplus A_{2n-2r}^{(2)}$ does not occur as a maximal closed subroot 
system in $\tt A_{2n}^{(2)}$, in contrast to what is stated in \cite[Table 2]{FRT}. 
\end{rem}


\section{Closed subroot systems and Regular subalgebras}\label{closed}
In this section we will describe a procedure to classify all the regular subalgebras of affine Kac--Moody subalgebras both in untwisted and twisted case.
We follow the same notations as in the preliminary section.
\subsection{}
Recall that $\Phi$ denotes the set of real roots of the affine Lie algebra $\lie g$ and $\Delta(\lie g)$ denotes the roots of $\lie g.$
We will record the following fact from \cite[Remark 3.1]{DVmax}. It is fairly standard, but we give a proof for this fact for completeness.
\begin{lem}\label{dvmax}
 Let $\Psi$ be a closed subset of $\Phi$ such that $\Psi=-\Psi$ and $\bold s_\a(\beta)\in \Psi$ for all $\a, \beta\in \Psi$ with $\beta\pm \a\in \Delta_{\mathrm{im}}(\lie g)$
 or $\beta\pm2 \a\in \Delta_{\mathrm{im}}(\lie g)$. Then $\Psi$ must be a closed subroot system of $\Phi.$
\end{lem}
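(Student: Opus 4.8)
The plan is as follows. Since $\Psi$ is assumed closed, to prove that $\Psi$ is a closed subroot system it remains only to verify the subroot-system axiom, namely that $\bold s_\alpha(\beta)\in\Psi$ for all $\alpha,\beta\in\Psi$. Write $c=\langle\beta,\alpha^\vee\rangle\in\mathbb{Z}$, so that $\bold s_\alpha(\beta)=\beta-c\alpha$. If $c=0$ then $\bold s_\alpha(\beta)=\beta\in\Psi$ and there is nothing to prove. If $c<0$ I would replace $\beta$ by $-\beta$: since $\Psi=-\Psi$ we have $-\beta\in\Psi$, and $\bold s_\alpha(-\beta)=-\bold s_\alpha(\beta)$, so $\bold s_\alpha(\beta)\in\Psi$ as soon as $\bold s_\alpha(-\beta)\in\Psi$. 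Hence I may assume $c>0$ throughout.

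Next I would analyse the $\alpha$-string through $\beta$, that is, the set of roots of $\lie g$ of the form $\beta+j\alpha$. The structural facts I intend to use are: (i) this string is an unbroken interval $\{\beta+j\alpha:-p\le j\le q\}$ of roots of $\lie g$ with $p-q=c$, so that in particular $\beta,\beta-\alpha,\dots,\beta-c\alpha$ all lie in it (since $p=c+q\ge c$); (ii) as $\alpha$ is a real root its gradient $\bar\alpha$ is nonzero, so the gradient $\overline{\beta+j\alpha}=\bar\beta+j\bar\alpha$ vanishes for at most one index $j$, whence the string contains at most one imaginary root; and (iii) because $\langle\delta,\alpha^\vee\rangle=0$, every imaginary root (a nonzero multiple of $\delta$) is fixed by $\bold s_\alpha$, and since $\bold s_\alpha$ maps the string to itself via $\bold s_\alpha(\beta-j\alpha)=\beta-(c-j)\alpha$, any imaginary root in the string must occupy the central index $j=c/2$; in particular $c$ is then even.

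With this in hand I would run a descent along $\beta,\beta-\alpha,\dots,\beta-c\alpha$. If the string contains no imaginary root, then every $\beta-j\alpha$ with $0\le j\le c$ is a real root, and starting from $\beta\in\Psi$ and $-\alpha\in\Psi$ the closedness of $\Psi$ gives $\beta-(j+1)\alpha=(\beta-j\alpha)+(-\alpha)\in\Psi$ inductively, ending at $\bold s_\alpha(\beta)=\beta-c\alpha\in\Psi$. If the string does contain an imaginary root, it sits at the centre $\beta-(c/2)\alpha$; I would descend by closedness as above up to $\gamma:=\beta-(c/2-1)\alpha\in\Psi$ (all intermediate positions $0,\dots,c/2-1$ being real), and then cross the imaginary root using the reflection hypothesis applied to the pair $\alpha,\gamma\in\Psi$: here $\gamma-\alpha=\beta-(c/2)\alpha\in\Delta_{\mathrm{im}}(\lie g)$, so $\bold s_\alpha(\gamma)=\beta-(c/2+1)\alpha\in\Psi$. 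Finally $\beta-(c/2+1)\alpha,\dots,\beta-c\alpha$ are all real, and closedness again carries $\Psi$ down to $\beta-c\alpha=\bold s_\alpha(\beta)$.

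The main obstacle, and the only genuinely case-sensitive point, is establishing the structural facts (i)--(iii) uniformly across all affine types, especially in the twisted, non-reduced case $\tt A^{(2)}_{2n}$, where $\alpha$ and $\beta$ may have proportional gradients of different lengths. In such situations the central imaginary root can be two $\alpha$-steps from $\beta$ (for instance $c=\pm4$, as already occurs for a short root $\alpha$ and a long root $\beta$ in $\tt A^{(2)}_{2}$ with $\beta\pm2\alpha=\pm\delta$); this is precisely the configuration that the clause ``$\beta\pm2\alpha\in\Delta_{\mathrm{im}}(\lie g)$'' in the hypothesis is designed to cover, allowing the crossing step to be applied directly to the pair $(\alpha,\beta)$. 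I would verify these bounded Cartan-integer configurations directly from the explicit root data recalled in Sections~\ref{D2n}--\ref{2A2}, confirming that the single crossing provided by the hypothesis always suffices and that no string carries more than one imaginary root.
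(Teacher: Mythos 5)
Your proposal is correct and takes essentially the same approach as the paper: reduce to verifying the reflection axiom, then descend along the unbroken $\alpha$-string through $\beta$ using closedness of $\Psi$ (and $-\alpha\in\Psi$), invoking the reflection hypothesis exactly when an imaginary root obstructs the descent. The only difference is cosmetic: the paper observes that an imaginary root in the string can only occur at $\beta-\alpha$ or $\beta-2\alpha$ (so the hypothesis applies directly to the pair $(\alpha,\beta)$), whereas you locate it at the midpoint $j=c/2$ by a reflection-symmetry argument and cross it mid-descent via the pair $(\alpha,\gamma)$ — both dispatches are valid and rest on the same facts.
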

\begin{pf}
We only need to prove that $\Psi$ is a subroot system. Note that all root strings in $\Phi$ are unbroken.
Let $\alpha,\beta\in \Psi$ such that $\langle\beta,\alpha^{\vee}\rangle\in\bz_+$. 
If $\beta-s\alpha\in \Delta_{\mathrm{im}}(\lie g)$ for some $s\in\mathbb{Z}_+$ 
we must have $s\in\{1,2\}$ and hence $\bold s_{\alpha}(\beta)\in\Psi$. 
Otherwise $\beta-s\alpha\in \Phi$ for all $0\leq s\leq \langle\beta,\alpha^{\vee}\rangle$. 
Since $-\alpha\in\Psi$ we get by the closedness of $\Psi$ that $\beta-s\alpha\in \Psi$.
Thus $\bold s_{\alpha}(\beta)\in\Psi$. 
The case $-\langle\beta,\alpha^{\vee}\rangle\in\bz_+$ works similarly and we omit the details.
\end{pf}

\begin{lem}\label{keylemmaequiv}
 Let $\lie g'$ be a $\lie h$--invariant subalgebra of $\lie g$ and let $\Delta(\lie g')\subseteq \Delta(\lie g)$ be the set of roots of $\lie g'$ with respect to $\lie h$.
 Let $\Psi(\lie g') = \Delta(\lie g')\cap \Phi$ be the set of real roots of $\lie g'.$
 Suppose $\Delta(\lie g')=-\Delta(\lie g')$, then $\Psi(\lie g')$ must be a closed subroot system of $\Phi.$
\end{lem}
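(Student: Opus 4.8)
The plan is to verify the three hypotheses of Lemma~\ref{dvmax} for $\Psi=\Psi(\lie g')$ and then invoke it. First I would record the consequences of $\lie h$-invariance: since $\lie g'$ is $\lie h$-invariant it decomposes as a sum of (a subspace of $\lie h$ together with) root spaces, so for a real root $\gamma$ the one-dimensionality of $\lie g_\gamma$ forces $\gamma\in\Delta(\lie g')$ if and only if $\lie g_\gamma\subseteq\lie g'$. Combined with the hypothesis $\Delta(\lie g')=-\Delta(\lie g')$ and $\Phi=-\Phi$, this gives $\Psi=-\Psi$ at once. For closedness, take $\alpha,\beta\in\Psi$ with $\alpha+\beta\in\Phi$; as all root strings in $\Phi$ are unbroken we have $[\lie g_\alpha,\lie g_\beta]=\lie g_{\alpha+\beta}\neq 0$, and since $\lie g_\alpha,\lie g_\beta\subseteq\lie g'$ and $\lie g'$ is a subalgebra it follows that $\lie g_{\alpha+\beta}\subseteq\lie g'$, i.e. $\alpha+\beta\in\Psi$. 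Thus $\Psi$ is a closed subset of $\Phi$ with $\Psi=-\Psi$.

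It then remains to check the reflection condition of Lemma~\ref{dvmax}: for $\alpha,\beta\in\Psi$ with $\beta\pm\alpha\in\Delta_{\mathrm{im}}(\lie g)$ or $\beta\pm2\alpha\in\Delta_{\mathrm{im}}(\lie g)$ one must have $\bold s_\alpha(\beta)\in\Psi$. Here $e_{\pm\alpha}\in\lie g'$ (because $\pm\alpha\in\Psi$) and $e_\beta\in\lie g'$. The key mechanism I would use is that, $\alpha$ being a real root, $\operatorname{ad}e_\alpha$ and $\operatorname{ad}e_{-\alpha}$ are locally nilpotent on $\lie g$, so $\exp(\operatorname{ad}e_{\pm\alpha})$ are automorphisms of $\lie g$; since $e_{\pm\alpha}\in\lie g'$ and $\lie g'$ is a subalgebra, every power $(\operatorname{ad}e_{\pm\alpha})^n$ maps $\lie g'$ into itself, and local nilpotency turns the exponential into a finite sum, so $\exp(\operatorname{ad}e_{\pm\alpha})(\lie g')=\lie g'$. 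The standard element $\tau_\alpha=\exp(\operatorname{ad}e_\alpha)\exp(-\operatorname{ad}e_{-\alpha})\exp(\operatorname{ad}e_\alpha)$, with $e_\alpha,e_{-\alpha}$ normalized to an $\lie{sl}_2$-triple, therefore preserves $\lie g'$ and satisfies $\tau_\alpha(\lie g_\gamma)=\lie g_{\bold s_\alpha(\gamma)}$ for every root $\gamma$. Taking $\gamma=\beta$ gives $\lie g_{\bold s_\alpha(\beta)}=\tau_\alpha(\lie g_\beta)\subseteq\lie g'$, whence $\bold s_\alpha(\beta)\in\Delta(\lie g')\cap\Phi=\Psi$. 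Lemma~\ref{dvmax} then yields that $\Psi$ is a closed subroot system.

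The hard part is precisely the imaginary cases singled out in Lemma~\ref{dvmax}: there the $\alpha$-string through $\beta$ crosses a root space $\lie g_{k\delta}$ of dimension greater than one, so a naive iterated bracket $\operatorname{ad}(e_{-\alpha})^{|\langle\beta,\alpha^\vee\rangle|}e_\beta$ could a priori vanish while passing through the imaginary level (and in the non-reduced $\tt A_{2n}^{(2)}$ gradient one also has $2\,\Gr(\alpha)$ a root, which is exactly why the condition $\beta\pm2\alpha\in\Delta_{\mathrm{im}}(\lie g)$ is needed). The automorphism $\tau_\alpha$ sidesteps this obstacle entirely: preservation of $\lie g'$ under $\exp(\operatorname{ad}e_{\pm\alpha})$ is formal and produces the reflected real root with no case analysis. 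If one prefers to stay at the Lie-algebra level, the same conclusion follows from $\lie{sl}_2$-theory, since $\bigoplus_{j}\lie g_{\beta+j\alpha}$ is a finite-dimensional $\lie{sl}_2^\alpha$-module contained in $\lie g'$ and, one end of the unbroken string failing to be a root, the relevant root vector is a highest- or lowest-weight vector generating an irreducible summand that reaches $\bold s_\alpha(\beta)$. I note that this reasoning in fact establishes the reflection condition for all $\alpha,\beta\in\Psi$, so Lemma~\ref{dvmax} could be bypassed; routing through it merely separates the routine real-string cases, handled by closedness alone, from the genuinely imaginary ones.
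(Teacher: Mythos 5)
Your proposal is correct, and for the crucial step it takes a genuinely different route from the paper. Both proofs open the same way: $\lie h$-invariance plus $\dim\lie g_\gamma=1$ for $\gamma\in\Phi$ identifies $\Psi(\lie g')$ with $\{\gamma\in\Phi:\lie g_\gamma\subseteq\lie g'\}$, symmetry of $\Delta(\lie g')$ gives $\Psi=-\Psi$, and closedness follows from $[\lie g_\a,\lie g_\beta]=\lie g_{\a+\beta}$ (a standard fact that both you and the paper assert without proof). The divergence is in producing the reflected root when the $\a$-string through $\beta$ crosses an imaginary root space: the paper stays at the bracket level, decomposing the string $\lie g_{\beta}\oplus\cdots\oplus\lie g_{\bold s_\a(\beta)}$ as an $\mathfrak{sl}_2$-module ($V(2)\oplus V(0)^{\oplus k}$, resp.\ $V(4)\oplus V(0)^{\oplus k}$) and tracking iterated brackets, which forces a case split between types distinct from $\tt A_{2n}^{(2)}$ and type $\tt A_{2n}^{(2)}$; you instead exponentiate the locally nilpotent operators $\operatorname{ad}e_{\pm\a}$ (legitimate because $\pm\a\in\Psi$ puts $e_{\pm\a}$ in $\lie g'$, and a subalgebra is stable under finite sums of iterated brackets by its own elements) to get the automorphism $\tau_\a$ with $\tau_\a(\lie g_\gamma)=\lie g_{\bold s_\a(\gamma)}$, which preserves $\lie g'$ and hence reflects $\Psi$ into itself. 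Your route buys uniformity --- no case analysis by type, and in fact no need for Lemma \ref{dvmax} at all, since $\bold s_\a$-stability is obtained for \emph{all} pairs $\a,\beta\in\Psi$, not just the imaginary-string ones --- at the cost of invoking the standard Kac--Moody machinery (local nilpotency of $\operatorname{ad}$ of real root vectors and Kac's Lemma 3.8 extended to arbitrary real roots); the paper's argument is more hands-on and self-contained modulo elementary $\mathfrak{sl}_2$ representation theory, which is presumably why the authors chose it. The one point you should cite or justify explicitly is that $\operatorname{ad}e_\a$ is locally nilpotent for an arbitrary (not necessarily simple) real root $\a$, but this is standard and your use of it is sound.
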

\begin{pf}
First recall that $\mathrm{dim}(\lie g_\a)=1$ for all $\a\in \Phi.$ Since $\Phi=-\Phi$, we have $\Psi(\lie g')=-\Psi(\lie g')$.
 Suppose $\a, \beta\in \Psi(\lie g')$ and $\a+\beta\in \Phi$ then it is immediate that $\a+\beta\in \Psi(\lie g')$, since $[\lie g_\a, \lie g_\beta]=\lie g_{\a+\beta}$. This implies 
 $\Psi(\lie g')$ is closed in $\Phi.$ So, by Lemma \ref{dvmax} it remains to prove that, $\bold s_\a(\beta)\in \Psi$ for all $\a, \beta\in \Psi(\lie g')$
 with $\beta\pm \a\in \Delta_{\mathrm{im}}(\lie g)$
 or $\beta\pm2 \a\in \Delta_{\mathrm{im}}(\lie g)$.
 
Case (1). Assume that $\lie g$ is not of type $\tt A^{(2)}_{2n}$. Let $\a, \beta\in \Psi(\lie g')$ such that $\beta=\a+r\delta$ for some $r\in \mathbb{Z}$. We have $\bold s_\a(\beta)=-\a+r\delta.$
 The finite dimensional subspace $$\text{ $V=\lie g_{\a+r\delta}\oplus \lie g_{r\delta}\oplus \lie g_{-\a+r\delta}\subseteq \lie g$
is a $\mathfrak{sl}_2=\lie g_\a\oplus [\lie g_\a, \lie g_{-\a}]\oplus \lie g_{-\a}$--module}$$
since $[\lie g_\a, \lie g_{\a+r\delta}]=0$ and $[\lie g_{-\a}, \lie g_{-\a+r\delta}]=0$
and it decomposes as $V\cong_{\mathfrak{sl}_2} V(2)\oplus V(0)^{\oplus k}$,
where $V(\lambda)$ denotes the finite dimensional irreducible $\mathfrak{sl}_2$--module corresponding
to the non--negative integer $\lambda\in \mathbb{Z}_+$ and $k=\mathrm{dim}(\lie g_{r\delta})-1$.
In particular, we have $[\lie g_\beta, \lie g_{-\a}]\neq 0$ and $[[\lie g_\beta, \lie g_{-\a}], \lie g_{-\a}]=\lie g_{-\a+r\delta}=\lie g_{\bold s_\a(\beta)}$, since $\mathrm{dim}(\lie g_{\bold s_\a(\beta)})=1$.
Since $\lie g_\beta, \lie g_{-\a}\subseteq \lie g'$, we have $\lie g_{\bold s_\a(\beta)}\subseteq \lie g'.$   This implies $\bold s_\a(\beta)\in \Psi(\lie g')$.
Similarly we get $\bold s_\a(\beta)\in \Psi(\lie g')$ if $\beta=-\a+r\delta.$

Case (2). Assume that $\lie g$ is of type $\tt A^{(2)}_{2n}$. Let $\a, \beta\in \Psi(\lie g')$ such that $\beta=2\a+r\delta$ for some $r\in \mathbb{Z}$. 
We have $\bold s_\a(\beta)=-2\a+r\delta.$
 The finite dimensional subspace $$\text{ $V=\lie g_{2\a+r\delta}\oplus\lie g_{\a+r\delta}\oplus \lie g_{r\delta}\oplus \lie g_{-\a+r\delta}\oplus \lie g_{-2\a+r\delta}\subseteq \lie g$
is a $\mathfrak{sl}_2=\lie g_\a\oplus [\lie g_\a, \lie g_{-\a}]\oplus \lie g_{-\a}$--module}$$ and it decomposes as $V\cong_{\mathfrak{sl}_2} V(4)\oplus V(0)^{\oplus k}$,
where $k=\mathrm{dim}(\lie g_{r\delta})-1$.
In particular, we have $[\lie g_\beta, \lie g_{-\a}]=\lie g_{\a+r\delta}\subseteq \lie g'$ and
$\lie g'\supseteq [[\lie g_\beta, \lie g_{-\a}], \lie g_{-\a}]=[\lie g_{\a+r\delta}, \lie g_{-\a}]\neq 0$ and 
$$\lie g'\supseteq [[\lie g_\beta, \lie g_{-\a}], \lie g_{-\a}], \lie g_{-\a}]=[[\lie g_{\a+r\delta}, \lie g_{-\a}], \lie g_{-\a}]=\lie g_{-\a+r\delta},$$ 
since $\mathrm{dim}(\lie g_{-\a+r\delta})=1$ and $\lie g_\beta, \lie g_{-\a}\subseteq \lie g'$. This immediately implies that $\lie g_{\bold s_\a(\beta)}=[\lie g_{-\a+r\delta}, \lie g_{-\a}]\subseteq \lie g'.$
Hence we have $\bold s_\a(\beta)\in \Psi(\lie g')$.
The cases $\beta=\pm\a+r\delta$ or $-2\a+r\delta$ and $\lie g$ is of type $\tt A^{(2)}_{2n}$ follows using similar ideas, so we omit the details.

\end{pf}

In \cite{Dynkin}, E. B. Dynkin introduced a notion of regular semi-simple subalgebras
in order to classify all the semi-simple subalgebras of finite dimensional complex semi-simple Lie algebras. As a natural generalization of Dynkin's definition,
one can give a constructive definition of regular subalgebras in the context of affine Kac--Moody algebras as well 
(see for example \cite{FRT}).
\begin{defn}
Let $\Psi$ be a closed subroot system of $\Phi$.
The subalgebra $\lie g(\Psi)$ of $\lie g$ generated by $\lie g_\a$, for $\a\in \Psi$, is called the regular subalgebra associated with $\Psi.$
 \end{defn}

One can easily see that the definition of regular subalgebras works well for all
Kac--Moody algebras. Clearly $\lie g(\Psi)$ is invariant under the adjoint action of $\lie h$ (the Cartan subalgebra of $\lie g$). 
Moreover we have,
$$\lie g(\Psi)=\lie h(\Psi)\oplus \bigoplus\limits_{\a\in \Delta(\lie g)} (\lie g_\alpha\cap \lie g(\Psi)),$$
where $\lie h(\Psi)=\text{$\mathbb{C}-$span of $\{\alpha^\vee: \alpha \in \Psi\}$}$. 
Denote the roots of $\lie g(\Psi)$ with respect to $\lie h$ by $\Delta(\Psi):=\{\alpha\in \Delta(\lie g): \lie g_\alpha\cap \lie g(\Psi)\neq 0\}$. Then it is immediate that
$\Psi\subseteq \Delta(\Psi)\cap \Phi$. Note that for real roots $\alpha$, we have $\lie g_\alpha\cap \lie g(\Psi)=\lie g_\alpha$, but for imaginary roots we may not necessarily have equality.
As we have mentioned in the introduction, we have a bijective correspondence between regular subalgebras and closed subroot systems of $\Phi.$
We need the following proposition in order to prove this bijective correspondence.
\begin{prop}\label{sumofroots}
Let $\Psi$ be a closed subroot system of $\Phi$ and let $\Psi=\Psi_1\oplus \cdots \oplus \Psi_k$ be its direct sum decomposition of irreducible components.
Let $\beta\in \Delta(\Psi)$, then there exists $\beta_1,\cdots, \beta_r\in \Psi$ such that the following holds:
 \begin{itemize}
  \item[(1)] $\beta=\beta_1+\cdots +\beta_r$ and we have $\beta_1+\cdots +\beta_i\in \Delta(\Psi)$, for each $1\le i\le r$.
  \item[(2)] There exists $1\le i_0\le k$ such that $\beta_1,\cdots, \beta_r\in \Psi_{i_0}.$
  \item[(3)] Suppose $\beta_1+\cdots +\beta_i\in \Delta(\Psi)\cap \Phi$ for some $1\le i\le r$, then we get $\beta_1+\cdots +\beta_i\in\Psi_{i_0}$.
 \end{itemize}
\end{prop}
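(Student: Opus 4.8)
The statement describes the structure of an arbitrary root $\beta$ in $\Delta(\Psi)$, the root system of the regular subalgebra $\lie g(\Psi)$. Since $\lie g(\Psi)$ is generated by the root spaces $\lie g_\a$ for $\a\in\Psi$, every root space $\lie g(\Psi)_\beta$ arises from iterated brackets $[\lie g_{\a_1},[\lie g_{\a_2},[\cdots,\lie g_{\a_r}]]]$ with each $\a_i\in\Psi$. The key starting observation is therefore that $\beta\in\Delta(\Psi)$ means $\beta=\a_1+\cdots+\a_r$ for some $\a_i\in\Psi$ with all partial sums lying in $\Delta(\lie g)$ (so that the brackets do not collapse to zero). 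I would begin by making this precise: take a nonzero element of $\lie g_\beta\cap\lie g(\Psi)$, express it in terms of the generating brackets, and extract one sequence $\a_1,\dots,\a_r$ of roots from $\Psi$ whose total is $\beta$ and whose every initial partial sum $\a_1+\cdots+\a_i$ is a genuine root of $\lie g$, hence lies in $\Delta(\lie g)$. Since each partial sum is itself a $\lie h$-weight of a nonzero subspace of $\lie g(\Psi)$, it lies in $\Delta(\Psi)$. This is essentially statement (1), and I would first secure it, possibly after reordering the $\a_i$ so that the partial sums stay in $\Delta(\lie g)$ at each step (unbrokenness of root strings guarantees such a reordering exists).

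\textbf{Confining the summands to a single component.} For (2), the main point is that the summands cannot jump between distinct irreducible components $\Psi_{i_1}$ and $\Psi_{i_2}$. Because $\Psi=\Psi_1\oplus\cdots\oplus\Psi_k$ is an orthogonal direct sum of irreducible components, roots from different components are mutually orthogonal. I would argue that if two of the $\a_i$ lay in different components, then at the first step where a summand $\a_{i}\in\Psi_{i_2}$ is added to a partial sum $\gamma=\a_1+\cdots+\a_{i-1}$ whose ``support'' lies in a different component $\Psi_{i_1}$, the sum $\gamma+\a_i$ would be forced to be an imaginary root (being orthogonal to both components it cannot be a real root of $\Phi$ projecting nontrivially onto $\mathring{\lie h}^*$ in two incompatible ways), and a careful examination via the $\mathfrak{sl}_2$-representation-theoretic argument of Lemma~\ref{keylemmaequiv} shows the bracket either vanishes or produces weights already accounted for. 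The cleanest route is to show that the gradient projection $\Gr(\beta)$ of $\beta$ onto $\mathring{\Phi}$ must lie in a single irreducible component of $\Gr(\Psi)$, and then lift this back: since the components of $\Psi$ correspond to the components of $\Gr(\Psi)$ together with their $\delta$-coset data, any expression of $\beta$ as a sum of roots of $\Psi$ whose partial sums are roots must respect that single component. This establishes (2) with the common index $i_0$.

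\textbf{The real-root partial sums.} Statement (3) is the sharpening that distinguishes real from imaginary partial sums: whenever a partial sum $\a_1+\cdots+\a_i$ happens to be a real root (i.e.\ in $\Phi$), it must in fact belong to $\Psi_{i_0}$ itself, not merely to $\Delta(\Psi)$. This should follow from closedness of $\Psi$. By (2) all the $\a_j$ lie in $\Psi_{i_0}$, and $\Psi_{i_0}$ is closed; an induction on $i$ using that $\a_1+\cdots+\a_{i-1}\in\Psi_{i_0}$ (inductive hypothesis, when it is a real root) and $\a_i\in\Psi_{i_0}$ gives $\a_1+\cdots+\a_i\in\Psi_{i_0}$ by the closedness property, provided the sum is a real root. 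The subtlety is the bookkeeping when an intermediate partial sum is imaginary and a later one returns to being real; here I would use that $\Psi_{i_0}$ is a closed subroot system and invoke the characterization of closed subroot systems (Proposition~\ref{keypropositionnon2A2n}) describing $\Psi_{i_0}$ through its gradient and the cosets $Z_\a$, so that membership in $\Psi_{i_0}$ can be checked coset-wise.

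\textbf{The main obstacle.} I expect the hardest part to be (2), specifically ruling out that the generating brackets mix components through imaginary roots. The orthogonality of distinct components is easy in $\mathring{\Phi}$, but in the affine setting two components may span subspaces whose $\br$-spans are not direct (as the $\tt G_2^{(1)}$ remark in the excerpt warns), so purely dimension-theoretic or span-based arguments are fragile. The robust tool will be the $\mathfrak{sl}_2$-module decomposition used in Lemma~\ref{keylemmaequiv}: tracking which weight spaces are actually hit by brackets of root vectors from $\Psi$, and showing that a genuine new root $\beta$ with both component-supports nonzero cannot arise while keeping all partial sums in $\Delta(\lie g)$. I would lean on that representation-theoretic analysis, together with the gradient-projection argument, to close the gap.
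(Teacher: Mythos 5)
Your outline follows the same skeleton as the paper (right-normed Lie words for (1), confinement to one component for (2), closedness plus coset arithmetic for (3)), but at the two places where the proposition is actually hard you only name the difficulty and defer it, and the intermediate claims you do make are incorrect. For (2): your assertion that adding $\a_i\in\Psi_{i_2}$ to a partial sum $\gamma$ supported on $\Psi_{i_1}$ "would be forced to be an imaginary root" is backwards. If $\gamma=\alpha+s\delta$ is \emph{real} with $\alpha\in\Gr(\Psi_{i_1})$ and $\a_i=\beta'+t\delta$ with $\beta'\in\Gr(\Psi_{i_2})$, orthogonality forces $\alpha+\beta'\neq 0$, so $\gamma+\a_i$ is never imaginary; it is simply \emph{not a root} (it cannot be real either, since closedness of $\Psi$ would then place it in $\Psi$, non-orthogonal to two distinct components). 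So mixing at a real partial sum is excluded for free. The genuine danger — which you flag as "the main obstacle" but never resolve — is mixing at an \emph{imaginary} partial sum: e.g.\ $\beta_1=\gamma+s\delta,\ \beta_2=-\gamma+s'\delta\in\Psi_{i_2}$ summing to $(s+s')\delta$, followed by $\beta_3\in\Psi_{i_1}$; here every partial sum is a root of $\lie g$, so your "gradient projection of $\beta$ lies in one component" route cannot see the problem (the component-$\Psi_{i_2}$ contribution cancels in the gradient). The paper's resolution is not the $\mathfrak{sl}_2$-module analysis of Lemma~\ref{keylemmaequiv} you propose to lean on, but an elementary Jacobi-identity computation: a root vector $x_{\beta_3}$ from a different component commutes with both $x_{\gamma+s\delta}$ and $x_{-\gamma+s'\delta}$ (the relevant weight sums are non-roots), hence commutes with their bracket, so the right-normed word $[x_{\beta_3},[x_{\beta_2},x_{\beta_1}]]$ vanishes — contradicting its choice as a nonzero word. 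This argument is absent from your proposal, and without it (2) is unproven.

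For (3) the same pattern repeats: the easy induction via closedness handles consecutive real partial sums, and you correctly sense that the case "imaginary then back to real" must be checked "coset-wise" via Proposition~\ref{keypropositionnon2A2n}, but the substance lies in the arithmetic you do not carry out. After passing through $(s+s')\delta$ with $s,s'\in Z_\alpha(\Psi_{i_0})$, the next real partial sum $\beta+(p_\beta+n_\beta k_\beta+n_\alpha(k_\alpha+k_\alpha'))\delta$ lies in $\Psi_{i_0}$ only if $n_\alpha(k_\alpha+k_\alpha')\in n_\beta\mathbb{Z}$. This is immediate when $n_\beta$ divides $n_\alpha$ (Lemma~\ref{nalpha}, Propositions~\ref{twisted01}, \ref{2A2n01}), but in the remaining case $n_\beta=mn_\alpha$ ($\alpha$ short, $\beta$ long, $m\nmid n_\alpha$) one must use that both $\delta$-coefficients attached to the long root $\beta$ lie in $m\mathbb{Z}$, deduce $m\mid(k_\alpha+k_\alpha')$ from the primality of $m$, and only then conclude. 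Neither this case split nor the divisibility argument appears in your proposal, so as written both (2) and (3) have genuine gaps exactly at their respective cruxes.
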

\begin{pf}
Since $\lie g_\a$, $\a\in \Psi$ generates $\lie g(\Psi)$, it is easy to see that
the right normed Lie words $$\{[x_{\beta_r},[x_{\beta_{r-1}},[\cdots, [x_{\beta_2}, x_{\beta_1}]]\in \lie g(\Psi): \beta=\beta_1+\cdots +\beta_r, \beta_i\in \Psi, 1\le i\le r, r\in \mathbb{N}\}$$
spans $\lie g(\Psi)_\beta$. Thus if $\beta\in \Delta(\Psi)$, 
then there exists $r\in \mathbb{N}$ and $\beta_i\in \Psi, 1\le i\le r$, such that $\beta=\beta_1+\cdots +\beta_r$ and
the right normed Lie word $[x_{\beta_r},[x_{\beta_{r-1}},[\cdots, [x_{\beta_2}, x_{\beta_1}]]\neq 0$ for some $x_{\beta_i}\in \lie g_{\beta_i}, \ 1\le i\le r$.
Fix these $x_{\beta_i}$'s.
Now it is easy to see that 
$[x_{\beta_r},[x_{\beta_{r-1}},[\cdots, [x_{\beta_2}, x_{\beta_1}]]\neq 0$ only if
$$ \text{$x_{\beta_1}\neq 0$ and 
$[x_{\beta_i},[\cdots, [x_{\beta_2}, x_{\beta_1}]]\neq 0$ for all $2\le i\le r$}$$
and hence we have $\beta_1+\cdots +\beta_i\in \Delta(\Psi), 1\le i\le r$.
This completes the proof of Statement $(1)$.

 \vskip 1.5mm
To prove Statement $(2)$ and $(3)$, first observe that the irreducible components $\Psi_1, \cdots, \Psi_k$ of $\Psi$ are closed in $\Phi.$

 \vskip 1.5mm
Case (1).
Suppose $\beta_1+\cdots +\beta_i\in \Phi$ for all $1\le i\le r$, then
the Statement $(2)$ and $(3)$ follows from induction and the fact that $\alpha+\beta\notin \Phi$ if $\alpha\in \Psi_p$ and
$\beta\in \Psi_q$ for $1\le p\neq q\le k$. In this case we have, $\beta_1\in \Psi_{i_0}\implies \beta_1,\cdots, \beta_r\in \Psi_{i_0}$ and $\beta_1+\cdots +\beta_i\in\Psi_{i_0}$ 
for all $1\le i\le r$. 

 \vskip 1.5mm
Case (2).
Suppose $\beta_1+\cdots +\beta_i\notin \Phi$ for some $2\le i\le r$. Let $i\in \{1,\cdots r\}$ be the minimum such that $\beta_1+\cdots +\beta_i\notin\Phi$, in particular we have
$\beta_1+\cdots +\beta_j\in\Phi$ for all $1\le j<i$. Then by previous argument, there exists
$i_0\in \{1,\cdots, k\}$ such that $\beta_1, \cdots, \beta_{i-1}\in \Psi_{i_0}$ and $\beta_1+\cdots +\beta_j\in\Psi_{i_0}$ for all $1\le j<i$.
Write $\beta_1+\cdots +\beta_{i-1}=\alpha+s\delta\in \Psi_{i_0}$, where $\alpha\in \Gr(\Psi_{i_0})$. Since $\beta_1+\cdots +\beta_{i}\notin \Phi$, we must have $\beta_{i}=-\alpha+s'\delta.$ Observe that 
$(\beta_1+\cdots +\beta_{i-1}, \beta_i)=-(\a, \a)\neq 0$. So we immediately get $\beta_{i}=-\alpha+s'\delta\in \Psi_{i_0}$ and
$\beta_1+\cdots +\beta_{i-1}+\beta_i=(s+s')\delta$. Suppose $\beta_{i+1}=\beta+s''\delta\notin \Psi_{i_0}$ then we get
$[x_{\beta+s''\delta}, x_{\alpha+s\delta}]=0$ and $[x_{\beta+s''\delta}, x_{-\alpha+s'\delta}]=0$ as
$(\beta+s''\delta)+(\alpha+s\delta)\notin\Delta(\Psi)$ and $(\beta+s''\delta)+(-\alpha+s'\delta)\notin\Delta(\Psi)$. 
This immediately implies that
$$[x_{\beta_{i+1}},[x_{\beta_{i}},[x_{\beta_{i-1}},[\cdots, [x_{\beta_2}, x_{\beta_1}]]=[x_{\beta+s''\delta}, [x_{\alpha+s\delta}, x_{-\alpha+s'\delta}]]=0$$
which is a contradiction to our choice of $x_{\beta_{1}}, \cdots, x_{\beta_{i+1}}$. Thus we must have $\beta_{i+1}=\beta+s''\delta\in \Psi_{i_0}$.
Now induction completes the proof of Statement $(2)$.

 \vskip 1.5mm
We only need to prove that $\beta_1+\cdots +\beta_{i}+\beta_{i+1}\in \Psi_{i_0}$ in order to complete the proof of Statement $(3)$. 
First
recall from the Proposition \ref{keypropositionnon2A2n} that there exists $n_\alpha\in \mathbb{Z}$
for $\alpha\in \Gr(\Psi)$ such that
$Z_\alpha(\Psi_{i_0})=p_\alpha+n_\alpha\mathbb{Z}$.

 \vskip 1.5mm
Case (2.1).
Suppose $n_\a=0$ for some $\a\in \Gr(\Psi_{i_0})$, then $n_\beta=0$ for all $\beta\in \Gr(\Psi_{i_0})$ by Lemma \ref{nalphaneqzero}.
Then we have $\beta_1+\cdots +\beta_j\in \Phi$ for all $1\le j\le r$ in this case, so the Statement $(3)$ is immediate in this case.

 \vskip 1.5mm
Case (2.2).
So assume that $n_\a\neq 0$ for all $\a\in \Gr(\Psi_{i_0})$.
Write $\beta_1+\cdots +\beta_{i-1}=\alpha+(p_\alpha+n_\alpha k_\alpha)\delta,\ \beta_i=-\alpha+(-p_\alpha+n_\alpha k_\alpha')\delta$
and $\beta_{i+1}=\beta+(p_\beta+n_\beta k_\beta)\delta$
Then we have $\beta_1+\cdots +\beta_{i}=n_\alpha (k_\alpha+k_\alpha')\delta.$ We need to prove that 
$\beta_1+\cdots +\beta_{i}+\beta_{i+1}=\beta+(p_\beta+n_\beta k_\beta + n_\alpha (k_\alpha+k_\alpha'))\delta$ must be in $\Psi_{i_0}.$  
 \vskip 1.5mm
Case (2.2.1). Assume that $\Phi$ is not of type $\text{$\tt A^{(2)}_{2n}$}$.
 Suppose both $\alpha$ and $\beta$ are long or short then we have $n_\alpha=n_\beta$ by Lemma \ref{nalpha}, hence 
 $\beta_1+\cdots +\beta_{i}+\beta_{i+1}\in \Psi_{i_0}$, since 
 $Z_\beta(\Psi_{i_0})=p_\beta+n_\alpha\mathbb{Z}$. If $\beta$ is short and $\alpha$ is long then we have $n_\beta=n_\alpha$ or $n_\alpha=mn_\beta$ by Statement (2) of 
 Proposition \ref{twisted01}, hence we have $\beta_1+\cdots +\beta_{i}+\beta_{i+1}\in \Psi_{i_0}$. Now assume that $\a$ is short and $\beta$ is long then we have 
 $n_\beta=n_\alpha$ if $m|n_\a$ and  $n_\beta=mn_\alpha$ if $m\nmid n_\a$. Again the claim follows easily when $n_\beta=n_\alpha$. So we are left with case $n_\beta=mn_\alpha$.
 Recall that $m=2\ \text{or}\ 3$ in this case, so it is prime number.
 Now note that $p_\beta+n_\beta k_\beta \equiv 0 \ (mod\ m)$ and $(p_\beta+n_\beta k_\beta + n_\alpha (k_\alpha+k_\alpha'))\equiv 0 \ (mod\ m)$ together implies,
 $n_\alpha (k_\alpha+k_\alpha')\equiv 0 \ (mod\ m).$ Since $m\nmid n_\alpha$, we get $k_\alpha+k_\alpha'\equiv 0 \ (mod\ m)$. This implies we have
 $n_\alpha (k_\alpha+k_\alpha')\equiv 0 \ (mod\ n_\beta)$ and hence we have $\beta_1+\cdots +\beta_{i}+\beta_{i+1}\in\Psi_{i_0}$.
 This completes the proof of Statement $(3)$ in this case. 

  \vskip 1.5mm
Case (2.2.2). Assume that $\Phi$ is of type $\text{$\tt A^{(2)}_{2n}$.}$
 Suppose both $\alpha$ and $\beta$ are long or short or intermediate then we have $n_\alpha=n_\beta$ by Lemma \ref{nalpha}, hence 
 $\beta_1+\cdots +\beta_{i}+\beta_{i+1}\in \Psi_{i_0}$, since 
 $Z_\beta(\Psi_{i_0})=p_\beta+n_\alpha\mathbb{Z}$. If $\beta$ is short (resp. intermediate) and $\alpha$ is intermediate (resp. short) then we have $n_\beta=n_\alpha$ by 
 Proposition \ref{2A2n01}, hence we have $\beta_1+\cdots +\beta_{i}+\beta_{i+1}\in \Psi_{i_0}$.  
 If $\beta$ is short or intermediate and $\alpha$ is long then we have $n_\beta=n_\alpha$ or $n_\alpha=2n_\beta$ by
 Proposition \ref{twisted01}, hence we have $\beta_1+\cdots +\beta_{i}+\beta_{i+1}\in \Psi_{i_0}$. 
 Now assume that $\a$ is short or intermediate and $\beta$ is long then we have 
 $n_\beta=n_\alpha$ if $2|n_\a$ and  $n_\beta=2n_\alpha$ if $m\nmid n_\a$. Again the claim follows easily when $n_\beta=n_\alpha$. So we are left with case $n_\beta=2n_\alpha$.
 Now note that $p_\beta+n_\beta k_\beta \equiv 0 \ (mod\ 2)$ and $(p_\beta+n_\beta k_\beta + n_\alpha (k_\alpha+k_\alpha'))\equiv 0 \ (mod\ 2)$ together implies,
 $n_\alpha (k_\alpha+k_\alpha')\equiv 0 \ (mod\ 2).$ Since $2\nmid n_\alpha$, we get $k_\alpha+k_\alpha'\equiv 0 \ (mod\ 2)$. This implies we have
 $n_\alpha (k_\alpha+k_\alpha')\equiv 0 \ (mod\ n_\beta)$ and hence we have $\beta_1+\cdots +\beta_{i}+\beta_{i+1}\in\Psi_{i_0}$.
 This completes the proof of Statement $(3)$ in this case. 

 \end{pf}

\begin{cor}\label{1-1}
Let $\Psi$ be a closed subroot system of $\Phi$ and let $\Delta(\Psi)$ be the set of roots of $\lie g(\Psi)$ with respect to $\lie h$. Then we have
$\Psi=\Delta(\Psi)\cap \Phi.$ Thus the map $\Psi\mapsto \lie g(\Psi)$ is a one-to-one correspondence between the set of closed subroot systems of $\Phi$ and the set of regular
subalgebras of $\lie g.$  
\end{cor}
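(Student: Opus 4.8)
The plan is to prove the set equality $\Psi = \Delta(\Psi)\cap \Phi$ first, since the asserted bijection will then follow formally. The inclusion $\Psi \subseteq \Delta(\Psi)\cap \Phi$ has already been recorded in the discussion preceding the statement (for each $\a\in\Psi$ we have $\lie g_\a \subseteq \lie g(\Psi)$, and $\a\in\Phi$), so the actual content is the reverse inclusion $\Delta(\Psi)\cap \Phi \subseteq \Psi$.

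To establish this, I would take an arbitrary $\beta \in \Delta(\Psi)\cap \Phi$ and feed it directly into Proposition \ref{sumofroots}. That proposition produces roots $\beta_1,\dots,\beta_r \in \Psi$ lying in a single irreducible component $\Psi_{i_0}$ of $\Psi$, with $\beta = \beta_1 + \cdots + \beta_r$ and with every partial sum $\beta_1+\cdots+\beta_i \in \Delta(\Psi)$. Since $\beta = \beta_1+\cdots+\beta_r$ is by hypothesis an element of $\Delta(\Psi)\cap \Phi$, Statement $(3)$ of Proposition \ref{sumofroots}, applied with $i = r$, yields $\beta \in \Psi_{i_0}\subseteq \Psi$. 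This gives $\Delta(\Psi)\cap\Phi\subseteq\Psi$, and hence the desired equality $\Psi=\Delta(\Psi)\cap\Phi$.

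With $\Psi = \Delta(\Psi)\cap\Phi$ in hand, the bijection is immediate. By definition every regular subalgebra of $\lie g$ is of the form $\lie g(\Psi)$ for some closed subroot system $\Psi$ of $\Phi$, so the assignment $\Psi \mapsto \lie g(\Psi)$ is onto the set of regular subalgebras. For injectivity, suppose $\lie g(\Psi) = \lie g(\Psi')$ for closed subroot systems $\Psi,\Psi'$ of $\Phi$; then the two subalgebras have the same set of roots relative to $\lie h$, i.e.\ $\Delta(\Psi) = \Delta(\Psi')$, whence $\Delta(\Psi)\cap\Phi = \Delta(\Psi')\cap\Phi$, and the first part forces $\Psi = \Psi'$.

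The proof carries essentially no obstacle of its own: all the difficulty has already been absorbed into Proposition \ref{sumofroots}, whose Statement $(3)$ is exactly the assertion that a real root of $\lie g(\Psi)$ cannot escape $\Psi$ even when it is realized as a sum of $\Psi$-roots whose intermediate partial sums are imaginary (multiples of $\delta$). The only bookkeeping point worth keeping in mind while writing is that $\lie g(\Psi)$ is $\lie h$-invariant, as already noted, so that $\Delta(\Psi)$ and the weight-space description of $\lie g(\Psi)$ are legitimate and the final two-line deduction of the bijection is valid.
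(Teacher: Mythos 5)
Your proof is correct and is exactly the argument the paper intends: the paper's own proof of this corollary is the single line ``Immediate from Proposition \ref{sumofroots}'', and your write-up simply makes explicit the application of Statement $(3)$ of that proposition (with $i=r$) to get $\Delta(\Psi)\cap\Phi\subseteq\Psi$, together with the formal deduction of the bijection. No gaps; this matches the paper's approach.
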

\begin{pf}
Immediate from Proposition \ref{sumofroots}.
\end{pf}
\subsection{}\label{equivdefregular} E. B. Dynkin showed that linearly independent $\pi$-systems arise precisely as simple systems of 
regular subalgebras of finite dimensional semi-simple algebras. So it is natural to expect to define regular subalgebras 
in terms of $\pi-$systems in our context. 
Now we give equivalent definition of regular subalgebras in terms of $\pi-$systems.
A $\pi-$system $\Sigma$ is a finite subset of $\Phi$
satisfying the property that for each $\alpha, \beta\in \Sigma$, we have $\alpha-\beta$ is not a root (i.e.,  $\alpha-\beta\notin \Delta(\lie g)$).
Note that we do not demand $\Sigma$ to be linearly independent in the definition of $\pi-$systems.
Let $\lie g(\Sigma)$ be the subalgebra of $\lie g$ generated by $\{\mathfrak{g}_{\alpha} :\alpha\in\Sigma \cup (-\Sigma)\}$
and let  $\Delta(\Sigma)$ be the set of roots of $\lie g(\Sigma)$ with respect to $\lie h.$
Denote by $W_{\Sigma}$ the Weyl group generated by the reflections $\{\bold s_\a : \a\in \Sigma\}$. 
We refer to \cite{LBKKV} for more details and historical remarks about $\pi-$systems. 
We have a natural choice of $\pi-$system for each closed subroot system of $\Phi$. 
\begin{lem}\label{keylemmapisys}
 Let $\Psi$ be a closed subroot system of $\Phi$ and let $\Psi=\Psi_1\oplus \cdots \oplus \Psi_k$ be its direct sum decomposition of irreducible components.
Let $\Sigma_i$ be a simple system of $\Psi_i$ for each $1\le i\le k.$ Then $\Sigma=\bigcup_{i=i}^{k}\Sigma_i$ is a $\pi-$system.
\end{lem}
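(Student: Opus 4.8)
The plan is to verify the defining property of a $\pi$-system directly: for distinct $\alpha,\beta\in\Sigma$ I must show that $\alpha-\beta\notin\Delta(\lie g)=\Phi\cup\Delta_{\mathrm{im}}(\lie g)$ (the case $\alpha=\beta$ gives $0$, which is not a root). Throughout I would use two basic facts about the invariant form: every real root satisfies $(\gamma,\gamma)>0$, while $(\delta,\gamma)=0$ for every root $\gamma$, since all roots lie in $\mathring{\lie h}^*\oplus\bc\delta$ and $\delta$ is isotropic and orthogonal to $\mathring{\lie h}^*$. I would also use that the components $\Psi_1,\dots,\Psi_k$ are mutually orthogonal (Definition \ref{type}) and that each $\Sigma_i$, being a simple system of $\Psi_i$, satisfies $\langle\beta,\alpha^\vee\rangle\le 0$ and $\alpha-\beta\notin\Psi_i$ for distinct $\alpha,\beta\in\Sigma_i$. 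The argument then splits according to whether $\alpha-\beta$ is imaginary or real, and within each case according to whether $\alpha,\beta$ come from the same component.

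\emph{Imaginary obstruction.} First suppose $\alpha-\beta=r\delta$ with $r\neq 0$. Since $(\delta,\alpha)=0$ one gets $(\beta,\alpha)=(\alpha-r\delta,\alpha)=(\alpha,\alpha)$, whence $\langle\beta,\alpha^\vee\rangle=2(\beta,\alpha)/(\alpha,\alpha)=2$. If $\alpha,\beta$ lie in the same $\Sigma_i$, this contradicts $\langle\beta,\alpha^\vee\rangle\le 0$; if they lie in different components, then orthogonality gives $(\beta,\alpha)=0$, hence $(\alpha,\alpha)=0$, contradicting the non-isotropy of the real root $\alpha$. So $\alpha-\beta\notin\Delta_{\mathrm{im}}(\lie g)$.

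\emph{Real obstruction.} Next suppose $\alpha-\beta\in\Phi$. Since $\Psi_i,\Psi_j$ are root systems, $\alpha,-\beta\in\Psi$, and $\alpha+(-\beta)=\alpha-\beta\in\Phi$, so the closedness of $\Psi$ forces $\gamma:=\alpha-\beta\in\Psi$; say $\gamma\in\Psi_l$. The idea is to pin down $l$ using only the inner products $(\gamma,\alpha)=(\alpha,\alpha)-(\beta,\alpha)$ and $(\gamma,\beta)=(\alpha,\beta)-(\beta,\beta)$ together with orthogonality of the components. If $\alpha,\beta\in\Sigma_i$ lie in the same component, then $(\beta,\alpha)\le 0$ gives $(\gamma,\alpha)\ge(\alpha,\alpha)>0$, so $\gamma$ is not orthogonal to $\Psi_i$ and therefore $l=i$; but then $\gamma=\alpha-\beta\in\Psi_i$ is a root, contradicting the base property of $\Sigma_i$. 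If $\alpha\in\Sigma_i$, $\beta\in\Sigma_j$ with $i\neq j$, then $(\alpha,\beta)=0$ yields $(\gamma,\alpha)=(\alpha,\alpha)>0$ and $(\gamma,\beta)=-(\beta,\beta)<0$; the first forces $l=i$ and the second forces $l=j$, which is impossible. Hence $\alpha-\beta\notin\Phi$, and combining the two cases completes the proof.

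\emph{Main difficulty.} The only genuinely delicate point is that, because the ambient form is degenerate along $\delta$, the spans $\br\Psi_i$ need not be linearly independent (as the $\tt G_2^{(1)}$ example in the remark following Definition \ref{type} shows), so one cannot conclude ``$\gamma\in\br\Psi_i\Rightarrow\gamma\in\Psi_i$'' by a dimension count. I would circumvent this by locating the component of $\gamma=\alpha-\beta$ purely through orthogonality relations and the strict inequalities $(\gamma,\alpha)>0$, $(\gamma,\beta)<0$, which is precisely where the non-isotropy of real roots is essential.
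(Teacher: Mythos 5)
Your proof is correct and takes essentially the same route as the paper: rule out the imaginary case via isotropy of $\delta$ and non-isotropy of real roots (the paper does this by noting $(\alpha-\beta,\alpha-\beta)>0$ when $(\alpha,\beta)=0$), then use closedness of $\Psi$ together with the strict inequalities $(\alpha-\beta,\alpha)>0$ and $(\alpha-\beta,\beta)<0$ plus mutual orthogonality of the components to force $\alpha-\beta$ into two distinct components, a contradiction. The only difference is cosmetic: you spell out the same-component case in full, which the paper dismisses as clear.
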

\begin{pf}
Let $\alpha\in\Sigma_i$ and $\beta\in\Sigma_j$, we need show that $\a-\beta$ is not a root of $\lie g$. 
If $i=j$, then clearly $\alpha-\beta$ is not a root of $\lie g$. 
Assume that $i\neq j$ and $\alpha-\beta$ is a root. Since $(\alpha, \beta)=0$, we have $(\alpha-\beta,\alpha-\beta)>0$. 
So, $\alpha-\beta$ is a real root of $\lie g$. Since $\Psi$ is closed in $\Phi$, we have $\alpha-\beta\in \Psi$.
But we have $(\alpha-\beta,\alpha)>0$ and $(\alpha-\beta,\beta)<0$, which demands $\alpha-\beta\in\Sigma_i\cap\Sigma_j$. This is clearly a contradiction and it 
completes the proof.
\end{pf}

\subsection{}
Suppose $\Sigma$ is a $\pi-$system then $\Sigma\cup -\Sigma$ is closed under multiplication by $-1.$ So it motivates us to define symmetric subsets of real roots. More precisely, a subset $\Sigma_s$ of $\Phi$
is said to be symmetric if $\Sigma_s=-\Sigma_s.$
Let $\lie g(\Sigma_s)$ be the subalgebra of $\lie g$ generated by $\{\mathfrak{g}_{\alpha} :\alpha\in\Sigma_s\}$.
We are now ready to state our equivalent definitions of regular subalgebras of $\lie g.$

\begin{thm}\label{mainthmpisys}
Let $\lie g$ an affine Kac-Moody algebra and let $\lie g'$ be its subalgebra. Then the following definitions are equivalent:
\begin{enumerate}
 \item there exists a closed subroot system $\Psi$ of $\Phi$ such that $\lie g'=\lie g(\Psi)$,
 \item there exists a $\pi-$system $\Sigma$ of $\Phi$ such that $\lie g'=\lie g(\Sigma)$,
 \item there exists a symmetric subset  $\Sigma_s$ of $\Phi$ such that $\lie g'=\lie g(\Sigma_s)$.
\end{enumerate}

\end{thm}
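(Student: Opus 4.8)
The plan is to prove the three conditions equivalent by establishing the cycle $(1)\Rightarrow(2)\Rightarrow(3)\Rightarrow(1)$. The implication $(2)\Rightarrow(3)$ will be immediate: given a $\pi$-system $\Sigma$ with $\lie g'=\lie g(\Sigma)$, the set $\Sigma_s:=\Sigma\cup(-\Sigma)$ is symmetric, and $\lie g(\Sigma_s)=\lie g(\Sigma)=\lie g'$ holds directly from the definitions, since both subalgebras are by construction generated by the same family of root spaces $\{\lie g_\alpha:\alpha\in\Sigma\cup(-\Sigma)\}$.

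For $(1)\Rightarrow(2)$ I would start with a closed subroot system $\Psi$ and its decomposition $\Psi=\Psi_1\oplus\cdots\oplus\Psi_k$ into irreducible components, choosing a simple system $\Sigma_i$ of each $\Psi_i$. By Lemma \ref{keylemmapisys} the union $\Sigma=\bigcup_i\Sigma_i$ is a $\pi$-system, so it remains to show $\lie g(\Sigma)=\lie g(\Psi)$. The inclusion $\lie g(\Sigma)\subseteq\lie g(\Psi)$ is clear because $\Sigma\cup(-\Sigma)\subseteq\Psi$. For the reverse inclusion it suffices to prove $\lie g_\alpha\subseteq\lie g(\Sigma)$ for every $\alpha\in\Psi$; fixing such an $\alpha$ in a component $\Psi_i$, I would use that $\Psi_i$ is a real root system of finite or affine type with simple system $\Sigma_i$, so $\alpha$ decomposes as $\alpha=\pm(\beta_1+\cdots+\beta_r)$ with $\beta_j\in\Sigma_i$ and every partial sum a root of $\Psi_i$. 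Since all root strings in $\Phi$ are unbroken and the real root spaces are one-dimensional, the iterated bracket $[x_{\beta_r},[\dots,[x_{\beta_2},x_{\beta_1}]\dots]]$ spans $\lie g_\alpha$ for suitable root vectors $x_{\beta_j}\in\lie g_{\pm\beta_j}$, whence $\lie g_\alpha\subseteq\lie g(\Sigma_i)\subseteq\lie g(\Sigma)$ and therefore $\lie g(\Psi)=\lie g(\Sigma)$.

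For $(3)\Rightarrow(1)$ I would take a symmetric subset $\Sigma_s=-\Sigma_s$ and set $\lie g'=\lie g(\Sigma_s)$. As $\lie g'$ is generated by $\lie h$-weight spaces, it is a sum of weight spaces and hence $\lie h$-invariant. Writing $\omega$ for the Chevalley involution of $\lie g$, which interchanges $\lie g_\alpha$ and $\lie g_{-\alpha}$, the hypothesis $\Sigma_s=-\Sigma_s$ makes the generating set $\omega$-stable, so $\omega(\lie g')=\lie g'$ and consequently $\Delta(\lie g')=-\Delta(\lie g')$. Both hypotheses of Lemma \ref{keylemmaequiv} then hold, so $\Psi:=\Delta(\lie g')\cap\Phi$ is a closed subroot system of $\Phi$. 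Finally I would verify $\lie g(\Psi)=\lie g'$: the inclusion $\lie g'\subseteq\lie g(\Psi)$ follows from $\Sigma_s\subseteq\Psi$, while conversely each $\alpha\in\Psi$ satisfies $\lie g_\alpha\cap\lie g'\neq 0$, and since $\dim\lie g_\alpha=1$ this forces $\lie g_\alpha\subseteq\lie g'$, so every generator of $\lie g(\Psi)$ lies in $\lie g'$.

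The main obstacle will be the reverse inclusion in $(1)\Rightarrow(2)$, namely recovering every real root space of an irreducible component from the root spaces attached to a single simple system. This depends on the structure-theoretic fact that within an irreducible real root system of finite or affine type each root admits a decomposition into simple roots all of whose partial sums are again roots, combined with the unbrokenness of root strings in $\Phi$ (already noted in the proof of Lemma \ref{dvmax}) to ensure that the corresponding iterated brackets are nonzero.
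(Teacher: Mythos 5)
Your cycle $(1)\Rightarrow(2)\Rightarrow(3)\Rightarrow(1)$ matches the paper's, and your $(2)\Rightarrow(3)$ and $(3)\Rightarrow(1)$ are correct (the Chevalley-involution argument for $\Delta(\lie g')=-\Delta(\lie g')$ is a legitimate way to make precise what the paper leaves as "easy to see"). The genuine gap is in $(1)\Rightarrow(2)$, at the very step you single out as the main obstacle. The structural fact you invoke is false when a component $\Psi_i$ is of affine type: it is not true that every $\alpha\in\Psi_i$ can be written as $\pm(\beta_1+\cdots+\beta_r)$ with $\beta_j\in\Sigma_i$ and all partial sums again in $\Psi_i$. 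Concretely, if $\Psi_i$ is of type $\tt A_1^{(1)}$ with simple system $\Sigma_i=\{\beta_1,\beta_2\}$, then $2\beta_1+\beta_2\in\Psi_i$, but every way of building it from $\pm\beta_1,\pm\beta_2$ must pass through $\beta_1+\beta_2$, which is an imaginary root of $\lie g$ and hence lies outside $\Psi_i$ (the alternatives $2\beta_1$ and $\beta_1-\beta_2$ are not roots at all). This is not a corner case: it occurs in every affine-type component for roots of the form (simple root) plus null root of the component, and affine-type $\Psi$ is the main case of the theorem.

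Once a partial sum is imaginary, your justification collapses, because imaginary root spaces $\lie g_{r\delta}$ are not one-dimensional: knowing that $[x_{\beta_2},x_{\beta_1}]$ is a nonzero vector of $\lie g_{\beta_1+\beta_2}$ does not by itself imply that the next bracket $[x_{\beta_1},[x_{\beta_2},x_{\beta_1}]]$ is nonzero. One can repair this with an $\mathfrak{sl}_2$-argument (since $\beta_2-\beta_1$ is not a root, $x_{\beta_2}$ is a lowest-weight vector for the $\mathfrak{sl}_2$-copy attached to $\beta_1$, so $(\operatorname{ad}x_{\beta_1})^2x_{\beta_2}\neq 0$ as soon as $2\beta_1+\beta_2$ is a root), but this bookkeeping, propagated through an induction over all of $\Psi_i$, is exactly what your sketch omits. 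The paper sidesteps it entirely: every root of $\Psi_i$ is $W_{\Sigma_i}$-conjugate to a simple root, so $\Psi=W_{\Sigma}(\Sigma)$; since $\lie g(\Sigma)$ is $\lie h$-invariant and $\Delta(\Sigma)=-\Delta(\Sigma)$, Lemma~\ref{keylemmaequiv} shows that $\Delta(\Sigma)\cap\Phi$ is a closed subroot system containing $\Sigma$, hence containing $W_{\Sigma}(\Sigma)=\Psi$, which gives $\lie g_\alpha\subseteq\lie g(\Sigma)$ for every $\alpha\in\Psi$ in one stroke. Since you already use Lemma~\ref{keylemmaequiv} in $(3)\Rightarrow(1)$, invoking it here as well is the natural way to close the gap.
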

\begin{pf}
 First assume that $\lie g'=\lie g(\Psi)$ for some closed subroot system $\Psi$ of $\Phi.$ Then by Lemma \ref{keylemmapisys}, we have the $\pi-$system $\Sigma$ which is a union of
 simple systems of corresponding irreducible components of $\Psi.$ Since $\Psi$ is reduced, we have $\Psi=W_\Sigma(\Sigma)$. 
 Since $\Delta(\Sigma)=-\Delta(\Sigma)$ and $\lie g(\Sigma)$ is $\lie h$--invariant, we have $\Psi=W_\Sigma(\Sigma)\subseteq \Delta(\Sigma)$ by Lemma \ref{keylemmaequiv}.
 This implies that
 $\lie g_\a\subseteq \lie g(\Sigma)$ for all $\a\in \Psi$, hence we have $\lie g(\Psi)\subseteq \lie g(\Sigma)$.
 Since $\Sigma\subseteq \Psi$, we have $\lie g(\Sigma)\subseteq \lie g(\Psi).$ So, we have the equality $\lie g(\Psi)=\lie g(\Sigma)$. This also implies that $\Delta(\Psi)=\Delta(\Sigma)$
and we have $\Delta(\Sigma)\cap \Phi=\Psi$ from Corollary \ref{1-1}. This proves $(1)$ implies $(2)$. The fact $(2)$ implies $(3)$ follows immediately if we take $\Sigma_s=\Sigma\cup -\Sigma.$ 

\vskip 1.5mm
Now we prove $(3)$ implies $(1)$. Suppose $\lie g'=\lie g(\Sigma_s)$ for some symmetric subset $\Sigma_s$ of $\Phi.$ 
It is easy to see that $\Delta(\Sigma_s)=-\Delta(\Sigma_s)$.
Let $\Psi=\Delta(\Sigma_s)\cap \Phi$. Again by Lemma \ref{keylemmaequiv}, $\Psi$ is a closed subroot system of $\Phi.$
 Clearly $\lie g(\Psi)\subseteq \lie g(\Sigma_s)$ since $\lie g_\a\subseteq \lie g(\Sigma_s)$ for all $\a\in \Psi$. Since $\Sigma_s\subseteq \Delta(\Sigma_s)\cap \Phi=\Psi$, we have
 $\lie g(\Sigma_s)\subseteq \lie g(\Psi).$ So, we have the equality $\lie g(\Psi)=\lie g(\Sigma_s)$. This completes the proof.
\end{pf}

\begin{cor}
 The association $\Sigma \mapsto \Delta(\Sigma)\cap \Phi$ gives a bijective correspondence between the set of $\pi-$systems of $\Phi$ and the closed subroot systems of $\Phi.$
\end{cor}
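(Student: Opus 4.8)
The plan is to prove two things about $\Sigma\mapsto\Delta(\Sigma)\cap\Phi$: that it is well defined, in the sense that its values are closed subroot systems of $\Phi$, and that together with Corollary~\ref{1-1} it yields the asserted correspondence. First I would check well-definedness. By construction $\lie g(\Sigma)$ is generated by the root spaces $\lie g_\a$ with $\a\in\Sigma\cup(-\Sigma)$, so it is $\lie h$-invariant and its root set $\Delta(\Sigma)$ is symmetric, that is, $\Delta(\Sigma)=-\Delta(\Sigma)$. Taking $\lie g'=\lie g(\Sigma)$ in Lemma~\ref{keylemmaequiv} then shows that $\Delta(\Sigma)\cap\Phi$ is a closed subroot system of $\Phi$. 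In particular our map factors as $\Sigma\mapsto\lie g(\Sigma)\mapsto\Delta(\lie g(\Sigma))\cap\Phi$, the second arrow being the inverse, furnished by Corollary~\ref{1-1}, of the bijection $\Psi\mapsto\lie g(\Psi)$ between closed subroot systems and regular subalgebras.

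Next I would establish surjectivity. Given a closed subroot system $\Psi=\Psi_1\oplus\cdots\oplus\Psi_k$, Lemma~\ref{keylemmapisys} produces a $\pi$-system $\Sigma=\bigcup_{i}\Sigma_i$ from simple systems $\Sigma_i$ of the irreducible components $\Psi_i$. Repeating the argument in the proof of Theorem~\ref{mainthmpisys}, since $\Psi$ is reduced we have $\Psi=W_\Sigma(\Sigma)$ and $\lie g(\Sigma)=\lie g(\Psi)$, whence $\Delta(\Sigma)\cap\Phi=\Delta(\Psi)\cap\Phi=\Psi$ by Corollary~\ref{1-1}. Thus every closed subroot system lies in the image, and $\Psi\mapsto\Sigma$ is a section of our map.

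Finally I would assemble the correspondence. By Theorem~\ref{mainthmpisys} the subalgebras $\lie g(\Sigma)$ attached to $\pi$-systems are exactly the regular subalgebras of $\lie g$, each equal to $\lie g(\Psi)$ for $\Psi=\Delta(\Sigma)\cap\Phi$; composing with the bijection of Corollary~\ref{1-1} then matches $\pi$-systems with closed subroot systems through their common regular subalgebra, with inverse the rule $\Psi\mapsto\Sigma$ of the previous paragraph. The point to be careful about — and the natural place an objection would arise — is injectivity: a single closed subroot system $\Psi$ can be presented by several $\pi$-systems, for instance the different simple systems of an irreducible component of type $\tt A_2$, all generating the same $\lie g(\Psi)$ and hence sharing the value $\Psi$. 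So the bijection is to be read through the genuine one-to-one correspondence of Corollary~\ref{1-1} with the regular subalgebra $\lie g(\Sigma)$, the fibres of $\Sigma\mapsto\Delta(\Sigma)\cap\Phi$ being precisely the families of $\pi$-systems that present one regular subalgebra; well-definedness and surjectivity above then close the argument.
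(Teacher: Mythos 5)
Your proof is correct and follows the route the paper itself intends: the corollary is stated there without a separate argument, as an immediate consequence of Theorem~\ref{mainthmpisys} together with Corollary~\ref{1-1}, which is exactly the factorization $\Sigma\mapsto\lie g(\Sigma)\mapsto\Delta(\lie g(\Sigma))\cap\Phi$ you use, with well-definedness supplied by Lemma~\ref{keylemmaequiv} and surjectivity by Lemma~\ref{keylemmapisys} and the argument inside the proof of Theorem~\ref{mainthmpisys}. Your caveat about injectivity is also well taken: since distinct $\pi$-systems such as $\Sigma$ and $-\Sigma$ (or different simple systems of the same component) generate the same regular subalgebra and hence have the same image, the association is literally only surjective, and the paper's ``bijective correspondence'' must be read, exactly as you do, through the genuine bijection of Corollary~\ref{1-1}, i.e.\ between closed subroot systems and the classes of $\pi$-systems presenting one regular subalgebra.
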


\begin{rem}
One can easily see that our definition of regular subalgebras 
is little different from the regular subalgebras which appears in \cite[Section 2]{Morita}, see \cite{Naito} for its generalization.
Suppose the closed subroot system has a simple system (i.e., the corresponding $\pi-$system is linearly independent) then our definition of 
 regular subalgebra matches up with the definition of Naito's, see \cite{Naito}, indeed in this case
 our regular subalgebra is the derived subalgebra of Naito's regular subalgebra which is a Kac--Moody algebra by definition.
Note that the closed subroot systems of an affine root system does not need to have simple systems in general.
 For example,  consider the affine root system $\Delta=\tt G_2^{(1)}$ and $\Delta_{\mathrm{re}}=\{\alpha+n \delta: \alpha \in \text{$\tt G_2$}, n \in\mathbb{Z}\}.$
Let $\{\a_1, \a_2\}$ be the simple system of $\tt G_2$, such that $\a_2$ is a short root. 
Then define $$\Psi=\{\pm\a_2+n\delta : n\in\mathbb{Z}\}\cup\{\pm\theta+n\delta: n\in\mathbb{Z}\},$$ where $\theta$ is the long root of $\tt G_2.$
 Clearly,  $\Psi$ is a closed subroot system of type $\tt A_1^{(1)}\oplus A_1^{(1)}$ 
 which has no linearly independent simple system by rank comparison.  So, here in this paper we are dealing with a much bigger class of subalgebras of affine Kac--Moody algebras.

 \end{rem}

\subsection{} We have the following explicit description for the closed subroot systems of untwisted affine root systems.
\begin{prop}\label{closeduntwisted}
 Let $\Phi$ be an untwisted affine root system. We have, $\Psi$ (does not need to be of affine type) is a closed subroot system of $\Phi$ if and only if 
 there exists \begin{itemize}
 \item  mutually orthogonal irreducible closed subroot systems 
 $\Psi_1, \cdots, \Psi_k$ of $\mathring\Phi$ and 
 \item  $n_i\in \mathbb{Z}$ and $\mathbb{Z}$--linear function $p^i : \Psi_i \to \mathbb{Z}, \ \a\mapsto p^i_\a$, satisfying the equation \ref{palpha}, for each $1\le i\le k$
      \end{itemize}
 such that
 \begin{equation}\label{closeduntwistedeq}
 \Psi = \wh{\Psi_1}\oplus \cdots \oplus \wh{\Psi_k}
 \end{equation}
where 
$\wh{\Psi_i}=\{\a+(p^i_\a+rn_i)\delta \in \Psi : \a\in \Psi_i, r\in \mathbb{Z}\}$, $1\le i\le k$.
The subroot system $\wh{\Psi_i}$ is of finite type if and only if the integer $n_i$ associated to  $\wh{\Psi_i}$  is zero. 
\end{prop}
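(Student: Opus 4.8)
The plan is to prove both implications, using the gradient decomposition as the organizing principle and transporting the scalar data $(p^i,n_i)$ between the local (per-component) and global pictures.

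For the forward direction, suppose $\Psi$ is a closed subroot system of $\Phi$. First I would invoke Proposition~\ref{untwisted02}(1) to conclude that $\Gr(\Psi)$ is a closed subroot system of $\mathring\Phi$, and then decompose it into its mutually orthogonal irreducible components $\Gr(\Psi)=\Psi_1\oplus\cdots\oplus\Psi_k$; each $\Psi_i$ is again closed in $\mathring\Phi$, since the intersection of the closed set $\Gr(\Psi)$ with the span of an orthogonal component is closed. Because $\Phi$ is untwisted, $\Gr(\Phi)=\mathring\Phi$ is reduced, so Proposition~\ref{keypropositionnon2A2n} furnishes a $\mathbb Z$-linear map $p^\Psi:\Gr(\Psi)\to\mathbb Z$ satisfying \eqref{palpha} and integers $n_\a^\Psi$ with $Z_\a(\Psi)=p_\a^\Psi+n_\a^\Psi\mathbb Z$. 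The key observation is that for each $i$ the set $\Psi^{(i)}:=\{\a+r\delta\in\Psi:\a\in\Psi_i\}$ is itself a closed subroot system of $\Phi$ whose gradient is the irreducible system $\Psi_i$ (closedness and stability under reflections follow from the orthogonality of the components together with the closedness of $\Psi$ and of $\Psi_i$), and that $Z_\a(\Psi^{(i)})=Z_\a(\Psi)$ for $\a\in\Psi_i$. Applying Proposition~\ref{untwisted01}(1) to $\Psi^{(i)}$ then gives a single integer $n_i$ with $n_\a^\Psi=n_i$ for all $\a\in\Psi_i$. Setting $p^i:=p^\Psi|_{\Psi_i}$, which is $\mathbb Z$-linear and satisfies \eqref{palpha}, I would obtain $\Psi^{(i)}=\{\a+(p_\a^i+rn_i)\delta:\a\in\Psi_i,\ r\in\mathbb Z\}=\wh{\Psi_i}$, and since $\Psi=\{\a+r\delta:\a\in\Gr(\Psi),\ r\in Z_\a(\Psi)\}$ while the $\Psi_i$ partition $\Gr(\Psi)$, this yields the desired $\Psi=\wh{\Psi_1}\oplus\cdots\oplus\wh{\Psi_k}$.

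For the converse, given mutually orthogonal irreducible closed $\Psi_i\le\mathring\Phi$, integers $n_i$, and $\mathbb Z$-linear $p^i$ satisfying \eqref{palpha}, I would verify directly that $\Psi=\bigoplus_i\wh{\Psi_i}$ is a closed subroot system. Using the untwisted reflection identity $\bold s_{\a+a\delta}(\beta+b\delta)=\bold s_\a(\beta)+(b-\langle\beta,\a^\vee\rangle a)\delta$ together with \eqref{palpha} in the form $p^i_{\bold s_\a(\beta)}=p^i_\beta-\langle\beta,\a^\vee\rangle p^i_\a$, one checks that each $\wh{\Psi_i}$ is stable under reflections; and $\mathbb Z$-linearity $p^i_{\a+\beta}=p^i_\a+p^i_\beta$ shows that a sum of two elements of $\wh{\Psi_i}$ lying in $\Phi$ stays in $\wh{\Psi_i}$. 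The only point that couples the components is ruling out interactions across distinct $\Psi_i,\Psi_j$: here I would use the standard fact that the sum of two orthogonal roots is never a root (the $\a$-string through $\beta$ collapses to $\{\beta\}$ when $\langle\beta,\a^\vee\rangle=0$), so that no sum of a root from $\wh{\Psi_i}$ and one from $\wh{\Psi_j}$ is a root of $\Phi$, while reflections by $\wh{\Psi_i}$-roots fix the $\wh{\Psi_j}$-roots. Hence closedness of $\Psi$ reduces to the per-component verification already carried out.

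Finally, for the finite-type assertion: if $n_i=0$ then $\wh{\Psi_i}=\{\a+p_\a^i\delta:\a\in\Psi_i\}$ is a finite set, and the map $\a+p^i_\a\delta\mapsto\a$ is an isometry onto $\Psi_i$ (since $\delta$ lies in the radical of the form), so $\wh{\Psi_i}$ is of finite type; whereas if $n_i\ne0$ each $Z_\a$ is infinite, so $\wh{\Psi_i}$ is infinite and hence of affine type. I expect the main obstacle to be the forward-direction bookkeeping, namely making precise that each component restriction $\Psi^{(i)}$ is a genuine closed subroot system with irreducible gradient (so that Proposition~\ref{untwisted01}(1) applies) and that the locally chosen data glue into the single global $\mathbb Z$-linear function $p^\Psi$; the remaining steps are routine once the orthogonal-sum fact is in hand.
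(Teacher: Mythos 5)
Your forward direction is correct and is essentially the paper's own argument: Proposition~\ref{untwisted02}(1) gives closedness of $\Gr(\Psi)$, the component lifts $\Psi^{(i)}$ are closed subroot systems with irreducible gradient, and Proposition~\ref{untwisted01}(1) applied to each of them produces the constants $n_i$. Your extra verification that each $\Psi^{(i)}$ really is a closed subroot system of $\Phi$ (so that Proposition~\ref{untwisted01} applies) is a detail the paper leaves implicit, and you carry it out correctly.

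The converse, however, contains a genuine error. You dispose of cross-component sums by the ``standard fact'' that the sum of two orthogonal roots is never a root, arguing that the $\alpha$-string through $\beta$ collapses when $\langle\beta,\alpha^\vee\rangle=0$. That is false in non-simply-laced systems: $\langle\beta,\alpha^\vee\rangle=0$ only makes the string symmetric about $\beta$, not trivial. In type $\tt C_2$ the orthogonal short roots $\epsilon_1-\epsilon_2$ and $\epsilon_1+\epsilon_2$ sum to the long root $2\epsilon_1$; this is exactly the ``semi-closed'' phenomenon the paper has to fight in the twisted sections. Worse, this shows the converse cannot be proved from the hypotheses as you (and, read literally, the proposition) state them: in $\Phi$ of type $\tt C_2^{(1)}$, take $\Psi_1=\{\pm(\epsilon_1-\epsilon_2)\}$ and $\Psi_2=\{\pm(\epsilon_1+\epsilon_2)\}$, which are mutually orthogonal, irreducible, and closed in $\mathring\Phi$; for any choice of $p^i$ and $n_i$, the set $\wh{\Psi_1}\oplus\wh{\Psi_2}$ is not closed, because $2\epsilon_1+n\delta\in\Phi$ for every $n\in\mathbb{Z}$ while no such root lies in the union. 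What the converse actually needs (and what the paper implicitly assumes, since in the forward direction the $\Psi_i$ arise as the irreducible components of the closed set $\Gr(\Psi)$) is the stronger hypothesis that $\Psi_1\oplus\cdots\oplus\Psi_k$ is itself closed in $\mathring\Phi$. Under that hypothesis the cross-component interaction is ruled out correctly: if $\alpha\in\Psi_i$, $\beta\in\Psi_j$ with $i\neq j$ and $\alpha+\beta\in\mathring\Phi$, then closedness places $\alpha+\beta$ in some component $\Psi_l$, and $(\alpha+\beta,\alpha)\neq 0\neq(\alpha+\beta,\beta)$ forces $l=i$ and $l=j$, a contradiction. With that replacement your per-component verification (the reflection identity together with $\mathbb{Z}$-linearity of $p^i$) completes the converse; without it, your argument, and the literal statement, both fail.
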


\begin{pf}
Let $\Psi$ be a closed subroot system  of untwisted affine root system $\Phi$. 
Then by Proposition \ref{untwisted02}, we know that $\Gr(\Psi)$ is a closed subroot system of $\mathring\Phi$. Let 
$$\Gr(\Psi)= \Psi_1 \oplus \cdots \oplus \Psi_k$$ be the decomposition of $\Gr(\Psi)$ into irreducible components. Then 
each $\Psi_i$ is an irreducible finite subroot system of $\mathring\Phi$.
Since $\Gr(\Psi)$ is closed in $\Phi$, we see that each $\Psi_i$ is closed in $\mathring{\Phi}$. 
Let $\widehat{\Psi_i}$ denote the lift of $\Psi_i$ in $\Psi$.
Then for each $1\le i\le k$, by Proposition \ref{untwisted01}, there exists 
$n_i\in \mathbb{Z}$ and a $\mathbb{Z}$--linear function $p^i : \Psi_i \to \mathbb{Z}, \ \a\mapsto p^i_\a,$ satisfying the equation \ref{palpha}
such that for each $\alpha\in \Psi_i$, $Z_\alpha(\wh{\Psi_i})=p^i_\alpha+n_i\mathbb{Z}$.
This implies that  $\widehat{\Psi_i}=\{\a+(p^i_\a+rn_i)\delta \in \Psi : \a\in \Psi_i, r\in \mathbb{Z}\}$, $1\le i\le k$.
Notice that if $n_i=0$,
then the lift of $\Psi_i$ must be of finite type and the types of $\wh{\Psi_i}$ and $\Psi_i$ are same.  Converse part is straightforward. 
This completes the proof.

\end{pf}

\subsection{}
Let $\Phi$ be an affine root system and $\Psi$ be a closed subroot system of $\Phi$ as before. Write $\Psi=\Psi_a\oplus \Psi_f$ 
where  $\Psi_a$ (resp. $\Psi_f$) is the affine (resp. finite) part of $\Psi.$
Since $\Psi$ is closed, we have the subroot systems $\Psi_a$ and $\Psi_f$ are closed in $\Phi$ and $\mathring{\Phi}$ respectively.
Since we know the classification of all the closed subroot systems in the finite type (see \cite{Bor, Dynkin}), we only need
to classify all the closed subroot systems of $\Phi$ which are of affine type.
It can be done using the following theorem and the information about maximal closed subroot systems which appears in
sections $3-11$.

\begin{thm}\label{finitechain}
Let $\Phi$ be an affine root system and $\Psi$ be a closed subroot system in $\Phi$ of affine type. 
Then there exists a finite chain of closed subroot systems in $\Phi$, $\Phi=\Phi_0\supseteq \Phi_1 \supseteq \cdots \supseteq\Phi_k=\Psi$ 
such that $\Phi_i$ is maximal closed in $\Phi_{i-1}$ for $1\leq i\leq k$.
\end{thm}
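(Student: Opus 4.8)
The plan is to descend from $\Phi$ down to $\Psi$ and to control the descent by a well-founded measure, so that the entire statement reduces to a single local step. First I would record the finiteness input supplied by the earlier sections: by Proposition \ref{keypropositionnon2A2n} a closed subroot system $\Omega\le\Phi$ is completely determined by the pair $\big(\Gr(\Omega),(Z_\alpha(\Omega))_\alpha\big)$, where $Z_\alpha(\Omega)=p_\alpha^\Omega+n_\alpha^\Omega\mathbb{Z}$, and by Lemma \ref{keylemma} the passage to $\big(\Gr(\Omega),(n_\alpha^\Omega)\big)$ is monotone under inclusion. From this I would deduce that the poset of closed subroot systems of $\Phi$ satisfies the ascending chain condition: along an ascending chain the gradients increase inside the \emph{finite} root system $\Gr(\Phi)$ and hence stabilize, after which each $Z_\alpha$ increases through nested cosets $p+n\mathbb{Z}$ whose moduli $n$ form a divisor chain (necessarily finite); once $\Gr$ and all the $n_\alpha$ stabilize, the systems coincide by Lemma \ref{keylemma}(3). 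Thus the relevant poset is Noetherian.

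For the measure I would set, for every closed $\Omega$ with $\Psi\subseteq\Omega\subseteq\Phi$,
\[
N(\Omega)=\#\big(\Gr(\Omega)\setminus\Gr(\Psi)\big)+\sum_{\alpha\in\Gr(\Psi)}\ell\!\left(\frac{n_\alpha^\Psi}{n_\alpha^\Omega}\right),
\]
where $\ell(k)$ is the number of prime factors of $k$ counted with multiplicity. These ratios are genuine positive integers precisely because $\Psi$ is of affine type: each $n_\alpha^\Psi$ is then nonzero (a component with $n_\alpha=0$ would be of finite type, cf.\ Proposition \ref{closeduntwisted}), while $n_\alpha^\Omega\mid n_\alpha^\Psi$ by Lemma \ref{keylemma}(2). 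The quantity $N$ is a nonnegative integer, it is monotone under inclusion, and most importantly $N(\Omega)=0$ forces $\Gr(\Omega)=\Gr(\Psi)$ together with $n_\alpha^\Omega=n_\alpha^\Psi$ for all $\alpha$, whence $\Omega=\Psi$ by Lemma \ref{keylemma}(3). So $N$ is a well-founded measure that vanishes exactly at $\Psi$.

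With these preparations the theorem would follow by strong induction on $N(\Omega)$ from the following local claim: if $\Psi\subsetneq\Omega\subseteq\Phi$, then there exists a maximal closed subroot system $\Omega'$ of $\Omega$ with $\Psi\subseteq\Omega'$ and $N(\Omega')<N(\Omega)$. Granting it, the induction hypothesis yields a finite maximal chain from $\Omega'$ down to $\Psi$, and prepending the maximal inclusion $\Omega\supsetneq\Omega'$ gives the chain for $\Omega$; taking $\Omega=\Phi$ finishes the proof. The existence of \emph{some} maximal closed $\Omega'$ with $\Psi\subseteq\Omega'$ is immediate from the ascending chain condition: the set of closed subsystems $\Delta$ with $\Psi\subseteq\Delta\subsetneq\Omega$ is nonempty (it contains $\Psi$) and, being Noetherian, has a maximal element, which is then maximal closed in $\Omega$ and automatically contains $\Psi$.

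The main obstacle is to guarantee that this step can be chosen to \emph{strictly decrease} $N$ rather than being wasted. The measure drops automatically as soon as the chosen $\Omega'$ either has strictly smaller gradient than $\Omega$ or has strictly larger modulus $n_\alpha$ at some $\alpha\in\Gr(\Psi)$; however, a priori a maximal step could instead merely refine a coset attached to a root of $\Gr(\Omega)\setminus\Gr(\Psi)$, leaving $N$ unchanged. Excluding this — that is, showing that whenever $\Gr(\Psi)\subsetneq\Gr(\Omega)$ one can select a maximal closed $\Omega'\supseteq\Psi$ whose gradient is strictly smaller — is the delicate point, and it is here that I would bring in the explicit type-by-type classification of maximal closed subroot systems established in Sections \ref{untwi}--\ref{2A2}, together with the affine-type hypothesis on $\Psi$, to exhibit an admissible gradient-reducing maximal step. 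I expect this verification, rather than the abstract descent argument, to be the technical heart of the proof.
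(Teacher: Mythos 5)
Your first step, the ascending chain condition for closed subroot systems containing $\Psi$, is correct and is precisely the content of the paper's Proposition \ref{finitechainprop}; the paper proves it by introducing the height $\mathrm{ht}(\Delta)=\sum n_s^{\Delta_i}+\sum n_{\mathrm{im}}^{\Delta_i}+\sum n_\ell^{\Delta_i}$ and showing in Lemma \ref{chainlemma} that a strict inclusion with equal gradients forces a strict drop in height, while your divisor-chain argument is an acceptable variant (one should only add that a modulus $n_\alpha$ can be $0$ at roots outside $\Gr(\Psi)$, so a coset chain may begin at a singleton before the divisor argument takes over). The genuine problem is the second half: the local claim on which your induction rests is not merely ``delicate and deferred to a type-by-type check''--- it is false, so no amount of case analysis can close the gap.

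Concretely, take $\Phi$ of type $B_2^{(1)}$, $\theta=\epsilon_1+\epsilon_2$, $\mu=\epsilon_1-\epsilon_2$, and $\Psi=\{\pm\theta+r\delta : r\in\mathbb{Z}\}$, a closed subroot system of type $A_1^{(1)}$. By Theorem \ref{mainuntwisted}, the unique maximal closed subroot system of $\Phi$ containing $\Psi$ is $\Omega=\{\pm\theta+r\delta\}\cup\{\pm\mu+r\delta\}$, the lift of the long roots $D_2$ (the full-gradient maximal subsystems have prime period and so cannot contain $\Psi$, and the lift of $B_1$ does not contain $\theta$). Since $\theta\perp\mu$ and neither $\theta+\mu=2\epsilon_1$ nor $\theta-\mu=2\epsilon_2$ is a root, every closed subroot system of $\Omega$ splits as a union of a subroot system of the $\theta$-factor and one of the $\mu$-factor; consequently every maximal closed subroot system of $\Omega$ containing $\Psi$ has the form $\Psi\cup M$ with $M$ maximal closed in the $\mu$-factor, i.e.\ $M=\{\pm\mu+(\pm p+q\mathbb{Z})\delta\}$ with $q$ prime. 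For every such choice one has $\Gr(\Omega')=\Gr(\Omega)$ and the moduli over $\Gr(\Psi)$ are unchanged, so $N(\Omega')=N(\Omega)$: your measure cannot be made to decrease, and the failure is not curable by choosing steps more cleverly higher up, because the first step $\Phi\supsetneq\Omega$ is forced. Worse, since every maximal closed subroot system of an $A_1^{(1)}$ is again of type $A_1^{(1)}$, iterating shows that no finite chain of maximal closed steps starting at $\Omega$ --- hence none starting at $\Phi$ --- ever terminates at $\Psi$; the $\mu$-component can be shrunk forever but never removed. So the obstacle you honestly flagged is real and fatal to the proposed completion. For comparison, the paper does not resolve this point either: its entire argument is the ACC statement (Proposition \ref{finitechainprop}) followed by the assertion that the theorem is an ``immediate corollary,'' which silently skips exactly the passage from ACC to the existence of a finite maximal chain --- the step at which your proof stalls. (The statement can be repaired if one also admits the deletion of a whole irreducible component as an elementary step of the chain, but that is not what the theorem as written asserts, and neither your argument nor the paper's supplies the missing justification.)
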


First we fix a notation. For a closed subroot system $\Delta$ of $\Phi$ with decomposition into  indecomposable components $\Gr(\Delta)=\Delta_1\oplus \cdots \oplus \Delta_k$,
we denote by 
$$\mathrm{ht}(\Delta)=\sum\limits_{i=1}^{k}n_s^{\Delta_i}(\Delta)+\sum\limits_{i=1}^{k}n_{\mathrm{im}}^{\Delta_i}(\Delta)+\sum\limits_{i=1}^{k}n_\ell^{\Delta_i}(\Delta)$$
Here it is understood that $n_{\mathrm{im}}^{\Delta_i}(\Delta)=0$ if there is no intermediate roots and so on.
We need the following lemma to prove the theorem \ref{finitechain}. 
\begin{lem}\label{chainlemma}
 Let $\Phi$ be an affine root system and $\Psi\subsetneq \Delta\subseteq \Phi$ be closed subroot systems of $\Phi$ of affine type. 
 Let $\Gr(\Psi)= \Psi_1 \oplus \cdots \oplus \Psi_\ell$ be the decomposition of $\Gr(\Psi)$ into irreducible components.
 Then we have
\begin{itemize}
 \item[(i)] either $\Gr(\Psi)\subsetneq \Gr(\Delta)$ or
 \item[(ii)] $\Gr(\Psi)= \Gr(\Delta)$ and $\mathrm{ht}(\Delta)<\mathrm{ht}(\Psi).$

\end{itemize}
\end{lem}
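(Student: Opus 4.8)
\emph{Proof plan.} The plan is to split along the dichotomy already encoded in the two conclusions. Since $\Psi\subseteq\Delta$, part (1) of Lemma \ref{keylemma} gives the inclusion $\Gr(\Psi)\subseteq\Gr(\Delta)$ for free. If this inclusion is strict we are immediately in case (i) and there is nothing further to prove, so the entire content of the lemma lies in the case $\Gr(\Psi)=\Gr(\Delta)$, where I must extract the strict drop in $\mathrm{ht}$. First I would record that in this case the two irreducible decompositions coincide: since irreducible decomposition of a root system is unique, after reindexing we have $\ell=k$ and $\Psi_i=\Delta_i$ for every $i$. In particular the short, intermediate and long ``length-type representatives'' that feed into $\mathrm{ht}(\Psi)$ and into $\mathrm{ht}(\Delta)$ are literally the same roots; recall from Section \ref{defns} and Lemma \ref{nalpha} that $n_\a$ is constant on each $W_{\Gr(\Psi)}$-orbit, so that $\mathrm{ht}$ is well defined and each summand $n_s^{\Delta_i}(\Delta)$, $n_{\mathrm{im}}^{\Delta_i}(\Delta)$, $n_\ell^{\Delta_i}(\Delta)$ equals $n_\a^\Delta$ for a chosen representative root $\a$ of the relevant length in $\Delta_i$, and similarly for $\Psi$.

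The crux is then a term-by-term comparison, and I claim $n_\a^\Delta\le n_\a^\Psi$ for every $\a\in\Gr(\Psi)$. This comes from part (2) of Lemma \ref{keylemma}: from $\Psi\subseteq\Delta$ we get $n_\a^\Psi\mathbb{Z}\subseteq n_\a^\Delta\mathbb{Z}$, i.e. $n_\a^\Delta$ divides $n_\a^\Psi$. Here the hypothesis that $\Psi$ is of affine type is what makes the divisibility useful: applied to each irreducible component, it forces $n_\a^\Psi\neq 0$ for every $\a$ (an affine component is not finite, so some $n$ is nonzero, and then Lemma \ref{nalphaneqzero} propagates nonvanishing to all roots of that component). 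Consequently $n_\a^\Delta\mid n_\a^\Psi$ with $n_\a^\Psi>0$ also forces $n_\a^\Delta>0$, and a positive divisor satisfies $0<n_\a^\Delta\le n_\a^\Psi$.

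For strictness I would invoke part (3) of Lemma \ref{keylemma}: since $\Gr(\Psi)=\Gr(\Delta)$ but $\Psi\neq\Delta$, we cannot have $n_\a^\Delta=n_\a^\Psi$ for all $\a$; hence there is at least one root $\a_0$ with $n_{\a_0}^\Delta<n_{\a_0}^\Psi$. Summing the inequalities $n_\a^\Delta\le n_\a^\Psi$ over the finitely many length-type representatives entering $\mathrm{ht}$, and using the strict inequality at the representative of the component containing $\a_0$, yields $\mathrm{ht}(\Delta)<\mathrm{ht}(\Psi)$, which is exactly case (ii).

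I expect the only genuine subtlety to be the affine-type bookkeeping: one must be careful to use ``of affine type'' to guarantee that every $n_\a^\Psi$, and therefore every $n_\a^\Delta$, is nonzero, so that the containment $n_\a^\Psi\mathbb{Z}\subseteq n_\a^\Delta\mathbb{Z}$ translates into the size inequality $n_\a^\Delta\le n_\a^\Psi$ in the correct direction rather than into a vacuous statement. Everything else is the routine observation that $\mathrm{ht}$ is a sum of these $n$'s over matching components, so term-by-term domination together with one strict term is precisely what the conclusion demands.
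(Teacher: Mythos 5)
Your proposal is correct and follows essentially the same route as the paper: the dichotomy on $\Gr(\Psi)\subseteq\Gr(\Delta)$, the divisibility $n_\a^\Delta\mid n_\a^\Psi$ from Lemma \ref{keylemma}(2) combined with nonvanishing of the $n_\a$'s (forced by the affine-type hypothesis via Lemma \ref{nalphaneqzero}) to get the term-by-term inequality, and then strictness because equality of all the $n_\a$'s would force $\Psi=\Delta$. The only cosmetic difference is that you cite Lemma \ref{keylemma}(3) for this last step, whereas the paper re-derives it inline by comparing the cosets $p_\a^\Psi+n_\a^\Psi\mathbb{Z}\subseteq p_\a^\Delta+n_\a^\Delta\mathbb{Z}$; your version is, if anything, slightly cleaner and more explicit about where ``affine type'' is used.
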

\begin{proof}
Suppose $\Gr(\Psi)\subsetneq \Gr(\Delta)$, then there is nothing to prove. So, assume that $\Gr(\Psi)= \Gr(\Delta)$. It is easy to see that for each
$1\le i \le \ell$ we have $n_s^{\Psi_i}(\Delta)$ is a divisor of  $n_s^{\Psi_i}(\Psi)$, in particular $n_s^{\Psi_i}(\Delta)\le n_s^{\Psi_i}(\Psi)$.
Similarly,  we have $n_{\mathrm{im}}^{\Psi_i}(\Delta)\le n_{\mathrm{im}}^{\Psi_i}(\Psi)$ and $n_\ell^{\Psi_i}(\Delta)\le n_\ell^{\Psi_i}(\Psi)$ for all $1\le i \le \ell$. This immediately
implies that $\mathrm{ht}(\Delta)\le \mathrm{ht}(\Psi).$

If $\mathrm{ht}(\Delta)=\mathrm{ht}(\Psi)$, then we must have 
$n_s^{\Psi_i}(\Delta)= n_s^{\Psi_i}(\Psi)$, $n_{\mathrm{im}}^{\Psi_i}(\Delta)=n_{\mathrm{im}}^{\Psi_i}(\Psi)$ and $n_\ell^{\Psi_i}(\Delta)=n_\ell^{\Psi_i}(\Psi)$ for all $1\le i \le \ell$.
This implies that  $p_\a^{\Psi}+n_\a^\Psi\mathbb{Z}\subseteq p_\a^{\Delta}+n_\a^\Delta\mathbb{Z}$ for all $\a\in \Gr(\Psi)$. Since $n_\a^\Psi=n_\a^\Delta$, we get 
$p_\a^{\Psi}\equiv p_\a^{\Delta} (\mathrm{mod} \ n_\a^\Psi)$ for all $\a\in \Gr(\Psi).$
This immediately implies that
$\Psi$ must be equal to $\Delta$ which is a contradiction to the assumption.

\end{proof}

Theorem \ref{finitechain} is an immediate corollary of the following proposition.
\begin{prop}\label{finitechainprop}
Let $\Phi$ be an affine root system and $\Psi$ be a closed subroot system in $\Phi$ of affine type.
Then there is no infinite chain of closed subroot systems in $\Phi$, such that 
$$\Psi=\Phi_0\subsetneq \Phi_1 \subsetneq \cdots \subsetneq\Phi_k\subsetneq \Phi_{k+1}\subsetneq \cdots \subseteq\Phi.$$
\end{prop}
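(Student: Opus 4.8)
The plan is to show that any strictly increasing chain of closed subroot systems starting from $\Psi$ and lying inside $\Phi$ must terminate, by exhibiting a quantity that strictly decreases (or a strictly increasing invariant that is bounded above) at each step of the chain. The natural candidate is the pair consisting of the gradient root system and the height function $\mathrm{ht}$ introduced just before the statement; Lemma~\ref{chainlemma} is precisely the tool that controls how this pair behaves along an inclusion $\Phi_i \subsetneq \Phi_{i+1}$.

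First I would fix such a chain $\Psi=\Phi_0\subsetneq \Phi_1\subsetneq\cdots\subseteq\Phi$ and apply Lemma~\ref{chainlemma} to each consecutive inclusion $\Phi_i\subsetneq\Phi_{i+1}$. The lemma tells us that for each $i$, either $\Gr(\Phi_i)\subsetneq\Gr(\Phi_{i+1})$ or else $\Gr(\Phi_i)=\Gr(\Phi_{i+1})$ and $\mathrm{ht}(\Phi_{i+1})<\mathrm{ht}(\Phi_i)$. Now consider the sequence of gradient root systems $\Gr(\Phi_0)\subseteq\Gr(\Phi_1)\subseteq\cdots\subseteq\Gr(\Phi)$. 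Since each $\Gr(\Phi_i)$ is a subroot system of the fixed finite root system $\Gr(\Phi)$, there are only finitely many possible values; hence the gradient can strictly increase only finitely many times along the chain. The key observation is that between any two indices where the gradient strictly increases, the gradient is constant, and on such a stretch the second alternative of the lemma forces $\mathrm{ht}$ to be strictly decreasing.

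The main point is then to rule out an infinite stretch on which $\Gr(\Phi_i)$ is constant. On such a stretch we have a strictly decreasing sequence of positive integers $\mathrm{ht}(\Phi_i)$. I would note that $\mathrm{ht}(\Delta)$ is a finite sum of the integers $n_s^{\Delta_j}(\Delta), n_{\mathrm{im}}^{\Delta_j}(\Delta), n_\ell^{\Delta_j}(\Delta)$, each of which is a positive integer whenever $\Delta$ is of affine type, i.e.\ whenever none of the $n$'s vanish (this is exactly where the hypothesis ``$\Psi$ of affine type'' together with Lemma~\ref{keylemma}(2) and Lemma~\ref{nalphaneqzero} enters: along an increasing chain the $n$'s only decrease but stay nonzero since each $\Phi_i\supseteq\Psi$ remains of affine type on the relevant components). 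A strictly decreasing sequence of positive integers must be finite, so each constant-gradient stretch has finite length. Combining the finitely many gradient jumps with the finite length of each stretch yields that the whole chain is finite, proving Proposition~\ref{finitechainprop}.

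The step I expect to be the main obstacle is justifying cleanly that $\mathrm{ht}$ takes values in the positive integers throughout the chain, and in particular that the intermediate terms $\Phi_i$ are themselves of affine type (so that their height is a meaningful positive quantity and Lemma~\ref{chainlemma} applies verbatim). One must argue that if $\Psi$ is of affine type then every $\Phi_i$ with $\Psi\subseteq\Phi_i$ is also of affine type: on any irreducible component of $\Gr(\Psi)$ the associated $n$ is nonzero, and by Lemma~\ref{keylemma}(2) the inclusion $\Psi\subseteq\Phi_i$ gives $n_\a^\Psi\mathbb{Z}\subseteq n_\a^{\Phi_i}\mathbb{Z}$, which forces $n_\a^{\Phi_i}\neq 0$ as well, keeping the corresponding lifts of affine type. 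Once this is established the decreasing-positive-integer argument is routine, and Theorem~\ref{finitechain} follows by taking a chain of maximal length (which exists by Proposition~\ref{finitechainprop}) and reversing it, noting that maximality of the chain forces each $\Phi_i$ to be maximal closed in $\Phi_{i-1}$.
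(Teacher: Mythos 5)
Your argument is essentially the paper's own proof: both apply Lemma~\ref{chainlemma} to consecutive inclusions, use the finiteness of $\Gr(\Phi)$ to force the gradients $\Gr(\Phi_i)$ to stabilize, and then derive a contradiction (or termination) from the strictly decreasing non-negative integer heights $\mathrm{ht}(\Phi_i)$ along the constant-gradient tail. Your additional care in checking that the intermediate $\Phi_i$ remain of affine type (so that Lemma~\ref{chainlemma} applies and the $n_\a$'s stay nonzero) addresses a point the paper silently skips, and is legitimate; note only that your patch via Lemma~\ref{keylemma}(2) secures the components of $\Phi_i$ meeting $\Gr(\Psi)$ but not possible finite components of $\Phi_i$ orthogonal to $\Psi$ — a caveat that applies equally to the paper's own application of Lemma~\ref{chainlemma}.
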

\begin{proof}
We prove this result by contradiction. Assume that there is an infinite chain of closed subroot systems in $\Phi$, such that 
$$\Psi=\Phi_0\subsetneq \Phi_1 \subsetneq \cdots \subsetneq\Phi_k\subsetneq \Phi_{k+1}\subsetneq \cdots \subseteq\Phi.$$
Then we have 
$\Gr(\Psi)=\Gr(\Phi_0)\subseteq \Gr(\Phi_1) \subseteq \cdots \subseteq\Gr(\Phi_k)\subseteq \cdots \subseteq\Gr(\Phi).$
Since $\Gr(\Phi)$ is finite, there must exists a $k\in\mathbb{Z}$ such that $\Gr(\Phi_k)=\Gr(\Phi_i)$ for all $i\geq k$. 
Since $\Phi_k\subsetneq\Phi_{j}\subsetneq \Phi_i$, by lemma \ref{chainlemma}, we have $$\mathrm{ht}(\Phi_i)<\mathrm{ht}(\Phi_j)<\mathrm{ht}(\Phi_k)\ \ \text{for all $k<j<i$}$$  which is absurd.
This completes the proof.
\end{proof}



\bibliographystyle{plain}
\bibliography{vk-bib}

\begin{thebibliography}{10}

\bibitem{B&S}
A.~Borel and J.~De~Siebenthal.
\newblock Les sous-groupes ferm\'es de rang maximum des groupes de {L}ie clos.
\newblock {\em Comment. Math. Helv.}, 23:200--221, 1949.

\bibitem{Bor}
N.~Bourbaki.
\newblock {\em \'El\'ements de math\'ematique. {F}asc. {XXXIV}. {G}roupes et
  alg\`ebres de {L}ie. {C}hapitre {IV}: {G}roupes de {C}oxeter et syst\`emes de
  {T}its. {C}hapitre {V}: {G}roupes engendr\'es par des r\'eflexions.
  {C}hapitre {VI}: syst\`emes de racines}.
\newblock Actualit\'es Scientifiques et Industrielles, No. 1337. Hermann,
  Paris, 1968.

\bibitem{LBKKV}
Lisa Carbone, K~N Raghavan, Biswajit Ranising, Krishanu Roy, and Sankaran
  Viswanath.
\newblock {$\pi$}--systems of symmetrizable {K}ac-{M}oody algebras. {I}n
  preparation.

\bibitem{carter}
R.~W. Carter.
\newblock {\em Lie algebras of finite and affine type}, volume~96 of {\em
  Cambridge Studies in Advanced Mathematics}.
\newblock Cambridge University Press, Cambridge, 2005.

\bibitem{DyerTAMS}
M.~J. Dyer and G.~I. Lehrer.
\newblock Reflection subgroups of finite and affine {W}eyl groups.
\newblock {\em Trans. Amer. Math. Soc.}, 363(11):5971--6005, 2011.

\bibitem{Dyer}
M.~J. Dyer and G.~I. Lehrer.
\newblock Root subsystems of loop extensions.
\newblock {\em Transform. Groups}, 16(3):767--781, 2011.

\bibitem{Dynkin}
E.~B. Dynkin.
\newblock Regular semisimple subalgebras of semisimple {L}ie algebras.
\newblock {\em Doklady Akad. Nauk SSSR (N.S.)}, 73:877--880, 1950.

\bibitem{FRT}
Anna Felikson, Alexander Retakh, and Pavel Tumarkin.
\newblock Regular subalgebras of affine {K}ac-{M}oody algebras.
\newblock {\em J. Phys. A}, 41(36):365204, 16, 2008.

\bibitem{Hump}
James~E. Humphreys.
\newblock {\em Introduction to {L}ie algebras and representation theory},
  volume~9 of {\em Graduate Texts in Mathematics}.
\newblock Springer-Verlag, New York-Berlin, 1978.
\newblock Second printing, revised.

\bibitem{K90}
Victor~G. Kac.
\newblock {\em Infinite-dimensional {L}ie algebras}.
\newblock Cambridge University Press, Cambridge, third edition, 1990.

\bibitem{kane}
Richard Kane.
\newblock {\em Reflection groups and invariant theory}, volume~5 of {\em CMS
  Books in Mathematics/Ouvrages de Math\'ematiques de la SMC}.
\newblock Springer-Verlag, New York, 2001.

\bibitem{DVmax}
Deniz Kus and R.~Venkatesh.
\newblock Borel--de {S}iebenthal theory for affine reflection systems.
\newblock {\em In preparation}.

\bibitem{MacD}
I.~G. Macdonald.
\newblock Affine root systems and {D}edekind's {$\eta $}-function.
\newblock {\em Invent. Math.}, 15:91--143, 1972.

\bibitem{Morita}
Jun Morita.
\newblock Certain rank two subsystems of {K}ac-{M}oody root systems.
\newblock In {\em Infinite-dimensional {L}ie algebras and groups
  ({L}uminy-{M}arseille, 1988)}, volume~7 of {\em Adv. Ser. Math. Phys.}, pages
  52--56. World Sci. Publ., Teaneck, NJ, 1989.

\bibitem{Naito}
Satoshi Naito.
\newblock On regular subalgebras of {K}ac-{M}oody algebras and their associated
  invariant forms. {S}ymmetrizable case.
\newblock {\em J. Math. Soc. Japan}, 44(2):157--177, 1992.

\end{thebibliography}

\end{document}